\newcommand{\br}{{\mathbb R}}
\newcommand{\bc}{{\mathbb C}}
\newtheorem{conjecture}[theorem]{Conjecture}
\newtheorem{problem}[theorem]{Problem}
\newtheorem{remark}[theorem]{Remark}
\newcommand{\eps}{{\epsilon}}
\newcommand{\vp}{{\varphi}}
\newcommand{\ip}[2]{\left \langle #1, #2 \right\rangle}
\newcommand{\norm}[1]{\left \lVert #1 \right \rVert}
\newcommand{\moment}[1]{{\left\vert\kern-0.25ex\left\vert\kern-0.25ex\left\vert #1 
    \right\vert\kern-0.25ex\right\vert\kern-0.25ex\right\vert}}
\newcommand*\colvec[1]{
        \global\colveccount#1
        \begin{pmatrix}
        \colvecnext
}
\def\colvecnext#1{
        #1
        \global\advance\colveccount-1
        \ifnum\colveccount>0
                \\
                \expandafter\colvecnext
        \else
                \end{pmatrix}
        \fi
}
\newcommand{\rcf}[1]{\bm{\hat\mathcal #1}}
\newcommand{\scf}[1]{\hat{\mathcal #1}}
\newcommand{\rdf}[1]{\bm{\hat{#1}}}
\newcommand{\sdf}[1]{\hat{#1}}
\newcommand{\rcr}[1]{\bm{\mathcal #1}}
\newcommand{\scr}[1]{\mathcal #1}
\newcommand{\rdr}[1]{\bm{#1}}
\newcommand{\sdr}[1]{#1}
\newcommand{\ssf}[1]{\hat{\mathscr #1}}
\newcommand{\Var}{{\text{Var}}}
\begin{document}

\title{Covariance Matrix Estimation for the \\ Cryo-EM Heterogeneity Problem}

\author{E. Katsevich\thanks{Department of Mathematics, Princeton University, Princeton, NJ 08544 ({\tt ekatsevi@princeton.edu}).}
        \and A. Katsevich\thanks{Department of Mathematics, University of Central Florida, Orlando, FL 32816 ({\tt alexander.katsevich@ucf.edu})}
	\and A. Singer \thanks{Department of Mathematics and PACM, Princeton University, Princeton, NJ 08544-1000 ({\tt amits@math.princeton.edu})}}
\date{}

\maketitle

\begin{abstract}
In cryo-electron microscopy (cryo-EM), a microscope generates a top view of a sample of randomly-oriented copies of a molecule. The problem of single particle reconstruction (SPR) from cryo-EM is to use the resulting set of noisy 2D projection images taken at unknown directions to reconstruct the 3D structure of the molecule. In some situations, the molecule under examination exhibits structural variability, which poses a fundamental challenge in SPR. The heterogeneity problem is the task of mapping the space of conformational states of a molecule. It has been previously suggested that the leading eigenvectors of the covariance matrix of the 3D molecules can be used to solve the heterogeneity problem. Estimating the covariance matrix is challenging, since only projections of the molecules are observed, but not the molecules themselves. In this paper, we formulate a general problem of covariance estimation from noisy projections of samples. This problem has intimate connections with matrix completion problems and high-dimensional principal component analysis. We propose an estimator and prove its consistency. When there are finitely many heterogeneity classes, the spectrum of the estimated covariance matrix reveals the number of classes. The estimator can be found as the solution to a certain linear system. In the cryo-EM case, the linear operator to be inverted, which we term the projection covariance transform, is an important object in covariance estimation for tomographic problems involving structural variation. Inverting it involves applying a filter akin to the ramp filter in tomography. We design a basis in which this linear operator is sparse and thus can be tractably inverted despite its large size.  We demonstrate via numerical experiments on synthetic datasets the robustness of our algorithm to high levels of noise.
\end{abstract}

\begin{keywords}
Cryo-electron microscopy, X-ray transform, inverse problems, structural variability, classification, heterogeneity, covariance matrix estimation, principal component analysis, high-dimensional statistics, Fourier projection slice theorem, spherical harmonics
\end{keywords}

\begin{AMS}
92C55, 44A12, 92E10, 68U10, 33C55, 62H30, 62J10
\end{AMS}

\section{Introduction}

\subsection{Covariance matrix estimation from projected data}

Covariance matrix estimation is a fundamental task in statistics. Statisticians have long grappled with the problem of estimating this statistic when the samples are only partially observed. In this paper, we consider this problem in the general setting where ``partial observations" are arbitrary linear projections of the samples onto a lower-dimensional space. 

\begin{problem} \label{main_problem}
Let $\rdr X$ be a random vector on $\bc^p$, with $\mathbb E[\rdr X] = \mu_0$ and $\Var(\rdr X) = \Sigma_0$ ($\Var[\rdr X]$ denotes the covariance matrix of $\rdr X$). Suppose also that $\rdr P$ is a random $q \times p$ matrix with complex entries, and $\rdr E$ is a random vector in $\bc^q$ with $\mathbb E[\rdr E] = 0$ and $\Var[\rdr E] = \sigma^2 \textnormal{I}_q$. Finally, let $\rdr I$ denote the random vector in $\bc^q$ given by
\begin{equation}
\rdr I = \rdr P \rdr X + \rdr E.
\label{stat_model}
\end{equation}
Assume now that $\rdr X$, $\rdr P$, and $\rdr E$ are independent. Estimate $\mu_0$ and $\Sigma_0$ given observations $\sdr I_1, \dots, \sdr I_n$ and $\sdr P_1, \dots, \sdr P_n$ of $\rdr  I$ and $\rdr P$, respectively.
\end{problem}

Here, and throughout this paper, we write random quantities in boldface to distinguish them from deterministic quantities. We use regular font (e.g., $\sdr X$) for vectors and matrices, calligraphic font (e.g., $\scr X$) for functions, and script font for function spaces (e.g., $\mathscr B$). We denote true parameter values with a subscript of zero (e.g., $\mu_0$), estimated parameter values with a subscript of $n$ (e.g., $\mu_n$), and generic variables with no subscript (e.g., $\mu$).

Problem \ref{main_problem} is quite general, and has many practical applications as special cases. The main application this paper addresses is the heterogeneity problem in single particle reconstruction (SPR) from cryo-electron microscopy (cryo-EM). SPR from cryo-EM is an inverse problem where the goal is to reconstruct a 3D molecular structure from a set of its 2D projections from random directions \cite{ctf}. The heterogeneity problem deals with the situation in which the molecule to be reconstructed can exist in several structural classes. In the language of Problem \ref{main_problem}, $\rdr X$ represents a discretization of the molecule (random due to heterogeneity), $P_s$ the 3D-to-2D projection matrices, and $I_s$ the noisy projection images. The goal of this paper is to estimate the covariance matrix associated with the variability of the molecule. If there is a small, finite number $(C)$ of classes, then $\Sigma_0$ has low rank $(C-1)$. This ties the heterogeneity problem to principal component analysis (PCA) \cite{pca}. If $\Sigma_0$ has eigenvectors $V_1, \dots, V_p$ (called principal components) corresponding to eigenvalues $\lambda_1 \geq \dots \geq \lambda_p$, then PCA states that $V_i$ accounts for a variance of $\lambda_i$ in the data. In modern applications, the dimensionality $p$ is often large, while $\rdr X$ typically has much fewer intrinsic degrees of freedom \cite{donoho_data}. The heterogeneity problem is an example of such a scenario; for this problem, we demonstrate later that the top principal components can be used in conjunction with the images to reconstruct each of the $C$ classes. 


Another class of applications closely related to Problem \ref{main_problem} are missing data problems in statistics. In these problems, $\sdr X_1, \dots, \sdr X_n$ are samples of a random vector $\rdr X$. The statistics of this random vector must be estimated in a situation where certain entries of the samples $\sdr X_s$ are missing \cite{little_rubin}. This amounts to choosing $\sdr P_s$ to be \textit{coordinate selection operators}, operators which output a certain subset of the entries of a vector. An important problem in this category is PCA with missing data, which is the task of finding the top principal components when some data are missing. Closely related to this is the noisy low rank matrix completion problem \cite{matrix_completion_candes}. In this problem, only a subset of the entries of a low rank matrix $A$ are known (possibly with some error), and the task is to fill in the missing entries. If we let $\sdr X_s$ be the columns of $A$, then the observed variables in each column are $\sdr P_s \sdr X_s + \sdr \eps_s$, where $\sdr P_s$ acts on $\sdr X_s$ by selecting a subset of its coordinates, and $\sdr \eps_s$ is noise. Note that the matrix completion problem involves filling in the missing entries of $\sdr X_s$, while Problem \ref{main_problem} requires us only to find the covariance matrix of these columns. However, the two problems are closely related. For example, if the columns are distributed normally, then the missing entries can be found as their expectations conditioned on the known variables \cite{climate_data}. Alternatively, we can find the missing entries by choosing the linear combinations of the principal components that best fit the known matrix entries. A well-known application of matrix completion is in the field of recommender systems (also known as collaborative filtering). In this application, users rate the products they have consumed, and the task is to determine what new products they would rate highly. We obtain this problem by interpreting $A_{i, j}$ as the $j$'th user's rating of product $i$. In recommender systems, it is assumed that only a few underlying factors determine users' preferences. Hence, the data matrix $A$ should have low rank. A high profile example of recommender systems is the Netflix prize problem \cite{netflix}.

In both of these classes of problems, $\Sigma_0$ is large but should have low rank. Despite this, note that Problem \ref{main_problem} does not have a low rank assumption. Nevertheless, as our numerical results demonstrate, the spectrum of our (unregularized) covariance matrix estimator reveals low rank structure when it is present in the data. Additionally, the framework we develop in this paper naturally allows for regularization.

Having introduced Problem \ref{main_problem} and its applications, let us delve more deeply into one particular application: SPR from cryo-EM.

\subsection{Cryo-electron microscopy} \label{section:cryo_em}
Electron microscopy is an important tool for structural biologists, as it allows them to determine complex 3D macromolecular structures. A general technique in electron microscopy is called \textit{single particle reconstruction}. In the basic setup of SPR, the data collected are 2D projection images of ideally assumed identical, but randomly oriented, copies of a macromolecule. In particular, one specimen preparation technique used in SPR is called \textit{cryo-electron microscopy}, in which the sample of molecules is rapidly frozen in a thin ice layer \cite{ctf, wang06}. The electron microscope provides a top view of the molecules in the form of a large image called a micrograph. The projections of the individual particles can be picked out from the micrograph, resulting in a set of projection images. Mathematically, we can describe the imaging process as follows. Let $\scr X: \br^3 \rightarrow \br$ represent the Coulomb potential induced by the unknown molecule. We scale the problem to be dimension-free in such a way that most of the ``mass" of $\scr X$ lies within the unit ball $B \subset \br^3$ (since we later model $\scr X$ to be bandlimited, we cannot quite assume it is supported in $B$). To each copy of this molecule corresponds a rotation $\sdr R \in SO(3)$, which describes its orientation in the ice layer. The idealized forward projection operator $\scr P = \scr P(\sdr R): L^1(\br^3) \rightarrow L^1(\br^2)$ applied by the microscope is the X-ray transform
\begin{equation}
(\scr P \scr X)(x, y) = \int_{\br} \scr X(\sdr R^T r)dz,
\label{P_s}
\end{equation}
where $r = (x, y, z)^T$. Hence, $\scr P$ first rotates $\scr X$ by $\sdr R$, and then integrates along vertical lines to obtain the projection image. The microscope yields the image $\scr P \scr X$, discretized onto an $N \times N$ Cartesian grid, where each pixel is also corrupted by additive noise. Let there be $q \approx \frac{\pi}{4}N^2$ pixels contained in the inscribed disc of an $N \times N$ grid (the remaining pixels contain little or no signal because $\scr X$ is concentrated in $B$). If $\mathcal S: L^1(\br^2) \rightarrow \br^q$ is a discretization operator, then the microscope produces images $\sdr I$ given by 
\begin{equation}
\rdr I = \mathcal S \rcr P \scr X + \rdr E,
\end{equation}
with $\rdr E \sim \mathcal N(0, \sigma^2 \text{I}_q)$, where for the purposes of this paper we assume additive white Gaussian noise. The microscope has an additional blurring effect on the images, a phenomenon we will discuss shortly, but will leave out of our model. Given a set of images $\sdr I_1, \dots, \sdr I_n$, the cryo-EM problem is to estimate the orientations $\sdr R_1, \dots, \sdr R_n$ of the underlying volumes and reconstruct $\scr X$. Note that throughout this paper, we will use ``cryo-EM" and ``cryo-EM problem" as shorthand for the SPR problem from cryo-EM images; we also use ``volume" as a synonym for ``3D structure".

The cryo-EM problem is challenging for several reasons. Unlike most other imaging modalities of computerized tomography, the rotations $\sdr R_s$ are unknown, so we must estimate them before reconstructing $\scr X$. This challenge is one of the major hurdles to  reconstruction in cryo-EM. Since the images are not perfectly centered, they also contain in-plane translations, which must be estimated as well. The main challenge in rotation estimation is that the projection images are corrupted by extreme levels of noise. This problem arises because only low electron doses can scan the molecule without destroying it. To an extent, this problem is mitigated by the fact that cryo-EM datasets often have tens or even hundreds of thousands of images, which makes the reconstruction process more robust. Another issue with transmission electron microscopy in general is that technically, the detector only registers the magnitude of the electron wave exiting the specimen. Zernike realized in the 1940s that the phase information could also be recovered if the images were taken out of focus \cite{phase_contrast}. While enabling measurement of the full output of the microscope, this out-of-focus imaging technique produces images representing the convolution of the true image with a point spread function (PSF). The Fourier transform of the PSF is called the contrast transfer function (CTF). Thus the true images are multiplied by the CTF in the Fourier domain to produce the output images. Hence, the $\mathcal P_s$ operators in practice also include the blurring effect of a CTF. This results in a loss of information at the zero crossings of the (Fourier-domain) CTF and at high frequencies \cite{ctf}. In order to compensate for the former effect, images are taken with several different defocus values, whose corresponding CTFs have different zero crossings. 

The field of cryo-EM has recently seen a drastic improvement in detector technology. New direct electron detector cameras have been developed, which, according to a recent article in \textit{Science}, have ``unprecedented speed and sensitivity" \cite{resolution_revolution}. This technology has enabled SPR from cryo-EM to succeed on smaller molecules (up to size $\sim$150kDa) and achieve higher resolutions (up to 3\AA) than before. Such high resolution allows tracing of the polypetide chain and identification of residues in protein molecules \cite{cheng_resolution, scheres_resolution, henderson, groel, near_atomic}. Recently, single particle methods have provided high resolution structures of the TRPV1 ion channel \cite{cryo_em_membrane} and of the large subunit of the yeast mitochondrial ribosome \cite{mitochondrial_ribosome}. While X-ray crystallography is still the imaging method of choice for small molecules, cryo-EM now holds the promise of reconstructing larger, biomedically relevant molecules not amenable to crystallization.

The most common method for solving the basic cryo-EM problem is guessing an initial structure and then performing an iterative refinement procedure, where iterations alternate between 1) estimating the rotations of the experimental images by matching them with projections of the current 3D model; and 2) tomographic inversion producing a new 3D model based on the experimental images and their estimated rotations \cite{ctf, vanheel00, gridding}. There are no convergence guarantees for this iterative scheme, and the initial guess can incur bias in the reconstruction. An alternative is to estimate the rotations and reconstruct an accurate initial structure directly from the data. Such an ab-initio structure is a much better initialization for the iterative refinement procedure. This strategy helps avoid bias and reduce the number of refinement iterations necessary to converge \cite{zhao_singer}. In the ab-initio framework, rotations can be estimated by one of several techniques (see e.g. \cite{SS_11, lanhui_orientations} and references therein).  

\subsection{Heterogeneity problem} \label{section:het_problem}

As presented above, a key assumption in the cryo-EM problem is that the sample consists of (rotated versions of) identical molecules. However, in many datasets this assumption does not hold. Some molecules of interest exist in more than one conformational state. For example, a subunit of the molecule might be present or absent, have a few different arrangements, or be able to move continuously from one position to another. These structural variations are of great interest to biologists, as they provide insight into the functioning of the molecule. Unfortunately, standard cryo-EM methods do not account for heterogeneous samples. New techniques must be developed to map the space of molecules in the sample, rather than just reconstruct a single volume. This task is called the \textit{heterogeneity problem}. A common case of heterogeneity is when the molecule has a finite number of dominant conformational classes. In this discrete case, the goal is to provide biologists with 3D reconstructions of all these structural states. While cases of continuous heterogeneity are possible, in this paper we mainly focus on the discrete heterogeneity scenario.

\begin{figure}[h]
\begin{center}
\includegraphics[scale = 0.8]{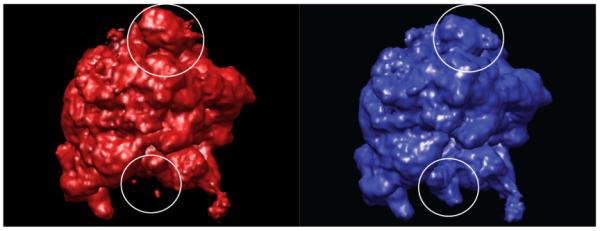}
\caption{Classical (left) and hybrid (right) states of 70S \textit{E. Coli} ribosome (image source: \cite{local})} \label{het}
\end{center}
\end{figure}

While we do not investigate the 3D rotation estimation problem in the heterogeneous case, we conjecture that this problem can be solved without developing sophisticated new tools. Consider for example the case when the heterogeneity is small; i.e., the volumes $\scr X_1,\ldots,\scr X_n$ can be rotationally aligned so they are all close to their mean (in some norm). For example, this property holds when the heterogeneity is localized (e.g., as in Figure \ref{het}). In this case, one might expect that by first assuming homogeneity, existing rotation estimation methods would yield accurate results. Even if the heterogeneity is large, an iterative scheme can be devised to alternately estimate the rotations and conformations until convergence (though this convergence is local, at best). Thus, in this publication, we assume that the 3D rotations $\sdr R_s$ (and in-plane translations) have already been estimated. 

With the discrete heterogeneity and known rotations assumptions, we can formulate the heterogeneity problem as follows. 

\begin{problem}{(Heterogeneity Problem).} \label{het_problem}
Suppose a heterogeneous molecule can take on one of $C$ different states: $\scr X^1, \dots, \scr X^C \in \mathscr B$, where $\mathscr B$ is a finite-dimensional space of bandlimited functions (see Section \ref{discrete_estimation}). Let $\Omega = \{1, 2, \dots, C\}$ be a sample space, and $p_1, \dots, p_C$ probabilities (summing to one) so that the molecule assumes state $c$ with probability $p_c$. Represent the molecule as a random field $\rcr X: \Omega \times \br^3 \rightarrow \br$, with 
\begin{equation}
\mathbb P[\rcr X= \scr X^c] = p_c, \quad c = 1, \dots, C.
\end{equation}
Let $\rdr R$ be a random rotation with some distribution over $SO(3)$, and define the corresponding random projection $\rcr P = \scr P(\rdr R)$ (see (\ref{P_s})). Finally, $\rdr E \sim \mathcal N(0, \sigma^2 \textnormal{I}_q)$. Assume that $\rcr X, \rdr R, \rdr E$ are independent. A random image of a particle is obtained via
\begin{equation}
\rdr I= \mathcal S \rcr P \rcr X + \rdr E,
\end{equation}
where $\mathcal S: L^1(\br^2) \rightarrow \br^q$ is a discretization operator. Given observations $I_1, \dots, I_n$ and $R_1, \dots, R_n$ of $\rdr I$ and $\rdr R$, respectively, estimate the number of classes $C$, the structures $\scr X^c$, and the probabilities $p_c$.
\end{problem}

Note that $\mathcal S \rcr P|_{\mathscr B}$ is a (random) linear operator between finite-dimensional spaces, and so it has a matrix version $\rdr P: \br^p \rightarrow \br^q$, where $p = \text{dim} \ \mathscr B$. If we let $\rdr X$ be the random vector on $\br^p$ obtained by expanding $\rcr X$ in the basis for $\mathscr B$, then we recover the equation $\rdr I = \rdr P \rdr X + \rdr E$ from Problem \ref{main_problem}. Thus, the main factors distinguishing Problem \ref{het_problem} from Problem \ref{main_problem} are that the former assumes a specific form for $\rdr P$ and posits a discrete distribution on $\rdr X$. As we discuss in Section \ref{finding_conformations}, Problem \ref{het_problem} can be solved by first estimating the covariance matrix as in Problem \ref{main_problem}, finding coordinates for each image with respect to the top eigenvectors of this matrix, and then applying a standard clustering procedure to these coordinates.

One of the main difficulties of the heterogeneity problem is that, compared to usual SPR, we must deal with an even lower effective signal-to-noise ratio (SNR). Indeed, the signal we seek to reconstruct is the variation of the molecules around their mean, as opposed to the mean volume itself. We propose a precise definition of SNR in the context of the heterogeneity problem in Section \ref{snr_fsc}. Another difficulty is the indirect nature of our problem. Although the heterogeneity problem is an instance of a clustering problem, it differs from usual such problems in that we do not have access to the objects we are trying to cluster -- only projections of these objects onto a lower-dimensional space are available. This makes it challenging to apply any standard clustering technique directly.

The heterogeneity problem is considered one of the most important problems in cryo-EM. In his 2013 Solvay public lecture on cryo-EM, Dr. Joachim Frank emphasized the importance of ``the ability to obtain an entire inventory of co-existing states of a macromolecule from a single sample" \cite{solvay}. Speaking of approaches to the heterogeneity problem in a review article, Frank discussed ``the potential these new technologies will have in exploring functionally relevant states of molecular machines" \cite{story_in_a_sample}. It is stressed there that much room for improvement remains; current methods cannot automatically identify the number of conformational states and have trouble distinguishing between similar conformations.

\subsection{Previous work}

Much work related to Problem \ref{main_problem} and Problem \ref{het_problem} has already been done. There is a rich statistical literature on the covariance estimation problem in the presence of missing data, a special case of Problem \ref{main_problem}. In addition, work on the \textit{low rank matrix sensing problem} (a generalization of matrix completion) is also closely related to Problem \ref{main_problem}. Regarding Problem \ref{het_problem}, several approaches to the heterogeneity problem have been proposed in the cryo-EM literature. 

\subsubsection{Work related to Problem \ref{main_problem}}

Many approaches to covariance matrix estimation from missing data have been proposed in the statistics literature \cite{little_rubin}. The simplest approach to dealing with missing data is to ignore the samples with any unobserved variables. Another simple approach is called \textit{available case analysis}, in which the statistics are constructed using all the available values. For example, the $(i, j)$ entry of the covariance matrix is constructed using all samples for which the $i$'th and $j$'th coordinates are simultaneously observed. These techniques work best under certain assumptions on the pattern of missing entries, and more sophisticated techniques are preferred \cite{little_rubin}. One of the most established such approaches is \textit{maximum likelihood estimation} (MLE). This involves positing a probability distribution on $\rdr X$ (e.g., multivariate normal) and then maximizing the likelihood of the observed partial data with respect to the parameters of the model. Such an approach to fitting models from partial observations was known as early as the 1930s, when Wilks used it for the case of a bivariate normal distribution \cite{wilks}. Wilks proposed to maximize the likelihood using a gradient-based optimization approach. In 1977, Dempster, Laird, and Rubin introduced the expectation-maximization (EM) algorithm \cite{EM} to solve maximum likelihood problems. The EM algorithm is one of the most popular methods for solving missing data problems in statistics. Also, there is a class of approaches to missing data problems called \textit{imputation}, in which the missing values are filled either by averaging the available values or through more sophisticated regression-based techniques. Finally, see \cite{high_dim_regression, lounici_2012} for other approaches to related problems.

Closely related to covariance estimation from missing data is the problem of PCA with missing data. In this problem, the task is to find the leading principal components, and not necessarily the entire covariance matrix. Not surprisingly, EM-type algorithms are popular for this problem as well. These algorithms often search directly for the low-rank factors. See \cite{pca_missing_data_2} for a survey of approaches to PCA with missing data. Closely related to PCA with missing data is the low rank matrix completion problem. Many of the statistical methods discussed above are also applicable to matrix completion. In particular, EM algorithms to solve this problem are popular, e.g., \cite{climate_data, gaussian_mc}. 

Another more general problem setup related to Problem \ref{main_problem} is the \textit{low rank matrix sensing problem}, which generalizes the low rank matrix completion problem. Let $A \in \br^{p\times n}$ be an unknown rank-$k$ matrix, and let $\mathcal M: \br^{p \times n} \rightarrow \br^d$ be a linear map, called the \textit{sensing matrix}. We would like to find $A$, but we only have access to the (possibly noisy) data $\mathcal M(A)$. Hence, the low rank matrix sensing problem can be formulated as follows \cite{lrms_alternating}:
\begin{equation}
\text{minimize} \ \norm{\mathcal M(A) - b}, \quad s.t. \quad rank(A) \leq k.
\end{equation}
Note that when $\Sigma_0$ is low rank, Problem \ref{main_problem} is a special case of the low rank matrix sensing problem. Indeed, consider putting the unknown vectors $\sdr X_1, \dots, \sdr X_n$ together as the columns of a matrix $A$. The rank of this matrix is the number of degrees of freedom in $\rdr X$ (in the cryo-EM problem, this relates to the number of heterogeneity classes of the molecule). The linear projections $\sdr P_1, \dots, \sdr P_n$ can be combined into one sensing matrix $\mathcal M$ acting on $A$. In this way, our problem falls into the realm of matrix sensing. 

One of the first algorithms for matrix sensing was inspired by the compressed sensing theory \cite{lrms_nuclear_norm}. This approach uses a matrix version of $\ell_1$ regularization called nuclear norm regularization. The nuclear norm is the sum of the singular values of a matrix, and is a convex proxy for its rank. Another approach to this problem is alternating minimization, which decomposes $A$ into a product of the form $UV^T$ and iteratively alternates between optimizing with respect to $U$ and $V$. The first proof of convergence for this approach was given in \cite{lrms_alternating}. Both the nuclear norm and alternating minimization approaches to the low rank matrix sensing problem require a \textit{restricted isometry property} on $\mathcal M$ for theoretical guarantees.

While the aforementioned algorithms are widely used, we believe they have limitations as well. EM algorithms require postulating a distribution over the data and are susceptible to getting trapped in local optima. Regarding the former point, Problem \ref{main_problem} avoids any assumptions on the distribution of $\rdr X$, so our estimator should have the same property. Matrix sensing algorithms (especially alternating minimization) often assume that the rank is known in advance. However, there is no satisfactory statistical theory for choosing the rank. By contrast, the estimator we propose for Problem \ref{main_problem} allows automatic rank estimation. 

\subsubsection{Work related to Problem \ref{het_problem}}

Several approaches to the heterogeneity problem have been proposed. Here we give a brief overview of some of these approaches.

One approach is based on the notion of common lines. By the Fourier projection slice theorem (see Theorem \ref{fourier_projection_slice}), the Fourier transforms of any two projection images of an object will coincide on a line through the origin, called a common line. The idea of Shatsky et al \cite{shat10} was to use common lines as a measure of how likely it is that two projection images correspond to the same conformational class. Specifically, given two projection images and their corresponding rotations, we can take their Fourier transforms and correlate them on their common line. From there, a weighted graph of the images is constructed, with edges weighted based on this common line measure. Then spectral clustering is applied to this graph to classify the images. An earlier common lines approach to the heterogeneity problem is described in \cite{hk_08}.

Another approach is based on MLE. It involves positing a probability distribution over the space of underlying volumes, and then maximizing the likelihood of the images with respect to the parameters of the distribution. For example, Wang et al \cite{doer13} model the heterogeneous molecules as a mixture of Gaussians and employ the EM algorithm to find the parameters. A challenge with MLE approaches is that the resulting objective functions are nonconvex and have a complicated structure. For more discussion of the theory and practice of maximum likelihood methods, see \cite{mle1} and \cite{mle2}, respectively. Also see \cite{relion} for a description of a software package which uses maximum likelihood to solve the heterogeneity problem.

A third approach to the heterogeneity problem is to use the covariance matrix of the set of original molecules. Penczek outlines a bootstrapping approach in \cite{penczek} (see also \cite{penczek2002, penczek2006, penczek2008, local}). In this approach, one repeatedly takes random subsets of the projection images and reconstructs 3D volumes from these samples. Then, one can perform principal component analysis on this set of reconstructed volumes, which yields a few dominant ``eigenvolumes''. Penczek proposes to then produce mean-subtracted images by subtracting projections of the mean volume from the images. The next step is to project each of the dominant eigenvolumes in the directions of the images, and then obtain a set of coordinates for each image based on its similarity with each of the eigenvolume projections. Finally, using these coordinates, this resampling approach proceeds by applying a standard clustering algorithm such as $K$-means to classify the images into classes.

While existing methods for the heterogeneity problem have their success stories, each suffers from its own shortcomings: the common line approach does not exploit all the available information in the images, the maximum likelihood approach requires explicit a-priori distributions and is susceptible to local optima, and the bootstrapping approach based on covariance matrix estimation is a heuristic sampling method that lacks in theoretical guarantees. 

Note that the above overview of the literature on the heterogeneity problem is not comprehensive. For example, very recently, an approach to the heterogeneity problem based on normal mode analysis was proposed \cite{hemnma}. 

\subsection{Our contribution}

In this paper, we propose and analyze a covariance matrix estimator $\Sigma_n$ to solve the general statistical problem (Problem \ref{main_problem}), and then apply this estimator to the heterogeneity problem (Problem \ref{het_problem}).

Our covariance matrix estimator has several desirable properties. First, we prove that the estimator is consistent as $n\rightarrow \infty$ for fixed $p, \ q$. Second, our estimator does not require a prior distribution on the data, unlike MLE methods. Third, when the data have low intrinsic dimension, our method does not require knowing the rank of $\Sigma_0$ in advance. The rank can be estimated from the spectrum of the estimated covariance matrix. This sets our method apart from alternating minimization algorithms that search for the low rank matrix factors themselves. Fourth, our estimator is given in closed-form and its computation requires only a single linear inversion.

To implement our covariance matrix estimator in the cryo-EM case, we must invert a high-dimensional matrix $L_n$ (see definition (\ref{L_def})). The size of this matrix is so large that typically it cannot even be stored on a computer; thus, inverting $L_n$ is the greatest practical challenge we face. We consider two possibilities of addressing this challenge. In the primary approach we consider, we replace $L_n$ by its limiting operator $L$, which does not depend on the rotations $R_s$ and is a good approximation of $L_n$ as long as these rotations are distributed uniformly enough. We then carefully construct new bases for images and volumes to make $L$ a sparse, block diagonal matrix. While $L$ has dimensions on the order of $N_{\text{res}}^6 \times N_{\text{res}}^6$, this matrix has only $O(N_{\text{res}}^9)$ total nonzero entries in the bases we construct, where $N_{\text{res}}$ is the grid size corresponding to the target resolution. These innovations lead to a practical algorithm to estimate the covariance matrix in the heterogeneity problem. The second approach we consider is an iterative inversion of $L_n$, which has a low storage requirement and avoids the requirement of uniformly distributed rotations. We compare the complexities of these two methods, and find that each has its strengths and weaknesses. 

The limiting operator $L$ is a fundamental object in tomographic problems involving variability, and we call it the \textit{projection covariance transform}. The projection covariance transform relates the covariance matrix of the imaged object to data that can be acquired from the projection images. Standard weighted back-projection tomographic reconstruction algorithms involve application of the ramp filter to the data \cite{natterer}, and we find that the inversion of $L$ entails applying a similar filter, which we call the \textit{triangular area filter}. The triangular area filter has many of the same properties as the ramp filter, but reflects the slightly more intricate geometry of the covariance estimation problem. The projection covariance transform is an interesting mathematical object in its own right, and we begin studying it in this paper. 

Finally, we numerically validate the proposed algorithm (the first algorithm discussed above). We demonstrate this method's robustness to noise on synthetic datasets by obtaining a meaningful reconstruction of the covariance matrix and molecular volumes even at low SNR levels. Excluding precomputations (which can be done once and for all), reconstructions for 10000 projection images of size $65 \times 65$ pixels takes fewer than five minutes on a standard laptop computer.

The paper is organized as follows. In Section \ref{estimator}, we construct an estimator for Problem \ref{main_problem}, state theoretical results about this estimator, and connect our problem to high-dimensional PCA. In Section \ref{covariance_cryo_em}, we specialize the covariance estimator to the heterogeneity problem and investigate its geometry. In Section \ref{finding_conformations}, we discuss how to reconstruct the conformations once we have estimated the mean and covariance matrix. In Section \ref{practical}, we discuss computational aspects of the problem and construct a basis in which $L$ is block diagonal and sparse. In Section \ref{consequences}, we explore the complexity of the proposed approach. In Section \ref{num_results}, we present numerical results for the heterogeneity problem. We conclude with a discussion of future research directions in Section \ref{future}. Appendices \ref{appendix_mat_der}, \ref{analysis_proofs}, and \ref{simplifying_integral} contain calculations and proofs. 

\section{An estimator for Problem \ref{main_problem}} \label{estimator}

\subsection{Constructing an estimator} \label{sample_cov}

We define estimators $\mu_n$ and $\Sigma_n$ through a general optimization framework based on the model (\ref{stat_model}). As a first step, let us calculate the first- and second-order statistics of $\rdr I$, conditioned on the observed matrix $\sdr P_s$ for each $s$. Using the assumptions in Problem \ref{main_problem}, we find that
\begin{equation}
\mathbb E[\rdr I| \rdr P = \sdr P_s] = \mathbb E[\rdr P \rdr X + \rdr E | \rdr P = \sdr P_s] = \mathbb E[\rdr P |\rdr P = \sdr P_s] \mathbb E[\rdr X] = P_s \mu_0.
\label{mean}
\end{equation}
and
\begin{equation}
\Var[\rdr I | \rdr P = \sdr P_s] = \Var[\rdr P \rdr X | \rdr P = P_s] + \Var[\rdr E] = P_s \Sigma_0 P_s^H + \sigma^2 \text{I}_q.
\label{cov}
\end{equation}
Note that $P_s^H$ denotes the conjugate transpose of $P_s$.

Based on (\ref{mean}) and (\ref{cov}), we devise least-squares optimization problems for $\mu_n$ and $\Sigma_n$:
\begin{equation}
\mu_n = \underset{\mu}{\operatorname{argmin}} \frac1n\sum_{s = 1}^n \norm{I_s - P_s \mu}^2;
\label{muopt}
\end{equation}
\begin{equation}
\Sigma_n= \underset{\Sigma}{\operatorname{argmin}} \frac1n\sum_{s = 1}^n \norm{(I_s - P_s \mu_n)(I_s - P_s\mu_n)^H - (P_s \Sigma P_s^H + \sigma^2 \text{I}_p)}_F^2.
\label{sigopt}
\end{equation}
Here we use the Frobenius norm, which is defined by $\norm{A}_F^2 = \sum_{i, j}|A_{ij}|^2$. 

Note that these optimization problems do not encode any prior knowledge about $\mu_0$ or $\Sigma_0$. Since $\Sigma_0$ is a covariance matrix, it must be positive semidefinite (PSD). As discussed above, in many applications $\Sigma_0$ is also low rank. The estimator $\Sigma_n$ need not satisfy either of these properties. Thus, regularization of (\ref{sigopt}) is an option worth exploring. Nevertheless, here we only consider the unregularized estimator $\Sigma_n$. Note that in most practical problems, we only are interested in the leading eigenvectors of $\Sigma_n$, and if these are estimated accurately, then it does not matter if $\Sigma_n$ is not PSD or low rank. Our numerical experiments show that in practice, the top eigenvectors of $\Sigma_n$ are indeed good estimates of the true principal components for high enough SNR.


Note that we first solve (\ref{muopt}) for $\mu_n$, and then use this result in (\ref{sigopt}). This makes these optimization problems quadratic in the elements of $\mu$ and $\Sigma$, and hence they can be solved by setting the derivatives with respect to $\mu$ and $\Sigma$ to zero. This leads to the following equations for $\mu_n$ and $\Sigma_n$ (see Appendix \ref{appendix_mat_der} for the derivative calculations):
\begin{equation}
\frac 1n\left(\sum_{s = 1}^n P_s^H P_s\right)\mu_n = \frac1n\sum_{s = 1}^n P_s^H I_s =: b_n;
\label{mueq}
\end{equation}
\begin{equation}
\frac1n\sum_{s = 1}^n P_s^H P_s \Sigma_n P_s^H P_s = \frac1n\sum_{s = 1}^n P_s^H(I_s - P_s\mu_n)(I_s - P_s\mu_n)^HP_s - \sigma^2 \frac1n\sum_{s = 1}^n P_s^H P_s =: B_n.
\label{sigeq}
\end{equation}
When $p = q$ and $\rdr P = \text{I}_p$, $\mu_n$ and $\Sigma_n$ reduce to the sample mean and sample covariance matrix. When $\rdr P$ is a coordinate-selection operator (recall the discussion following the statement of Problem \ref{main_problem}), (\ref{mueq}) estimates the mean by averaging all the available observations for each coordinate, and (\ref{sigeq}) estimates each entry of the covariance matrix by averaging over all samples for which both coordinates are observed. These are exactly the available-case estimators discussed in \cite[Section 3.4]{little_rubin}. 

Observe that (\ref{mueq}) requires inversion of the matrix
\begin{equation}
A_n = \frac1n\sum_{s=1}^n P_s^H P_s,
\label{A_def}
\end{equation}
and (\ref{sigeq}) requires inversion of the linear operator $L_n: \bc^{p \times p} \rightarrow \bc^{p \times p}$ defined by
\begin{equation}
L_n(\Sigma) = \frac1n\sum_{s = 1}^n P_s^H P_s \Sigma P_s^H P_s.
\label{L_def}
\end{equation}

Since $P_s$ are drawn independently from $\rdr P$, the law of large numbers implies that
\begin{equation}
A_n \rightarrow A \quad \text{and} \quad L_n \rightarrow L \quad \text{almost surely},
\label{as_convergence}
\end{equation}
where the convergence is in the operator norm, and
\begin{equation}
A = \mathbb E[\rdr P^H \rdr P] \quad \text{and} \quad L(\Sigma) = \mathbb E[\rdr P^H \rdr P \Sigma \rdr P^H \rdr P].
\label{as_convergence2}
\end{equation}
 The invertibilities of $A$ and $L$ depend on the distribution of $\rdr P$. Intuitively, if $\rdr P$ has a nonzero probability of ``selecting" any coordinate of its argument, then $A$ will be invertible. If $\rdr P$ has a nonzero probability of ``selecting" any pair of coordinates of its argument, then $L$ will be invertible. In this paper, we assume that $A$ and $L$ are invertible. In particular, we will find that in the cryo-EM case, $A$ and $L$ are invertible if, for example, the rotations are sampled uniformly from $SO(3)$. Under this assumption, we will prove that $A_n$ and $L_n$ are invertible with high probability for sufficiently large $n$. In the case when $A_n$ or $L_n$ are not invertible, we cannot define estimators from the above equations, so we simply set them to zero. Since the RHS quantities $b_n$ and $B_n$ are noisy, it is also not desirable to invert $A_n$ or $L_n$ when these matrices are nearly singular. Hence, we propose the following estimators:
\begin{equation}
\mu_n = 
\begin{cases}
A_n^{-1} b_n \quad &\text{if } \norm{A_n^{-1}} \leq 2\norm{A^{-1}} \\
0 \quad &\text{otherwise};
\end{cases} 
\quad
\Sigma_n = 
\begin{cases}
L_n^{-1}(B_n) \quad &\text{if } \norm{L_n^{-1}} \leq 2\norm{L^{-1}} \\
0 \quad &\text{otherwise}.
\end{cases} 
\end{equation}
The factors of 2 are somewhat arbitrary; any $\alpha > 1$ would do.

Let us make a few observations about $A_n$ and $L_n$. By inspection, $A_n$ is symmetric and positive semidefinite. We claim that $L_n$ satisfies the same properties, with respect to the Hilbert space $\bc^{p \times p}$ equipped with the inner product $\ip{A}{B} = \text{tr}(B^H A)$. Using the property $\text{tr}(AB) = \text{tr}(BA)$, we find that for any $\Sigma_1, \Sigma_2$,
\begin{equation}
\begin{split}
\ip{L_n(\Sigma_1)}{\Sigma_2} &= \text{tr}(\Sigma_2^H L_n(\Sigma_1)) = \text{tr}\left[\frac1n\sum_s\Sigma_2 ^HP_s^H P_s\Sigma_1P_s^H P_s\right] \\
&= \text{tr}\left[\frac1n\sum_sP_s^H P_s\Sigma_2^HP_s^H P_s\Sigma_1\right] = \ip{\Sigma_1}{L(\Sigma_2)}.
\end{split}
\end{equation}
Thus, $L_n$ is self-adjoint. Next, we claim that $L_n$ is positive semidefinite. Indeed,
\begin{equation}
\begin{split}
\ip{L_n(\Sigma)}{\Sigma} &= \text{tr}(\Sigma^H L_n(\Sigma)) = \text{tr}\left[\frac1n\sum_s\Sigma^H P_s^H P_s\Sigma P_s^H P_s\right] \\
&= \frac1n\sum_s \text{tr}[(P_s\Sigma P_s^H)^H(P_s\Sigma P_s^H)] = \sum_s\frac1n\norm{P_s\Sigma P_s^H}_F^2 \geq 0.
\end{split}
\end{equation}

\subsection{Consistency of $\mu_n$ and $\Sigma_n$} \label{theoretical}

In this section, we state that under mild conditions on $\rdr P, \rdr X, \rdr E$, the estimators $\mu_n$ and $\Sigma_n$ are consistent. Note that here, and throughout this paper, $\norm{\cdot}$ will denote the Euclidean norm for vectors and the operator norm for matrices. Also, define
\begin{equation}
\moment{\rdr Y}_j = \mathbb E[\norm{\rdr Y - \mathbb E[\rdr Y]}^j]^{1/j},
\end{equation}
where $\rdr Y$ is a random vector. 

\begin{proposition}\label{mu_main}
Suppose $A$ (defined in (\ref{as_convergence2})) is invertible, that $\norm{\rdr P}$ is bounded almost surely, and that $\moment{\rdr X}_2, \moment{\rdr E}_2 < \infty$. Then, for fixed $p, q$ we have 
\begin{equation}
\mathbb E\norm{\rdr \mu_n - \mu_0}= O\left(\frac 1 {\sqrt n}\right).
\end{equation}
Hence, under these assumptions, $\mu_n$ is consistent.
\end{proposition}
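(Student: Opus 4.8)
\emph{Proof proposal.} The plan is to compare $\mu_n$ with the ``oracle'' quantity $A^{-1}b_n$ that uses the true limiting matrix $A$ in place of $A_n$, and to condition on whether the truncation in the definition of $\mu_n$ is triggered. Set
\[
\mathcal{E}_n = \left\{\norm{A_n^{-1}} \leq 2\norm{A^{-1}}\right\},
\]
so that $\mu_n = A_n^{-1} b_n$ on $\mathcal{E}_n$, while $\mu_n = 0$ (hence $\norm{\mu_n - \mu_0} = \norm{\mu_0}$) on $\mathcal{E}_n^c$. Accordingly I would split $\mathbb E\norm{\mu_n - \mu_0} = \mathbb E\big[\norm{\mu_n - \mu_0}\,\mathbf 1_{\mathcal{E}_n}\big] + \norm{\mu_0}\,\mathbb P(\mathcal{E}_n^c)$ and estimate the two terms separately. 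Throughout, let $M$ denote an almost sure bound for $\norm{\rdr P}$.

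On $\mathcal{E}_n$, the starting point is the identity $\mathbb E[b_n] = A\mu_0$, which follows from $\rdr I = \rdr P\rdr X + \rdr E$ and the independence of $\rdr P, \rdr X, \rdr E$. Then, on $\mathcal E_n$,
\[
\mu_n - \mu_0 = A_n^{-1}(b_n - A_n\mu_0) = A_n^{-1}\big[(b_n - \mathbb E b_n) + (A - A_n)\mu_0\big],
\]
so that $\norm{\mu_n - \mu_0} \leq 2\norm{A^{-1}}\big(\norm{b_n - \mathbb E b_n} + \norm{\mu_0}\,\norm{A_n - A}\big)$. Both $b_n - \mathbb E b_n$ and $A_n - A$ are averages of $n$ i.i.d.\ mean-zero terms ($P_s^H I_s - A\mu_0$ and $P_s^H P_s - A$, respectively). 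The hypotheses --- $\norm{\rdr P}\leq M$ a.s.\ and $\moment{\rdr X}_2, \moment{\rdr E}_2 < \infty$ --- guarantee that each such term has finite second moment; for instance $\mathbb E\norm{\rdr P^H\rdr P\rdr X}^2 \leq M^4\,\mathbb E\norm{\rdr X}^2 \leq M^4\big(2\moment{\rdr X}_2^2 + 2\norm{\mu_0}^2\big)$, and analogously for the $\rdr E$ contribution. Hence $\mathbb E\norm{b_n - \mathbb E b_n}^2 = O(1/n)$ and $\mathbb E\norm{A_n - A}_F^2 = O(1/n)$, and Cauchy--Schwarz upgrades these to $\mathbb E\norm{b_n - \mathbb E b_n} = O(1/\sqrt n)$ and $\mathbb E\norm{A_n - A} = O(1/\sqrt n)$ (using $\norm{\cdot}\leq\norm{\cdot}_F$). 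Taking expectations yields $\mathbb E\big[\norm{\mu_n - \mu_0}\,\mathbf 1_{\mathcal E_n}\big] = O(1/\sqrt n)$.

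For the complementary event, I would show $\mathcal E_n^c \subseteq \big\{\norm{A_n - A} > \tfrac{1}{2\norm{A^{-1}}}\big\}$: if $\norm{A_n - A} \leq \tfrac{1}{2\norm{A^{-1}}}$, a Neumann-series argument applied to $A_n = A\big(\mathrm{I} + A^{-1}(A_n - A)\big)$ shows $A_n$ is invertible with $\norm{A_n^{-1}} \leq \norm{A^{-1}}/\big(1 - \norm{A^{-1}}\norm{A_n - A}\big) \leq 2\norm{A^{-1}}$. Then Chebyshev's inequality together with $\mathbb E\norm{A_n - A}^2 \leq \mathbb E\norm{A_n - A}_F^2 = O(1/n)$ gives $\mathbb P(\mathcal E_n^c) = O(1/n)$, so $\norm{\mu_0}\,\mathbb P(\mathcal E_n^c) = O(1/n) = O(1/\sqrt n)$. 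Adding the two contributions proves $\mathbb E\norm{\mu_n - \mu_0} = O(1/\sqrt n)$, whence consistency.

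The only genuine subtlety is that $A_n^{-1}$ is itself random and statistically dependent on $b_n$, so one cannot take expectations naively through the product $A_n^{-1}b_n$; the decomposition above sidesteps this by bounding $\norm{A_n^{-1}}$ deterministically on $\mathcal E_n$ and shifting all remaining randomness onto the centered averages $b_n - \mathbb E b_n$ and $A_n - A$, whose $L^2$ fluctuations are $O(1/\sqrt n)$. The bookkeeping that these averages have finite second moments --- the step where the moment hypotheses on $\rdr P, \rdr X, \rdr E$ are actually used --- is routine but unavoidable. (A matrix Bernstein bound would give an exponentially small $\mathbb P(\mathcal E_n^c)$, but the crude Chebyshev estimate already suffices for the stated rate.)
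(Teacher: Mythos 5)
Your proof is correct and follows essentially the same route as the paper's (Proposition \ref{mu_appendix}): condition on the event $\norm{A_n^{-1}}\leq 2\norm{A^{-1}}$, bound $A_n^{-1}$ deterministically there, and control the remaining i.i.d.\ fluctuations in $L^2$ at rate $O(1/\sqrt n)$. The only cosmetic differences are that the paper recognizes $b_n - A_n\mu_0 = \frac1n\sum_s P_s^H(I_s - P_s\mu_0)$ directly as a single centered average (avoiding your extra split into $b_n - \mathbb E b_n$ and $(A-A_n)\mu_0$) and bounds the bad event via matrix Bernstein rather than Chebyshev; both variants are valid and yield the same rate.
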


\begin{proposition} \label{Sigma_main}
Suppose $A$ and $L$ (defined in (\ref{as_convergence2})) are invertible, that $\norm{\rdr P}$ is bounded almost surely, and that there is a polynomial $Q$ for which
\begin{equation}
\moment{\rdr X}_j, \moment{\rdr E}_j \leq Q(j), \quad j \in \mathbb N.
\label{moment_condition}
\end{equation}
Then, for fixed $p, q$, we have
\begin{equation}
\mathbb E\norm{\rdr \Sigma_n  - \Sigma_0} = O\left(\frac{Q(\log n)}{\sqrt n}\right).
\end{equation}
Hence, under these assumptions, $\Sigma_n$ is consistent.
\end{proposition}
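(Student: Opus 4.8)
The plan is to trace the error of $\Sigma_n$ back to the discrepancy between the data-dependent right-hand side $B_n$ of (\ref{sigeq}) and $L_n(\Sigma_0)$, which is small because $B_n$ is, up to a $\mu_n$-correction, an average of i.i.d.\ matrices whose conditional mean is $L_n(\Sigma_0)$. Introduce the good event
\[
\mathcal G_n = \bigl\{\norm{A_n^{-1}} \le 2\norm{A^{-1}}\bigr\} \cap \bigl\{\norm{L_n^{-1}} \le 2\norm{L^{-1}}\bigr\},
\]
on which $\mu_n$ and $\Sigma_n$ are the genuine least-squares solutions, so that $\Sigma_n - \Sigma_0 = L_n^{-1}\bigl(B_n - L_n(\Sigma_0)\bigr)$ and hence $\norm{\Sigma_n - \Sigma_0}\,\mathbf 1_{\mathcal G_n} \le 2\norm{L^{-1}}\,\norm{B_n - L_n(\Sigma_0)}$. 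On the complement $\Sigma_n = 0$, so the error equals the constant $\norm{\Sigma_0}$; but since $\norm{\rdr P}$ is bounded almost surely, the i.i.d.\ summands defining $A_n$ and $L_n$ in (\ref{A_def})--(\ref{L_def}) are bounded, so a matrix Hoeffding/Bernstein bound combined with the Neumann-series perturbation estimate gives $\mathbb P(\mathcal G_n^c) \le C e^{-cn}$ for fixed $p,q$, and the off-$\mathcal G_n$ contribution to $\mathbb E\norm{\Sigma_n - \Sigma_0}$ is exponentially small. Everything thus reduces to bounding $\mathbb E\bigl[\norm{B_n - L_n(\Sigma_0)}\,\mathbf 1_{\mathcal G_n}\bigr]$.

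Next I would decompose this remainder. Set $Z_s := I_s - P_s\mu_0$; by (\ref{cov}), $\mathbb E[Z_s Z_s^H \mid P_s] = P_s \Sigma_0 P_s^H + \sigma^2\text{I}_q$. Substituting $I_s - P_s\mu_n = Z_s - P_s(\mu_n - \mu_0)$ into the definition of $B_n$ and expanding gives $B_n - L_n(\Sigma_0) = T_1 + T_2$, where
\[
T_1 = \frac1n\sum_{s=1}^n\Bigl( P_s^H Z_s Z_s^H P_s - P_s^H(P_s\Sigma_0 P_s^H + \sigma^2\text{I}_q)P_s \Bigr)
\]
is an average of i.i.d.\ mean-zero matrices, and $T_2$ collects the cross terms in $\mu_n - \mu_0$, satisfying $\norm{T_2} \le c\bigl(\norm{\mu_n - \mu_0}\max_{s\le n}\norm{Z_s} + \norm{\mu_n - \mu_0}^2\bigr)$ by the a.s.\ bound on $\norm{\rdr P}$. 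For $T_1$ the Frobenius norm dominates the operator norm and the centered i.i.d.\ increments are orthogonal in $\norm{\cdot}_F$, so $\mathbb E\norm{T_1} \le (\mathbb E\norm{T_1}_F^2)^{1/2} = O(n^{-1/2})$, the per-term variance being finite because $\mathbb E\norm{Z_1}^4 < \infty$, which follows from $\moment{\rdr X}_4, \moment{\rdr E}_4 \le Q(4)$ and the bound on $\norm{\rdr P}$. For $\norm{\mu_n - \mu_0}$ I would invoke Proposition \ref{mu_main}, together with its $L^2$ refinement $\mathbb E\norm{\mu_n - \mu_0}^2 = O(1/n)$, which is obtained by the same argument since on $\mathcal G_n$ one has $\mu_n - \mu_0 = A_n^{-1}\,\tfrac1n\sum_s P_s^H Z_s$, a stably-inverted matrix applied to an average of i.i.d.\ mean-zero vectors of finite second moment (these moments being controlled by (\ref{moment_condition})).

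The one genuinely delicate point --- and the source of the factor $Q(\log n)$ --- is the term $\mathbb E\bigl[\norm{\mu_n - \mu_0}\,\max_{s\le n}\norm{Z_s}\bigr]$ in the bound for $T_2$: the two factors are dependent, since $\mu_n$ is a function of all the $Z_s$, so their expectations cannot simply be multiplied, and I only have polynomially-growing control of the moments of the $Z_s$. I would handle this by truncation at the level $t_n \asymp Q(\log n)$, splitting the expectation as $t_n\,\mathbb E\norm{\mu_n - \mu_0} + \mathbb E\bigl[\norm{\mu_n - \mu_0}\,\max_s\norm{Z_s}\,\mathbf 1_{\{\max_s\norm{Z_s} > t_n\}}\bigr]$. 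The first piece is $O(Q(\log n)/\sqrt n)$ by Proposition \ref{mu_main}. For the second, a union bound and Markov's inequality applied to the $\lceil\log n\rceil$-th moment give $\mathbb P(\max_{s\le n}\norm{Z_s} > t_n) \le n\bigl(cQ(\lceil\log n\rceil)/t_n\bigr)^{\lceil\log n\rceil}$, which is $o(n^{-k})$ for every $k$ once $t_n \asymp Q(\log n)$ --- this is precisely the moment order $j\approx\log n$ that minimizes $n^{1/j}Q(j)$ --- so, combined via Cauchy--Schwarz with the fact that $\norm{\mu_n - \mu_0}$ and $\max_s\norm{Z_s}$ have all moments bounded polynomially in the moment order, this residual is negligible. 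Assumption (\ref{moment_condition}) is used exactly here, and it is what pins the final rate to $Q(\log n)/\sqrt n$. Assembling the three contributions (the $\mathbf 1_{\mathcal G_n^c}$-term, $T_1$, and $T_2$) yields $\mathbb E\norm{\Sigma_n - \Sigma_0} = O(Q(\log n)/\sqrt n)$, which is $o(1)$ since $Q$ is a polynomial, establishing consistency. I expect this truncation bookkeeping --- reconciling the dependence of $\mu_n$ on the entire sample with only polynomial tail control --- to be the main obstacle; the $T_1$ estimate and the $\mathcal G_n^c$ estimate are routine law-of-large-numbers and concentration arguments.
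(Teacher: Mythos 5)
Your proposal is correct, and its skeleton (restrict to the good event via matrix Bernstein, reduce to bounding $\norm{B_n - L_n\Sigma_0}$, split that into an i.i.d.\ fluctuation plus a $\mu_n$-correction) matches the paper's proof in Appendix \ref{analysis_proofs}. Where you diverge is in how the two pieces are estimated, and the divergence is substantive. The paper splits your $T_1$ further into $D_2 + D_3 + D_4$ and controls the dominant piece $D_2$ (the empirical second moment of $\rdr V = \rdr P^H(\rdr I - \rdr P\mu_0)$ minus its expectation) via a Rudelson-type lemma, which is exactly where its factor $\left(\mathbb E\norm{\rdr V}^{\log n}\right)^{1/\log n} \lesssim Q(\log n)$ enters; you instead bound all of $T_1$ by a single elementary variance computation in Frobenius norm, which needs only fourth moments and yields a clean $O(1/\sqrt n)$ with no logarithm. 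Conversely, for the cross term the paper applies Cauchy--Schwarz termwise, $\mathbb E[\norm{\rdr Z_s}\norm{\mu_0 - \rdr \mu_n}] \leq \moment{}{}\mathbb E[\norm{\rdr Z_s}^2]^{1/2}\mathbb E[\norm{\mu_0-\rdr\mu_n}^2]^{1/2}$, using only second moments and the $L^2$ bound (\ref{useful_later}), so its $D_1$ is $O(1/\sqrt n)$ with no truncation needed; your replacement of $\frac1n\sum_s\norm{Z_s}$ by $\max_s\norm{Z_s}$ is what forces the truncation at level $Q(\log n)$ and makes that term, rather than the law-of-large-numbers term, the source of your logarithmic factor. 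Both routes deliver the stated rate for fixed $p, q$ (indeed, combining your $T_1$ bound with the paper's $D_1$ bound would remove the $Q(\log n)$ entirely in the fixed-dimension regime; the paper's heavier machinery pays off only in its explicit dependence on $p$, $q$, $B_P$). The one simplification you could make is to note that the cross term admits the direct Cauchy--Schwarz treatment, which renders your truncation bookkeeping unnecessary.
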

\begin{remark}
The moment growth condition (\ref{moment_condition}) on $\rdr X$ and $\rdr E$ is not very restrictive. For example, bounded, subgaussian, and subexponential random vectors all satisfy (\ref{moment_condition}) with $\deg Q \leq 1$ (see \cite[Sections 5.2 and 5.3]{vershynin}).
\end{remark}

See Appendix \ref{analysis_proofs} for the proofs of Propositions (\ref{mu_main}) and (\ref{Sigma_main}). We mentioned that $\mu_n$ and $\Sigma_n$ are generalizations of available-case estimators. Such estimators are known to be consistent when the data are \textit{missing completely at random} (MCAR). This means that the pattern of missingness is independent of the (observed and unobserved) data. Accordingly, in Problem \ref{main_problem}, we assume that $\rdr P$ and $\rdr X$ are independent, a generalization of the MCAR condition. The above propositions state that the consistency of $\mu_n$ and $\Sigma_n$ also generalizes to Problem \ref{main_problem}.

\subsection{Connection to high-dimensional PCA} \label{rmt}

While the previous section focused on the ``fixed $p$, large $n$" regime, in practice both $p$ and $n$ are large. Now, we consider the latter regime, which is common in modern high-dimensional statistics. In this regime, we consider the properties of the estimator $\Sigma_n$ when $\Sigma_0$ is low rank, and the task is to find its leading eigenvectors. What is the relationship between the spectra of $\Sigma_n$ and $\Sigma_0$? Can the rank of $\Sigma_0$ be deduced from that of $\Sigma_n$? To what extent do the leading eigenvectors of $\Sigma_n$ approximate those of $\Sigma_0$? In the setting of (\ref{stat_model}) when $\rdr P = \text{I}_p$, the theory of high-dimensional PCA provides insight into such properties of the sample covariance matrix (and thus of $\Sigma_n$). In particular, an existing result gives the correlation between the top eigenvectors of $\Sigma_n$ and $\Sigma_0$ for given settings of SNR and $p/n$. It follows from this result that if the SNR is sufficiently high compared to $\sqrt{p/n}$, then the top eigenvector of $\Sigma_n$ is a useful approximation of the top eigenvector of $\Sigma_0$. If generalized to the case of nontrivial $\rdr P$, this result would be a useful guide for using the estimator $\Sigma_n$ to solve practical problems, such as Problem \ref{het_problem}. In this section, we first discuss the existing high-dimensional PCA literature, and then raise some open questions about how these results generalize to the case of nontrivial $\rdr P$.

Given i.i.d. samples $I_1, \dots, I_n \in \br^p$ from a centered distribution $\rdr I$ with covariance matrix $\tilde \Sigma_0$ (called the population covariance matrix), the sample covariance matrix $\tilde \Sigma_n$ is defined by
\begin{equation}
\tilde \Sigma_n =  \frac{1}{n}\sum_{s = 1}^n I_s I_s^H.
\label{sample_covariance}
\end{equation}
We use the new tilde notation because in the context of Problem \ref{main_problem}, $\tilde \Sigma_0$ is the signal-plus-noise covariance matrix, as opposed to the covariance of the signal itself. High-dimensional PCA is the study of the spectrum of $\tilde \Sigma_n$ for various distributions of $\rdr I$ in the regime where $n, p \rightarrow \infty$ with $p/n \rightarrow \gamma$.

The first case to consider is $\rdr X = 0$, i.e., $\rdr I = \rdr E$, where $\rdr E \sim \mathcal N(0, \sigma^2 \text{I}_p)$. In a landmark paper, Mar\u{c}enko and Pastur \cite{MP} proved that the spectrum of $\tilde \Sigma_n$ converges to \textit{the Mar\u{c}enko-Pastur (MP) distribution}, which is parameterized by $\gamma$ and $\sigma^2$:
\begin{equation}
MP(x) = \frac{1}{2\pi \sigma^2}\frac{\sqrt{(\gamma_+ - x)(x - \gamma_-)}}{\gamma x} 1_{[\gamma_-, \gamma_+]}, \quad \gamma_{\pm} = \sigma^2(1 \pm \sqrt{\gamma})^2.
\label{MP_law}
\end{equation}
The above formula assumes $\gamma \leq 1$; a similar formula governs the case $\gamma > 1$. Note that there are much more general statements about classes of $\rdr I$ for which this convergence holds; see e.g., \cite{silverstein_bai}. See Figure \ref{fig:MP} for MP distributions with a few different parameter settings.

\begin{figure}
\centering
        \begin{subfigure}[b]{0.45\textwidth}
                \centering
	\includegraphics[scale = 0.1]{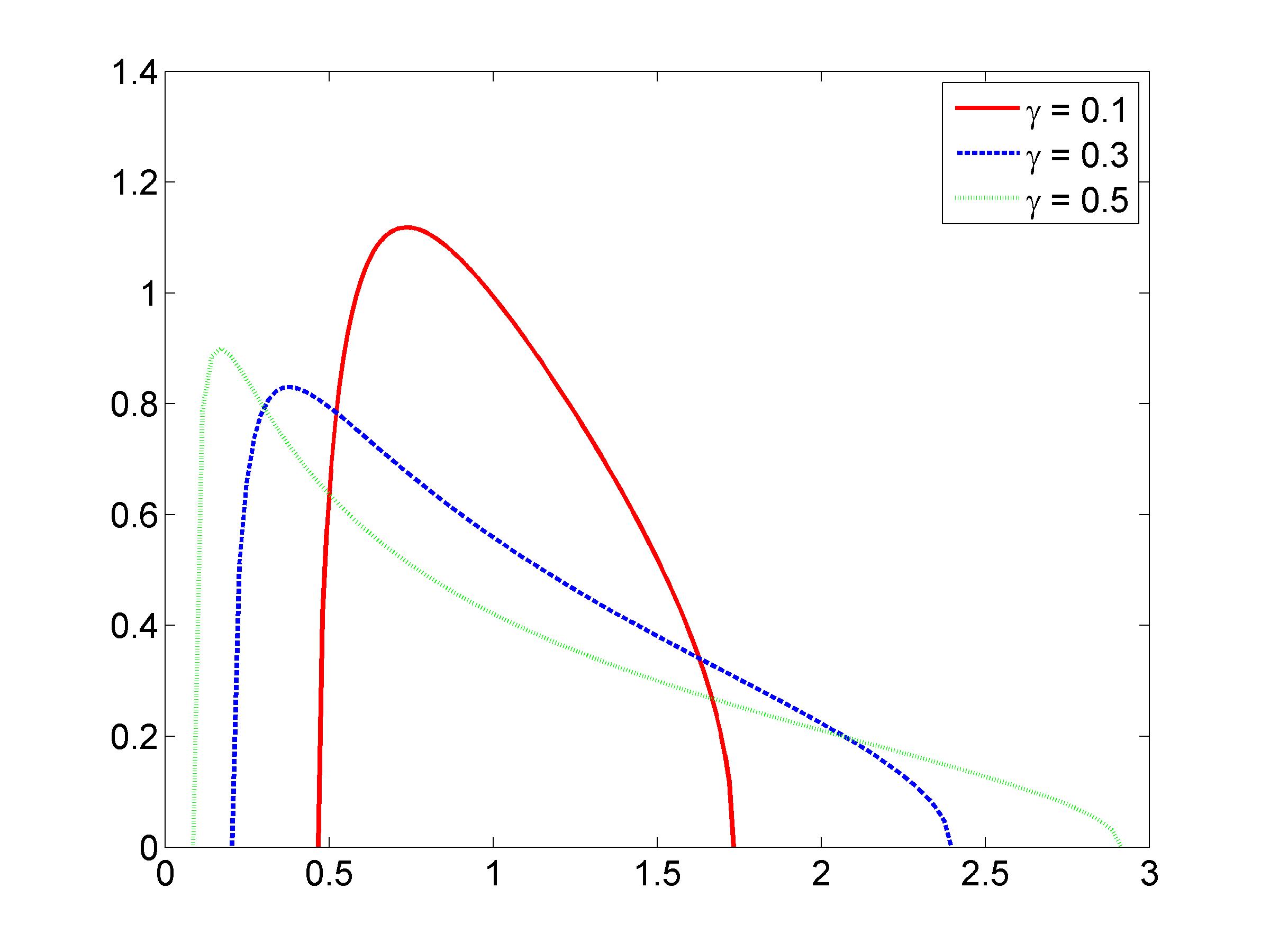}
	\caption{MP distributions (\ref{MP_law}) for $\sigma^2 = 1$}
\label{fig:MP}
        \end{subfigure}
 \begin{subfigure}[b]{0.45\textwidth}
                \centering
\includegraphics[scale = 0.1]{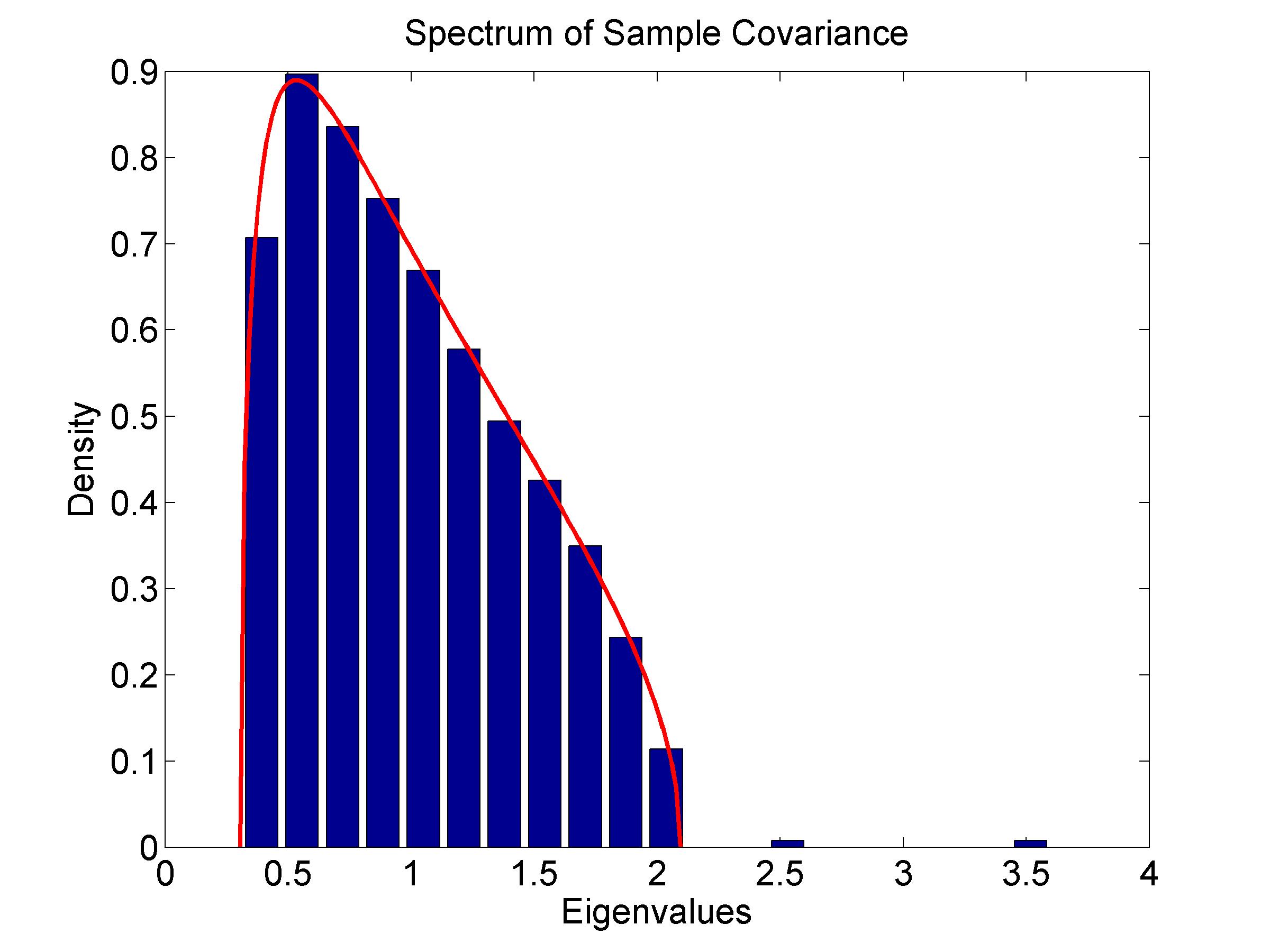}
	\caption{Empirical spectrum for spiked model}
\label{fig:spike}
        \end{subfigure}
\caption{Illustrations of high-dimensional PCA}
\end{figure}

Johnstone \cite{high_dim_pca} took this analysis a step further and considered the limiting distribution of the largest eigenvalue of $\tilde \Sigma_n$. He showed that the distribution of this eigenvalue converges to the Tracy-Widom distribution centered on the right edge of the MP spectrum. In the same paper, Johnstone considered the \textit{spiked covariance model}, in which 
\begin{equation}
\rdr I = \rdr X + \rdr E,
\label{signal_plus_noise}
\end{equation}
where $\rdr E$ is as before and $\Sigma_0 = \text{Var}[\rdr X] = \text{diag}(\tau_1^2, \dots, \tau_r^2, 0, \dots, 0)$, so that the population covariance matrix is $\tilde \Sigma_0 = \text{diag}(\tau_1^2 + \sigma^2, \dots, \tau_r^2 + \sigma^2, \sigma^2, \dots, \sigma^2)$. Here, $\rdr X$ is the signal and $\rdr E$ is the noise. In this view, the goal is to accurately recover the top $r$ eigenvectors, as these will determine the subspace on which $\rdr X$ is supported. The question then is the following: for what values of $\tau_1, \dots, \tau_r$ will the top $r$ eigenvectors of the sample covariance matrix be good approximations to the top eigenvectors of the population covariance? Since we might not know the value of $r$ a-priori, it is important to first determine for what values of $\tau_1, \dots, \tau_r$ we can detect the presence of ``spiked" population eigenvalues. In \cite{baik_spike}, the spectrum of the sample covariance matrix in the spiked model was investigated. It was found that the bulk of the distribution still obeys the MP law, whereas for each $k$ such that
\begin{equation}
\frac{\tau_k^2}{\sigma^2} \geq \sqrt{\gamma},
\label{threshold}
\end{equation}
 the sample covariance matrix will have an eigenvalue tending to $(\tau_k^2 + \sigma^2)(1 + \frac{\sigma^2}{\tau_k^2}\gamma)$. The signal eigenvalues below this threshold tend to the right edge of the noise distribution. Thus, (\ref{threshold}) defines a criterion for detection of signal. In Figure \ref{fig:spike}, we illustrate these results with a numerical example. We choose $p = 800, \ n = 4000$, and a spectrum corresponding to $r = 3$, with $\tau_1, \tau_2$ above, but $\tau_3$ below, the threshold corresponding to $\gamma = p/n = 0.2$. Figure \ref{fig:spike} is a normalized histogram of the eigenvalues of the sample covariance matrix. The predicted MP distribution for the bulk is superimposed. We see that indeed we have two eigenvalues separated from this bulk. Moreover, the eigenvalue of $\tilde \Sigma_n$ corresponding to $\tau_3$ does not pop out of the noise distribution.

It is also important to compare the top eigenvectors of the sample and population covariance matrices. Considering the simpler case of a spiked model with $r = 1$, \cite{bbp, finite_sample_pca} showed a ``phase transition" effect: as long as $\tau_1$ is above the threshold in (\ref{threshold}), the correlation of the top eigenvector ($V_{\text{PCA}})$ with the true principal component ($V$) tends to a limit between 0 and 1:
\begin{equation}
|\ip{V_{\text{PCA}}}{V}|^2 \rightarrow \frac{\frac{1}{\gamma}\frac{\tau_1^4}{\sigma^4} - 1}{\frac{1}{\gamma}\frac{\tau_1^4}{\sigma^4}  + \frac{\tau_1^2}{\sigma^2}}.
\label{correlation}
\end{equation}
Otherwise, the limiting correlation is zero. Thus, high-dimensional PCA is inconsistent. However, if $\tau_1^2/\sigma^2$ is sufficiently high compared to $\sqrt{\gamma}$, then the top eigenvector of the sample covariance matrix is still a useful approximation.

While all the statements made so far have concerned the limiting case $n, p \rightarrow \infty$, similar (but slightly more complicated) statements hold for finite $n, p$ as well (see, e.g., \cite{finite_sample_pca}). Thus, (\ref{threshold}) has a practical interpretation. Again considering the case $r = 1$, note that the quantity $\tau_1^2/\sigma^2$ is the SNR. When faced with a problem of the form (\ref{signal_plus_noise}) with a given $p$ and SNR, one can determine how many samples one needs in order to detect the signal. If $V$ represents a spatial object as in the cryo-EM case, then $p$ can reflect the resolution to which we reconstruct $V$. Hence, if we have a dataset with a certain number of images $n$ and a certain estimated SNR, then (\ref{threshold}) determines the resolution to which $V$ can be reconstructed from the data. 

This information is important to practitioners (e.g., in cryo-EM), but as of now, the above theoretical results only apply to the case when $\rdr P$ is trivial. Of course, moving to the case of more general $\rdr P$ brings additional theoretical challenges. For example, with nontrivial $\rdr P$, the empirical covariance matrix of $\rdr X$ is harder to disentangle from that of $\rdr I$, because the operator $L_n$ becomes nontrivial (see (\ref{sigeq}) and (\ref{L_def})). How can our knowledge about the spiked model be generalized to the setting of Problem \ref{main_problem}? We raise some open questions along these lines. 

\begin{enumerate}
\item In what high-dimensional parameter regimes (in terms of $n, p, q$) is there hope to detect and recover any signal from $\Sigma_n$? With the addition of the parameter $q$, the traditional regime $p \approx n$ might no longer be appropriate. For example, in the random coordinate selection case with the (extreme) parameter setting $q = 2$, it is expected that $n = p^2 \log p$ samples are needed just for $L_n$ to be invertible (by the coupon collector problem).
\item In the case when there is no signal ($\rdr X$ = 0), we have $\rdr I = \rdr E$. In this case, what is the limiting eigenvalue distribution of $\Sigma_n$ (in an appropriate parameter regime)? Is it still the MP law? How does the eigenvalue distribution depend on the distribution of $\rdr P$? This is perhaps the first step towards studying the signal-plus-noise model.
\item In the no-signal case, what is the limiting distribution of the largest eigenvalue of $\Sigma_n$? Is it still Tracy-Widom? How does this depend on $n, \ p,\ q$, and $\rdr P$? Knowing this distribution can provide p-values for signal detection, as is the case for the usual spiked model (see \cite[p. 303]{high_dim_pca}).
\item In the full model (\ref{stat_model}), if $\rdr X$ takes values in a low-dimensional subspace of $\br^p$, is the limiting eigenvalue distribution of $\Sigma_n$ a bulk distribution with a few separated eigenvalues? If so, what is the generalization of the SNR condition (\ref{threshold}) that would guarantee separation of the top eigenvalues? What would these top eigenvalues be, in terms of the population eigenvalues? Would there still be a phase-transition phenomenon in which the top eigenvectors of $\Sigma_n$ are correlated with the principal components as long as the corresponding eigenvalues are above a threshold?
\end{enumerate}

Answering these questions theoretically would require tools from random matrix theory such as the ones used by \cite{high_dim_pca, baik_spike, finite_sample_pca}. We do not attempt to address these issues in this paper, but remark that such results would be very useful theoretical guides for practical applications of our estimator $\Sigma_n$. Our numerical results show that the spectrum of the cryo-EM estimator $\Sigma_n$ has qualitative behavior similar to that of the sample covariance matrix.

At this point, we have concluded the part of our paper focused on the general properties of the estimator $\Sigma_n$. Next, we move on to the cryo-EM heterogeneity problem.

\section{Covariance estimation in cryo-EM heterogeneity problem} \label{covariance_cryo_em}

Now that we have examined the general covariance matrix estimation problem, let us specialize to the cryo-EM case. In this case, the matrices $\sdr P$ have a specific form: they are finite-dimensional versions of $\scr P$ (defined in (\ref{P_s})). We begin by describing the Fourier-domain counterpart of $\scr P$, which will be crucial in analyzing the cryo-EM covariance estimation problem. Our Fourier transform convention is
\begin{equation}
\hat f(\xi) = \int_{\br^d} f(x)e^{-ix\cdot \xi}dx; \quad f(x) =  \frac{1}{(2\pi)^d}\int_{\br^d} \hat f(\xi)e^{ix\cdot \xi}d\xi.
\label{fourier}
\end{equation}
The following classical theorem in tomography (see e.g. \cite{natterer} for a proof) shows that the operator $\scr P$ takes on a simpler form in the Fourier domain.
\begin{theorem}{(Fourier Projection Slice Theorem).} \label{fourier_projection_slice}
Suppose $\scr Y\in L^2(\br^3) \cap L^1(\br^3)$ and $\scr J: \br^2 \rightarrow \br$. Then
\begin{equation}
\scr P \scr Y = \scf J \Longleftrightarrow \scf P \scf Y = \scf J,
\label{fourier_slice}
\end{equation}
where $\hat{\mathcal P}: C(\br^3) \rightarrow C(\br^2)$ is defined by 
\begin{equation}
(\hat{\mathcal P} \scf  Y)\colvec{2}{\hat x}{\hat y} = \scf  Y\left(\sdr R^T(\hat x, \hat y, 0)^T \right) = \scf  Y\left(\hat x\sdr R^1 + \hat y\sdr R^2 \right).
\label{pstilde_1}
\end{equation}
Here, $R^i$ is the $i$'th row of $R$.
\end{theorem}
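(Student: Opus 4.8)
The plan is to compute the two-dimensional Fourier transform of $\scr P \scr Y$ directly and show it equals $\scf P \scf Y$ pointwise; the claimed equivalence then drops out from injectivity of the Fourier transform. Everything reduces to one change of variables together with an application of Fubini's theorem.

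First I would verify that $\scr P \scr Y$ is a legitimate input to the Fourier transform. Since $\sdr R \in SO(3)$ is an isometry with unit Jacobian, Fubini's theorem gives
\[
\int_{\br^2} \bigl|(\scr P \scr Y)(x,y)\bigr|\, dx\, dy \;\le\; \int_{\br^3} \bigl|\scr Y(\sdr R^T r)\bigr|\, dr \;=\; \norm{\scr Y}_{L^1(\br^3)} < \infty,
\]
so $\scr P \scr Y \in L^1(\br^2)$ and $\widehat{\scr P \scr Y}$ is a well-defined continuous function. Likewise $\scr Y \in L^1(\br^3)$ makes $\scf Y$ a well-defined continuous function, so $(\scf P \scf Y)(\hat x, \hat y) = \scf Y(\hat x \sdr R^1 + \hat y \sdr R^2)$ is meaningful pointwise.

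Next comes the core computation. Using the conventions in (\ref{fourier}) and the definition (\ref{P_s}),
\[
\widehat{\scr P \scr Y}(\hat x, \hat y) \;=\; \int_{\br^2} \left( \int_{\br} \scr Y\bigl(\sdr R^T(x,y,z)^T\bigr)\, dz \right) e^{-i(x \hat x + y \hat y)}\, dx\, dy .
\]
Because $\scr Y \in L^1(\br^3)$, the integrand is absolutely integrable over $\br^3$, so Fubini lets me treat this as a single integral in $(x,y,z)$. I then substitute $r = \sdr R^T(x,y,z)^T$, i.e. $(x,y,z)^T = \sdr R r$, which has Jacobian $1$; the phase transforms via $x\hat x + y\hat y = (x,y,z)\cdot(\hat x,\hat y,0) = (\sdr R r)\cdot(\hat x,\hat y,0) = r \cdot \sdr R^T(\hat x,\hat y,0)^T$. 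This gives
\[
\widehat{\scr P \scr Y}(\hat x, \hat y) \;=\; \int_{\br^3} \scr Y(r)\, e^{-i\, r\cdot \sdr R^T(\hat x,\hat y,0)^T}\, dr \;=\; \scf Y\!\left(\sdr R^T(\hat x,\hat y,0)^T\right) \;=\; (\scf P \scf Y)(\hat x, \hat y),
\]
which is exactly $\widehat{\scr P \scr Y} = \scf P \scf Y$. The equivalence then follows: if $\scr P \scr Y = \scr J$, apply the Fourier transform to get $\scf P \scf Y = \scf J$; conversely, if $\scf P \scf Y = \scf J$, then $\widehat{\scr P \scr Y} = \widehat{\scr J}$ with both $\scr P \scr Y$ and $\scr J$ in $L^1(\br^2)$, and injectivity of the Fourier transform on $L^1$ forces $\scr P \scr Y = \scr J$ a.e. The $L^2$ hypothesis is used only to situate $\scr Y$ and $\scr J$ in the spaces where these Fourier transforms and pointwise identities are unambiguous.

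There is no deep obstacle here: the whole content is the rotational change of variables and the interchange of integrals. The only point requiring genuine care is the justification of that interchange and, relatedly, ensuring $\scr P \scr Y$ is integrable enough for its Fourier transform and the injectivity step to make sense — both handled by the $L^1$ bound above.
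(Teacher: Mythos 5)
Your argument is correct and is the standard proof of the projection slice theorem: the $L^1$ bound justifying Fubini, the rotational change of variables with unit Jacobian, and the identity $x\hat x + y\hat y = r\cdot \sdr R^T(\hat x,\hat y,0)^T$ are exactly the right ingredients, and the injectivity of the Fourier transform on $L^1$ correctly yields the converse direction. The paper itself does not prove this statement but cites it as classical (deferring to Natterer's book), and your computation is essentially the proof given there, so there is nothing to reconcile.
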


Hence, $\scf P$ rotates a function by $\sdr R$ and then restricts it to the horizontal plane $\hat z = 0$. If we let $\xi = (\hat x, \hat y, \hat z)$, then another way of viewing $\scf P$ is that it restricts a function to the plane $\xi \cdot \sdr R^3 = 0$.

\subsection{Infinite-dimensional heterogeneity problem} \label{fully_continuous}

To build intuition for the Fourier-domain geometry of the heterogeneity problem, consider the following idealized scenario, taking place in Fourier space. Suppose detector technology improves to the point that images can be measured continuously and noiselessly and that we have access to the full joint distribution of $\rdr R$  and $\rcf I$. We would like to estimate the mean $\hat m_0: \br^3 \rightarrow \bc$ and covariance function $\scf C_0: \br^3 \times \br^3 \rightarrow \bc$ of the random field $\rcf X$, defined
\begin{equation}
\hat m_0(\xi) = \mathbb E[\rcf  X(\xi)]; \quad \scf C_0(\xi_1, \xi_2) = \mathbb E[(\rcf  X(\xi_1) - \hat m_0(\xi_1))\overline{(\rcf  X(\xi_2) - \hat m_0(\xi_2))}].
\label{mean_covariance}
\end{equation}
Heuristically, we can proceed as follows. By the Fourier projection slice theorem, every image $\scf I$ provides an observation of $\scf X(\xi)$ for $\xi \in \br^3$ belonging to a central plane perpendicular to the viewing direction corresponding to $\scf I$. By abuse of notation, let $\xi \in \scf I$ if $\scf I$ carries the value of $\scf X(\xi)$, and let $\scf I(\xi)$ denote this value. Informally, we expect that we can recover $\hat m_0$ and $\scf C_0$ via
\begin{equation}
\hat m_0(\xi) = \mathbb E[\rcf I(\xi) \ | \ \xi \in \rcf I], \quad \scf C_0(\xi_1, \xi_2) = \mathbb E[(\rcf I(\xi_1) - \hat m_0(\xi_1))\overline{(\rcf I(\xi_2) - \hat m_0(\xi_2))} \ | \ \xi_1, \xi_2 \in \rcf I].
\label{heuristic}
\end{equation}
Now, let us formalize this problem setup and intuitive formulas for $\hat m_0$ and $\scf C_0$.
\begin{problem} \label{continuous_problem}
Let $\rcf  X: \Omega \times \br^3 \rightarrow \bc$ be a random field, where $(\Omega, \mathcal F, \nu)$ is a probability space. Here $\rcf  X(\omega, \cdot)$ is a Fourier volume for each $\omega \in \Omega$. Let $\rdr R: \Omega \rightarrow SO(3)$ be a random rotation, independent of $\rcf X$, having the uniform distribution over $SO(3)$. Let $\rcf P = \scf P(\rdr R)$ be the (random) projection operator associated to $\rdr R$ via (\ref{pstilde_1}). Define the random field $\rcf I: \Omega \times \br^2 \rightarrow \bc$ by 
\begin{equation}
\rcf I = \rcf P \rcf  X.
\label{fully_continuous_model}
\end{equation}
Given the joint distribution of $\rcf I$ and $\rdr R$, find the mean $\hat m_0$ and covariance function $\scf C_0$ of $\rcf X$, defined in (\ref{mean_covariance}).
Let $\rcf X$ be regular enough that 
\begin{equation}
\hat m_0 \in C_0^\infty(\br^3), \quad \scf C_0 \in C_0^\infty(\br^3 \times \br^3).
\label{continuity_condition}
\end{equation}
\end{problem}

In this problem statement, we do not assume that $\rcf X$ has a discrete distribution. The calculations that follow hold for any $\rcf X$ satisfying (\ref{continuity_condition}).

We claim that $\hat m_0$ and $\scf C_0$ can be found by solving 
\begin{equation}
\scf A (\hat m_0):= \mathbb E[\rcf P{}^* \rcf P]\hat m_0 = \mathbb E[\rcf {P}{}^* \rcf I]
\label{A_fully_continuous}
\end{equation}
and
\begin{equation}
\scf L(\scf C_0) := \mathbb E[\rcf P{}^* \rcf P\scf C_0 \rcf P{}^* \rcf P] = \mathbb E[\rcf P{}^* (\rcf I - \rcf P\hat  m_0)(\rcf I - \rcf P \hat m_0){}^* \rcf P],
\label{L_fully_continuous}
\end{equation}
equations whose interpretations we shall discuss in this section. Note that (\ref{A_fully_continuous}) and (\ref{L_fully_continuous}) can be seen as the limiting cases of (\ref{mueq}) and (\ref{sigeq}) for $\sigma^2 = 0,$ $p \rightarrow \infty$, and $n \rightarrow \infty$.

In the equations above, we define $\scf P^*: C_0^\infty(\br^2) \rightarrow C_0^\infty(\br^3)'$ by $\ip{\scf P^* \scf J}{\scf Y} := \ip{\scf J}{\scf P\scf Y}_{L^2(\br^2)}$, where $\scf J \in C_0^\infty(\br^2)$, $\scf Y \in C_0^\infty(\br^3)$ and $C_0^\infty(\br^3)'$ is the space of continuous linear functionals on $C_0^\infty(\br^3)$. Thus, both sides of (\ref{A_fully_continuous}) are elements of $C_0^\infty(\br^3)'$. To verify this equation, we apply both sides to a test function $\scf Y$:
\begin{equation}
\begin{split}
\ip{\mathbb E[\rcf {P}{}^* \rcf I]}{\scf Y} = \mathbb E\left[\ip{\rcf I}{\rcf {P}\scf Y}_{L^2(\br^2)}\right] &= \mathbb E\left[\mathbb E\left[\left.\ip{\rcf I}{\rcf {P}\scf Y}_{L^2(\br^2)} \right| \rcf P\right]\right] \\
&= \mathbb E\left[\ip{\rcf P \hat m_0}{\rcf P \scf Y}_{L^2(\br^2)}\right] = \ip{\mathbb E[\rcf P{}^* \rcf P\hat m_0]}{\scf Y}.
\end{split}
\end{equation}
Note that 
\begin{equation}
\begin{split}
\ip{\scf P^* \scf P\hat m}{\scf Y} = \ip{\scf P \hat m}{\scf P \scf Y}_{L^2(\br^2)} &= \int_{\br^2} \hat m(\hat x R^1 + \hat y R^2) \overline{\scf Y(\hat x R^1 + \hat y R^2)} d\hat x d \hat y \\
&= \int_{\br^3} \hat m(\xi)\overline{\scf Y(\xi)} \delta(\xi \cdot \sdr R^3)d\xi,
\end{split}
\label{R2R3}
\end{equation}
from which it follows that in the sense of distributions, 
\begin{equation}
(\scf P^* \scf P \hat m)(\xi) = \hat m(\xi) \delta(\xi \cdot \sdr R^3).
\label{ps_star_ps}
\end{equation}
Intuitively, this means that $\scf P^* \scf P$ inputs the volume $\hat m$ and outputs a ``truncated" volume that coincides with $\hat m$ on a plane perpendicular to the viewing angle and is zero elsewhere. This reflects the fact that the image $\scf I = \scf P \scf X$ only gives us information about $\scf X$ on a single central plane. When we aggregate this information over all possible $\sdr R$, we obtain the operator $\scf A$:
\begin{equation}
\begin{split}
\scf A \hat m (\xi) = \mathbb E[\hat m(\xi) \delta(\xi \cdot \rdr R^3)] &= \hat m(\xi) \frac{1}{4\pi} \int_{S^2} \delta(\xi \cdot \theta) d\theta \\
&= \frac{\hat m(\xi)}{|\xi|}\frac{1}{4\pi} \int_{S^2} \delta\left(\frac{\xi}{|\xi|} \cdot \theta\right) d\theta = \frac{\hat m(\xi)}{2|\xi|}.
\end{split}
\label{A_diagonal}
\end{equation}
We used the fact that $\rdr R^3$ is uniformly distributed over $S^2$ if $\rdr R$ is uniformly distributed over $SO(3)$. Here, $d\theta$ is the surface measure on $S^2$ (hence the normalization by $4\pi$). The last step holds because the integral over $S^2$ is equal to the circumference of a great circle on $S^2$, so it is $2\pi$. 

By comparing (\ref{A_fully_continuous}) and (\ref{A_def}), it is clear that $\scf A$ is the analogue of $\hat A_n$ for infinite $n$ and $p$. Also, the equation (\ref{A_fully_continuous}) echoes the heuristic formula (\ref{heuristic}). The backprojection operator $\scf P{}^*$ simply ``inserts" a 2D image into 3D space by situating it in the plane perpendicular to the viewing direction of the image, and so the RHS of (\ref{A_fully_continuous}) at a point $\xi$ is the accumulation of values $\scf I(\xi)$. Moreover, the operator $\scf A$ is diagonal, and for each $\xi$, $\scf A$  reflects the measure of the set $\xi \in \scf I$; i.e., the density of central planes passing through $\xi$ under the uniform distribution of rotations. Thus, (\ref{A_fully_continuous}) encodes the intuition from the first equation in (\ref{heuristic}). Inverting $\scf A$ involves multiplying by the radial factor $2|\xi|$. In tomography, this factor is called the ramp filter \cite{natterer}. Traditional tomographic algorithms proceed by applying the ramp filter to the projection data and then backprojecting. Note that solving $\frac{1}{2|\xi|}\hat m_0(\xi) = \mathbb E[\rcf P{}^* \rcf I]$ implies performing these operations in the reverse order; however, backprojection and application of the ramp filter commute.


Now we move on to (\ref{L_fully_continuous}). Both sides of this equation are continuous linear functionals on $C_0^\infty(\br^3) \times C_0^\infty(\br^3)$. Indeed, for $\scf Y_1, \scf Y_2 \in C_0^\infty(\br^3)$, the LHS of (\ref{L_fully_continuous}) operates on $(\scf Y_1, \scf Y_2)$ through the definition
\begin{equation}
(\scf P^* \scf P \scf C \scf P^* \scf P)(\scf Y_1, \scf Y_2) = \ip{\scf C}{(\scf P^* \scf P\scf Y_1, \scf P^* \scf P\scf Y_2)},
\end{equation}
where we view $\scf C \in C_0^\infty(\br^3 \times \br^3)$ as operating on pairs $(\eta_1, \eta_2)$ of elements in $C_0^\infty(\br^3)'$ via
\begin{equation}
\ip{\scf C}{(\eta_1, \eta_2)} := \int_{\br^3 \times \br^3}\overline{\eta_1(\xi_1)}\eta_2(\xi_2)\scf C(\xi_1, \xi_2)d\xi_1 d\xi_2.
\end{equation}
Using these definitions, we verify (\ref{L_fully_continuous}): 
\begin{equation}
\begin{split}
&\mathbb E[\rcf P{}^* (\rcf I - \rcf P\hat  m_0)(\rcf I - \rcf P \hat m_0){}^* \rcf P] (\scf Y_1, \scf Y_2) \\
&\quad := \mathbb E\left[\ip{\rcf P{}^* (\rcf I - \rcf P\hat  m_0)}{\scf Y_1}\overline{\ip{\rcf P{}^* (\rcf I - \rcf P\hat  m_0)}{\scf Y_2}}\right] \\
&\quad = \mathbb E\left[\overline{\ip{\rcf P{}^*\rcf P \scf Y_1}{\rcf X - \hat  m_0}}\ip{\rcf P{}^* \rcf P \scf Y_2}{\rcf X - \hat  m_0}\right] \\
&\quad = \mathbb E\left[\int_{\br^3 \times \br^3}\overline{\rcf P{}^*\rcf P \scf Y_1(\xi_1)} (\rcf X(\xi_1) - \hat  m_0(\xi_1))\rcf P{}^* \rcf P \scf Y_2(\xi_2)\overline{(\rcf X(\xi_2) - \hat  m_0(\xi_2))}d\xi_1 d\xi_2\right] \\
&\quad = \mathbb E\left[\int_{\br^3 \times \br^3}\overline{\rcf P{}^*\rcf P \scf Y_1(\xi_1)} \scf C_0(\xi_1, \xi_2)\rcf P{}^* \rcf P \scf Y_2(\xi_2)d\xi_1 d\xi_2\right] \\
&\quad = \mathbb E\left[\ip{\scf C_0}{(\rcf P{}^*\rcf P \scf Y_1, \rcf P{}^*\rcf P \scf Y_2)}\right] = \mathbb E\left[\rcf P{}^* \rcf P \scf C_0 \rcf P{}^* \rcf P (\scf Y_1, \scf Y_2)\right]. \\
\end{split}
\end{equation}

Substituting (\ref{ps_star_ps}) into the last two lines of the preceding calculation, we find 
\begin{equation}
(\scf P^* \scf P \scf C \scr P^* \scf P)(\xi_1, \xi_2) = \scf C(\xi_1, \xi_2)\delta(\xi_1 \cdot \sdr R^3)\delta(\xi_2 \cdot \sdr R^3).
\end{equation}
This reflects the fact that an image $\scf I$ gives us information about $\scf C_0(\xi_1, \xi_2)$ for $\xi_1, \xi_2 \in \scf I$. Taking the expectation over $\rdr R$, we find that 
\begin{equation}
\begin{split}
(\scf L \scf C)(\xi_1, \xi_2) &= \mathbb E[\scf C(\xi_1, \xi_2) \delta(\xi_1 \cdot \rdr R^3)\delta(\xi_2 \cdot \rdr R^3)] \\
&= \scf C(\xi_1, \xi_2)\frac{1}{4\pi}\int_{S^2}\delta(\xi_1 \cdot \theta)\delta(\xi_2\cdot \theta)d\theta =: \scf C(\xi_1, \xi_2)\mathcal K(\xi_1, \xi_2).
\end{split}
\label{L_diagonal}
\end{equation}
Like $\scf A$, the operator $\scf L$ is diagonal. $\scf L$ is a fundamental operator in tomographic inverse problems involving variability; we term it the \textit{projection covariance transform}. In the same way that (\ref{A_fully_continuous}) reflected the first equation of (\ref{heuristic}), we see that \ref{L_fully_continuous}) resembles the second equation of (\ref{heuristic}). In particular, the kernel value $\mathcal K(\xi_1, \xi_2)$ reflects the density of central planes passing through $\xi_1, \xi_2$.

To understand this kernel, let us compute it explicitly. We have
\begin{equation}
\mathcal K(\xi_1, \xi_2) = \frac{1}{4\pi}\int_{S^2} \delta(\xi_1\cdot \theta) \delta(\xi_2 \cdot \theta) d\theta.
\label{K_def}
\end{equation}
For fixed $\xi_1$, note that $\delta(\xi_1 \cdot \theta)$ is supported on the great circle of $S^2$ perpendicular to $\xi_1$. Similarly, $\delta(\xi_2 \cdot \theta)$ corresponds to a great circle perpendicular to $\xi_2$. Choose $\xi_1, \xi_2 \in \br^3$ so that $|\xi_1 \times \xi_2| \neq 0$. Then, note that these two great circles intersect in two antipodal points $\theta = \pm (\xi_1 \times \xi_2)/|\xi_1 \times \xi_2|$, and the RHS of (\ref{K_def}) corresponds to the total measure of $\delta(\xi_1 \cdot \theta)\delta(\xi_2 \cdot \theta)$ at those two points. 

To calculate this measure explicitly, let us define the approximation to the identity $\delta_\eps(t) = \frac{1}{2\eps}\chi_{[-\eps, \eps]}(t)$. Fix $\eps_1, \eps_2 > 0$. Note that $\delta_{\eps_1}(\xi_1 \cdot \theta)$ is supported on a strip of width $2\eps_1/|\xi_1|$ centered at the great circle perpendicular to $\xi_1$. $\delta_{\eps_2}(\xi_2 \cdot \theta)$ is supported on a strip of width $2\eps_2/|\xi_2|$ intersecting the first strip transversely. For small $\eps_1, \eps_2$, the intersection of the two strips consists of two approximately parallelogram-shaped regions, $S_1$ and $S_2$ (see Figure \ref{fig:triangular_area_filter}). The sine of the angle between the diagonals of each of these regions is $|\xi_1 \times \xi_2|/|\xi_1||\xi_2|$, and a simple calculation shows that the area of one of these regions is $2\eps_1 2\eps_2/|\xi_1\times \xi_2|$. It follows that
\begin{equation}
\begin{split}
\mathcal K(\xi_1, \xi_2) &= \frac{1}{4\pi}\int_{S^2} \delta(\xi_1\cdot \theta) \delta(\xi_2\cdot \theta) d\theta = \lim_{\eps_1, \eps_2 \rightarrow 0} \frac{1}{4\pi}\int_{S^2} \delta_{\eps_1}(\xi_1 \cdot \theta) \delta_{\eps_2}(\xi_2\cdot \theta) d\theta \\
&= \lim_{\eps_1, \eps_2 \rightarrow 0} \frac{1}{4\pi}\int_{S_1 \cup S_2} \frac{1}{2\eps_1}\frac{1}{2\eps_2} d\theta = \lim_{\eps_1, \eps_2 \rightarrow 0} \frac{1}{4\pi}2\frac{4\eps_1 \eps_2}{|\xi_1 \times \xi_2|}\frac{1}{2\eps_1}\frac{1}{2\eps_2}\\
&= \frac{1}{4\pi}\frac{2}{|\xi_1 \times \xi_2|}.
\end{split}
\end{equation}

This analytic form of $\mathcal K$ sheds light on the geometry of $\scf L$. Recall that $\mathcal K(\xi_1, \xi_2)$ is a measure of the density of central planes passing through $\xi_1$ and $\xi_2$. Note that this density is nonzero everywhere, which reflects the fact that there is a central plane passing through each pair of points in $\br^3$. The denominator in $\mathcal K$ is proportional to the magnitudes $|\xi_1|$ and $|\xi_2|$, which indicates that there is a greater density of planes passing through pairs of points nearer the origin. Finally, note that $\mathcal K$ varies inversely with the sine of the angle between $\xi_1$ and $\xi_2$; indeed, a greater density of central planes pass through a pair of points nearly collinear with the origin. In fact, there is a singularity in $\mathcal K$ when $\xi_1, \xi_2$ are linearly dependent, reflecting the fact that infinitely many central planes pass through collinear points. As a way to sum up the geometry encoded in $\mathcal K$, note that except for the factor of $1/4\pi$, $1/\mathcal K$ is the area of the triangle spanned by the vectors $\xi_1$ and $\xi_2$. For this reason, we call $1/\mathcal K$ the \textit{triangular area filter}.

\begin{figure}[h]
\centering
\includegraphics[scale = 0.4]{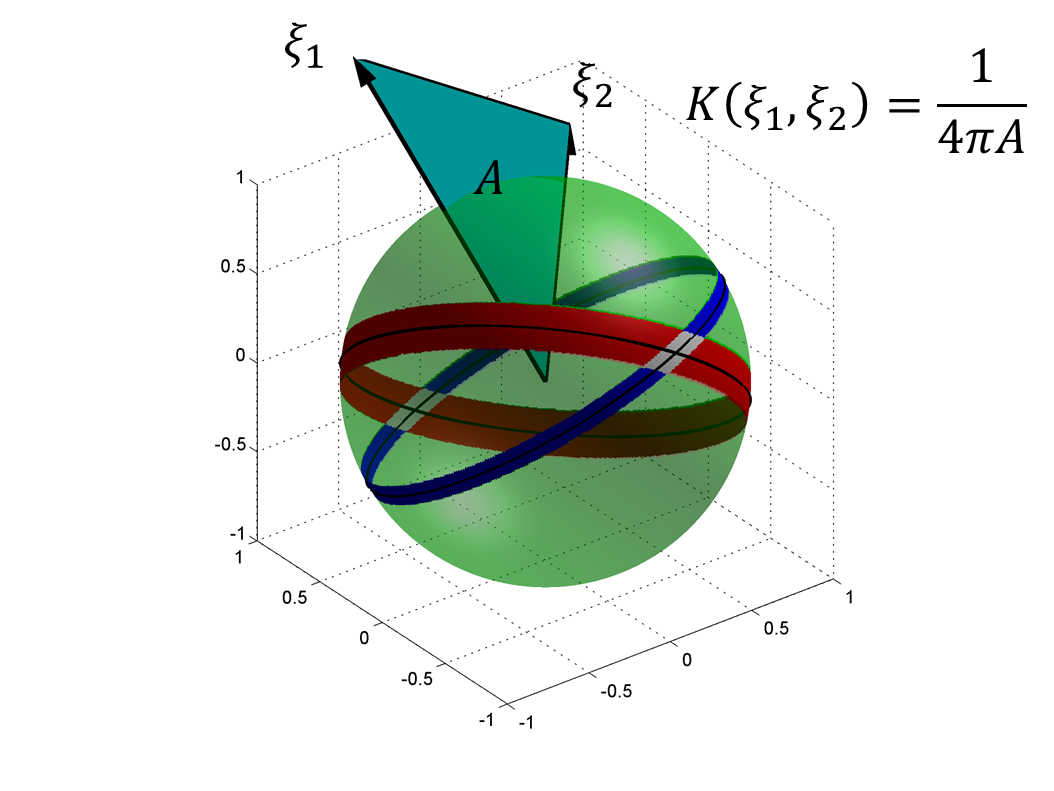}
\caption{The triangular area filter. $\xi_1$ induces a strip on $S^2$ of width proportional to $1/|\xi_1|$ (blue); $\xi_2$ induces a strip of width proportional to $1/|\xi_2|$ (red). The strips intersect in two parallelogram-shaped regions (white), each with area proportional to $1/|\xi_1 \times \xi_2|$. Hence, $\mathcal K(\xi_1, \xi_2)$ is inversely proportional to the area of the triangle spanned by $\xi_1, \xi_2$ (cyan).}
\label{fig:triangular_area_filter}
\end{figure}

Note that the triangular area filter is analogous to the ramp filter: it grows linearly with the frequencies $|\xi_1|$ and $|\xi_2|$ to compensate for the loss of high frequency information incurred by the geometry of the problem. So, this filter is a generalization of the ramp filter appearing in the estimation of the mean to the covariance estimation problem. The latter has a somewhat more intricate geometry, which is reflected in $\mathcal K$.

The properties of $\mathcal K$ translate into the robustness of inverting $\scf L$ (supposing we added noise to our model). In particular, the robustness of recovering $\scf C_0(\xi_1, \xi_2)$ grows with $\mathcal K(\xi_1, \xi_2)$. For example, recovering higher frequencies in $\scf C_0$ is more difficult. However, the fact that $\mathcal K$ is everywhere positive means that $\scf L$ is at least invertible. This statement is important in proving theoretical results about our estimators, as we saw in Section \ref{theoretical}. Note that an analogous problem of estimating the covariance matrix of 2D objects from their 1D line projections would not satisfy this condition, because for most pairs of points in $\mathbb{R}^2$, there is not a line passing through both points as well as the origin. 

\subsection{The discrete covariance estimation problem} \label{discrete_estimation}

The calculation in the preceding section shows that if we could sample images continuously and if we had access to projection images from all viewing angles, then $\scf L$ would become a diagonal operator. In this section, we explore the modifications necessary for the realistic case where we must work with finite-dimensional representations of volumes and images.

Our idea is to follow what we did in the fully continuous case treated above and estimate the covariance matrix in the Fourier domain. One possibility is to choose a Cartesian basis in the Fourier domain. With this basis, a tempting way to define $\hat P_s$ would be to restrict the Fourier 3D grid to the pixels of a 2D central slice by nearest-neighbor interpolation. This would make $\hat P_s$ a coordinate-selection operator, making $\hat L_n$ diagonal. However, this computational simplicity comes at a great cost in accuracy; numerical experiments show that the errors induced by such a coarse interpolation scheme are unacceptably large. Such an interpolation error should not come as a surprise, considering similar interpolation errors in computerized tomography \cite{natterer}. Hence, we must choose other bases for the Fourier volumes and images.

The finite sampling rate of the images limits the 3D frequencies we can hope to reconstruct. Indeed, since the images are sampled on an $N \times N$ grid confining a disc of radius 1, the corresponding Nyquist bandlimit is $\omega_{\text{Nyq}} = N\pi/2$. Hence, the images carry no information past this 2D bandlimit. By the Fourier slice theorem, this means that we also have no information about $\scr X$ past the 3D bandlimit $\omega_{\text{Nyq}}$. In practice, the exponentially decaying envelope of the CTF function renders even fewer frequencies possible to reconstruct. Moreover, we saw in Section \ref{fully_continuous} and will see in Section \ref{condition_number} that reconstruction of $\Sigma_0$ becomes more ill-conditioned as the frequency increases. Hence, it often makes sense to take a cutoff $\omega_{\max} < \omega_{\text{Nyq}}$. We can choose $\omega_{\max}$ to correspond to an effective grid size of $N_{\text{res}}$ pixels, where $N_{\text{res}} \leq N$. In this case, we would choose $\omega_{\max} = N_{\text{res}}\pi/2$. Thus, it is natural to search for $\scr X$ in a space of functions bandlimited in $B_{\omega_{\max}}$ (the ball of radius $\omega_{\max}$) and with most of their energy contained in the unit ball. The optimal space $\mathscr B$ with respect to these constraints is spanned by a finite set of 3D Slepian functions \cite{prolate4}. For a given bandlimit $\omega_{\max}$, we have
\begin{equation}
p = \text{dim}(\mathscr B) = \frac{2}{9\pi}\omega_{\max}^3.
\label{shannon_3}
\end{equation}
This dimension is called the \textit{Shannon number}, and is the trace of the kernel in \cite[eq. 6]{prolate4}.

For the purposes of this section, let us work abstractly with the finite-dimensional spaces $\ssf V \subset C_0(B_{\omega_{\max}})$ and $\ssf{I} \subset C_0(D_{\omega_{\max}})$, which represent Fourier volumes and Fourier images, respectively ($D_{\omega_{\max}} \subset \br^2$ is the disc of radius $\omega_{\max}$). For example, $\ssf V$ could be spanned by the Fourier transforms of the 3D Slepian functions. Let 
\begin{equation}
 \ssf V = \text{span}\{\hat h_j\}, \quad \ssf{I} = \text{span}\{\hat g_i\},
\end{equation}
with dim$(\ssf V) = \hat p$  and dim$(\ssf I) = \hat q$.  Assume that for all $\sdr R$, $\hat{\mathcal P}(\ssf V) \subset \ssf{I}$ (i.e., we do not need to worry about interpolation). Denote by $\sdf P$ the matrix expression of $\scf P|_{\ssf V}$. Thus, $\sdf P \in \bc^{\hat q \times \hat p}$. Let $\sdf X_1, \dots, \sdf X_n$ be the representations of $\scf X_1, \dots, \scf X_n$ in the basis for $\ssf V$. 

Since we are given the images $\sdr I_s$ in the pixel basis $\br^q$, let us consider how to map these images into $\ssf I$. Let $Q_1: \br^q \rightarrow \ssf{I}$ be the mapping which fits (in the least-squares sense) an element of $\ssf I$ to the pixel values defined by a vector in $\br^q$. It is easiest to express $Q_1$ in terms of the reverse mapping $Q_2: \ssf{I} \rightarrow \br^q$. The $i$'th column of $Q_2$ consists of the evaluations of $g_i$ at the real-domain gridpoints inside the unit disc. It is easy to see that the least-squares method of defining $Q_1$ is $Q_1 = Q_2^+ = (Q_2^H Q_2)^{-1}Q_2^H$. 

Now, note that
\begin{equation}
\rdr I = \scr S\rcr P \rcr X + \rdr E \Rightarrow Q_1\rdr I = Q_1 \scr S\rcr P \rcr X + Q_1 \rdr E \approx \rdf P \rdf X + Q_1 \rdr E.
\label{approx_1}
\end{equation}
The last approximate equality is due to the Fourier slice theorem. The inaccuracy comes from the discretization operator $\scr S$. Note that $\Var[Q_1 \rdr E] = \sigma^2 Q_1 Q_1^H = \sigma^2 (Q_2^H Q_2)^{-1}$. We would like the latter matrix to be a multiple of the identity matrix so that the noise in the images remains white. Let us calculate the entries of $Q_2^H Q_2$ in terms of the basis functions $g_i$. Given the fact that we are working with volumes $h_i$ which have most of their energy concentrated in the unit ball, it follows that $g_i$ have most of their energy concentrated in the unit disc. If $x_1, \dots, x_q$ are the real-domain image gridpoints, it follows that
\begin{equation}
\begin{split}
(Q_2^H Q_2)_{ij} = \sum_{r =1}^q \overline{ g_i(x_r)} g_j(x_r) &\approx \frac{q}{\pi}\int_{|x| \leq 1} \overline{ g_i(x)} g_j(x)dx \\
&\approx \frac{q}{\pi}\overline{\ip{ g_i}{ g_j}_{L^2(\br^2)}} = \frac{q}{\pi}\frac{1}{(2\pi)^2}\overline{\ip{\hat g_i}{\hat g_j}_{L^2(\br^2)}}.
\label{identity_calculation}
\end{split}
\end{equation}
It follows that in order for $Q_2^H Q_2$ to be (approximately) a multiple of the identity matrix, we should require $\{\hat g_i\}$ to be an orthonormal set in $L^2(\br^2)$. If we let $c_q = 4\pi^3/q$, then we find that
\begin{equation}
Q_1 Q_1^H \approx c_q \text{I}_{\hat q}.
\label{approx_2}
\end{equation}
It follows that, if we make the approximations in (\ref{approx_1}) and (\ref{approx_2}), we can formulate the heterogeneity problem entirely in the Fourier domain as follows:
\begin{equation}
\rdf I = \rdf P \rdf X + \rdf E,
\end{equation}
where $\Var[\rdf E] = \sigma^2{c_q}\text{I}_{\hat q}$. Thus, we have an instance of Problem (\ref{main_problem}), with $\sigma^2$ replaced by $\sigma^2 c_q$, $q$ replaced by $\hat q$, and $p$ replaced by $\hat p$. We seek $\hat \mu_0 = \mathbb E[\rdf X]$ and $\hat \Sigma_0 = \Var[\rdf X]$. Equations (\ref{mueq}) and (\ref{sigeq}) become
\begin{equation}
\sdf A_n \hat \mu_n := \left(\frac1n\sum_{s = 1}^n \sdf P_s^H \sdf P_s\right)\hat \mu_n = \frac1n\sum_{s = 1}^n \sdf P_s^H \sdf I_s.
\label{mueq_new}
\end{equation}
and
\begin{equation}
\begin{split}
\sdf L_n \hat \Sigma_n :&= \frac1n\sum_{s = 1}^n \sdf P_s^H \sdf P_s \hat \Sigma_n \sdf P_s^H \sdf P_s \\
&= \frac1n\sum_{s = 1}^n \sdf P_s^H (\sdf I_s -  \sdf P_s \hat \mu_n)(\hat I_s -  \sdf P_s \hat \mu_n)^H \sdf P_s - \sigma^2c_q \sdf A_n =: \hat B_n.
\label{sigeq_new}
\end{split}
\end{equation}


\subsection{Exploring $\hat A$ and $\hat L$} \label{exploring}

In this section, we seek to find expressions for $\hat A$ and $\hat L$ like those in (\ref{A_diagonal}) and (\ref{L_diagonal}). The reason for finding these limiting operators is two-fold. First of all, recall that the theoretical results in Section \ref{theoretical} depend on the invertibility of these limiting operators. Hence, knowing $\hat A$ and $\hat L$ in the cryo-EM case will allow us to verify the assumptions of Propositions \ref{mu_main} and \ref{Sigma_main}. Secondly, the law of large numbers guarantees that for large $n$, we have $\sdf A_n \approx \sdf A$ and $\sdf L_n \approx \sdf L$. We shall see in Section \ref{practical} that approximating $\sdf A_n$ and $\sdf L_n$ by their limiting counterparts makes possible the tractable implementation of our algorithm.

In Section \ref{fully_continuous}, we worked with functions $\hat m: \br^3 \rightarrow \bc$ and $\scf C: \br^3 \times \br^3 \rightarrow \bc$. Now, we are in a finite-dimensional setup, and we have formulated (\ref{mueq_new}) and (\ref{sigeq_new}) in terms of vectors and matrices. Nevertheless, in the finite-dimensional case we can still work with functions as we did in Section \ref{fully_continuous} via the identifications
\begin{equation}
\hat \mu \in \bc^{\hat p} \leftrightarrow \hat m = \sum_{i = 1}^{\hat p} \hat \mu_i \hat h_i \in \ssf V, \quad \hat \Sigma \in \bc^{\hat p \times \hat p} \leftrightarrow \scf C = \sum_{i, j = 1}^{\hat p} \hat \Sigma_{i, j} \hat h_i \otimes \hat h_j \in \ssf V \otimes \ssf V,
\label{identifications}
\end{equation}
where we define
\begin{equation}
(\hat h_i \otimes \hat h_j)(\xi_1, \xi_2) = \hat h_i (\xi_1) \overline{\hat h_j(\xi_2)},
\end{equation}
and $\ssf V \otimes \ssf V = \text{span}\{\hat h_i \otimes \hat h_j\}$.
Thus, we identify $\bc^{\hat p}$ and $\bc^{\hat p \times \hat p}$ with spaces of bandlimited functions. For these identifications to be isometries, we must endow $\ssf V$ with an inner product for which $\hat h_i$ are orthonormal. We consider a family of inner products, weighted by radial functions $w(|\xi|)$:
\begin{equation}
\ip{\hat h_i}{\hat h_j}_{L_w^2(\br^3)} = \int_{\br^3} \hat h_i(\xi) 
\overline{\hat h_j(\xi)}w(|\xi|)d\xi = \delta_{ij}.
\label{assumption}
\end{equation}
The inner product on $\ssf V \otimes \ssf V$ is inherited from that of $\ssf V$.


Note that $\hat A_n$ and $\hat L_n$ both involve the projection-backprojection operator $\hat P_s^H \hat P_s$. Let us see how to express $\hat P_s^H \hat P_s$ as an operator on $\ssf V$. The $i$'th column of $\hat P_s$ is the representation of $\mathcal{\hat P}_s \hat h_i$ in the orthonormal basis for $\ssf{I}$. Hence, using the isomorphism $\hat \bc^{\hat q} \leftrightarrow \ssf{I}$ and reasoning along the lines of (\ref{R2R3}), we find that
\begin{equation}
(\hat P_s^H \hat P_s)_{i, j} = \overline{\ip{\mathcal{\hat P}_s \hat h_i}{\mathcal{\hat P}_s \hat h_j}_{L^2(\br^2)}} = \int_{\br^3} \overline{\hat h_i(\xi)} \hat h_j(\xi) \delta(\xi \cdot \sdr R_s^3)d\xi.
\label{pshps}
\end{equation}
Note that here and throughout this section, we perform manipulations (like those in Section \ref{fully_continuous}) that involve treating elements of $\ssf V$ as test functions for distributions. We will ultimately construct $\ssf V$ so that its elements are continuous, but not in $C_0^\infty(\br^3)$, as assumed in Section \ref{fully_continuous}. Nevertheless, since we are only dealing with distributions of order zero, continuity of the elements of $\ssf V$ is sufficient.

From (\ref{pshps}), it follows that if $\hat \mu \in \bc^{\hat p} \leftrightarrow \hat m \in \ssf V$, then
\begin{equation}
\begin{split}
(\hat P_s^H \hat P_s)\hat \mu \leftrightarrow \sum_{i = 1}^{\hat p}\hat h_i \sum_{j = 1}^{\hat p}(\hat P_s^H \hat P_s)_{ij}\hat \mu_j  &= \sum_{i = 1}^{\hat p}\hat h_i \sum_{j = 1}^{\hat p}\int_{\br^3} \overline{\hat h_i(\xi)} \hat \mu_{j}\hat h_j(\xi) \delta(\xi \cdot \sdr R_s^3)d\xi \\
&= \sum_{i = 1}^{\hat p}\hat h_i \int_{\br^3} \left(\hat m(\xi) \delta(\xi \cdot \sdr R_s^3)\right)\overline{\hat h_i(\xi)} d\xi \\
&=: \pi_{\ssf V}\left(\hat m(\xi)\delta (\xi \cdot \sdr R_s^3)\right),
\label{proj_backproj_discrete_precursor}
\end{split}
\end{equation}
where $\pi_{\ssf V}: C_0^\infty(\br^3)' \rightarrow \ssf V$ is defined via
\begin{equation}
\pi_{\ssf V}(\eta) = \sum_{i} \hat h_i \ip{\eta}{\hat h_i}, \quad \eta \in C_0^\infty(\br^3)'
\label{pi_V_def}
\end{equation}
is a projection onto the finite-dimensional subspace $\ssf V$.
%

In analogy with (\ref{A_fully_continuous}), we have
\begin{equation}
\begin{split}
\hat A \hat \mu \leftrightarrow \mathbb E\left[\pi_{\ssf V}\left(\hat m(\xi)\delta (\xi \cdot \rdr R^3)\right)\right] =  \pi_{\ssf V}\left(\frac{\hat m(\xi)}{2|\xi|} \right)
\end{split}
\label{A_coords}
\end{equation}
Note $\hat A$ resembles the operator $\scf A$ obtained in (\ref{A_fully_continuous}), with the addition of the ``low-pass filter" $\pi_{\ssf V}$.  As a particular choice of weight, one might consider $w(|\xi|) = 1/|\xi|$ in order to cancel the ramp filter. For this weight, note that
\begin{equation}
\begin{split}
\hat A \hat \mu \leftrightarrow \pi_{\ssf V}\left(\frac{\hat m(\xi)}{2|\xi|}\right) = \pi^w_{\ssf V} \left(\frac12\hat m(\xi)\right) = \frac12 \hat m(\xi), \quad w(|\xi|) = 1/|\xi|,
\end{split}
\label{ramp_cancellation}
\end{equation}
where $\pi^w_{\ssf V}$ is the \textit{orthogonal} projection onto $\ssf V$ with respect to the weight $w$.
Thus, for this weight we find that $\hat A = \frac12 \text{I}_{\hat p}$.

A calculation analagous to (\ref{proj_backproj_discrete_precursor}) shows that for $\hat \Sigma \in \bc^{\hat p \times \hat p}\leftrightarrow \scf C \in \ssf V \otimes \ssf V$, 
\begin{equation}
\begin{split}
\hat P_s^H \hat P_s \hat \Sigma \hat P_s^H \hat P_s \leftrightarrow \pi_{\ssf V \otimes \ssf V}\left(\scf C(\xi_1, \xi_2)\delta (\xi_1 \cdot \sdr R_s^3)\delta (\xi_2 \cdot \sdr R_s^3)\right).
\end{split}
\end{equation}
Then, taking the expectation over $\rdr R^3$, we find that  
\begin{equation}
\begin{split}
\hat L \hat \Sigma \leftrightarrow \pi_{\ssf V \otimes \ssf V}\left(\scf C(\xi_1, \xi_2)\mathcal K(\xi_1, \xi_2)\right).
\label{L_coords}
\end{split}
\end{equation}
This shows that between $\hat L$ is linked to $\scf L$ via the low-pass-filter $\pi_{\ssf V \otimes \ssf V}$, which is defined analogously to (\ref{pi_V_def}).
\subsection{Properties of $\hat A$ and $\hat L$} \label{cryo_em_theoretical}

In this section, we will prove several results about $\hat A$ and $\hat L$, defined in (\ref{A_coords}) and (\ref{L_coords}). We start by proving a useful lemma:
\begin{lemma} \label{useful_lemma}
For $\eta \in C_0^\infty(\br^3)'$ and $\scf Y \in \ssf V$, we have
\begin{equation}
\ip{\pi_{\ssf V}\eta}{\scf Y}_{L^2_w(\br^3)} = \ip{\eta}{\scf Y}.
\end{equation}
Likewise, if $\eta \in C_0^\infty(\br^3 \times \br^3)'$ and $\scf C \in \ssf V \otimes \ssf V$, we have
\begin{equation}
\ip{\pi_{\ssf V \otimes \ssf V}\eta}{\scf C}_{L^2_w(\br^3 \times \br^3)} = \ip{\eta}{\scf C}.
\end{equation}

\end{lemma}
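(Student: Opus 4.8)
The plan is to unwind the definition of $\pi_{\ssf V}$ in \eqref{pi_V_def} and exploit the orthonormality assumption \eqref{assumption}. Recall that $\pi_{\ssf V}(\eta) = \sum_i \hat h_i \ip{\eta}{\hat h_i}$, where the pairing $\ip{\cdot}{\cdot}$ (no subscript) denotes the duality pairing between $C_0^\infty(\br^3)'$ and $C_0^\infty(\br^3)$, while $\ip{\cdot}{\cdot}_{L^2_w(\br^3)}$ is the weighted inner product in which the $\hat h_i$ form an orthonormal basis of $\ssf V$. So the statement is essentially the assertion that $\pi_{\ssf V}$, defined via the dual pairing, coincides with the adjoint-style projection with respect to the weighted inner product.

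First I would write $\scf Y \in \ssf V$ in the orthonormal basis as $\scf Y = \sum_j c_j \hat h_j$, where $c_j = \ip{\scf Y}{\hat h_j}_{L^2_w(\br^3)}$ by \eqref{assumption}. Then expand the left-hand side:
\begin{equation}
\ip{\pi_{\ssf V}\eta}{\scf Y}_{L^2_w(\br^3)} = \ip{\sum_i \hat h_i \ip{\eta}{\hat h_i}}{\sum_j c_j \hat h_j}_{L^2_w(\br^3)} = \sum_{i,j} \overline{c_j}\,\ip{\eta}{\hat h_i}\,\ip{\hat h_i}{\hat h_j}_{L^2_w(\br^3)}.
\end{equation}
(One must be careful about which factor gets conjugated; the ordering in \eqref{assumption} and in the definition of $\pi_{\ssf V}$ fixes the convention, and I would track this explicitly.) By \eqref{assumption} the inner sum collapses via $\ip{\hat h_i}{\hat h_j}_{L^2_w(\br^3)} = \delta_{ij}$, leaving $\sum_i \overline{c_i}\,\ip{\eta}{\hat h_i}$. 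On the other side, $\ip{\eta}{\scf Y} = \ip{\eta}{\sum_i c_i \hat h_i} = \sum_i \overline{c_i}\,\ip{\eta}{\hat h_i}$ by (conjugate-)linearity of the pairing in the second slot. Comparing the two expressions gives the identity. The second statement for $\ssf V \otimes \ssf V$ follows verbatim with the orthonormal basis $\{\hat h_i \otimes \hat h_j\}$ of $\ssf V \otimes \ssf V$ (which is orthonormal in $L^2_w(\br^3 \times \br^3)$ precisely because the $\hat h_i$ are orthonormal in $L^2_w(\br^3)$ and the tensor inner product factors), and with $\pi_{\ssf V \otimes \ssf V}$ defined analogously.

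The only genuine subtlety — the ``hard part,'' though it is minor — is bookkeeping of complex conjugates: the duality pairing $\ip{\cdot}{\cdot}$ and the Hermitian inner product $\ip{\cdot}{\cdot}_{L^2_w}$ have different symmetry properties, so one must verify that the conjugation conventions in \eqref{pi_V_def}, \eqref{assumption}, and in the definition of $\hat h_i \otimes \hat h_j$ are mutually consistent, and that $\scf Y$ (resp. $\scf C$) is genuinely in the span so that it can be expanded with coefficients given by the weighted inner product. There is also a regularity point — elements of $\ssf V$ are continuous but not smooth, so strictly $\ip{\eta}{\scf Y}$ requires $\eta$ to act on order-zero distributions — but as the paper already notes after \eqref{pshps}, continuity of the elements of $\ssf V$ suffices since all distributions in play are of order zero. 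No deep idea is needed; the lemma is a restatement of the fact that a Parseval-type expansion in an orthonormal basis turns the dual-pairing projection into the orthogonal projection for the corresponding inner product.
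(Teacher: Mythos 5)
Your proof is correct and follows essentially the same route as the paper's: expand $\pi_{\ssf V}\eta$ via its definition, use orthonormality of $\{\hat h_i\}$ in $L^2_w$ to collapse the resulting sum, and identify the result with the dual pairing $\ip{\eta}{\scf Y}$ via the Parseval expansion of $\scf Y$. Your extra care with the conjugation conventions and the order-zero regularity remark are fine but do not change the argument.
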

\begin{proof}
Indeed, we have
\begin{equation}
\begin{split}
\ip{\pi_{\ssf V}\eta}{\scf Y}_{L^2_w(\br^3)} &= \sum_{i = 1}^{\hat p} \ip{\eta}{\hat h_i} \ip{\hat h_i}{\scf Y}_{L^2_w(\br^3)} \\
&= \ip{\eta}{\sum_{i = 1}^{\hat p}\ip{\scf Y}{\hat h_i}_{L^2_w(\br^3)}\hat h_i} = \ip{\eta}{\scf Y}.
\end{split}
\end{equation}
The proof of the second claim is similar.
\end{proof}

Note that $\hat A$ and $\hat L$ are self-adjoint and positive semidefinite because each $\hat A_n$ and $\hat L_n$ satisfies this property. In the next proposition, we bound the minimum eigenvalues of these two operators from below.

\begin{proposition} \label{min_eig}
Let $M_{w}(\omega_{\max}) = \max_{|\xi| \leq \omega_{\max}}|\xi|w(|\xi|)$. Then,
\begin{equation}
\lambda_{\min}(\hat A) \geq \frac{1}{2M_{w}(\omega_{\max})}; \quad \lambda_{\min}(\hat L) \geq \frac{1}{2\pi M^2_{w}(\omega_{\max})}.
\end{equation}
\end{proposition}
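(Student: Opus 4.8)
The plan is to bound the Rayleigh quotients of $\hat A$ and $\hat L$ from below. Both operators are self-adjoint and positive semidefinite (this is inherited from $\hat A_n$, $\hat L_n$), so it is enough to establish $\ip{\hat A\hat\mu}{\hat\mu}\ge \frac{1}{2M_{w}(\omega_{\max})}\|\hat\mu\|^2$ for every $\hat\mu\in\bc^{\hat p}$ and $\ip{\hat L\hat\Sigma}{\hat\Sigma}\ge \frac{1}{2\pi M^2_{w}(\omega_{\max})}\|\hat\Sigma\|^2$ for every $\hat\Sigma\in\bc^{\hat p\times\hat p}$, and then invoke the variational characterization of $\lambda_{\min}$. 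I would first pass to the function picture: by the normalization (\ref{assumption}) the identifications (\ref{identifications}) are isometries of $\bc^{\hat p}$ and $\bc^{\hat p\times\hat p}$ onto $(\ssf V,\|\cdot\|_{L^2_w(\br^3)})$ and $(\ssf V\otimes\ssf V,\|\cdot\|_{L^2_w(\br^3\times\br^3)})$, so that $\|\hat\mu\|=\|\hat m\|_{L^2_w}$, $\|\hat\Sigma\|=\|\scf C\|_{L^2_w}$, and the quadratic forms may be computed there with no change of eigenvalues.

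For $\hat A$ I would use the closed form (\ref{A_coords}) together with Lemma \ref{useful_lemma} (applied to $\eta=\hat m(\xi)/(2|\xi|)$ and $\scf Y=\hat m$) to strip off the projection $\pi_{\ssf V}$, obtaining $\ip{\hat A\hat\mu}{\hat\mu}=\int_{\br^3}|\hat m(\xi)|^2/(2|\xi|)\,d\xi$. Since $\hat m$ is supported in $B_{\omega_{\max}}$ and $w$ is a positive radial weight, on that ball $\frac{1}{2|\xi|}=\frac{w(|\xi|)}{2|\xi|w(|\xi|)}\ge \frac{w(|\xi|)}{2M_{w}(\omega_{\max})}$, and integrating against $|\hat m|^2$ yields $\ip{\hat A\hat\mu}{\hat\mu}\ge \frac{1}{2M_{w}(\omega_{\max})}\|\hat m\|^2_{L^2_w}$, which is the first inequality. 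For $\hat L$ the same two steps — the closed form (\ref{L_coords}) followed by the second part of Lemma \ref{useful_lemma} — give $\ip{\hat L\hat\Sigma}{\hat\Sigma}=\int_{\br^3\times\br^3}\mathcal K(\xi_1,\xi_2)\,|\scf C(\xi_1,\xi_2)|^2\,d\xi_1\,d\xi_2$; the only extra input is the elementary inequality $|\xi_1\times\xi_2|\le |\xi_1||\xi_2|$, which turns the explicit formula $\mathcal K(\xi_1,\xi_2)=\frac{1}{2\pi|\xi_1\times\xi_2|}$ into $\mathcal K(\xi_1,\xi_2)\ge \frac{1}{2\pi|\xi_1||\xi_2|}\ge \frac{w(|\xi_1|)w(|\xi_2|)}{2\pi M^2_{w}(\omega_{\max})}$ on $B_{\omega_{\max}}\times B_{\omega_{\max}}$; integrating against $|\scf C|^2$ then finishes the proof.

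I do not expect any genuine obstacle here; the argument is essentially bookkeeping once Lemma \ref{useful_lemma} and the formulas (\ref{A_coords})--(\ref{L_coords}) are in hand. The two points that deserve a little care are: first, verifying that (\ref{identifications}) really is an isometry — this is precisely the role of the orthonormality requirement (\ref{assumption}) on $\{\hat h_i\}$ — so that $\lambda_{\min}$ of the matrix/operator coincides with the function-space Rayleigh quotient; and second, tracking which bilinear pairing appears when one applies Lemma \ref{useful_lemma}, so that the multipliers $1/(2|\xi|)$ and $\mathcal K$ end up weighting $|\hat m|^2$ and $|\scf C|^2$ and not some other quadratic expression. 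One also tacitly uses that $w>0$ on $[0,\omega_{\max}]$, which makes $M_{w}(\omega_{\max})$ finite and positive and validates the pointwise lower bounds above.
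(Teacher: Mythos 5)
Your proposal is correct and follows essentially the same route as the paper's proof: pass to the function picture via the isometries (\ref{identifications}), strip the projection with Lemma \ref{useful_lemma}, and bound the resulting multipliers $1/(2|\xi|)$ and $\mathcal K$ pointwise against $w$ using $M_w(\omega_{\max})$ and $|\xi_1\times\xi_2|\le|\xi_1||\xi_2|$. No gaps.
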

\begin{proof}
Let $\hat \mu \in \bc^{\hat p}\leftrightarrow \hat m\in \ssf V$. Using the isometry $\bc^{\hat p} \leftrightarrow \ssf V$, Lemma (\ref{useful_lemma}), and (\ref{A_coords}), we find
\begin{equation}
\begin{split}
\ip{\hat A\hat \mu}{\hat \mu}_{\bc^{\hat p}} &= \ip{\pi_{\ssf V}\left(\hat m \frac{1}{2|\xi|}\right)}{\hat m}_{L^2_w(\br^3)} 
= \ip{\hat m\frac{1}{2|\xi|}}{\hat m} \\
&= \int_{B_{\omega_{\max}}}|\hat m(\xi)|^2 \frac{1}{2|\xi|w(|\xi|)}w(|\xi|)d\xi \geq \frac{1}{2M_{w}(\omega_{\max})}\norm{\hat m}_{L^2_w(\br^3)} ^2 = \frac{1}{2M_{w}(\omega_{\max})}\norm{\hat \mu}^2.
\end{split}
\end{equation}

The bound on the minimum eigenvalue of $\hat L$ follows from a similar argument, using (\ref{L_coords}) and the following bound:
\begin{equation}
\min_{\xi_1, \xi_2 \in B_{\omega_{\max}}} \frac{K(\xi_1, \xi_2)}{w(|\xi_1|)w(|\xi_2|)} = \min_{\xi_1, \xi_2 \in B_{\omega_{\max}}} \frac{1}{2\pi |\xi_1 \times \xi_2|w(|\xi_1|)w(|\xi_2|)} \geq \frac{1}{2\pi M^2_{w}(\omega_{\max})}.
\label{still_singular}
\end{equation}
\end{proof}

By inspecting $M_w(\omega_{\max})$, we see that choosing $w = 1/|\xi|$ leads to better conditioning of both $\hat A$ and $\hat L$, as compared to $w = 1$. This is because the former weight compensates for the loss of information at higher frequencies. We see from (\ref{ramp_cancellation}) that for $w = 1/|\xi|$, $\hat A$ is perfectly conditioned. This weight also cancels the linear growth of the triangular area filter with radial frequency. However, it does not cancel $\mathcal K$ altogether, since the dependency on $\sin \gamma$ in the denominators in (\ref{still_singular}) remains, where $\gamma$ is the angle between $\xi_1$ and $\xi_2$.

The maximum eigenvalue of $\hat L$ cannot be bounded as easily, since the quotient in (\ref{still_singular}) is not bounded from above. A bound on $\lambda_{\max}(\hat L)$ might be obtained by using the fact that a bandlimited $\scf C$ can only be concentrated to a limited extent around the singular set $\{\xi_1, \xi_2 : |\xi_1 \times \xi_2| = 0\}$.

Finally, we prove another property of $\hat A$ and $\hat L$: they commute with rotations. Let us define the group action of $SO(3)$ on functions $\br^3 \rightarrow \bc$ as follows: for $\sdr R \in SO(3)$ and $\scf Y: \br^3 \rightarrow \bc$, let $\sdr R . \scf Y (\xi) = \scf Y(\sdr R^T \xi)$. Likewise, define the group action of $SO(3)$ on functions $\scf C: \br^3\times \br^3 \rightarrow \bc$ via $\sdr R . \scf C (\xi_1, \xi_2) = \scf C(\sdr R^T \xi_1, \sdr R^T \xi_2)$. 
\begin{proposition} \label{rot_invariance}
Suppose that the subspace $\ssf V$ is closed under rotations. Then, for any $\scf Y \in \ssf V$, $\scf C \in  \ssf V \otimes \ssf V$ and $\sdr R \in SO(3)$, we have
\begin{equation}
R.(\hat A \scf Y) = \hat A (R.\scf Y), \quad R.(\hat L \scf C) = \hat L (R.\scf C),
\label{commuting_rotation}
\end{equation}
where $\hat A \scf Y$ and $\hat L \scf C$ are understood via the identifications (\ref{identifications}).
\end{proposition}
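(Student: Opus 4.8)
The plan is to read off the structure of $\hat A$ and $\hat L$ from the concrete formulas (\ref{A_coords}) and (\ref{L_coords}). Under the identifications (\ref{identifications}), $\hat A$ acts on $\ssf V$ by $\scf Y \mapsto \pi_{\ssf V}\big(\scf Y(\xi)/(2|\xi|)\big)$ and $\hat L$ acts on $\ssf V \otimes \ssf V$ by $\scf C \mapsto \pi_{\ssf V \otimes \ssf V}\big(\mathcal K(\xi_1,\xi_2)\,\scf C(\xi_1,\xi_2)\big)$; i.e.\ each is a multiplication operator followed by a projection. So I would reduce the proposition to two independent facts: (i) multiplication by $1/(2|\xi|)$ (resp.\ by $\mathcal K$) intertwines the $SO(3)$-action, and (ii) $\pi_{\ssf V}$ (resp.\ $\pi_{\ssf V\otimes\ssf V}$) intertwines it. Composing (i) and (ii) yields (\ref{commuting_rotation}). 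The hypothesis that $\ssf V$ is closed under rotations is used here twice: it makes $\hat A(\sdr R.\scf Y)$ meaningful in the first place (the operator $\hat A$ lives on $\ssf V$), and it gives that $\ssf V \otimes \ssf V$ is closed under the diagonal $SO(3)$-action, so $\hat L(\sdr R.\scf C)$ is meaningful too.

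Fact (i) is immediate because the symbols $1/|\xi|$ and $1/|\xi_1\times\xi_2|$ are themselves $SO(3)$-invariant functions: $|\sdr R^T\xi| = |\xi|$, and for $\sdr R \in SO(3)$ the identity $\sdr R^T\xi_1 \times \sdr R^T\xi_2 = \sdr R^T(\xi_1\times\xi_2)$ gives $|\sdr R^T\xi_1 \times \sdr R^T\xi_2| = |\xi_1\times\xi_2|$, hence $\mathcal K(\sdr R^T\xi_1,\sdr R^T\xi_2) = \mathcal K(\xi_1,\xi_2)$. Expanding $\big(\sdr R.(M\scf Y)\big)(\xi) = (M\scf Y)(\sdr R^T\xi)$ and pulling the invariant symbol out then gives $\sdr R.(M\scf Y) = M(\sdr R.\scf Y)$ in one line (and likewise for the diagonal action), once one notes the rotation action extends in the obvious way to the distributions $M\scf Y$. (This is the only place $\det \sdr R = 1$ is used, though in fact $|\xi_1\times\xi_2|$ is invariant under all of $O(3)$.)

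For Fact (ii) I would first note that since the weight $w$ is radial, the substitution $\zeta = \sdr R^T\xi$ shows the $SO(3)$-action is unitary on $L^2_w(\br^3)$, and the diagonal action is unitary on $L^2_w(\br^3\times\br^3)$ with weight $w(|\xi_1|)w(|\xi_2|)$. Then, using the characterization of $\pi_{\ssf V}$ supplied by Lemma \ref{useful_lemma} --- $\pi_{\ssf V}\eta$ is the unique element of $\ssf V$ with $\ip{\pi_{\ssf V}\eta}{\scf Y}_{L^2_w(\br^3)} = \ip{\eta}{\scf Y}$ for all $\scf Y \in \ssf V$ --- I would check, for every $\scf Y \in \ssf V$,
\begin{equation*}
\begin{split}
\ip{\sdr R.(\pi_{\ssf V}\eta)}{\scf Y}_{L^2_w(\br^3)}
&= \ip{\pi_{\ssf V}\eta}{\sdr R^{-1}.\scf Y}_{L^2_w(\br^3)}
= \ip{\eta}{\sdr R^{-1}.\scf Y} \\
&= \ip{\sdr R.\eta}{\scf Y}
= \ip{\pi_{\ssf V}(\sdr R.\eta)}{\scf Y}_{L^2_w(\br^3)},
\end{split}
\end{equation*}
where the first step uses unitarity together with $\sdr R^{-1}.\scf Y \in \ssf V$, the second and fourth are Lemma \ref{useful_lemma}, and the third is the definition of the $SO(3)$-action on $C_0^\infty(\br^3)'$ as the transpose of the action on test functions. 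Since both $\sdr R.(\pi_{\ssf V}\eta)$ and $\pi_{\ssf V}(\sdr R.\eta)$ lie in $\ssf V$, they coincide, and the same argument applies verbatim to $\pi_{\ssf V \otimes \ssf V}$. Combining with Fact (i), $\sdr R.(\hat A\scf Y) = \sdr R.\pi_{\ssf V}(M\scf Y) = \pi_{\ssf V}\big(\sdr R.(M\scf Y)\big) = \pi_{\ssf V}\big(M(\sdr R.\scf Y)\big) = \hat A(\sdr R.\scf Y)$, and identically $\sdr R.(\hat L\scf C) = \hat L(\sdr R.\scf C)$. I do not expect a real obstacle here; the only thing requiring care is the bookkeeping separating the distributional pairing $\ip{\cdot}{\cdot}$ from the weighted inner product $\ip{\cdot}{\cdot}_{L^2_w}$, and confirming the two rotation actions are mutually adjoint so that $\ip{\sdr R.\eta}{\scf Y} = \ip{\eta}{\sdr R^{-1}.\scf Y}$.
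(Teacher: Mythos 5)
Your proposal is correct and follows essentially the same route as the paper: factor $\hat A$ and $\hat L$ as a rotation-invariant multiplication operator composed with $\pi_{\ssf V}$ (resp.\ $\pi_{\ssf V\otimes\ssf V}$), show each factor intertwines the $SO(3)$-action---the projection via Lemma \ref{useful_lemma} and closure of $\ssf V$ under rotations, the multiplication via invariance of $1/|\xi|$ and of $|\xi_1\times\xi_2|$---and compose. Your explicit remarks on the unitarity of the rotation action on $L^2_w$ for radial $w$ and on the cross-product identity simply spell out steps the paper leaves implicit.
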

\begin{proof}
We begin by proving the first half of (\ref{commuting_rotation}). First of all, extend the group action of SO(3) to the space $C_0^\infty(\br^3)'$ via 
\begin{equation}
\ip{R.\eta}{\scf Y} := \ip{\eta}{R^{-1}.\scf Y}, \quad \scf Y \in C_0^\infty(\br^3).
\end{equation}
We claim that for any $\eta \in C_0^\infty(\br^3)'$, we have $R.(\pi_{\ssf V}\eta) = \pi_{\ssf V}(R.\eta)$. Since $\ssf V$ is closed under rotations, both sides of this equation are elements of $\ssf V$. We can verify their equality by taking an inner product with an arbitrary element $\scf Y \in \ssf V$. Using Lemma \ref{useful_lemma} and the fact that $\ssf V$ is closed under rotations, we obtain
\begin{equation}
\begin{split}
\ip{R.(\pi_{\ssf V}\eta)}{\scf Y}_{L^2_w(\br^3)} = \ip{\pi_{\ssf V}\eta}{R^{-1}.\scf Y}_{L^2_w(\br^3)} &= \ip{\eta}{R^{-1}.\scf Y} \\
&= \ip{R.\eta}{\scf Y} = \ip{\pi_{\ssf V}(R.\eta)}{\scf Y}_{L^2_w(\br^3)}.
\end{split}
\end{equation}
Next, we claim that for any $\scf Y \in \ssf V$, we have $R.(\scf A \scf Y) = \scf A (R.\scf Y)$. To check whether these two elements of $C_0^\infty(\br^3)'$ are the same, we apply them to a test function $\scf Z \in C_0^\infty(\br^3)$:
\begin{equation}
\begin{split}
\ip{R.(\scf A \scf Y)}{\scf Z} = \ip{\scf A \scf Y}{R^{-1}.\scf Z} &= \int_{\br^3}\frac{\scf Y(\xi)}{2|\xi|}\scf Z(R\xi)d\xi \\
&= \int_{\br^3}\frac{\scf Y(R^H\xi)}{2|\xi|}\scf Z(\xi)d\xi = \ip{\scf A (R.\scf Y)}{\scf Z}.
\end{split}
\end{equation}
Putting together what we have, we find that
\begin{equation}
R.(\hat A \scf Y) = R.(\pi_{\ssf V}(\scf A \scf Y)) = \pi_{\ssf V}(R.(\scf A \scf Y)) = \pi_{\ssf V}(\scf A(R. \scf Y)) = \hat A(R. \scf Y),
\end{equation}
which proves the first half of (\ref{commuting_rotation}). The second half is proved analogously.
\end{proof}

This property of $\hat A$ and $\hat L$ is to be expected, given the rotationally symmetric nature of these operators. This suggests that $\hat L$ can be studied further using the representation theory of $SO(3)$.

Finally, let us check that the assumptions of Propositions \ref{mu_main} and \ref{Sigma_main} hold in the cryo-EM case. It follows from Proposition \ref{min_eig} that as long as $M_{w}(\omega_{\max}) < \infty$, the limiting operators $\hat A$ and $\hat L$ are invertible. Of course, it is always possible to choose such a weight $w$. In particular the weights already considered, $w = 1, 1/|\xi|$ satisfy this property. Moreover, by rotational symmetry, $\norm{\hat P(\sdr R)}$ is independent of $\sdr R$, and so of course this quantity is uniformly bounded. Thus, we have checked all the necessary assumptions to arrive at the following conclusion: 
\begin{proposition} \label{cryo_em_consistency}
If we neglect the errors incurred in moving to the Fourier domain and assume that the rotations are drawn uniformly from $SO(3)$, then the estimators $\hat \mu_n$ and $\hat \Sigma_n$ obtained from (\ref{mueq_new}) and (\ref{sigeq_new}) are consistent. 
\end{proposition}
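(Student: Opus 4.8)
The strategy is to verify that the hypotheses of Propositions~\ref{mu_main} and~\ref{Sigma_main} hold for the Fourier-domain model. Once the discretization errors in (\ref{approx_1})--(\ref{approx_2}) are neglected, equations (\ref{mueq_new}) and (\ref{sigeq_new}) are exactly the normal equations (\ref{mueq}) and (\ref{sigeq}) for an instance of Problem~\ref{main_problem} in which $p, q, \sigma^2$ are replaced by $\hat p, \hat q, \sigma^2 c_q$ and $\rdr P$ is the random matrix of $\scf P(\rdr R)|_{\ssf V}$. So there are three things to check: invertibility of the limiting operators $\hat A$ and $\hat L$; almost-sure boundedness of $\norm{\rdr P}$; and the moment conditions on $\rdf X$ and $\rdf E$. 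The uniform-rotation hypothesis enters precisely at the first of these, since the diagonal forms (\ref{A_coords})--(\ref{L_coords}) of $\hat A$ and $\hat L$ — and hence Proposition~\ref{min_eig} — were derived using that $\rdr R^3$ is uniform on $S^2$.

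First I would dispose of invertibility: fixing any admissible radial weight $w$ with $M_w(\omega_{\max}) < \infty$ — for instance $w \equiv 1$, or the ramp-cancelling choice $w = 1/|\xi|$ — Proposition~\ref{min_eig} gives $\lambda_{\min}(\hat A) \ge 1/(2 M_w(\omega_{\max})) > 0$ and $\lambda_{\min}(\hat L) \ge 1/(2\pi M_w^2(\omega_{\max})) > 0$, so $\hat A$ and $\hat L$ are invertible and the estimators in (\ref{mueq_new})--(\ref{sigeq_new}) are well defined. For the moment conditions: $\rdf E$ is Gaussian, hence subgaussian, so by the remark following Proposition~\ref{Sigma_main} it satisfies (\ref{moment_condition}) with $\deg Q \le 1$; and in the discrete heterogeneity setting $\rcr X$ takes only the finitely many values $\scf X^1, \dots, \scf X^C$, so the random coefficient vector $\rdf X$ is bounded and a fortiori satisfies (\ref{moment_condition}) (the same holds for any $\rdf X$ with polynomially growing moments). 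Granting the boundedness of $\norm{\rdr P}$, Propositions~\ref{mu_main} and~\ref{Sigma_main} then apply verbatim, yielding $\mathbb E\norm{\hat \mu_n - \hat \mu_0} = O(n^{-1/2})$ and $\mathbb E\norm{\hat \Sigma_n - \hat \Sigma_0} = O(Q(\log n)/\sqrt n)$, i.e.\ consistency.

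The one step that is more than bookkeeping — and the point where I would be careful — is the uniform boundedness of $\norm{\scf P(\sdr R)|_{\ssf V}}$ in $\sdr R$. The clean argument is that, under the isometry $\bc^{\hat q} \leftrightarrow \ssf I$, the operator $\scf P(\sdr R)|_{\ssf V}$ factors as the map $\scf Y \mapsto \scf Y(\sdr R^T\,\cdot\,)$ on $\ssf V$ — which is unitary for the weighted inner product because $\ssf V$ is rotation-invariant and $w$ is radial — followed by a fixed, rotation-independent slicing operator: restriction to the plane $\hat z = 0$ composed with the identification into $\ssf I$, which is legitimate because $\scf P(\ssf V) \subset \ssf I$ by the assumption in Section~\ref{discrete_estimation}. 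Hence $\norm{\scf P(\sdr R)|_{\ssf V}}$ equals the norm of the slicing operator alone, a finite constant independent of $\sdr R$; this is exactly the almost-sure bound demanded by Propositions~\ref{mu_main} and~\ref{Sigma_main}, which completes the verification.
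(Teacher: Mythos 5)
Your proposal is correct and follows essentially the same route as the paper: verify the hypotheses of Propositions \ref{mu_main} and \ref{Sigma_main}, using Proposition \ref{min_eig} for invertibility of $\hat A$ and $\hat L$ and rotational symmetry for the uniform bound on $\norm{\rdr P}$. Your factorization of $\scf P(\sdr R)|_{\ssf V}$ into a unitary rotation followed by a fixed slicing operator, and your explicit check of the moment conditions, simply spell out details the paper leaves implicit.
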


\section{Using $\hat \mu_n, \hat \Sigma_n$ to determine the conformations} \label{finding_conformations}

To solve Problem \ref{het_problem}, we must do more than just estimate $\hat \mu_0$ and $\hat \Sigma_0$. We must also estimate $C$, $\hat X^c$, and $p_c$, where $\sdf X^c$ is the coefficient vector of $\scf X^c$ in the basis for $\ssf V$. Once we solve (\ref{mueq_new}) and (\ref{sigeq_new}) for $\hat \mu_n$ and $\hat \Sigma_n$, we perform the following steps.

From the discussion on high-dimensional PCA in Section \ref{rmt}, we expect to determine the number of structural states by inspecting the spectrum of $\hat \Sigma_n$. We expect the spectrum of $\hat \Sigma_n$ to consist of a bulk distribution along with $C - 1$ separate eigenvalues (assuming the SNR is sufficiently high), a fact confirmed by our numerical results. Hence, given $\hat \Sigma_n$, we can estimate $C$.

Next, we discuss how to reconstruct $\hat X^1, \dots, \hat X^C$ and $p_1, \dots, p_C$. Our approach is similar to Penczek's \cite{penczek}. By the principle of PCA, the leading eigenvectors of $\hat \Sigma_0$ span the space of mean subtracted volumes $\hat X^1 - \hat \mu_0, \dots, \hat X^C - \hat \mu_0$. If $\hat V_n^1, \dots, \hat V_n^{C-1}$ are the leading eigenvectors of $\hat \Sigma_n$, we can write
\begin{equation}
\hat X_s \approx \hat \mu_n + \sum_{c' = 1}^{C-1} \alpha_{s, c'} \hat V_n^{c'}.
\label{factorial}
\end{equation}
Note that there is only approximate equality because we have replaced the mean $\hat \mu_0$ by the estimated mean $\hat \mu_n$, and the eigenvectors of $\hat \Sigma_0$ by those of $\hat \Sigma_n$.
We would like to recover the coefficients $\alpha_{s} = (\alpha_{s, 1}, \dots, \alpha_{s, C-1})$, but the $\hat X_s$ are unknown. Nevertheless, if we project the above equation by $\hat P_s$, then we get
\begin{equation}
\sum_{c' = 1}^{C-1}\alpha_{s, c'} \hat P_s \hat V_n^{c'} \approx \hat P_s  \hat X_s -  \hat P_s \hat \mu_n = (\hat I_s - \hat P_s \hat \mu_n) - \hat \eps_s.
\label{least_squares}
\end{equation}
For each $s$, we can now solve this equation for the coefficient vector $\alpha_s$ in the least-squares sense. This gives us $n$ vectors in $\bc^{C-1}$. These should be clustered around $C$ points $\alpha^c = (\alpha^c_1, \dots, \alpha^c_{C-1})$ for $c = 1, \dots, C$, corresponding to the $C$ underlying volumes. At this point, Penczek proposes to perform $K$-means clustering on $\alpha_s$ in order to deduce which image corresponds to which class. However, if the images are too noisy, then it would be impossible to separate the classes via clustering. Note that in order to reconstruct the original volumes, all we need are the means of the $C$ clusters of coordinates. If the mean volume and top eigenvectors are approximately correct, then the main source of noise in the coordinates is the Gaussian noise in the images. It follows that the distribution of the coordinates in $\bc^{C-1}$ is a mixture of Gaussians. Hence, we can find the means $\alpha^c$ of each cluster using either an EM algorithm (of which the $K$-means algorithm used by Penczek is a limiting case \cite{bishop_book}) or the method of moments, e.g. \cite{disentangling}. In the current implementation, we use an EM algorithm. Once we have the $C$ mean vectors, we can reconstruct the original volumes using (\ref{factorial}). Putting these steps together, we arrive at a high-level algorithm to solve the heterogeneity problem (see Algorithm \ref{high_level_algorithm}).

\begin{algorithm}[h]
\caption{High-level algorithm for heterogeneity problem (Problem \ref{het_problem}).}
\label{high_level_algorithm}
\begin{algorithmic}[1]
\STATE \textbf{Input:} $n$ images $I_s$ and the corresponding rotations $\sdr R_s$
\STATE Estimate the noise level $\sigma^2$ from the corner regions of the images.
\STATE Choose bases for $\ssf I$ and $\ssf V$.
\STATE Map the images $I_s$ into $\sdf I_s \in \bc^{\hat q}$.
\STATE Estimate $\hat \mu_n, \hat \Sigma_n$ by solving (\ref{mueq_new}) and (\ref{sigeq_new}).
\STATE Compute the eigendecomposition of $\hat \Sigma_n$ and estimate its rank $r$. Set $C = r + 1$.
\STATE Estimate each $\alpha_{s} \in \bc^{C-1}$ by solving (\ref{least_squares}) using least squares.
\STATE Find $\alpha^c$ and $p_c$ by applying either EM or a method of moments algorithm to $\alpha_s$.
\STATE Using $\alpha^c$, find $\scf X^1, \dots, \scf X^C$ from (\ref{factorial}). Map volumes back to real domain for visualization.
\STATE \textbf{Output:} $C$, $\scr X^1, \dots, \scr X^C$, $p_1, \dots, p_C$.
\end{algorithmic}
\end{algorithm}

\section{Implementing Algorithm \ref{high_level_algorithm}} \label{practical}

In this section, we confront the practical challenges of implementing Algorithm \ref{high_level_algorithm}. We consider different approaches to addressing these challenges and choose one approach to explore further.

\subsection{Computational challenges and approaches} \label{comp_challenges_approaches}

The main computational challenge in Algorithm \ref{high_level_algorithm} is solving for $\hat \Sigma_n$ in
\begin{equation}
\hat L_n(\hat \Sigma_n) = \hat B_n,
\label{big_system}
\end{equation}
given the immense size of this problem. Two possibilities for inverting $\hat L_n$ immediately come to mind. The first is to treat (\ref{big_system}) as a large system of linear equations, viewing $\hat \Sigma_n$ as a vector in $\bc^{\hat p^2}$ and $\hat L_n$ as a matrix in $\bc^{\hat p^2 \times \hat p^2}$. In this scheme, the matrix $\hat L_n$ could be computed once and stored. However, this approach has an unreasonably large storage requirement. Since $\hat p = O(N_{\text{res}}^3)$, it follows that $\hat L_n$ has size $N_{\text{res}}^6 \times N_{\text{res}}^6$. Even for a small $N_{\text{res}}$ value such as 17, each dimension of $\hat L_n$ is $1.8\times 10^6$. Storing such a large $\hat L_n$ requires over 23 terabytes. Moreover, inverting this matrix na\"{i}vely is completely intractable. 

The second possibility is to abandon the idea of forming $\hat L_n$ as a matrix, and instead to use an iterative algorithm, such as the conjugate gradient (CG) algorithm, based on repeatedly applying $\hat L_n$ to an input matrix. From (\ref{sigeq_new}), we see that applying $\hat L_n$ to a matrix is dominated by $n$ multiplications of a $\hat q \times \hat p$ matrix by a $\hat p \times \hat p$ matrix, which costs $n \hat q \hat p^2 = O(n N_{\text{res}}^8)$. If $\kappa_n$ is the condition number of $\hat L_n$, then CG will converge in $O(\sqrt{\kappa_n})$ iterations (see, e.g., \cite{trefethen_numerical}). Hence, while the storage requirement of this alternative algorithm is only $O(\hat p^2) = O(N_{\text{res}}^6)$, the computational complexity is $O(nN_{\text{res}}^8\sqrt{\kappa_n})$. Thus, the price to pay for reducing the storage requirement is that $n$ matrix multiplications must be performed at each iteration. While this computational complexity might render the algorithm impractical for a regular computer, one can take advantage of the fact that the $n$ matrix multiplications can be performed in parallel.

We propose a third numerical scheme, one which requires substantially less storage than the first scheme above and does not require $O(n)$ operations at each iteration. We assume that $R_s$ are drawn from the uniform distribution over $SO(3)$, and so for large $n$, the operator $\hat L_n$ does not differ much from its limiting counterpart, $\hat L$ (defined in (\ref{L_coords})). Hence, if we replace $\hat L_n$ by $\hat L$ in (\ref{big_system}), we would not be making too large an error. Of course, $\hat L$ is a matrix of the same size as $\hat L_n$, so it is also impossible to store on a computer. However, we leverage the analytic form of $\hat L$ in order to invert it more efficiently. At this point, we have not yet chosen the spaces $\ssf V$ and $\ssf I$, and by constructing these carefully we give $\hat L$ a special structure. This approach also entails a tradeoff: in practice the approximation $\hat L_n \approx \hat L$ is accurate to the extent that $\sdr R_1^3, \dots, \sdr R_n^3$ are uniformly distributed on $S^2$. Hence, we must extract a subset of the given rotations whose viewing angles are approximately uniformly distributed on the sphere. Thus, the sacrifice we make in this approach is a reduction in the sample size. Moreover, since the subselected viewing directions are no longer statistically independent, the theoretical consistency result stated in Proposition \ref{cryo_em_consistency} does not necessarily extend to this numerical scheme.

Nevertheless, the latter approach is promising because the complexity of inverting $\hat L$ is independent of the number of images, and this computation might be tractable for reasonable values of $N_{\text{res}}$ if $\hat L$ has enough structure. It remains to construct $\ssf V$ and $\ssf I$ to induce a special structure in $\hat L$, which we turn to next.

\subsection{Choosing $\ssf V$ to make $\hat L$ sparse and block diagonal}

In this section, we write down an expression for an individual element of $\hat L$, and discover that for judiciously chosen basis functions $\hat h_i$, the matrix $\hat L$ becomes sparse and block diagonal. 

First, let us fix a functional form for the basis elements $\hat h_i$: let 
\begin{equation}
\hat h_i(r, \alpha) = f_i(r)a_i(\alpha), \quad r \in \br^+, \ \alpha \in S^2,
\label{radial_angular}
\end{equation}
where $f_i: \br^+ \rightarrow \br$ are radial functions and $a_i: S^2 \rightarrow \bc$ are spherical harmonics. Note, for example, that the 3D Slepian functions have this form \cite[eq. 110]{prolate4}. If $\hat h_i$ are orthogonal with respect to the weight $w$, then 
\begin{equation}
\ip{f_i}{f_j}_{L^2_{r^2 w(r)}}\ip{a_i}{a_j}_{L^2(S^2)} = \delta_{ij},
\label{orthogonality}
\end{equation}
where we use $L^2_w$ as a shorthand for $L^2_w(\br^+)$. The 3D Slepian functions satisfy the above condition with $w = 1$, because they are orthogonal in $L^2(\br^3)$.

Next, we write down the formula for an element $\hat L_{i_1, i_2, j_1, j_2}$ (here, $j_1, j_2$ are the indices of the input matrix, and $i_1, i_2$ are the indices of the output matrix). From (\ref{L_coords}) and Lemma \ref{useful_lemma}, we find
\begin{equation}
\begin{split}
\hat L_{i_1, i_2, j_1, j_2} &= \ip{\pi_{\ssf V \otimes \ssf V}\left((\hat h_{j_1} \otimes \hat h_{j_2})\mathcal K\right)}{\hat h_{i_1} \otimes \hat h_{i_2}}_{L^2_w(\br^3 \times \br^3)}\\
&= \int_{\br^3 \times \br^3}(\hat h_{j_1} \otimes \hat h_{j_2})(\xi_1, \xi_2)\mathcal K(\xi_1, \xi_2)\overline{(\hat h_{i_1} \otimes \hat h_{i_2})(\xi_1, \xi_2)}d\xi_1 d\xi_2\\
&= \int_{S^2\times S^2}\int_{\br^+ \times \br^+} (\hat h_{j_1} \otimes \hat h_{j_2})(\xi_1, \xi_2)\overline{(\hat h_{i_1} \otimes \hat h_{i_2})(\xi_1, \xi_2)}\frac{1}{2\pi r_1 r_2 |\alpha \times \beta|}r_1^2 r_2^2 dr_1 dr_2 d\alpha d\beta\\
&= \ip{f_{j_1}}{f_{i_1}}_{L^2_r}\ip{f_{j_2}}{f_{i_2}}_{L^2_r}\int_{S^2 \times S^2}(a_{j_1} \otimes a_{j_2})(\alpha, \beta)\overline{(a_{i_1} \otimes a_{i_2})(\alpha, \beta)}\frac{1}{2\pi|\alpha \times \beta|}d\alpha d\beta.
\label{L_indices}
\end{split}
\end{equation}
Thus, to make many of the radial inner products in $\hat L$ vanish, we see from (\ref{orthogonality}) that the correct weight is
\begin{equation}
w(r) = \frac{1}{r}.
\label{weight}
\end{equation}
Recall that this is the weight needed to cancel the ramp filter in $\hat A$ (see (\ref{ramp_cancellation})). We obtain a cancellation in $\hat L$ as well because the kernel of this operator also grows linearly with radial frequency. From this point on, $w$ will represent the weight above, and we will work in the corresponding weighted $L^2$ space.

What are sets of functions of the form (\ref{radial_angular}) that are orthonormal in $L^2_w(\br^3)$? If we chose 3D Slepian functions, we would get the functional form
\begin{equation}
\hat h_{k, \ell, m}(r, \alpha) = f_{k, \ell}(r)Y_\ell^m(\alpha).
\label{slepian_basis}
\end{equation}
However, these functions are orthonormal with weight $w = 1$ instead of $w = 1/r$. Consider modifying this construction by replacing the $f_{k, \ell}(r)$ by the radial functions arising in the 2D Slepian functions. These satisfy the property
\begin{equation}
\ip{f_{k_1, \ell_1}}{f_{k_2, \ell_2}}_{L^2_r} = 0 \quad \text{if} \quad \ell_1 = \ell_2, k_1 \neq k_2.
\end{equation}
With this property (\ref{slepian_basis}) becomes orthonormal in $L^2_w(\br^3)$. This gives $\hat L$ a certain degree of sparsity. However, note that the construction (\ref{slepian_basis}) has different families of $L^2_r$-orthogonal radial functions corresponding to each angular function. Thus, we only have orthogonality of the radial functions $f_{k_1, \ell_1}$ and $f_{k_2, \ell_2}$ when $\ell_1 = \ell_2$. Thus, many of the terms $\ip{f_{j}}{f_{i}}_{L^2_r}$ in (\ref{L_indices}) are still nonzero. 

A drastic improvement on (\ref{slepian_basis}) would be to devise an orthogonal basis in $L^2_w$ that used one set of $r$-weighted orthogonal functions $f_k$ for all the angular functions, rather than a separate set for each angular function. Namely, suppose we chose
\begin{equation}
\hat h_{k, \ell, m}(r, \alpha) = f_{k}(r)Y_\ell^m(\alpha), \quad (k, \ell, m) \in J,
\label{our_basis}
\end{equation}
where $J$ is some indexing set. Note that $f_k$ and $J$ need to be carefully constructed so that $\text{span}\{h_{k, \ell, m}\} \approx \mathscr B$ (see Section \ref{properties} for this construction). 
We have
\begin{equation}
f_{k}(r)Y_{\ell, m}(\alpha) = \hat h_{k, \ell, m}(r, \alpha) =\hat h_{k, \ell, m}(-r, -\alpha) = f_{k}(-r)Y_{\ell, m}(-\alpha) = (-1)^\ell f_k(-r)Y_{\ell, m}(\alpha).
\end{equation} 
Here, we assume that each $f_k$ is either even or odd at the origin, and we extend $f_k(r)$ to $r \in \br$ according to this parity. The above calculation implies that $f_k$ should have the same parity as $\ell$. Let us suppose that $f_k$ has the same parity as $k$. Then, it follows that $(k, \ell, m) \in J$ only if $k = \ell$ mod 2. Thus, $h_{k, \ell, m}$ will be orthonormal in $L^2_w$ if 
\begin{equation}
\{f_k : k = 0 \ \text{mod } 2\} \quad \text{and} \quad \{f_k : k = 1 \ \text{mod } 2\} \quad \text{are orthonormal in } L^2_r. 
\label{even_odd}
\end{equation}
If we let $k_i$ be the radial index corresponding to $i$, then we claim that the above construction implies
\begin{equation}
\begin{split}
\hat L_{i_1, i_2, j_1, j_2} &= \delta_{k_{i_1}k_{j_1}}\delta_{k_{i_2}k_{j_2}}\int_{S^2 \times S^2}(a_{j_1} \otimes a_{j_2})(\alpha, \beta)\overline{(a_{i_1} \otimes a_{i_2})(\alpha, \beta)}\frac{1}{2\pi|\alpha \times \beta|}d\alpha d\beta.
\label{block_diagonal}
\end{split}
\end{equation}
This statement does not follow immediately from (\ref{even_odd}), because we still need to check the case when $k_{i_1} \neq k_{j_1}$ mod 2. Note that in this case, the dependence on $\alpha$ in the integral over $S^2 \times S^2$ is odd, and so indeed $\hat L_{i_1, i_2, j_1, j_2} = 0$ in that case as well. If $\ssf V_k$ is the space spanned by $f_k(r)Y_{\ell}^m(\alpha)$ for all $\ell, m$, then the above implies that $\hat L$ operates separately on each $\ssf V_{k_1} \otimes \ssf V_{k_2}$. In the language of matrices, this means that if we divide $\hat \Sigma_n$ into blocks $\hat \Sigma_n^{k_1, k_2}$ based on radial indices, $\hat L$ operates on these blocks separately. We denote each of the corresponding ``blocks" of $\hat L$ by $\hat L^{k_1, k_2}$. Let us re-index the angular functions so that $a_i^k$ denotes the $i$'th angular basis function paired with $f_k$. From (\ref{block_diagonal}), we have
\begin{equation}
\begin{split}
\hat L^{k_1, k_2}_{i_1, i_2, j_1, j_2} &= \int_{S^2 \times S^2}(a_{j_1}^{k_1} \otimes a_{j_2}^{k_2})(\alpha, \beta)\overline{(a_{i_1}^{k_1} \otimes a_{i_2}^{k_2})(\alpha, \beta)}\frac{1}{2\pi|\alpha \times \beta|}d\alpha d\beta.\\
\end{split}
\label{L_block}
\end{equation}
This block diagonal structure of $\hat L$ makes it much easier to invert. Nevertheless, each block $\hat L^{k_1, k_2}$ is a square matrix with dimension $O(k_1^2 k_2^2)$. Hence, inverting the larger blocks of $\hat L$ can be difficult. Remarkably, it turns out that each block of $\hat L$ is sparse. In Appendix \ref{simplifying_integral}, we simplify the above integral over $S^2 \times S^2$. Then, (\ref{L_block}) becomes
\begin{equation}
\begin{split}
\hat L^{k_1, k_2}_{i_1, i_2, j_1, j_2} = \sum_{\ell, m} c(\ell)C_{\ell, m}(\overline{a_{i_1}^{k_1}}a_{j_1}^{k_1}) \overline{C_{\ell, m}(\overline{a_{i_2}^{k_2}}a_{j_2}^{k_2})},
\label{L_hat_indices}
\end{split}
\end{equation}
where the constants $c(\ell)$ are defined in (\ref{c_ell}) and $C_{\ell, m}(\hat \psi)$ is the $\ell, m$ coefficient in the spherical harmonic expansion of $\hat \psi: S^2 \rightarrow \bc$. It turns out that the above expression is zero for most sets of indices. To see why, recall that the functions $a_i^k$ are spherical harmonics. It is known that the product $Y_{\ell}^m Y_{\ell'}^{m'}$ can be expressed as a linear combination of harmonics $Y_L^M$, where $M = m + m'$ and $|\ell - \ell'| \leq L \leq \ell + \ell'$. Thus, $C_\ell^m\left(\overline{a_{i}}a_{j}\right)$ are sparse vectors, which shows that each block $\hat L^{k_1, k_2}$ is sparse. For example, $\hat L^{15, 15}$ has each dimension approximately $2\times 10^4$. However, only about $10^7$ elements of this block are nonzero, which is only about 3\% of its the total number of entries. This is about the same number of elements as a $3000\times 3000$ full matrix.

Thus, we have found a way to tractably solve the covariance matrix estimation problem: reconstruct $\hat \Sigma_n$ (approximately) by solving the sparse linear systems
\begin{equation}
\hat L^{k_1, k_2} \hat \Sigma_n^{k_1, k_2} = \hat B_n^{k_1, k_2},
\label{linear_system}
\end{equation}
where we recall that $\hat B_n$ is the RHS of (\ref{sigeq_new}). Also, using the fact that $\hat A_n \approx \hat A = \frac12 \text{I}_{\hat q}$, we can estimate $\hat \mu_n$ from 
\begin{equation}
\hat \mu_n = \frac{2}{n}\sum_{s = 1}^n \hat P_s^H \hat I_s.
\label{mu_system}
\end{equation}

In the next two sections, we discuss how to choose the radial components $f_k(r)$ and define $\ssf{I}$ and $\ssf V$ more precisely.

\subsection{Constructing $f_k(r)$ and the space $\ssf V$} \label{properties}

We have discussed so far that 
\begin{equation}
\ssf V = \text{span}(\{f_k(r)Y_\ell^m(\theta, \vp): (k, \ell, m) \in J\}),
\end{equation}
with $(k, \ell, m) \in J$ only if $k = \ell$ mod 2. Moreover, we have required the orthonormality condition (\ref{even_odd}). However, recall that we initially assumed that the real-domain functions $\scr X_s$ belonged to the space of 3D Slepian functions $\mathscr B$. Thus, we must choose $\ssf V$ to approximate the image of $\mathscr B$ under the Fourier transform. Hence, the basis functions $f_k(r)Y_\ell^m(\theta, \vp)$ should be supported in the ball of radius $\omega_{\max}$ and have their inverse Fourier transforms concentrated in the unit ball. Moreover, we must have $\text{dim}(\ssf V) \approx \text{dim}(\mathscr B)$. Finally, the basis functions $\hat h_i$ should be analytic at the origin (they are the truncated Fourier transforms of compactly supported molecules). We begin by examining this condition.

Expanding $\hat h_i$ in a Taylor series near the origin up to a certain degree, we can approximate it locally as a finite sum of homogeneous polynomials. By \cite[Theorem 2.1]{stein_weiss}, a homogeneous polynomial of degree $d$ can be expressed as 
\begin{equation}
H_d(\xi) = r^d(c_d Y_d(\alpha) + c_{d - 2}Y_{d-2}(\alpha) + \cdots),
\label{homogeneous}
\end{equation}
where each $Y_\ell$ represents a linear combination of spherical harmonics of degree $\ell$.
Hence, if $(k, \ell, m) \in J$, then we require that $f_k(r) = \alpha_\ell r^\ell + \alpha_{\ell +2}r^{\ell +2} + \cdots$, where some coefficients can be zero. We satisfy this requirement by constructing $f_0, f_1, \dots$ so that 
\begin{equation}
f_k(r) = \alpha_{k, k} r^k + \alpha_{k, k+2}r^{k+2} + \cdots
\end{equation}
for small $r$ with $\alpha_{k, k} \neq 0$, and combine $f_k$ with $Y_\ell^m$ if $k = \ell$ mod 2 and $\ell \leq k$. This leads to the following set of 3D basis functions:
\begin{equation}
\{\hat h_i\} = \{f_0Y_0^0, f_1 Y_1^{-1}, f_1 Y_1^0, f_1 Y_1^1, f_2 Y_0^0, f_2 Y_2^{-2}, \dots, f_2 Y_2^2, \dots \}.
\label{h_def}
\end{equation}
Written another way, we define
\begin{equation}
\ssf V = \text{span}\left(\left\{f_k(r)Y_\ell^m(\theta, \vp): 0 \leq k \leq K, \ \ell = k \text{ (mod 2)}, \ 0 \leq \ell \leq k, \ |m| \leq \ell\right\}\right).
\label{V_def}
\end{equation}
Following the reasoning preceding (\ref{homogeneous}), it can be seen that near the origin, this basis spans the set of polynomial functions up to degree $K$.

Now, consider the real- and Fourier-domain content of $\hat h_i$. The bandlimitedness requirement on $\scr X_s$ is satisfied if and only if the functions $f_k$ are supported in the interval $[0, \omega_{\max}]$. To deal with the real domain requirement, we need the inverse Fourier transform of $f_k(r)Y_\ell^m(\theta, \vp)$. With the Fourier convention (\ref{fourier}), it follows from \cite{hankel} that
\begin{equation}
\begin{split}
\mathcal F^{-1}\left(f_k(r)Y_\ell^m(\theta, \vp)\right)(r_x, \theta_x, \vp_x) &= \frac{1}{2\pi^2}i^\ell \left(\int_{0}^\infty f_k(r )j_\ell(r r_x)r^2 dr \right)Y_\ell^m(\theta_x, \vp_x) \\
&= \frac{1}{2\pi^2}i^\ell (S_\ell f_k)(r_x)Y_\ell^m(\theta_x, \vp_x).
\end{split}
\end{equation}
Here, $j_\ell$ is the spherical Bessel function of order $\ell$, and $S_\ell$ is the spherical Hankel transform. Also note that $(r, \theta, \vp)$ are Fourier-domain spherical coordinates, while $(r_x, \theta_x, \vp_x)$ are their real-domain counterparts. Thus, satisfying the real-domain concentration requirement amounts to maximizing the percentage of the energy of $S_\ell f_k$ that is contained in $[0, 1]$ for $0 \leq k \leq K, \ 0 \leq \ell \leq k, \ \ell = k$ mod 2.

Finally, we have arrived at the criteria we would like $f_k(r)$ to satisfy:
\begin{enumerate}
\item $\supp f_k \subset [0, \omega_{\max}]$;
\item $\{f_k : k \text{ even}\}$ and $\{f_k : k \text{ odd}\}$ orthonormal in $L^2(\br^+, r)$;
\item $f_k(r) = \alpha_{k,k} r^k + \alpha_{k, k+2}r^{k+2} + \cdots$ near $r = 0$;
\item Under the above conditions, maximize the percentage of the energy of $S_\ell f_k$ in $[0, 1]$, for $0 \leq k \leq K,\  0 \leq \ell \leq k, \ \ell = k$ mod 2.
\end{enumerate}
While it might be possible to find an optimal set of such functions $\{f_k\}$ by solving an optimization problem, we can directly construct a set of functions that satisfactorily satisfies the above criteria.

Note that since $\ell$ ranges in $[0, k]$, it follows that for larger $k$, we need to have higher-order spherical Hankel transforms $S_\ell f_k$ remain concentrated in $[0, 1]$. Since higher-order spherical Hankel transforms tend to be less concentrated for oscillatory functions, it makes sense to choose $f_k$ to be less and less oscillatory as $k$ increases. Note that the functions $f_k$ cannot all have only few oscillations because the even and odd functions must form orthonormal sets. Using this intuition, we construct $f_k$ as follows. Since the even and odd $f_k$ can be constructed independently, we will illustrate the idea by constructing the even $f_k$. For simplicity, let us assume that $K$ is odd, with $K = 2K_0 + 1$. Define the cutoff $\chi = \chi([0, \omega_{\max}])$. First, consider the sequence
\begin{equation}
J_{0}(z_{0, K_0 + 1}r/\omega_{\max})\chi, J_{2}(z_{2, K_0}r/\omega_{\max})\chi, \dots,
J_{2K_0}(z_{2K_0,1} r/\omega_{\max})\chi,
\end{equation}
where $z_{k, m}$ is the $m$th positive zero of $J_k$ (the $k$th order Bessel function). Note that the functions in this list satisfy criteria 1 (by construction) and 3 (due to the asymptotics of the Bessel function at the origin). Also note that we have chosen the scaling of the arguments of the Bessel functions so that the number of zero crossings decreases as the list goes on. Thus, the functions become less and less oscillatory, which is the pattern that might lead to satisfying criterion 4. However, since these functions might not be orthogonal with respect to the weight $r$, we need to orthonormalize them with respect to this weight (via Gram-Schmidt). We need to be careful to orthonormalize them in such a way as to preserve the properties that they already satisfy. This can be achieved by running the ($r$-weighted) Gram-Schmidt algorithm from higher $k$ towards lower $k$. This preserves the supports of the functions, their asymptotics at the origin, and the oscillation pattern. Moreover, the orthogonality property now holds as well. See Figure \ref{radial_basis} for the first several even radial basis functions. Constructing the odd radial functions requires following an analogous procedure. Also, changing the parity of $K$ requires the obvious modifications.

\begin{figure}
        \centering
        \begin{subfigure}[b]{0.24\textwidth}
                \centering
                \includegraphics[scale = 0.04]{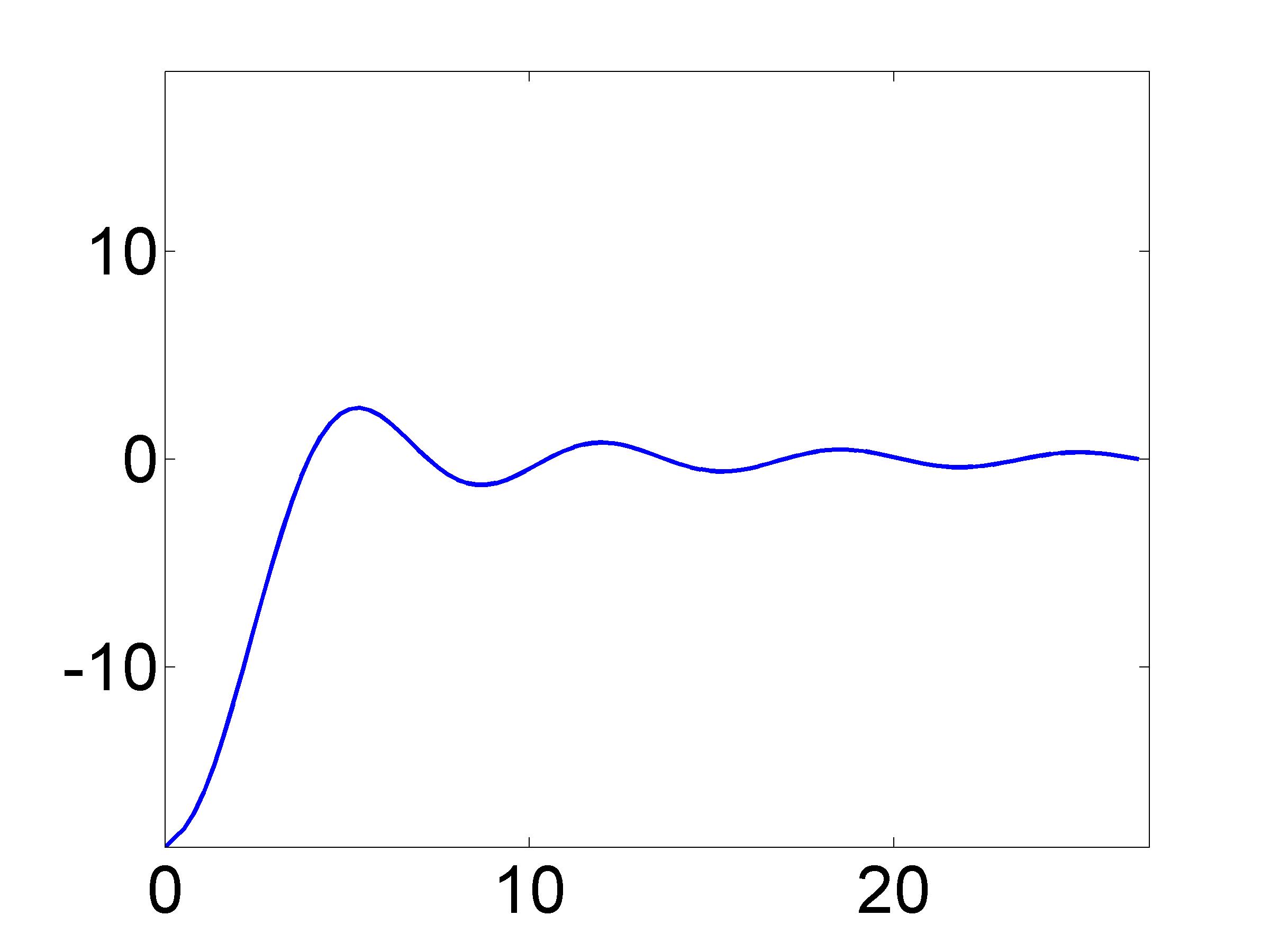}
	\caption{$f_0(r)$}
        \end{subfigure}
	\begin{subfigure}[b]{0.24\textwidth}
                \centering
                \includegraphics[scale = 0.04]{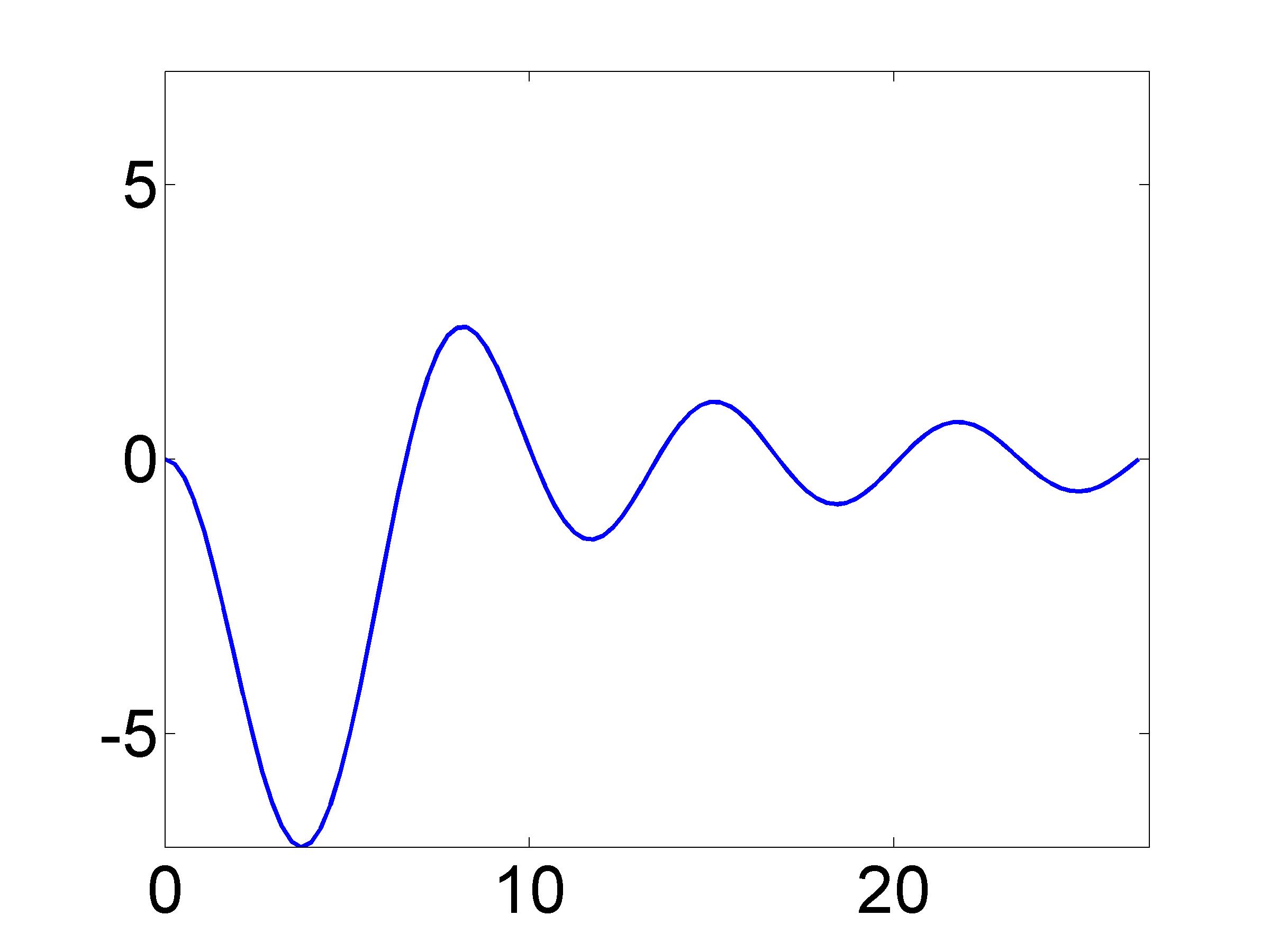}
	\caption{$f_2(r)$}
        \end{subfigure}
	\begin{subfigure}[b]{0.24\textwidth}
                \centering
                \includegraphics[scale = 0.04]{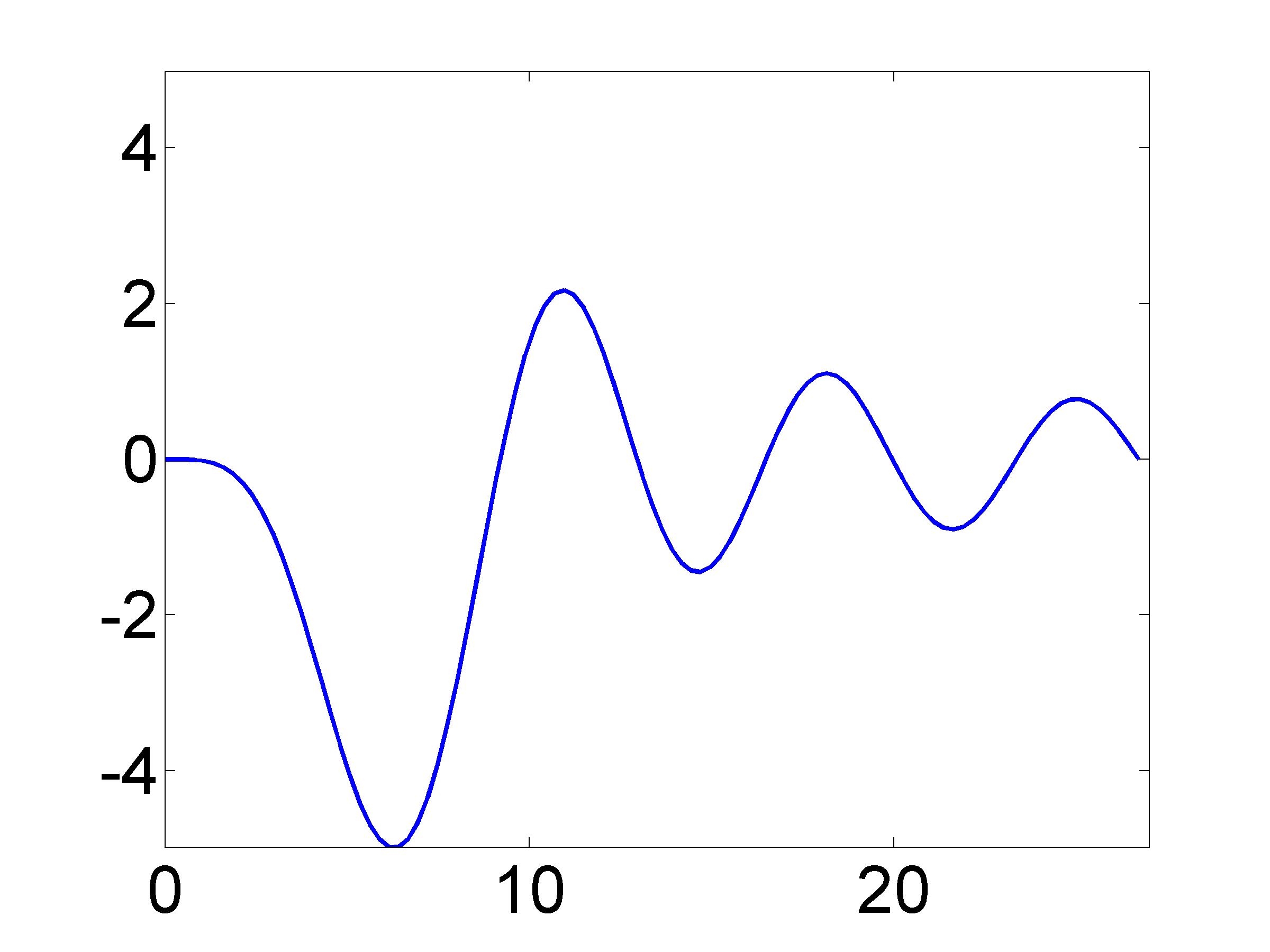}
	\caption{$f_4(r)$}
        \end{subfigure}
	\begin{subfigure}[b]{0.24\textwidth}
                \centering
                \includegraphics[scale = 0.04]{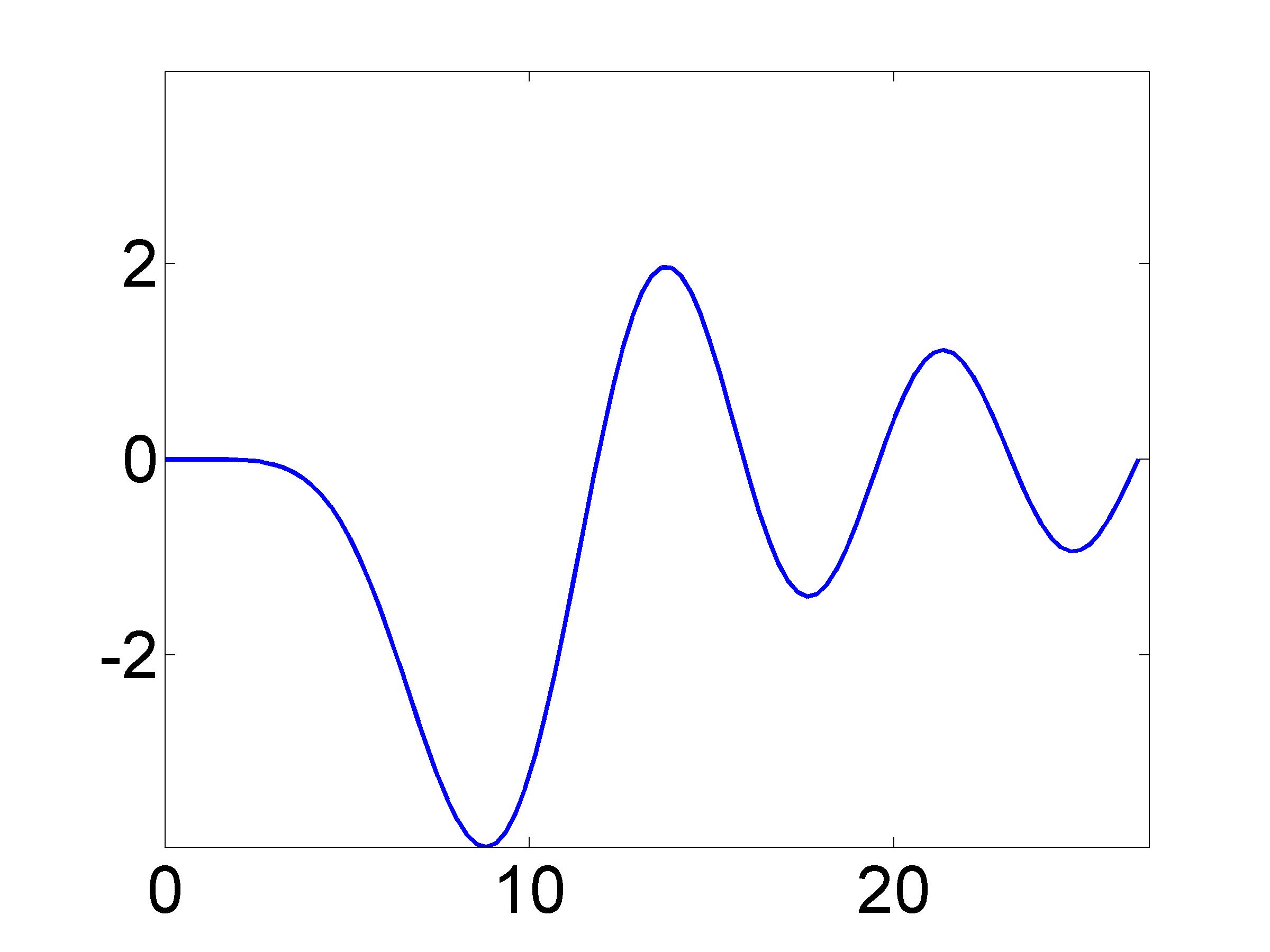}
	\caption{$f_6(r)$}
        \end{subfigure}
	\\
	\begin{subfigure}[b]{0.24\textwidth}
                \centering
                \includegraphics[scale = 0.04]{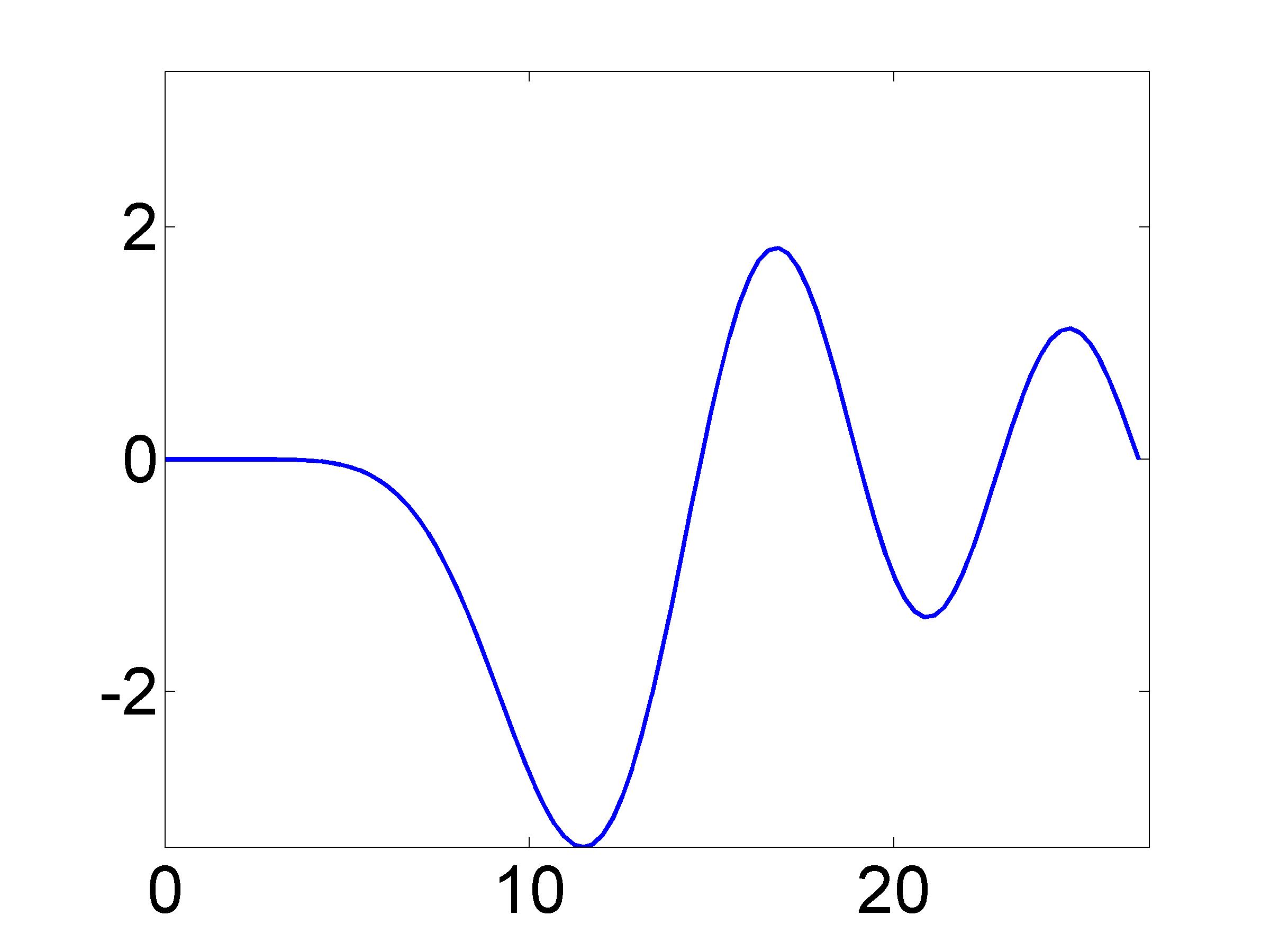}
	\caption{$f_8(r)$}
        \end{subfigure}
	\begin{subfigure}[b]{0.24\textwidth}
                \centering
                \includegraphics[scale = 0.04]{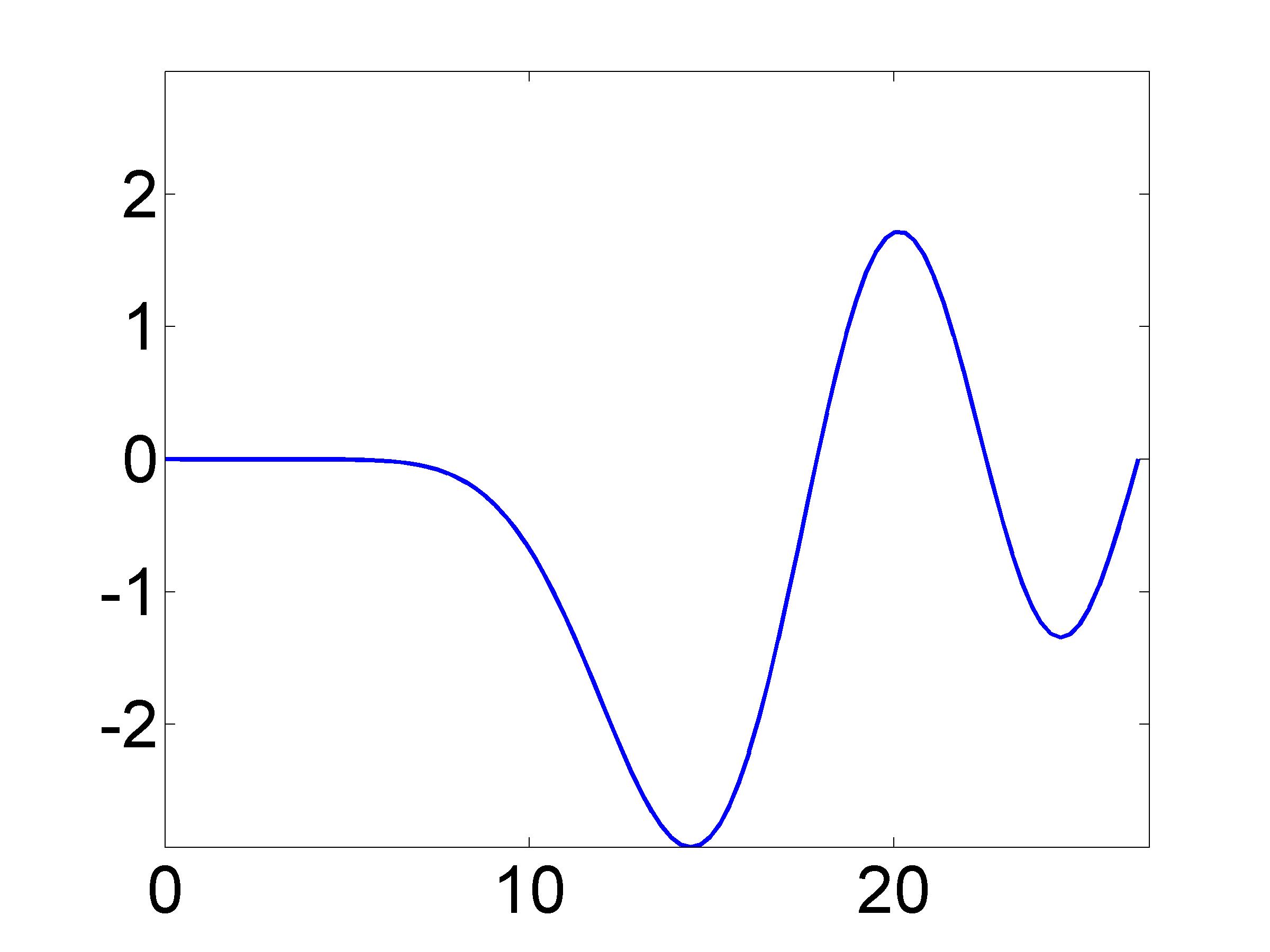}
	\caption{$f_{10}(r)$}
        \end{subfigure}
	\begin{subfigure}[b]{0.24\textwidth}
                \centering
                \includegraphics[scale = 0.04]{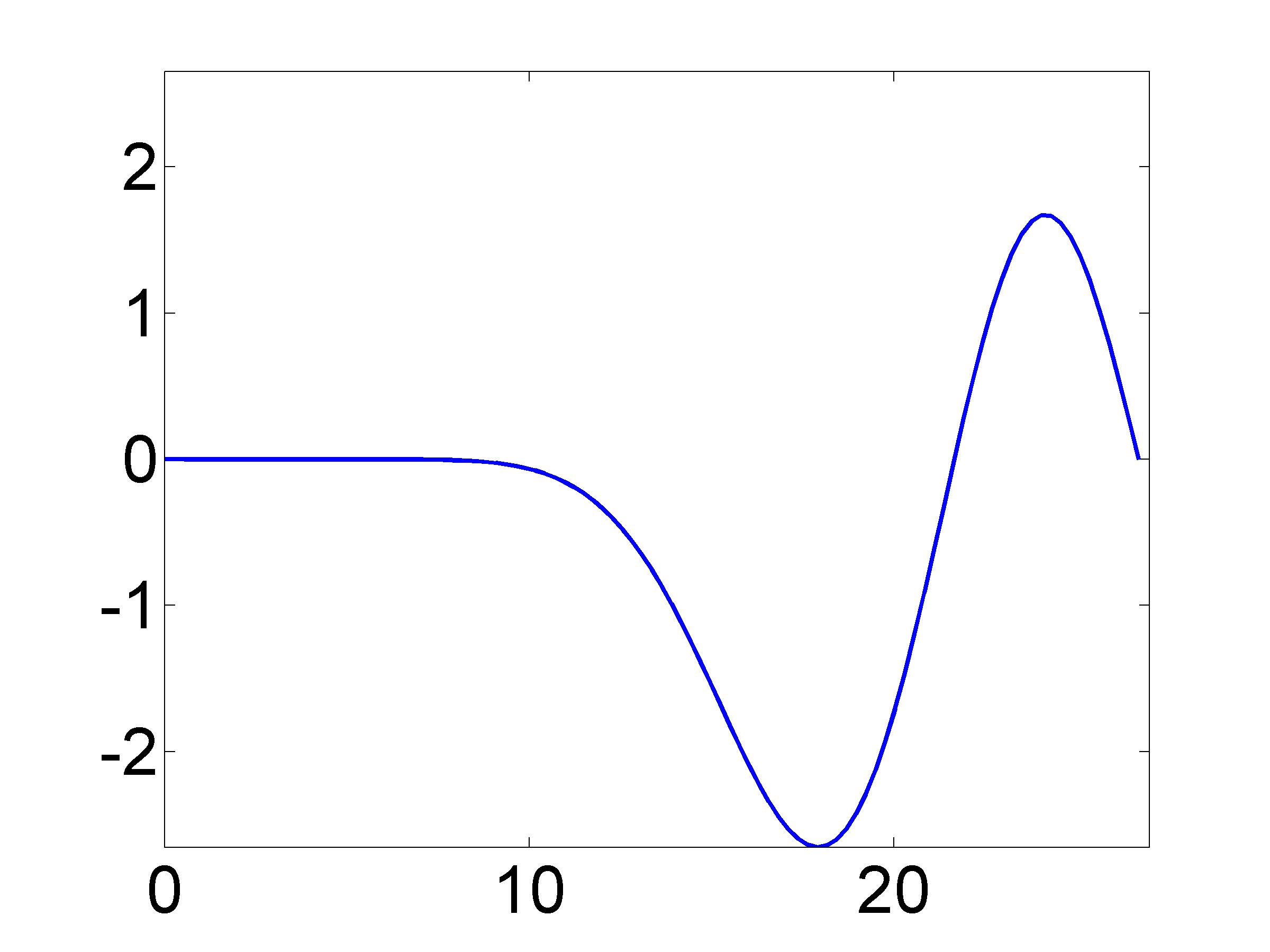}
	\caption{$f_{12}(r)$}
        \end{subfigure}
	\begin{subfigure}[b]{0.24\textwidth}
                \centering
                \includegraphics[scale = 0.04]{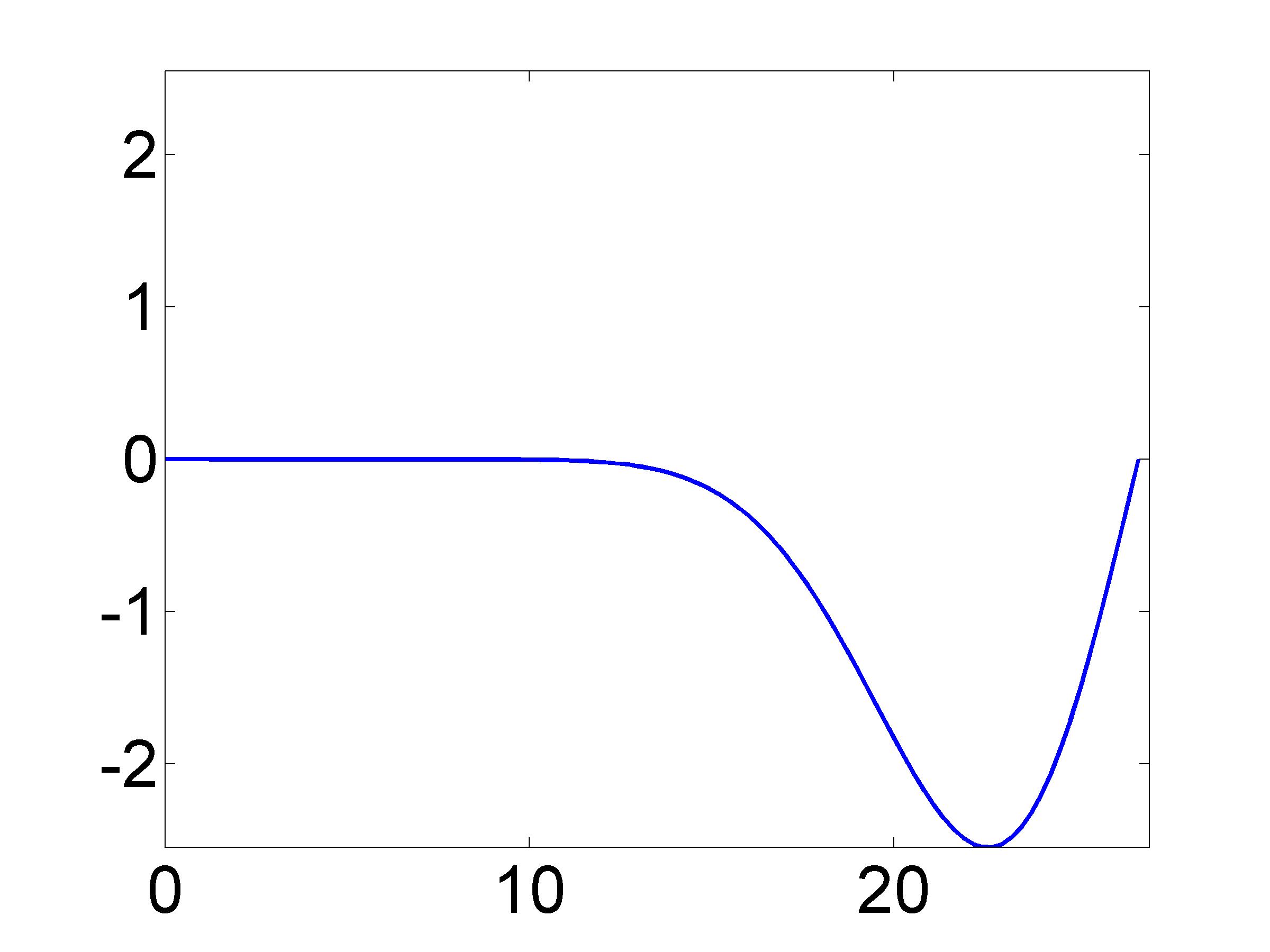}
	\caption{$f_{14}(r)$}
	\end{subfigure}
\caption{The even basis functions up to $f_{14}(r)$. Note that they become less oscillatory as $k$ increases, and that $f_k(r) \sim r^k$ at the origin. The odd basis functions have a similar structure and so are not pictured.}
\label{radial_basis}
\end{figure}

It remains to choose $K$. We do this based on how well criterion 4 is satisfied. For example, we can calculate how much energy of $S_\ell f_k$ is contained in the unit interval for all $0 \leq k \leq K, \ 0 \leq \ell \leq k, \ \ell = k$ mod 2. Numerical experiments show that $K = N_{\text{res}} - 2$ is a reasonable value. For each value of $N_{\text{res}}$ that we tested, this choice led to $S_\ell f_k$ having at least 80\% of its energy concentrated in the unit interval for each relevant $(\ell, k)$, and at least 95\% on average over all such pairs $(\ell, k)$. Thus our experiments show that for our choice of $f_k$, choosing roughly $K \approx N_{\text{res}}$ leads to acceptable satisfaction of criterion 4. A short calculation yields
\begin{equation}
\hat p = \text{dim}(\ssf V) = \sum_{k = 0}^K \frac{(k+1)(k+2)}{2} = \frac{(K+1)(K+2)(K+3)}{6} \approx \frac{N_{\text{res}}^3}{6} = \frac{4\omega_{\max}^3}{3\pi^3}.
\label{p_dim}
\end{equation}
Recall from (\ref{shannon_3}) that $p = \text{dim}(\mathscr B) = \frac{2}{9\pi}\omega_{\max}^3$. Hence, we have $\hat p/p = 6/\pi^2 \approx 0.6$. Hence, the dimension of the space $\ssf V$ we have constructed is within a constant factor of the dimension of $\mathscr B$. This factor is the price we pay for the computational simplicity $\ssf V$ provides. 

Note that a different construction of $f_k$ might have even better results. Choosing better radial functions can be the topic of further research. In any case, the specific choice of $f_k$ does not affect the structure of our algorithm at all because $\hat L$ is independent of these functions, as can be seen from (\ref{L_block}). Thus, the selection of the radial basis functions can be viewed as an independent module in our algorithm. The radial functions we choose here work well in numerical experiments; see Section \ref{num_results}.

\subsection{Constructing $\ssf{I}$} \label{properties_I}

Finally, the remaining piece in our construction is the finite-dimensional space of Fourier images, $\ssf I$. To motivate our construction, consider applying $\scf P_s$ to a basis element of $\ssf V$. The first observation to make is that the radial components $f_k(r)$ factor through $\hat{\mathcal P}_s$ completely:
\begin{equation}
\hat{\mathcal P}_s(f_k(r) Y_\ell^m(\theta, \vp)) = f_k(r)\hat{\mathcal P}_s(Y_\ell^m(\theta, \vp)).
\label{factor_through}
\end{equation}
Note that the $\hat{\mathcal P}_s$ on the LHS should be intepreted as $C(\br^3)\rightarrow C(\br^2)$, whereas the one on the RHS is the restricted map $C(S^2) \rightarrow C(S^1)$, which we also call $\hat{\mathcal P}_s$. The correct interpretation should be clear in each case. Viewed in this new way, $\hat{\mathcal P}_s: C(S^2) \rightarrow C(S^1)$ rotates a function on the sphere by $\sdr R_s \in SO(3)$, and then restricts the result to the equator. 
\begin{figure}
\centering
\epsfig{file = 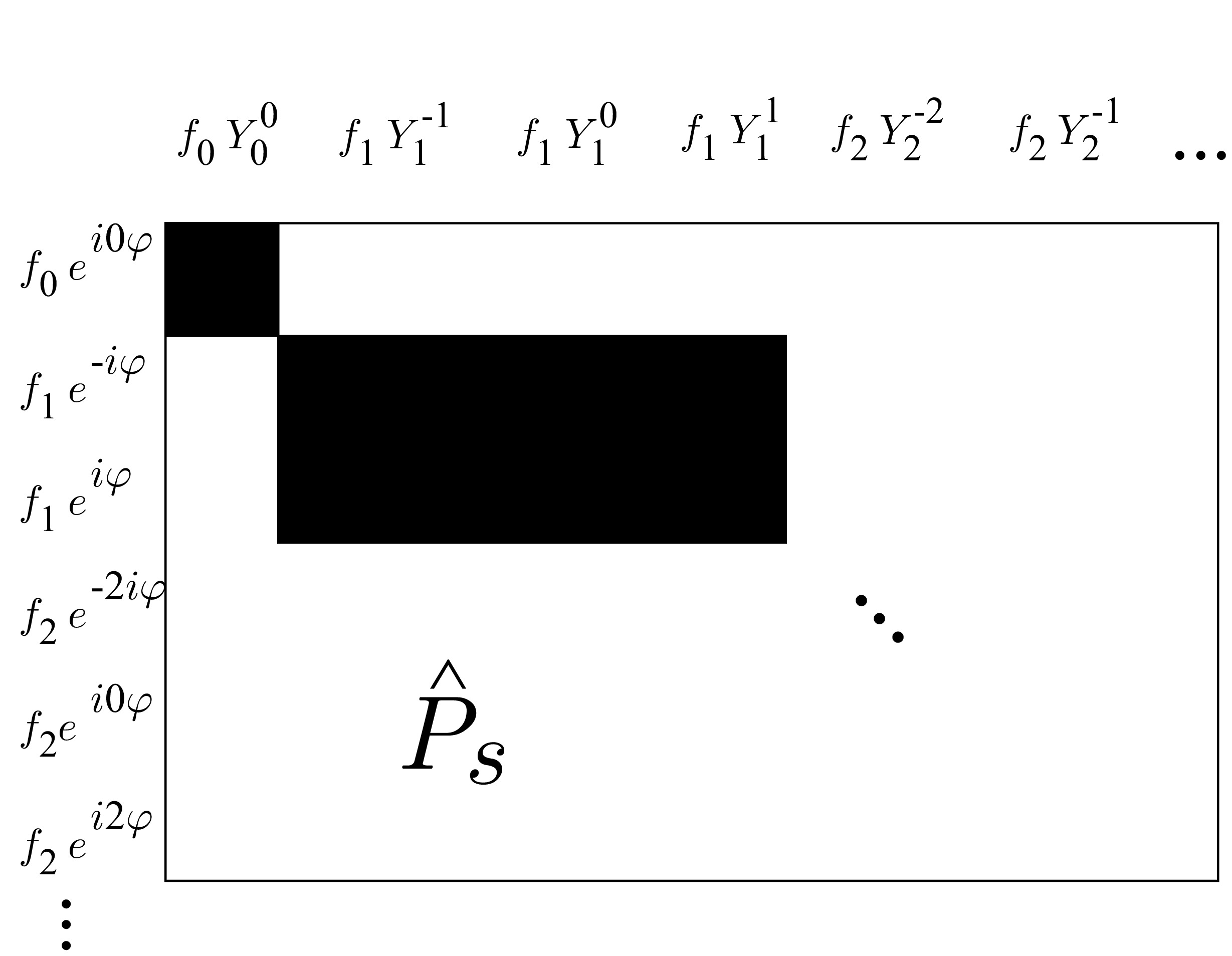, height = 5cm}
\caption{Block diagonal structure of $\hat P_s$. The shaded rectangles represent the nonzero entries. For an explanation of the specific pairing of angular and radial functions, see (\ref{g_def}) and (\ref{h_def}) and the preceding discussion. A short calculation shows that the $k$th block of $\hat P_s$ has size $(k+1) \times \frac{(k+1)(k+2)}{2}$.}
\label{tilde_P_s}
\end{figure}

By the rotational properties of spherical harmonics, a short calculation shows that
\begin{equation}
\begin{split}
\hat{\mathcal P}_s(Y_\ell^m(\theta, \vp)) = \sum_{\substack{|m'| \leq \ell\\ m' = \ell \text{ mod }2}} c_{\ell, m, m'}(R_s)\frac{1}{\sqrt{2\pi}}e^{im'\vp},
\end{split}
\end{equation}
where the constants $c_{\ell, m, m'}$ depend on the Wigner D matrices $D^\ell$ \cite{wigner}. Hence, $\scf P_s(\ssf V) \subset \ssf I$ if 
\begin{equation}
f_k(r) Y_\ell^m(\theta, \vp) \in \ssf V \Rightarrow \frac{1}{\sqrt{2\pi}}f_k(r) e^{im\vp} \in \ssf{I}, \quad m = -\ell, -\ell +2, \dots, \ell - 2, \ell. 
\label{restriction}
\end{equation}
Thus, we construct $\ssf I$ by pairing $f_k$ with $\frac{1}{\sqrt{2\pi}}e^{im\vp}$ if $k = m$ mod 2 and $m \leq k$. This leads to the 2D basis functions
\begin{equation}
\begin{split}
\{\hat g_i\} &= \left\{\frac{1}{\sqrt{2\pi}}f_0(r), \frac{1}{\sqrt{2\pi}}f_1(r)e^{-i\vp}, \frac{1}{\sqrt{2\pi}}f_1(r)e^{i\vp}\right., \\
&\quad\quad \left.\frac{1}{\sqrt{2\pi}}f_2(r)e^{-2i\vp}, \frac{1}{\sqrt{2\pi}}f_2(r),\frac{1}{\sqrt{2\pi}}f_2(r)e^{2i\vp}, \dots \right\}.
\end{split}
\label{g_def}
\end{equation}
Written another way, we construct
\begin{equation}
\ssf{I} = \text{span}\left(\left\{\frac{1}{\sqrt{2\pi}}f_k(r)e^{im\vp}: 0 \leq k \leq K,\  m = k \text{ (mod 2)}, \ |m| \leq k\right\}\right).
\label{I_def}
\end{equation}
If $\ssf I_k$ is the subspace of $\ssf I$ spanned by the basis functions with radial component $f_k$, (\ref{factor_through}) shows that $\scf P_s(\ssf V_k) \subset \ssf I_k$ for each $k$. Thus, $\sdf P_s$ has a block-diagonal structure, as depicted in Figure \ref{tilde_P_s}.

Let us now compare the dimension of $\ssf I$ to that of the corresponding space of 2D Slepian functions, as we did the previous section. We have
\begin{equation}
\hat q = \text{dim}(\ssf{I}) = \sum_{k = 0}^K (k+1) = \frac{(K+1)(K+2)}{2} \approx \frac{N_{\text{res}}^2}{2} = \frac{2\omega_{\max}^2}{\pi^2}.
\label{q_dim}
\end{equation}
The Shannon number in 2D corresponding to the bandlimit $\omega_{\max}$ is $\omega_{\max}^2/4$. Thus, we are short of this dimension by a constant factor of $8/\pi^2 \approx 0.8$. Another comparison to make is that the number grid points in the disc inscribed in the $N_{\text{res}} \times N_{\text{res}}$ grid is $\frac \pi 4 N_{\text{res}}^2 = \omega_{\max}^2/\pi$. Thus, $\text{dim}(\ssf{I})$ is short of this number by a factor of $\frac 2 \pi$. Note that this is the same factor that was obtained in a similar situation in \cite{fb}, so $\ssf I$ is comparable in terms of approximation to the Fourier-Bessel space constructed there.

Thus, by this point we have fully specified our algorithm for the heterogeneity problem. After finding $\hat \Sigma_n$ numerically via (\ref{linear_system}), we can proceed as in steps 6-9 of Algorithm \ref{high_level_algorithm} to solve Problem \ref{het_problem}. 

\section{Algorithm complexity} \label{consequences}

In this section, we explore the consequences of the constructions of $\ssf V$ and $\ssf I$ for the complexity of the proposed algorithm. We also compare this complexity with that of the straightforward CG approach discussed in Section (\ref{comp_challenges_approaches}).

To calculate the computational complexity of inverting the sparse matrix $\hat L^{k_1, k_2}$ via the CG algorithm, we must bound the number of nonzero elements of this matrix and its condition number.

\subsection{Sparsity of $\hat L$ and storage complexity} \label{sparsity_of_L}
Preliminary numerical experiments confirm the following conjecture:
\begin{conjecture} \label{L_sparsity_conjecture}
\begin{equation}
\text{\textnormal{nnz}}(\hat L^{k_1, k_2}) \leq \frac{1}{k_1 + k_2 + 1} \left(\frac{(k_1 +1)(k_1 + 2)(k_2 + 1)(k_2 + 2)}{4}\right)^2,
\label{L_sparsity}
\end{equation}
where $\text{\textnormal{nnz}}(A)$ is the number of nonzero elements in a matrix $A$, and the term involving the square is the total number of elements in $\hat L^{k_1, k_2}$.
\end{conjecture}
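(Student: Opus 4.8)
\emph{Proof proposal.} The plan is to reduce (\ref{L_sparsity}) to a purely combinatorial count of angular multi-indices, starting from the closed form (\ref{L_hat_indices}), which writes each entry $\hat L^{k_1,k_2}_{i_1,i_2,j_1,j_2}$ as a weighted overlap of the spherical-harmonic expansions of the pointwise products $\overline{a_{i_1}^{k_1}}\,a_{j_1}^{k_1}$ and $\overline{a_{i_2}^{k_2}}\,a_{j_2}^{k_2}$. Since each $a_i^k$ is a spherical harmonic $Y_{\ell_i}^{m_i}$ with $0\le\ell_i\le k$ and $\ell_i\equiv k\pmod 2$, the Clebsch--Gordan (Gaunt) selection rules give that $C_{\ell,m}\big(\overline{a_i^k}\,a_j^k\big)$ vanishes unless $m=m_j-m_i$ and $|\ell_i-\ell_j|\le\ell\le\ell_i+\ell_j$; moreover the product has parity $(-1)^{\ell_i+\ell_j}=+1$, so only even $\ell$ contribute. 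Substituting this into (\ref{L_hat_indices}), a necessary condition for $\hat L^{k_1,k_2}_{i_1,i_2,j_1,j_2}\neq0$ is the conjunction of the balance condition $m_{j_1}-m_{i_1}=m_{j_2}-m_{i_2}=:m$ and the existence of an even $\ell\ge|m|$ with $c(\ell)\neq0$ lying in $\big[\,|\ell_{i_1}-\ell_{j_1}|,\ \ell_{i_1}+\ell_{j_1}\,\big]\cap\big[\,|\ell_{i_2}-\ell_{j_2}|,\ \ell_{i_2}+\ell_{j_2}\,\big]$. Thus (\ref{L_sparsity}) becomes a counting estimate on admissible quadruples; the radial functions $f_k$ play no role, consistent with the remark following (\ref{p_dim}).

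\emph{The count.} I would bound $\mathrm{nnz}(\hat L^{k_1,k_2})$ by summing over $i_1,j_1,i_2$ and counting the admissible partners $j_2$. For any such partner the azimuthal index $m_{j_2}=m_{i_2}+m_{j_1}-m_{i_1}$ is forced, so only the degree $\ell_{j_2}$ is free, and the overlap condition confines $\ell_{j_2}$ to a window around $\ell_{i_2}$ of length $O\big(\min(\ell_{i_1}+\ell_{j_1},\,k_2)\big)$, of the correct parity, with $\ell_{j_2}\ge|m_{j_2}|$. Dropping the window and using only $\ell_{j_2}\in\{k_2\bmod 2,\dots,k_2\}$ (at most $\tfrac{k_2}{2}+1$ values) already yields the warm-up bound $\mathrm{nnz}(\hat L^{k_1,k_2})\le\frac{1}{\max(k_1,k_2)+1}\big(\tfrac{(k_1+1)(k_1+2)(k_2+1)(k_2+2)}{4}\big)^2$, using $\dim\ssf V_{k_i}=\tfrac{(k_i+1)(k_i+2)}{2}$. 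To reach the sharper constant $k_1+k_2+1$ one must retain the window: introducing $s=\ell_{i_1}+\ell_{j_1}$ and observing that small $s$ simultaneously pins $\ell_{j_2}$ near $\ell_{i_2}$ and forces $|m|\le\ell\le s$ (hence restricts the choices of $m_{i_1},m_{j_1}$), one carries out the resulting double sum over the four degree variables, uses the even-$\ell$ parity to recover the customary factor of two, and compares the leading term with $(\dim\ssf V_{k_1}\,\dim\ssf V_{k_2})^2$.

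\emph{Main obstacle.} The reduction to combinatorics is routine; the difficulty is that the count must be \emph{sharp}: the warm-up estimate overshoots the conjectured bound by the factor $\tfrac{k_1+k_2+1}{\max(k_1,k_2)+1}\in(1,2]$, so essentially no slack is tolerated. The ``witness'' device of charging each admissible quadruple once per valid pair $(\ell,m)$ over-counts, because a single quadruple may admit a whole interval of witnessing degrees $\ell$; neutralizing this requires either an exact evaluation of the number of quadruples in terms of the support of the Wigner $3j$-symbols (equivalently, the dimensions of the relevant $SO(3)$-intertwiner spaces) or an injective/weighting scheme that charges each quadruple to a single degree. Pinning down the exact value $k_1+k_2+1$, as opposed to a bound of the same order, is precisely the point that remains open, which is why the statement is recorded as a conjecture, with the numerical experiments of Section~\ref{num_results} providing the current evidence.
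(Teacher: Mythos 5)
There is no proof of this statement in the paper to compare against: the paper presents (\ref{L_sparsity}) purely as a conjecture supported by ``preliminary numerical experiments'' and explicitly says it ``remains to be verified theoretically.'' Your proposal is therefore not in conflict with anything in the paper, and its setup is sound and consistent with the paper's own qualitative sparsity argument in Section \ref{practical}: starting from (\ref{L_hat_indices}), the necessary conditions you extract (the forced azimuthal balance $m_{j_1}-m_{i_1}=m_{j_2}-m_{i_2}$, the triangle-inequality overlap of the two Gaunt windows, and the restriction to even $\ell$, which is consistent both with the parity $\ell_i\equiv k\pmod 2$ within a block and with $c(\ell)=0$ for odd $\ell$) are exactly the selection rules the paper invokes to claim that the vectors $C_{\ell}^m(\overline{a_i}a_j)$ are sparse. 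Your warm-up count is also correct as stated: fixing $i_1,j_1,i_2$ forces $m_{j_2}$ and leaves at most $\lfloor k_2/2\rfloor+1$ choices of $\ell_{j_2}$, which yields $\mathrm{nnz}\le(\dim\ssf V_{k_1}\dim\ssf V_{k_2})^2/(\max(k_1,k_2)+1)$ after symmetrizing in $k_1,k_2$. That is a genuinely provable statement and already captures the $O(1/k)$ density decay, but it is weaker than (\ref{L_sparsity}).

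The gap you flag is real and is the whole content of the conjecture: upgrading $\max(k_1,k_2)+1$ to $k_1+k_2+1$ in the denominator requires exploiting the overlap window jointly in both factors rather than discarding it in one of them, and a naive union over witnessing degrees $\ell$ over-counts quadruples. Since you do not close this, your proposal does not prove the statement --- but neither does the paper, and you correctly diagnose why the sharp constant is the open part. One minor caution: your counting bounds the number of index quadruples satisfying the \emph{necessary} conditions, which upper-bounds $\mathrm{nnz}$ only; if the conjectured bound were meant to be tight, one would additionally need that no accidental cancellations occur in the sum (\ref{L_hat_indices}), but for the inequality as stated this direction is the right one.
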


Hence, the percentage of nonzero elements in each block of $\hat L$ decays linearly with the frequencies associated with that block. This conjecture remains to be verified theoretically.

We pause here to note the storage complexity of the proposed algorithm, which is dominated by the cost of storing $\hat L$. In fact, since we process all the blocks separately, only storing one $\hat L^{k_1, k_2}$ at a time will suffice. Hence, the storage complexity is the memory required to store the largest block of $\hat L$, which is $\text{nnz}(\hat L^{K, K}) = O(K^7) = O(N_{\text{res}}^7)$.
%
Compare this to the required storage for a full matrix of the size of $\hat L$, which is $O(N_{\text{res}}^{12})$.

\subsection{Condition number of $\hat L$} \label{condition_number}

Here we find the condition number of each $\hat L^{k_1, k_2}$. We already proved in Proposition \ref{min_eig} that $\lambda_{\min}(\hat L) \geq 1/2\pi$. For any $k_1, k_2$, this implies that $\lambda_{\min}(\hat L^{k_1, k_2}) \geq 1/2\pi$. This is confirmed by a numerical experiment: in Figure \ref{fig:sm_eigs} are plotted the minimum eigenvalues of $\hat L^{k, k}$ for $0 \leq k \leq 15$. Note that the eigenvalues actually approach the value $1/2\pi$ (marked with a horizontal line) as $k$ increases.
\begin{figure}
\centering
        \begin{subfigure}[b]{0.45\textwidth}
                \centering
	\includegraphics[scale = 0.3]{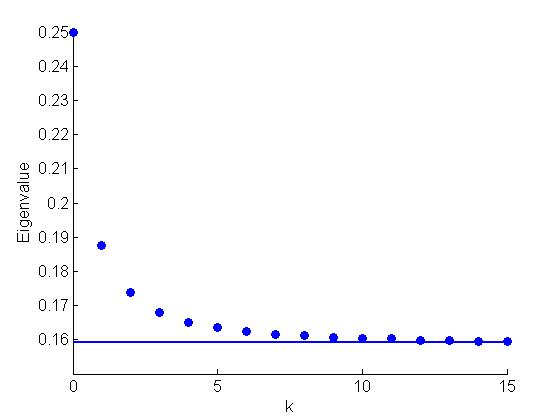}
	\caption{Smallest eigenvalues}
\label{fig:sm_eigs}
        \end{subfigure}
 \begin{subfigure}[b]{0.45\textwidth}
                \centering
	\includegraphics[scale = 0.3]{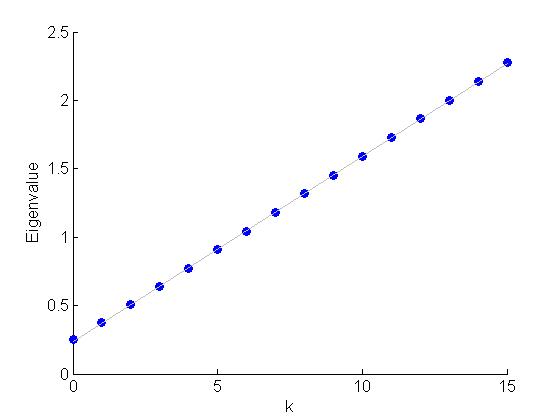}
	\caption{Largest eigenvalues}
\label{fig:lg_eigs}
        \end{subfigure}
\caption{The smallest and largest eigenvalues of (the continuous version of) $\hat L^{k, k}$, for $0 \leq k \leq 15$. The smallest eigenvalues approach their theoretical lower bound of $1/2\pi$ as $k$ increases. The largest eigenvalues show a clear linear dependence on $k$.}
\label{fig:Lcond}
\end{figure}
We remarked in Section \ref{cryo_em_theoretical} that an upper bound on the maximum eigenvalue is harder to find. Nevertheless, numerical experiments have led us to the following conjecture:
\begin{conjecture}\label{conjecture}
The maximal eigenvalue of $\hat L^{k_1, k_2}$ grows linearly with $\min(k_1, k_2)$.
\end{conjecture}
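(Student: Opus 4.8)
The plan is to recast $\hat L^{k_1,k_2}$ as the compression of a multiplication operator to a finite‑dimensional subspace and then estimate a Rayleigh quotient. Let $\ssf W_k := \text{span}\{Y_\ell^m : 0\le\ell\le k,\ \ell\equiv k \pmod 2\}\subset L^2(S^2)$ be the angular part of $\ssf V_k$; its basis functions are $L^2(S^2)$‑orthonormal. By the identification (\ref{identifications}) together with (\ref{L_block}), $\hat L^{k_1,k_2}$ is unitarily equivalent to the operator on $\ssf W_{k_1}\otimes\ssf W_{k_2}$ (with its $L^2(S^2\times S^2)$ inner product) sending $\scf C$ to the $L^2$‑orthogonal projection onto $\ssf W_{k_1}\otimes\ssf W_{k_2}$ of $\mathcal K(\alpha,\beta)\scf C(\alpha,\beta)$, where $\mathcal K(\alpha,\beta)=\frac{1}{2\pi|\alpha\times\beta|}$. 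Note $\mathcal K\in L^1(S^2\times S^2)$ but $\mathcal K\notin L^\infty$, so the underlying multiplication operator is unbounded and its compressions can have eigenvalues tending to infinity. Since $\hat L^{k_1,k_2}$ is self‑adjoint and positive semidefinite,
\begin{equation}
\lambda_{\max}(\hat L^{k_1,k_2}) = \sup_{0\ne\scf C\in\ssf W_{k_1}\otimes\ssf W_{k_2}}\ \frac{\displaystyle\int_{S^2\times S^2}\frac{|\scf C(\alpha,\beta)|^2}{2\pi|\alpha\times\beta|}\,d\alpha\,d\beta}{\displaystyle\int_{S^2\times S^2}|\scf C(\alpha,\beta)|^2\,d\alpha\,d\beta}.
\label{rayleigh_Lmax}
\end{equation}
Assume without loss of generality $k_1\le k_2$, so $\min(k_1,k_2)=k_1$; the goal is to show the supremum in (\ref{rayleigh_Lmax}) is bounded above and below by absolute constants times $k_1$. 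Throughout, write $\gamma(\alpha,\beta)\in[0,\pi]$ for the angle between $\alpha$ and $\beta$, so that $|\alpha\times\beta|=\sin\gamma$.

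For the upper bound, the key observation is that for each fixed $\beta$ the section $\alpha\mapsto\scf C(\alpha,\beta)$ lies in $\ssf W_{k_1}$, hence is a spherical polynomial of degree at most $k_1$. First I would invoke the standard Nikolskii (reproducing‑kernel) estimate $\norm{g}_{L^\infty(S^2)}\le Ck_1\norm{g}_{L^2(S^2)}$, valid with an absolute constant $C$ for every $g$ of spherical degree $\le k_1$, which survives restriction to $\ssf W_{k_1}\subset\bigoplus_{\ell\le k_1}\mathcal H_\ell$. Then, for fixed $\beta$, parametrize $\alpha$ by geodesic polar coordinates $(\gamma,\phi)$ centered at $\beta$, so that $d\alpha/\sin\gamma=d\gamma\,d\phi$, and split $\int_{S^2}|\scf C(\alpha,\beta)|^2/\sin\gamma\,d\alpha$ according to whether $\gamma\in[1/k_1,\pi-1/k_1]$ or $\gamma$ lies in one of the two polar caps $\gamma<1/k_1$, $\gamma>\pi-1/k_1$. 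On the middle piece $1/\sin\gamma\le 1/\sin(1/k_1)\le Ck_1$, so that piece is $\le Ck_1\norm{\scf C(\cdot,\beta)}_{L^2(S^2)}^2$; each cap has $(\gamma,\phi)$‑measure $2\pi/k_1$ with integrand $\le\norm{\scf C(\cdot,\beta)}_{L^\infty(S^2)}^2\le C^2k_1^2\norm{\scf C(\cdot,\beta)}_{L^2(S^2)}^2$, hence each is $\le C'k_1\norm{\scf C(\cdot,\beta)}_{L^2(S^2)}^2$. Integrating in $\beta$ and applying Tonelli gives $\int\int|\scf C|^2/(2\pi\sin\gamma)\le C''k_1\norm{\scf C}_{L^2(S^2\times S^2)}^2$, i.e. $\lambda_{\max}(\hat L^{k_1,k_2})\le C''\min(k_1,k_2)$.

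For the matching lower bound I would exhibit a rank‑one test function $\scf C=u\otimes v$, where $u\in\ssf W_{k_1}$ and $v\in\ssf W_{k_2}$ are the parity‑restricted zonal kernels centered at the north pole $N$, e.g. $u=\sum_{\ell\le k_1,\ \ell\equiv k_1}(2\ell+1)P_\ell(\langle\cdot,N\rangle)$ and similarly $v$ with $k_2$. Each concentrates, up to slowly decaying tails, in a main lobe of angular width $\asymp 1/k_j$ about the antipodal pair $\{N,-N\}$, with $\norm{u}_{L^2(S^2)}^2\asymp k_1^2$ and $|u|\asymp k_1^2$ on its lobe (similarly for $v$, with $k_2$). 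On the region where $\alpha$ and $\beta$ both lie within $O(1/k_1)$ of $N$ one has $\gamma(\alpha,\beta)\lesssim 1/k_1$, and passing to the local Euclidean model (exponential map at $N$: $\gamma\approx|x-y|$, $d\alpha\,d\beta\approx dx\,dy$, $u\approx U$, $v\approx V$) reduces the numerator of (\ref{rayleigh_Lmax}) on this region to a planar Riesz‑potential integral $\asymp\iint_{|x|,|y|\lesssim 1/k_1}\frac{|U(x)|^2|V(y)|^2}{|x-y|}\,dx\,dy$. Since $|x-y|\lesssim 1/k_1$ there, this is $\gtrsim k_1\norm{U}_{L^2}^2\norm{V}_{L^2}^2\asymp k_1\norm{u}_{L^2(S^2)}^2\norm{v}_{L^2(S^2)}^2$ (the three remaining lobe‑pairs only add, those near $(N,-N)$ and $(-N,N)$ being absorbed by the $\gamma\to\pi$ singularity). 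Hence the Rayleigh quotient at $u\otimes v$ is $\gtrsim k_1=\min(k_1,k_2)$, which completes the two‑sided bound; specializing to $k_1=k_2=k$ recovers the linear growth of $\lambda_{\max}(\hat L^{k,k})$ seen in Figure \ref{fig:lg_eigs}.

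The main obstacle is the lower bound: the zonal kernels $u,v$ are not compactly supported and their Legendre‑sum tails decay only like $\gamma^{-2}$ away from the lobes, so one must rigorously exclude the possibility that these tails produce cancellation near $\gamma=0$ or a competing singular contribution. I expect the cleanest remedy is to replace the bare kernels by smoothed, de la Vallée‑Poussin–type variants on $S^2$ that still lie in the parity subspace $\ssf W_k$ and have rapidly decaying tails while retaining the $1/k$ localization; with such building blocks the planar reduction above is routine, the lower estimate of the Riesz integral being a one‑line application of $|x-y|\le|x|+|y|\lesssim 1/k_1$. The upper bound, by contrast, is essentially complete once the spherical Nikolskii inequality is quoted; the only point requiring care there is uniformity of its constant in $k_1$, which holds. (The argument is symmetric in $k_1,k_2$ — for $k_2<k_1$ one slices in the $\beta$ variable instead — which is exactly why the rate is governed by $\min(k_1,k_2)$ rather than either index alone.)
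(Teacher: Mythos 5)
You should know at the outset that the paper does not prove this statement: it is left as a conjecture, supported only by the numerical evidence in Figure \ref{fig:lg_eigs} (a least-squares fit to the computed top eigenvalues of $\hat L^{k,k}$). So your argument is by construction a different route, and if written out it would upgrade the conjecture to a theorem in the sharp two-sided form $\lambda_{\max}(\hat L^{k_1,k_2})\asymp\min(k_1,k_2)$. Your starting point is correct: since the $a_i^k$ are $L^2(S^2)$-orthonormal spherical harmonics of degree at most $k$ (of fixed parity), (\ref{L_block}) exhibits $\hat L^{k_1,k_2}$ as the compression of multiplication by $\mathcal K(\alpha,\beta)=1/(2\pi|\alpha\times\beta|)$ to the tensor product of the two angular subspaces, so $\lambda_{\max}$ is exactly the Rayleigh quotient you wrote. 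The upper bound is sound: for fixed $\beta$ the slice $\alpha\mapsto\scf C(\alpha,\beta)$ is a spherical polynomial of degree $\le k_1$, the Nikolskii bound $\norm{g}_{L^\infty(S^2)}\le C(k_1+1)\norm{g}_{L^2(S^2)}$ follows from the reproducing-kernel diagonal $(k_1+1)^2/4\pi$ and only improves on subspaces, and splitting the geodesic polar integral at distance $1/k_1$ from the two poles $\pm\beta$ gives a bound $O(k_1)\norm{\scf C(\cdot,\beta)}_{L^2}^2$ uniformly in $\beta$; Tonelli (legitimate, as the integrand is nonnegative) finishes it.

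Where your self-assessment is off is the lower bound, which you flag as the main obstacle: it is in fact easier than the upper bound, and no de la Vall\'ee-Poussin smoothing or Euclidean transference is needed. With $\scf C=u\otimes v$ the numerator is $\int\int|u(\alpha)|^2|v(\beta)|^2/(2\pi\sin\gamma)\,d\alpha\,d\beta$ with a \emph{nonnegative} integrand, so there is no cancellation to exclude; you simply discard everything outside the product of two caps of radius $c/k_1$ about the common center $N$, on which $1/\sin\gamma\ge k_1/(2c)$, and the denominator is exactly $\norm{u}_2^2\norm{v}_2^2$ no matter what the tails do. All that remains is that a fixed fraction of $\norm{u}_2^2$ (resp.\ $\norm{v}_2^2$) lives on that cap. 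For the bare parity-restricted zonal kernel this follows from the spherical Bernstein inequality $\norm{\nabla u}_\infty\le Ck_1\norm{u}_\infty=Ck_1u(N)$, which gives $u\ge u(N)/2$ on the cap for $c$ small, hence $\int_{\mathrm{cap}}|u|^2\gtrsim u(N)^2k_1^{-2}\asymp\norm{u}_{L^2(S^2)}^2$ since $\norm{u}_{L^2(S^2)}^2\asymp u(N)\asymp k_1^2$; the same holds for $v$, whose concentration scale $1/k_2\le1/k_1$ only helps. The remaining loose ends are routine: uniformity in $k$ of the Nikolskii and Bernstein constants (standard) and the trivial adjustment for small $k_1$.
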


Moreover, a plot of the maximal eigenvalue of $\hat L^{k, k}$ shows a clear linear dependence on $k$. See Figure \ref{fig:lg_eigs}. The line of best fit is approximately
\begin{equation}
\text{maximum eigenvalue of }  \hat L^{k, k} =  0.2358 +  0.1357 k.
\end{equation}

Taken together, Proposition \ref{min_eig} and Conjecture \ref{conjecture} imply the following conjecture about the condition number of $\hat L^{k_1, k_2}$, which we denote by $\kappa(\hat L^{k_1, k_2})$:
\begin{conjecture}\label{cond_number}
\begin{equation}
\kappa(\hat L^{k_1, k_2}) \leq 1.4818 + 0.8524 \min(k_1, k_2).
\end{equation}
\end{conjecture}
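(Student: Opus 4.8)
The plan is to derive the bound from one established result together with one earlier conjecture, and then to indicate what a fully rigorous argument would require. The established input is Proposition~\ref{min_eig}: with the weight $w(r)=1/r$ fixed in Section~\ref{practical} we have $M_{w}(\omega_{\max})=\max_{|\xi|\le\omega_{\max}}|\xi|w(|\xi|)=1$, hence $\lambda_{\min}(\hat L)\ge 1/(2\pi)$. Because the basis of Section~\ref{properties} makes $\hat L$ block diagonal, with each block $\hat L^{k_1,k_2}$ acting on $\ssf V_{k_1}\otimes\ssf V_{k_2}$, every block inherits $\lambda_{\min}(\hat L^{k_1,k_2})\ge 1/(2\pi)$ --- exactly the behavior seen in Figure~\ref{fig:sm_eigs}. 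The conjectural input is the quantitative form of Conjecture~\ref{conjecture}, i.e.\ the fitted bound $\lambda_{\max}(\hat L^{k_1,k_2})\le 0.2358+0.1357\,\min(k_1,k_2)$; one should first verify that the regression line of Figure~\ref{fig:lg_eigs}, after an innocuous additive slack if necessary, genuinely dominates the eigenvalues for every $k$ rather than merely fitting them. Granting these, positive-definiteness of $\hat L^{k_1,k_2}$ gives $\kappa(\hat L^{k_1,k_2})=\lambda_{\max}/\lambda_{\min}\le 2\pi\,\lambda_{\max}(\hat L^{k_1,k_2})\le 2\pi\bigl(0.2358+0.1357\min(k_1,k_2)\bigr)$, and since $2\pi\cdot 0.2358\approx 1.482$ and $2\pi\cdot 0.1357\approx 0.853$, this is (up to the precision of the fit coefficients) the asserted inequality.

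The real content, and the main obstacle, is to replace Conjecture~\ref{conjecture} by a theorem, i.e.\ to prove a linear-in-$\min(k_1,k_2)$ upper bound on $\lambda_{\max}(\hat L^{k_1,k_2})$. By~(\ref{L_block}), this block is the matrix of the nonnegative quadratic form $\phi\mapsto\int_{S^2\times S^2}|\phi(\alpha,\beta)|^2\,\mathcal K(\alpha,\beta)\,d\alpha\,d\beta$ on the degree-bounded angular space paired with $(f_{k_1},f_{k_2})$, where $\mathcal K(\alpha,\beta)=1/(2\pi|\alpha\times\beta|)$, so $\lambda_{\max}(\hat L^{k_1,k_2})$ equals the supremum of this form over unit-$L^2$-norm $\phi$ in that space. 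The difficulty is the blow-up of $\mathcal K$ along the singular set $\{|\alpha\times\beta|=0\}$: a crude split into a $\delta$-neighborhood of that set, using the addition-theorem estimate $\norm{\phi}_\infty\lesssim k_1k_2\norm{\phi}_{L^2}$ there and $\mathcal K\le 1/(2\pi\delta)$ away from it, and then optimizing in $\delta$, yields only $\lambda_{\max}\lesssim k_1k_2$, which is too weak.

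To recover the sharper rate I would exploit the product structure of $\mathcal K$ together with the rotational symmetry established in Proposition~\ref{rot_invariance}: decompose the angular part of $\ssf V_{k_1}\otimes\ssf V_{k_2}$ into $SO(3)$-irreducibles, use the simplified angular integral~(\ref{L_hat_indices}) with the coefficients $c(\ell)$ defined in~(\ref{c_ell}), and estimate $\hat L^{k_1,k_2}$ irreducible-block by irreducible-block, showing that the decay of $c(\ell)$ combined with the Clebsch--Gordan constraint $|\ell-\ell'|\le L\le\ell+\ell'$ on products of spherical harmonics keeps the accumulation small enough that only a factor $\min(k_1,k_2)$ is lost. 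That representation-theoretic estimate --- proving the singular kernel costs $\min(k_1,k_2)$ rather than $k_1k_2$ --- is where essentially all the work lies; once it is in hand, the condition-number bound follows by the elementary division carried out in the first paragraph.
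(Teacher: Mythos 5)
Your derivation is exactly the paper's: the statement is obtained by combining the lower bound $\lambda_{\min}(\hat L^{k_1,k_2})\ge 1/(2\pi)$ from Proposition \ref{min_eig} (inherited blockwise) with the numerically fitted linear bound on $\lambda_{\max}(\hat L^{k_1,k_2})$ of Conjecture \ref{conjecture}, and the constants $2\pi\cdot 0.2358$ and $2\pi\cdot 0.1357$ reproduce $1.4818$ and $0.8524$. Your further sketch of how one might actually prove the max-eigenvalue bound goes beyond anything attempted in the paper, which leaves that input (and hence this statement) as a conjecture supported only by the experiments of Figure \ref{fig:lg_eigs}.
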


In particular, this implies that 
\begin{equation}
\kappa(\hat L) \leq 1.4818 + 0.8524K.
\label{cond_L}
\end{equation}

\subsection{Algorithm complexity}

Using the above results, we estimate the computational complexity of Algorithm \ref{high_level_algorithm}. We proceed step by step through the algorithm and estimate the complexity at each stage. Before we do so, note that due to the block-diagonal structure of $\hat P_s$ (depicted in Figure \ref{tilde_P_s}), it can be easily shown that an application of $\hat P_s$ or $\hat P_s^H$ costs $O(K^4)$. 

Sending the images from the pixel domain into $\ssf I$ requires $n$ applications of the matrix $Q_1 \in \bc^{\hat q \times q}$, which costs $O(nq\hat q) = O(nN^2N_{\text{res}}^2)$. Note that this complexity can be improved using an algorithm of the type \cite{special_function_transforms}, but in this paper we do not delve into the details of this alternative.

Finding $\hat \mu_n$ from (\ref{mu_system}) requires $n$ applications of the matrix $\hat P_s^H$, and so has complexity $O(nK^4) = O(n N_{\text{res}}^4)$. 

Next, we must compute the matrix $\hat B_n$. Note that the second term in $\hat B_n$ can be replaced by a multiple of the identity matrix by (\ref{ramp_cancellation}), so only the first term of $\hat B_n$ must be computed. Note that $\hat B_n$ is a sum of $n$ matrices, and each matrix can be found as the outer product of $\hat P_s^H(\hat I_s - \hat P_s \hat \mu_n) \in \bc^{\hat p}$ with itself. Calculating this vector has complexity $O(K^4)$, from which it follows that calculating $\hat B_n$ costs $O(nK^4) = O(nN_{\text{res}}^4)$. 

Next, we must invert $\hat L$. As mentioned in Section \ref{comp_challenges_approaches}, the inversion of a matrix $A$ via CG takes $\sqrt{\kappa(A)}$ iterations. If $A$ is sparse, than applying it to a vector has complexity $\text{nnz}(A)$. Hence, the total complexity for inverting a sparse matrix is  $\sqrt{\kappa(A)}\text{nnz}(A)$. Conjectures \ref{L_sparsity_conjecture} and \ref{cond_number} imply that 
\begin{equation}
\begin{split}
\text{complexity of inverting $\hat L$} &\lesssim \sum_{k_1, k_2 = 0}^K \sqrt{\kappa(\hat L^{k_1, k_2})}\text{\textnormal{nnz}}(\hat L^{k_1, k_2}) \\
&\lesssim \sum_{k_1, k_2 = 0}^K\sqrt{\min(k_1, k_2)}\frac{1}{k_1 + k_2 + 1} \left(\frac{(k_1 +1)(k_1 + 2)(k_2 + 1)(k_2 + 2)}{4}\right)^2 \\
&\lesssim \sum_{k_1, k_2 = 0}^K(k_1 k_2)^{1/4}\frac{1}{\sqrt{k_1 k_2}} k_1^4 k_2^4\\
&\lesssim \sum_{k_1 = 0}^K k_1^{3.75}\sum_{k_2 = 0}^K k_2^{3.75} \lesssim K^{4.75}K^{4.75} = K^{9.5}.
\end{split}
\end{equation}
Since $\hat L$ has size of the order $K^6 \times K^6$, note that the complexity of inverting a full matrix of this size would be $K^{18}$. Thus, our efforts to make $\hat L$ sparse have saved us a $K^{8.5}$ complexity factor.  Moreover, the fact that $\hat L$ is block diagonal makes its inversion parallelizable.

Assuming that $C = O(1)$, solving each of the $n$ least-squares problems (\ref{least_squares}) is dominated by a constant number of applications of $\hat P_s$ to a vector. Thus, finding $\alpha_s$ for $s = 1, \dots, n$ costs $O(nN_{\text{res}}^4)$. 

Next, we must fit a mixture of Gaussians to $\alpha_s$ to find $\alpha^c$. An EM approach to this problem to this problem requires $O(n)$ operations per iteration. Assuming that the number of iterations is constant, finding $\alpha^c$ has complexity $O(n)$. 

Finally, reconstructing $\hat X^c$ via (\ref{factorial}) has complexity $O(N_\text{res}^3)$.

Hence, neglecting lower-order terms, we find that the total complexity of our algorithm is 
\begin{equation}
O(nN^2N_{\text{res}}^2 + N_{\text{res}}^{9.5}).
\label{complexity}
\end{equation}

\subsection{Comparison to straightforward CG approach}

We mentioned in Section \ref{comp_challenges_approaches} that a CG approach is possible in which at each iteration, we apply $\hat L_n$ to $\hat \Sigma$ using the definition (\ref{sigeq_new}). This approach has the advantage of not requiring uniformly spaced viewing directions. While the condition number of $\hat L_n$ depends on the rotations $R_1, \dots, R_n$, let us assume here that $\kappa(\hat L_n) \approx \kappa(\hat L)$. We estimated the computational complexity of this approach in Section \ref{comp_challenges_approaches}, but at that point we assumed that each $\hat P_s$ was a full matrix. If we use the bases $\ssf V$ and $\ssf I$, we reap the benefit of the block-diagonal structure of $\hat P_s$. Hence, for each $s$, evaluating $\hat P_s^H \hat P_s \hat \Sigma \hat P_s^H \hat P_s$ is dominated by the multiplication $\hat P_s \hat \Sigma$, which has complexity $N_{\text{res}}^7$. Hence, applying $\hat L_n$ to $\hat \Sigma$ has complexity $n N_{\text{res}}^7$. By (\ref{cond_L}), we assume that $\kappa(\hat L_n) = O(N_{\text{res}})$. Hence, the full complexity of inverting $\hat L$ using the conjugate gradient approach is 
\begin{equation}
O(n N_{\text{res}}^{7.5}). 
\end{equation}
Compare this to a complexity of $O(N_{\text{res}}^{9.5})$ for inverting $\hat L$. Given that $n$ is usually on the order of $10^5$ or $10^6$, for moderate values of $N_{\text{res}}$ we have $N_{\text{res}}^{9.5} \ll n N_{\text{res}}^{7.5}$. Nevertheless, both algorithms have possibilities for parallelization, which might change their relative complexities. As for memory requirements, note that the straightforward CG algorithm only requires $O(N_{\text{res}}^6)$ storage, whereas we saw in Section \ref{sparsity_of_L} that the proposed algorithm requires $O(N_{\text{res}}^7)$ storage.

In summary, these two algorithms each have their strengths and weaknesses, and it would be interesting to write parallel implementations for both and compare their performances. In the present paper, we have implemented and tested only the algorithm based on inverting $\hat L$. 

\section{Numerical results}\label{num_results}

Here, we provide numerical results illustrating Algorithm \ref{high_level_algorithm}, with the bases $\ssf I$ and $\ssf V$ chosen so as to make $\hat L$ sparse, as discussed in Section \ref{practical}. The results presented below are intended for proof-of-concept purposes, and they demonstrate the qualitative behavior of the algorithm. They are not, however, biologically significant results. We have considered an idealized setup in which there is no CTF effect, and have assumed that the rotations $R_s$ (and translations) have been estimated perfectly. In this way, we do not perform a ``full-cycle" experiment, starting from only the noisy images. Therefore, we cannot gauge the overall effect of noise on our algorithm because we do not account for its contribution to the misspecification of rotations; we investigate the effect of noise on the algorithm only after the rotation estimation step. Moreover, we use simulated data instead of experimental data. The application of our algorithm to experimental datasets is left for a separate publication.

\subsection{An appropriate definition of SNR} \label{snr_fsc}

Generally, the definition of SNR is
\begin{equation}
\text{SNR} = \frac{P(\text{signal})}{P(\text{noise})},
\end{equation}
where $P$ denotes power. In our setup, we will find appropriate definitions for both $P(\text{signal})$ and $P(\text{noise})$. Let us consider first the noise power. The standard definition is $P(\text{noise}) = \sigma^2$. However, note that in our case, the noise has a power of $\sigma^2$ in each pixel of an $N \times N$ grid, but we reconstruct the volumes to a bandlimit $\omega_{\max}$, corresponding to $N_{\text{res}}$. Hence, if we downsampled the $N \times N$ images to size $N_{\text{res}} \times N_{\text{res}}$, then we would still obey the Nyquist criterion (assuming the volumes actually are bandlimited by $\omega_{\max}$). This would have the effect of reducing the noise power by a factor of $N_{\text{res}}^2/N^2$. Hence, in the context of our problem, we define
\begin{equation}
P(\text{noise}) = \frac{N_{\text{res}}^2}{N^2}\sigma^2.
\end{equation}

Now, consider $P(\text{signal})$. In standard SPR, a working definition of signal power is 
\begin{equation}
P(\text{signal}) = \frac{1}{n}\sum_{s = 1}^n \frac{1}{q}\norm{P_s X_s}^2, 
\label{signal_power}
\end{equation}
However, in the case of the heterogeneity problem, the object we are trying to reconstruct is not the volume itself, but rather the deviation from the average volume, due to heterogeneity. Thus, the relevant signal to us is not the images themselves, but the parts of the images that correspond to projections of the deviations of $X_s$ from $\mu_0$. Hence, a natural definition of signal power in our case is
\begin{equation}
P(\text{signal}_{\text{het}}) = \frac{1}{n}\sum_{s=1}^n\frac1q \norm{P_s(X_s - \mu_0)}^2.
\label{signal_power_het}
\end{equation}
Using the above definitions, let us define SNR$_{\text{het}}$ in our problem by
\begin{equation}
\text{SNR}_\text{het} = \frac{P(\text{signal}_{\text{het}})}{P(\text{noise})} = \frac{\frac{1}{qn}\sum_{s=1}^n \norm{P_s(X_s - \mu_0)}^2}{\sigma^2 N_{\text{res}}^2/ N^2}.
\label{SNR_het_def}
\end{equation}
Even with the correction factor $N_{\text{res}}^2/ N^2$, SNR$_{\text{het}}$ values are lower than the SNR values usually encountered in structural biology. Hence, we also define
\begin{equation}
\text{SNR} = \frac{P(\text{signal})}{P(\text{noise})} = \frac{\frac{1}{n}\sum_{s = 1}^n \frac{1}{q}\norm{P_s X_s}^2}{\sigma^2 N_{\text{res}}^2/ N^2}.
\label{SNR_def}
\end{equation}
We will present our numerical results primarily using SNR$_{\text{het}}$, but we will also provide the corresponding SNR values in parentheses.

\begin{figure}[h]
        \centering
        \begin{subfigure}[b]{0.24\textwidth}
                \centering
                \includegraphics[scale = 0.04]{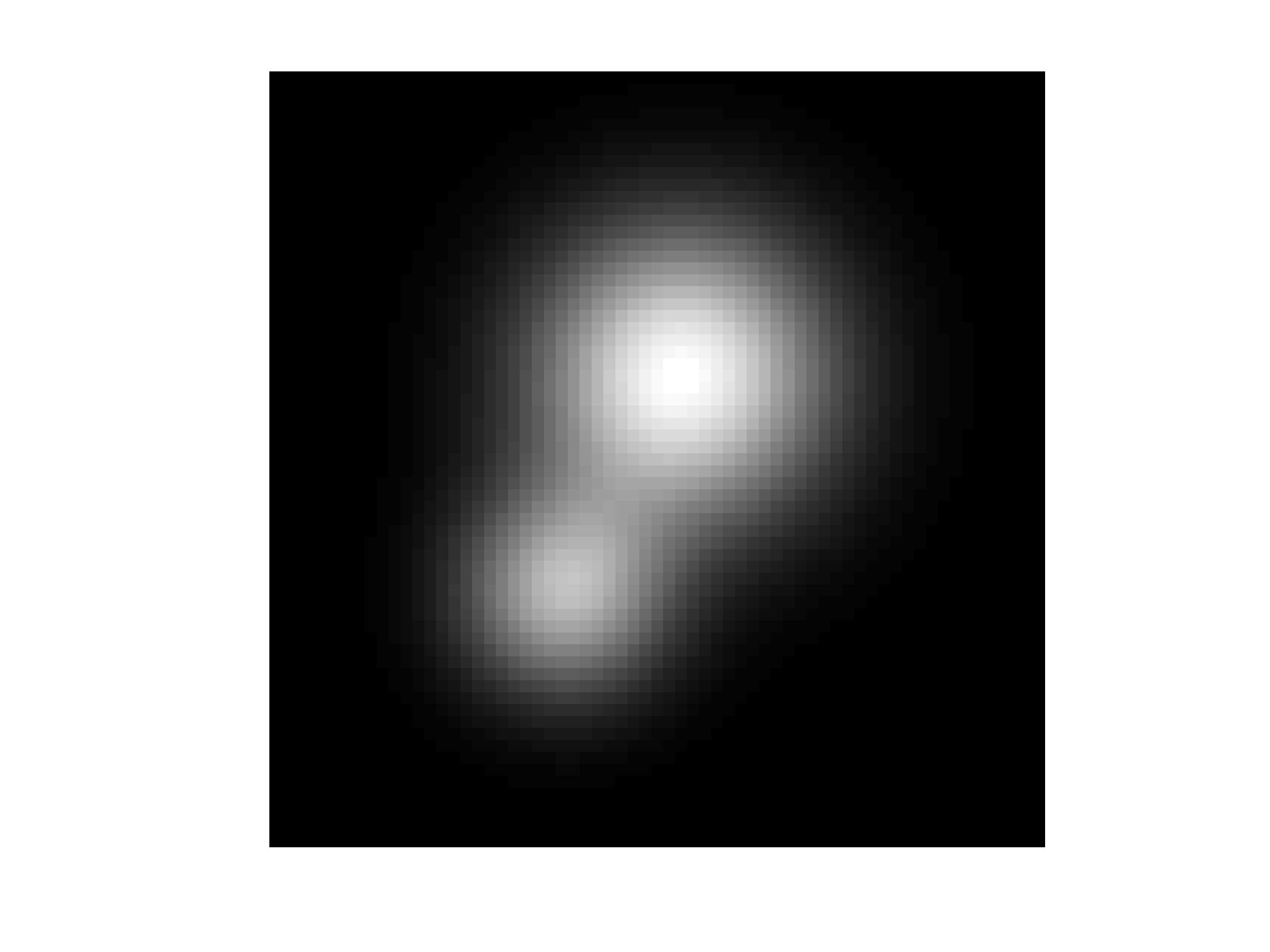}
        \end{subfigure}
	\begin{subfigure}[b]{0.24\textwidth}
                \centering
                \includegraphics[scale = 0.04]{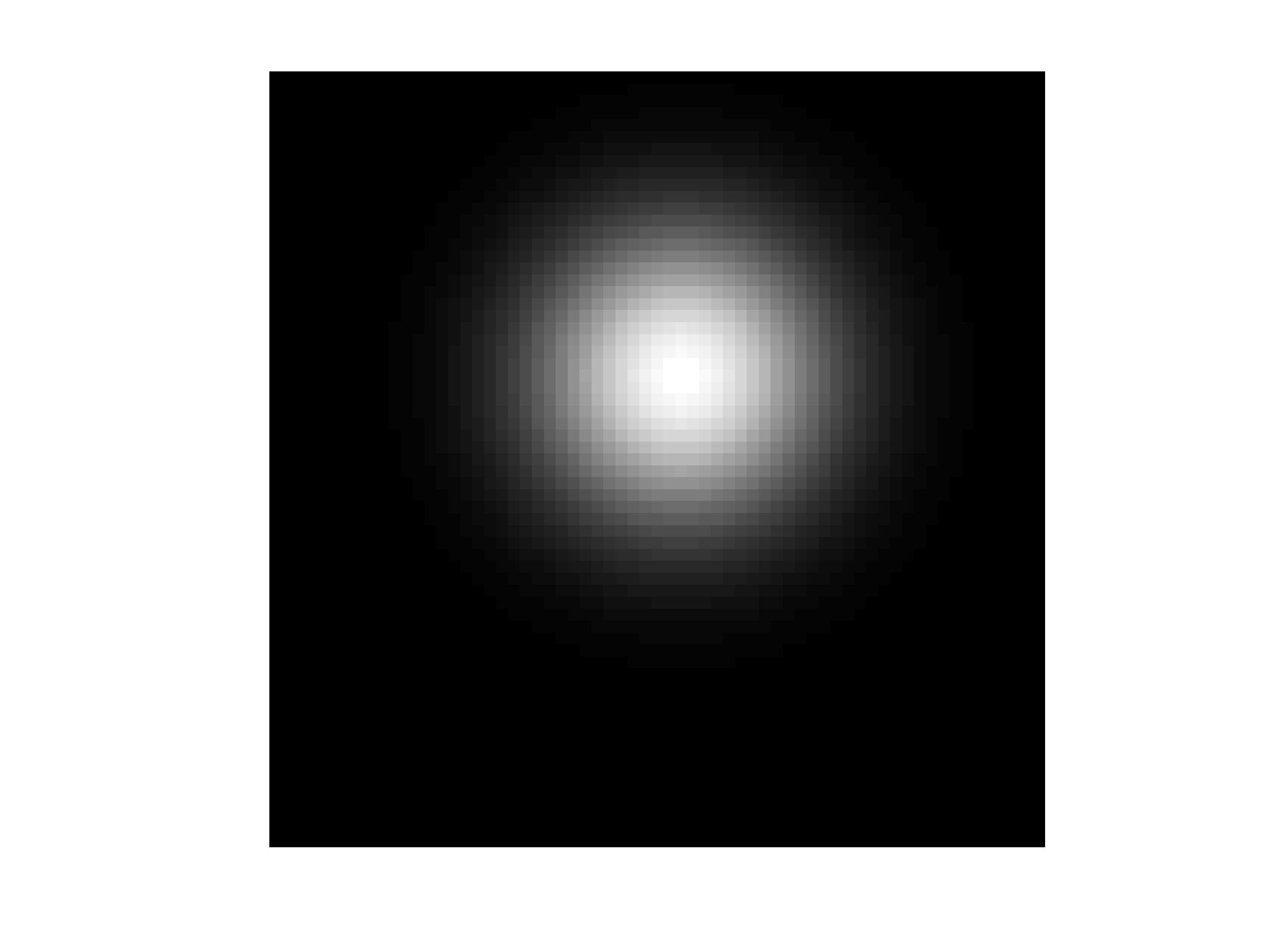}
        \end{subfigure}
	\begin{subfigure}[b]{0.24\textwidth}
                \centering
                \includegraphics[scale = 0.04]{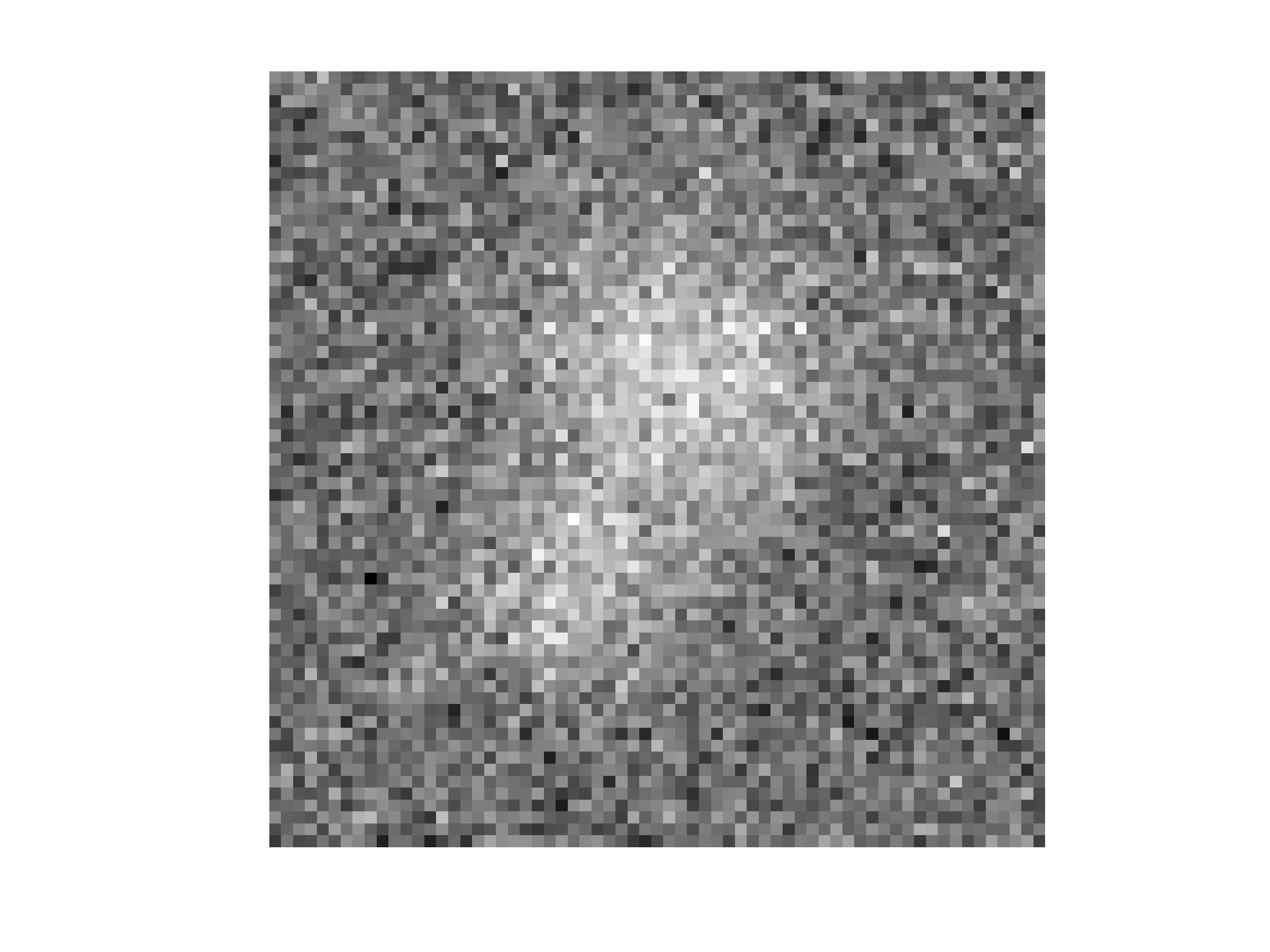}
        \end{subfigure}
	\begin{subfigure}[b]{0.24\textwidth}
                \centering
                \includegraphics[scale = 0.04]{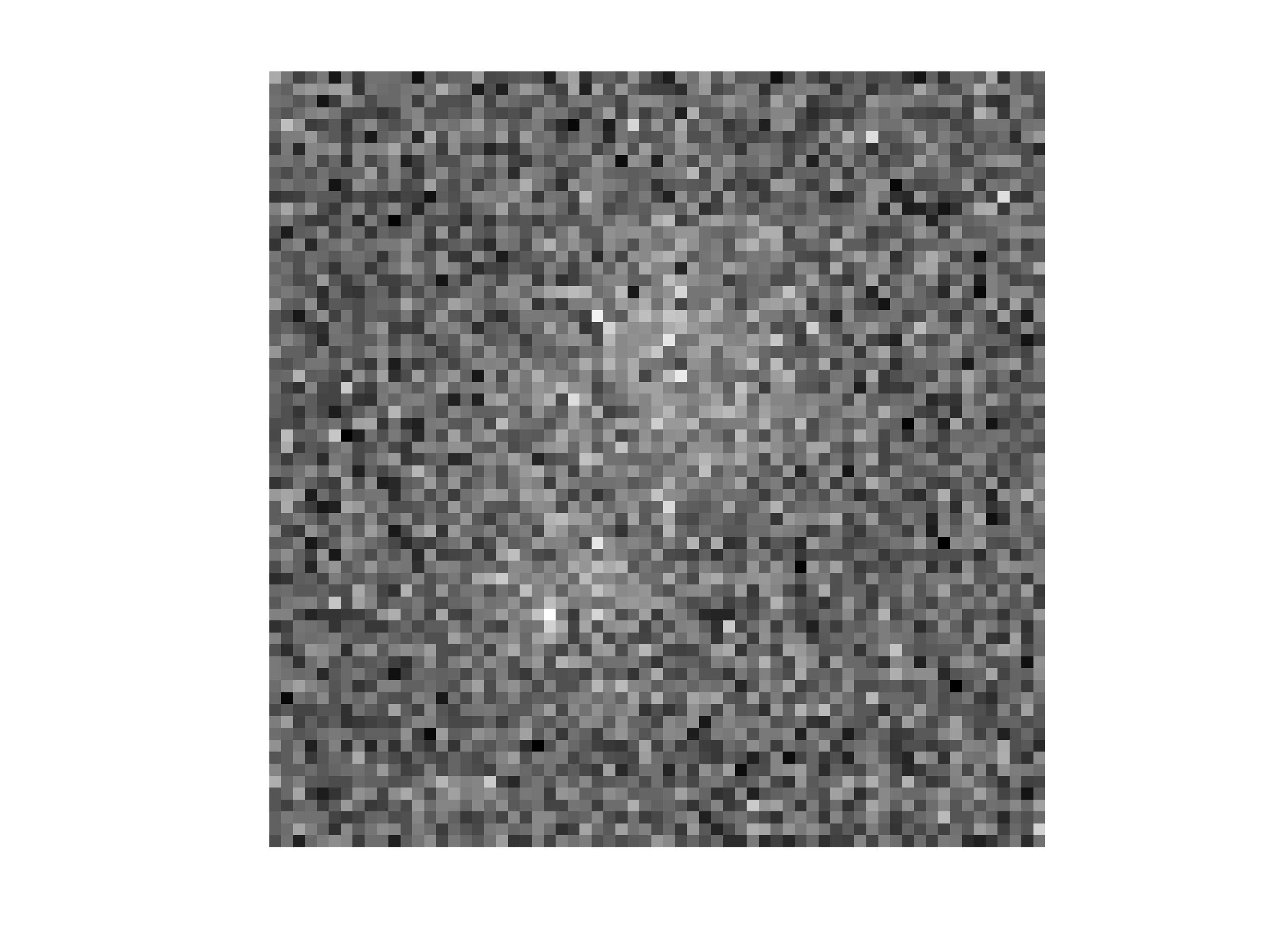}
       \end{subfigure} \\

	\begin{subfigure}[b]{0.24\textwidth}
		\centering
		\includegraphics[scale = 0.04]{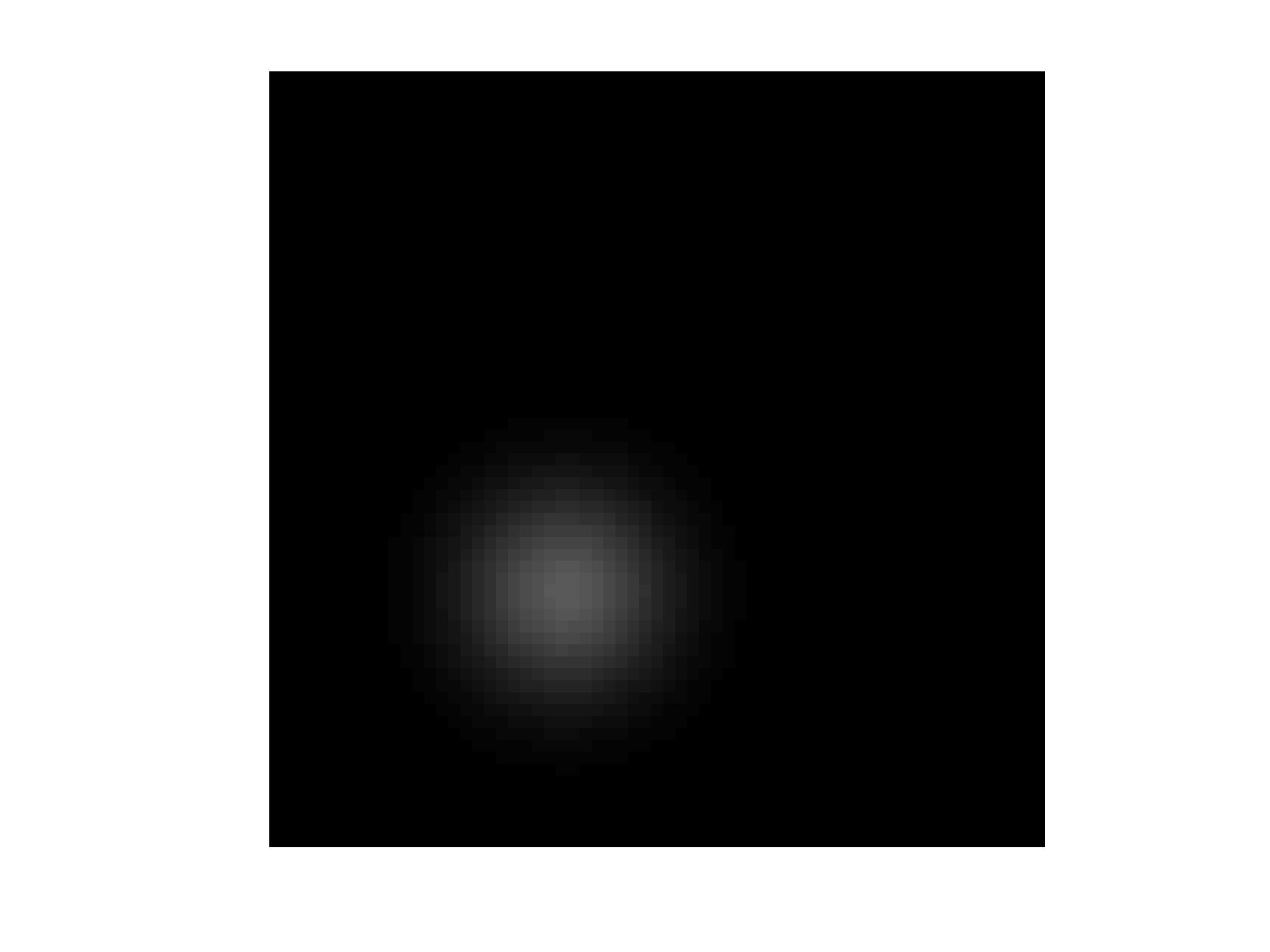}
		\caption{Class 1 (Clean)}
        \end{subfigure}
\begin{subfigure}[b]{0.24\textwidth}
                \centering
                \includegraphics[scale = 0.17]{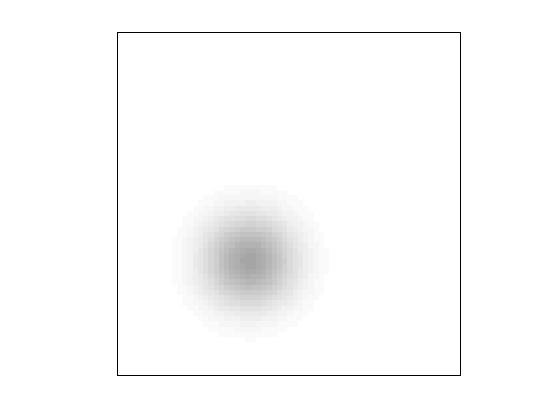}
		\caption{Class 2 (Clean)}
        \end{subfigure}
	\begin{subfigure}[b]{0.24\textwidth}
		\centering
		\includegraphics[scale = 0.04]{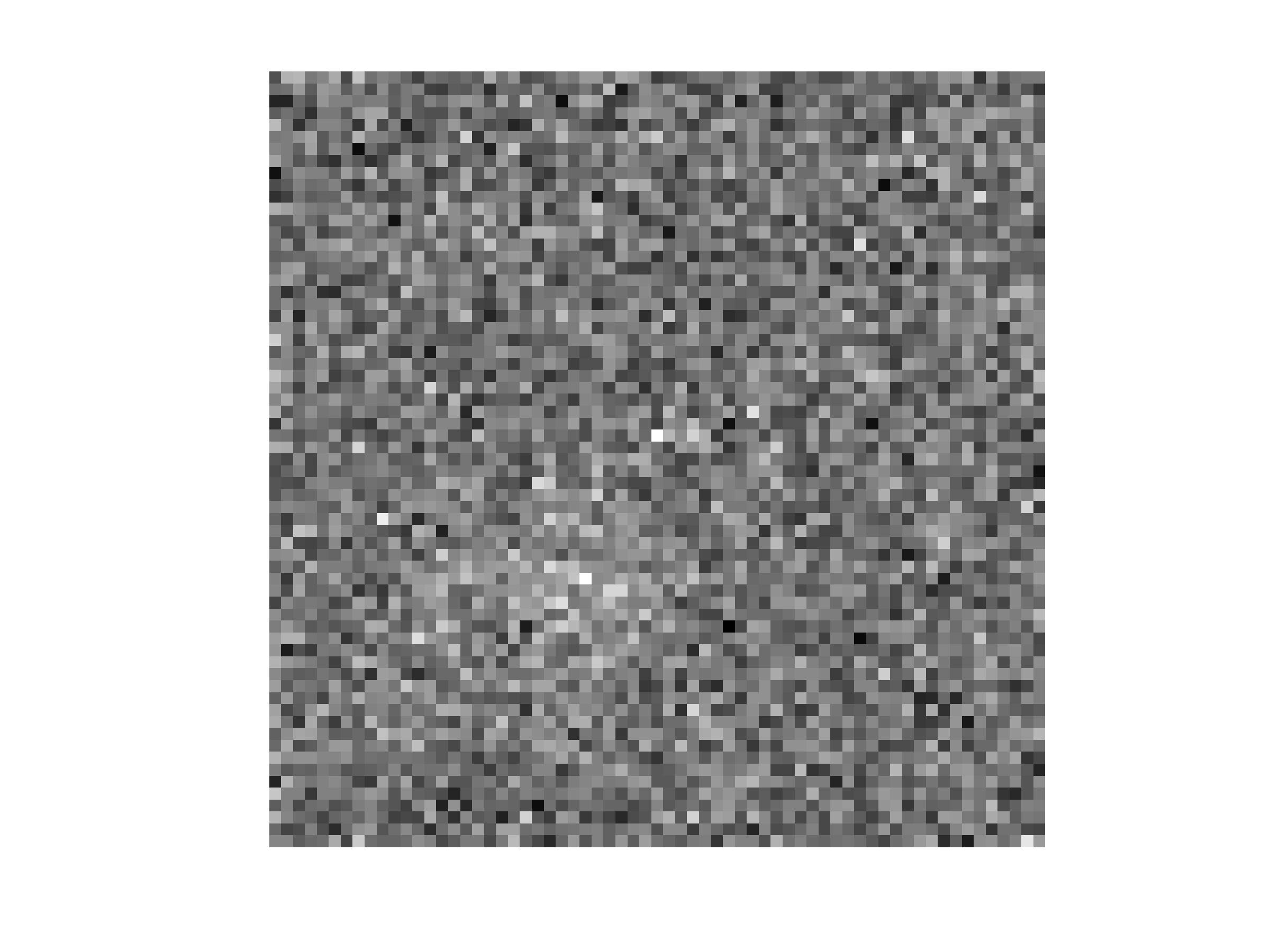}
	\caption{SNR = 0.96}
        \end{subfigure}
	\begin{subfigure}[b]{0.24\textwidth}
                \centering
                \includegraphics[scale = 0.04]{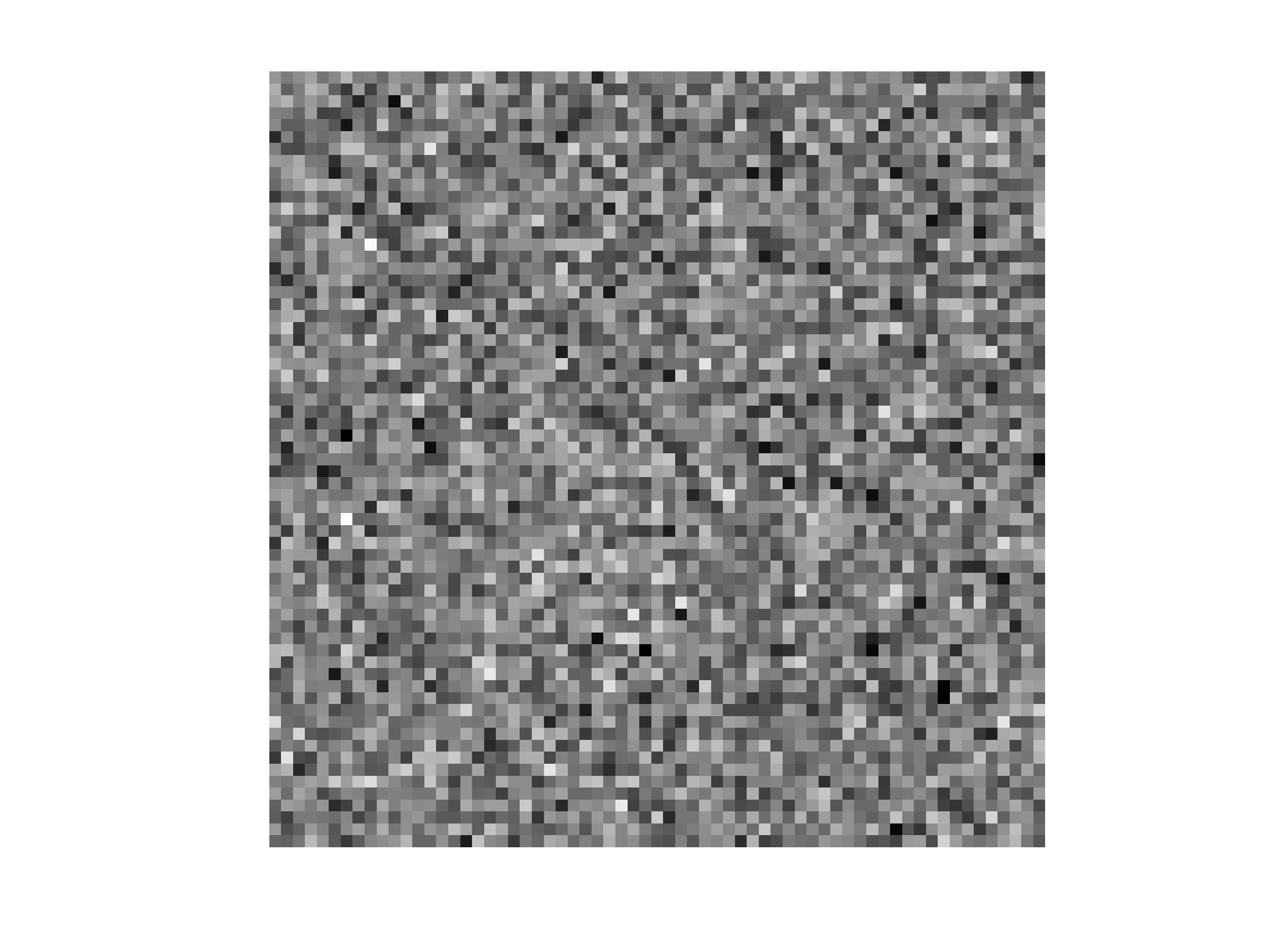}
	\caption{SNR = 0.19}
        \end{subfigure}
\caption{This figure depicts the effect of mean-subtraction on projection images in the context of a two-class heterogeneity. The bottom row projections obtained from the top row by mean-subtraction. Columns (a) and (b) are clean projection images of the two classes from a fixed viewing angle. Columns (c) and (d) are both noisy versions of column (a). The image in the top row of column (c) has an SNR of 0.96, but the SNR of the corresponding mean-subtracted image is only 0.05. In column (d), the top image has an SNR of 0.19, but the mean-subtracted image has SNR 0.01. Note: the SNR values here are not normalized by $N_{\text{res}}^2/N^2$ in order to illustrate the signal present in a projection image. }
\label{noisy_projs}
\end{figure}

To get a sense of the difference between this definition of SNR and the conventional one, compare the signal strength in a projection image to that in a mean-subtracted projection image in Figure \ref{noisy_projs}. 

\subsection{Experimental procedure}

We performed three numerical experiments: one with two heterogeneity classes, one with three heterogeneity classes, and one with continuous variation along the perimeter of a triangle defined by three volumes. The first two demonstrate our algorithm in the setup of Problem \ref{het_problem}, and the third shows that we can estimate the covariance matrix and discover a low-dimensional structure in more general setups than the discrete heterogeneity case. 

As a first step in each of the experiments, we created a number of phantoms analytically. We chose the phantoms to be linear combinations of Gaussian densities:
\begin{equation}
\scr X^c(r) = \sum_{i = 1}^{M_c} a_{i, c}\exp\left(-\frac{\norm{r - r_{i, c}}^2}{2\sigma_{i, c}^2}\right), \quad r_{i, c} \in \br^3, a_{i, c}, \sigma_{i, c}, \in \br_+, \quad c = 1, \dots, C.
\label{phantom_eq}
\end{equation}

For the discrete heterogeneity cases, we chose probabilities $p_1, \dots, p_C$ and generated $\scr X_1, \dots, \scr X_n$ by sampling from $\scr X^1, \dots, \scr X^C$ accordingly. For the continuous heterogeneity case, we generated each $\scr X_s$ by choosing a point uniformly at random from the perimeter of the triangle defined by $\scr X^1, \scr X^2, \scr X^3$. 

For all of our experiments, we chose $n = 10000, \ N = 65, \ N_{\text{res}} = 17, \ K = 15$, and selected the set of rotations $\sdr R_s$ to be approximately uniformly distributed on $SO(3)$. For each $\sdr R_s$, we calculated the clean continuous projection image $\mathcal P_s \scr X_s$ analytically, and then sampled the result on an $N\times N$ grid. Then, for each SNR level, we used (\ref{SNR_het_def}) to find the noise power $\sigma^2$ to add to the images. 

After simulating the data, we ran Algorithm \ref{high_level_algorithm} on the images $I_s$ and rotations $\sdr R_s$ on an Intel i7-3615QM CPU with 8 cores, and 8 GB of RAM. The runtime for the entire algorithm with the above parameter values (excluding precomputations) is 257 seconds. For the continuous heterogeneity case, we stopped the algorithm after computing the coordinates $\alpha_{s}$ (we did not attempt to reconstruct individual volumes in this case). To quantify the resolution of our reconstructions, we use the Fourier Shell Correlation (FSC), defined as the correlation of the reconstruction with the ground truth on each spherical shell in Fourier space \cite{fsc}. For the discrete cases, we calculated FSC curves for the mean, the top eigenvectors, and the mean-subtracted reconstructed volumes. We also plotted the correlations of the mean, top eigenvectors, and mean-subtracted volumes with the corresponding ground truths for a range of SNR values. Finally, we plotted the coordinates $\alpha_{s}$. For the continuous heterogeneity case, we tested the algorithm on only a few different SNR values. By plotting $\alpha_{s}$ in this case, we recover the triangle used in constructing $\scr X_s$.

\subsection{Experiment: two classes} \label{two_classes}

In this experiment, we constructed two phantoms $\scr X^1$ and $\scr X^2$ of the form (\ref{phantom_eq}), with $M_1 = 1, M_2 = 2$.
Cross-sections of $\scr X^1$ and $\scr X^2$ are depicted in the top row panels (c) and (d) in Figure \ref{reconstructions}. We chose the two heterogeneity classes to be equiprobable: $p_1 = p_2 = 1/2$. Note that the theoretical covariance matrix in the two-class heterogeneity problem has rank 1, with dominant eigenvector proportional to the difference between the two volumes.
%

Figure \ref{reconstructions} shows the reconstructions of the mean, top eigenvector, and two volumes for SNR$_{\text{het}}$ = 0.013, 0.003, 0.0013 (0.25, 0.056, 0.025). In Figure \ref{eig_hists}, we display eigenvalue histograms of the reconstructed covariance matrix for the above SNR values. Figure \ref{fig:fsc} shows the FSC curves for these reconstructions. Figure \ref{correlations} shows the correlations of the computed means, top eigenvectors, and (mean-subtracted) volumes with their true values for a broader range of SNR values. In Figure \ref{coords_2}, we plot a histogram of the coordinates $\alpha_{s}$ from step 7 of Algorithm \ref{high_level_algorithm}.

\begin{figure}[H]
        \centering
        \begin{subfigure}[b]{0.24\textwidth}
                \centering
                \includegraphics[scale = 0.04]{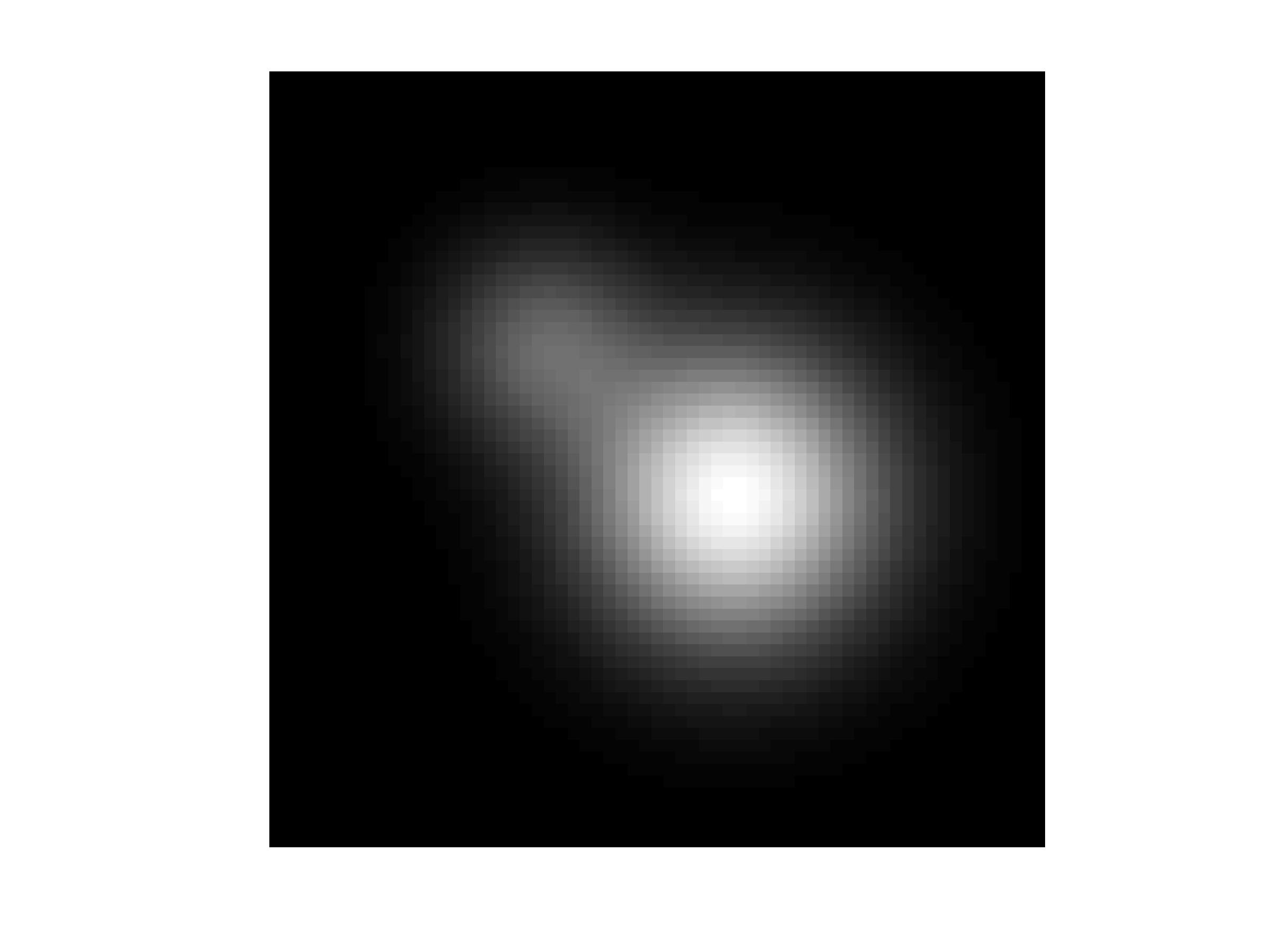}
        \end{subfigure}
\begin{subfigure}[b]{0.24\textwidth}
                \centering
                \includegraphics[scale = 0.04]{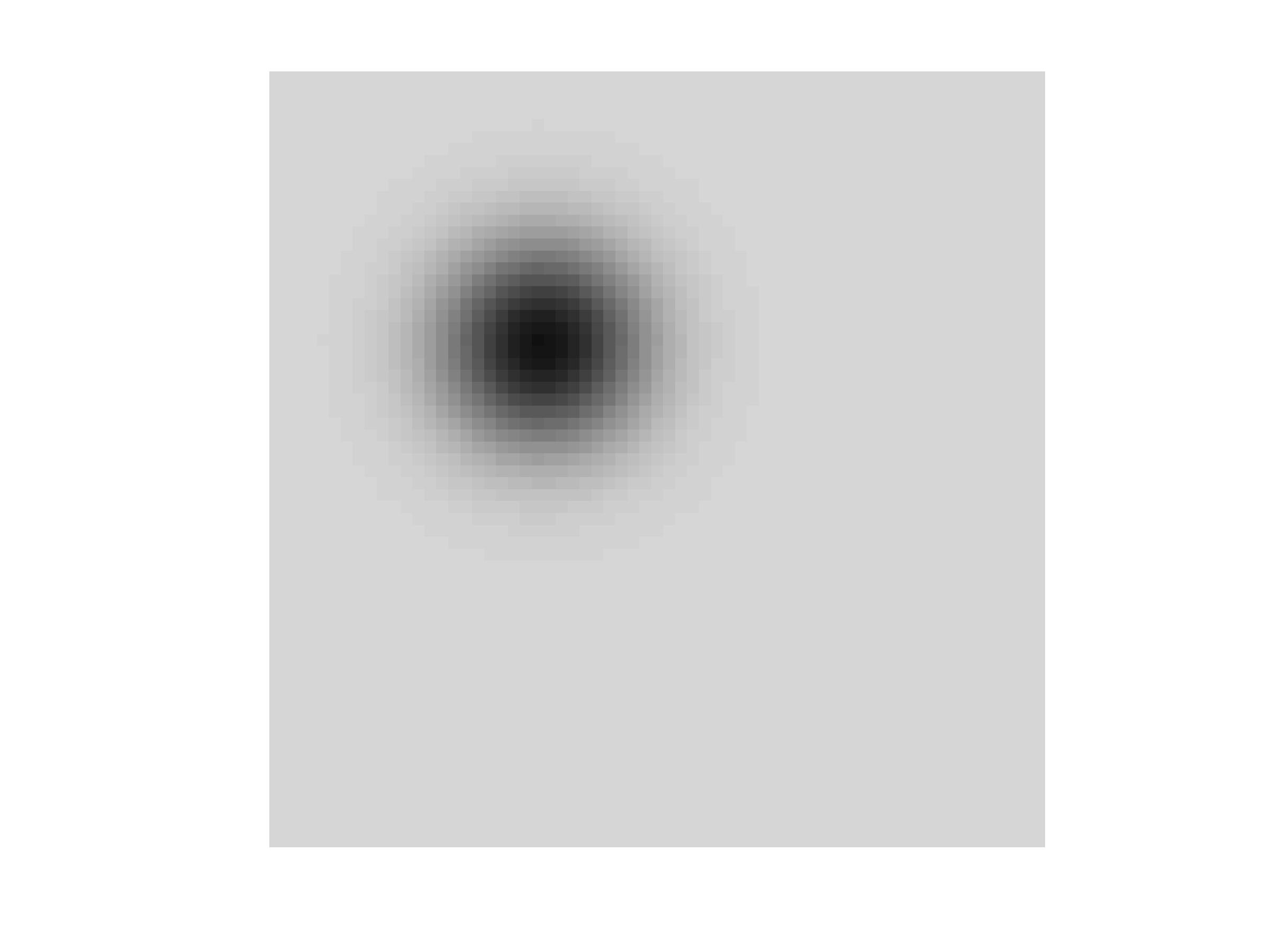}
        \end{subfigure}
\begin{subfigure}[b]{0.24\textwidth}
                \centering
                \includegraphics[scale = 0.04]{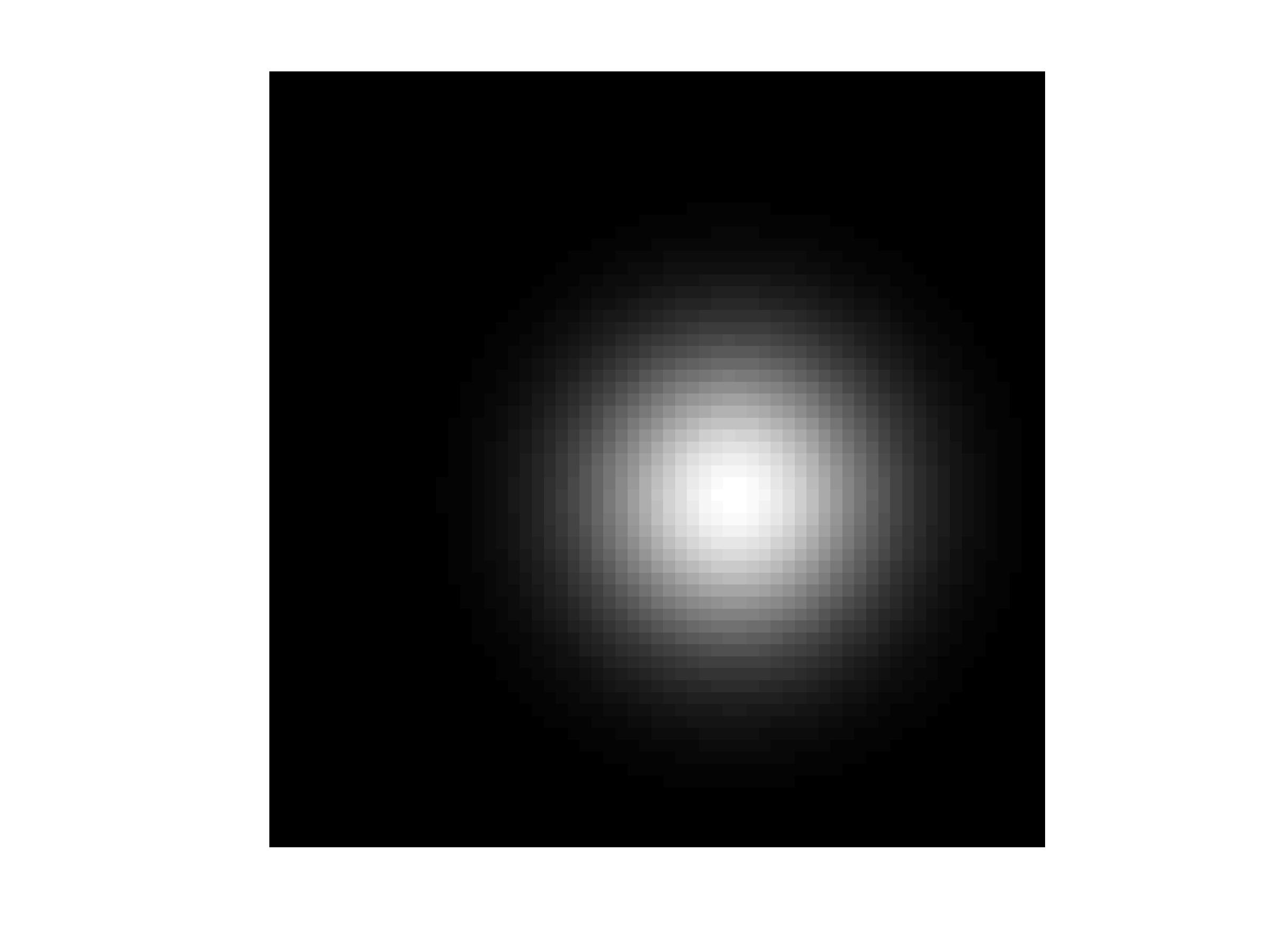}
        \end{subfigure}
        \begin{subfigure}[b]{0.24\textwidth}
                \centering
                \includegraphics[scale = 0.04]{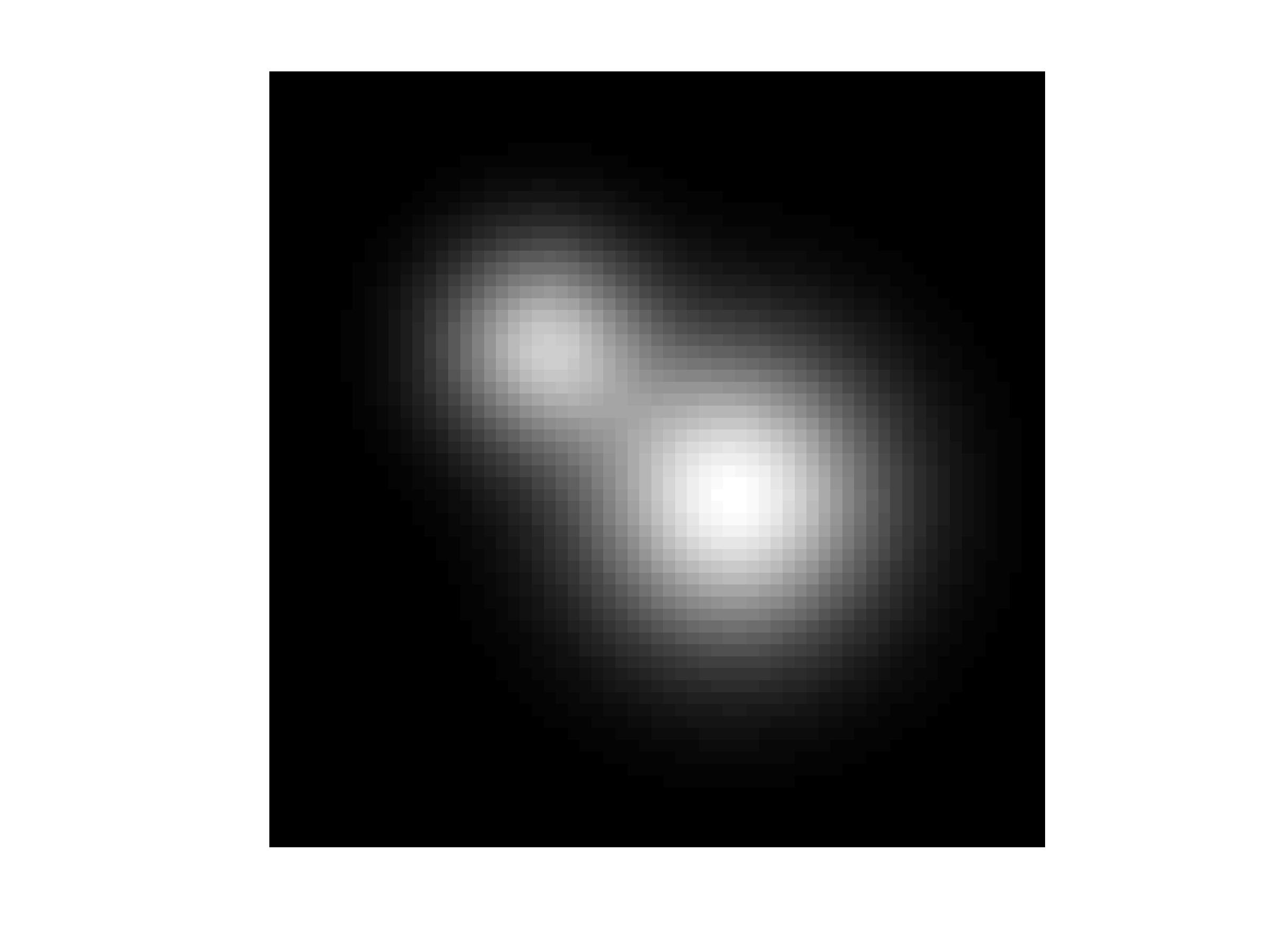}
        \end{subfigure} \\

	\begin{subfigure}[b]{0.24\textwidth}
                \centering
                \includegraphics[scale = 0.04]{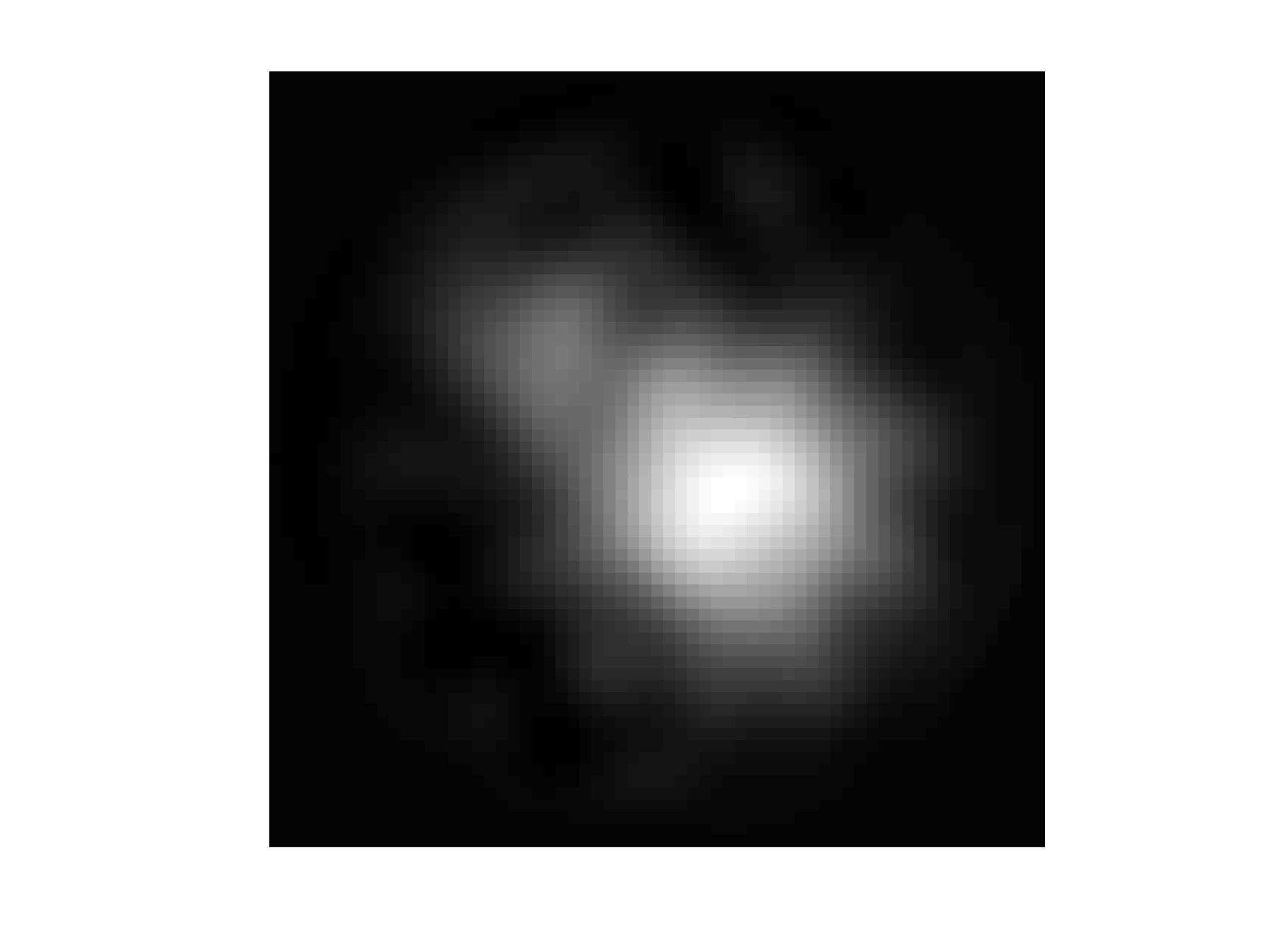}
        \end{subfigure}
	\begin{subfigure}[b]{0.24\textwidth}
                \centering
                \includegraphics[scale = 0.04]{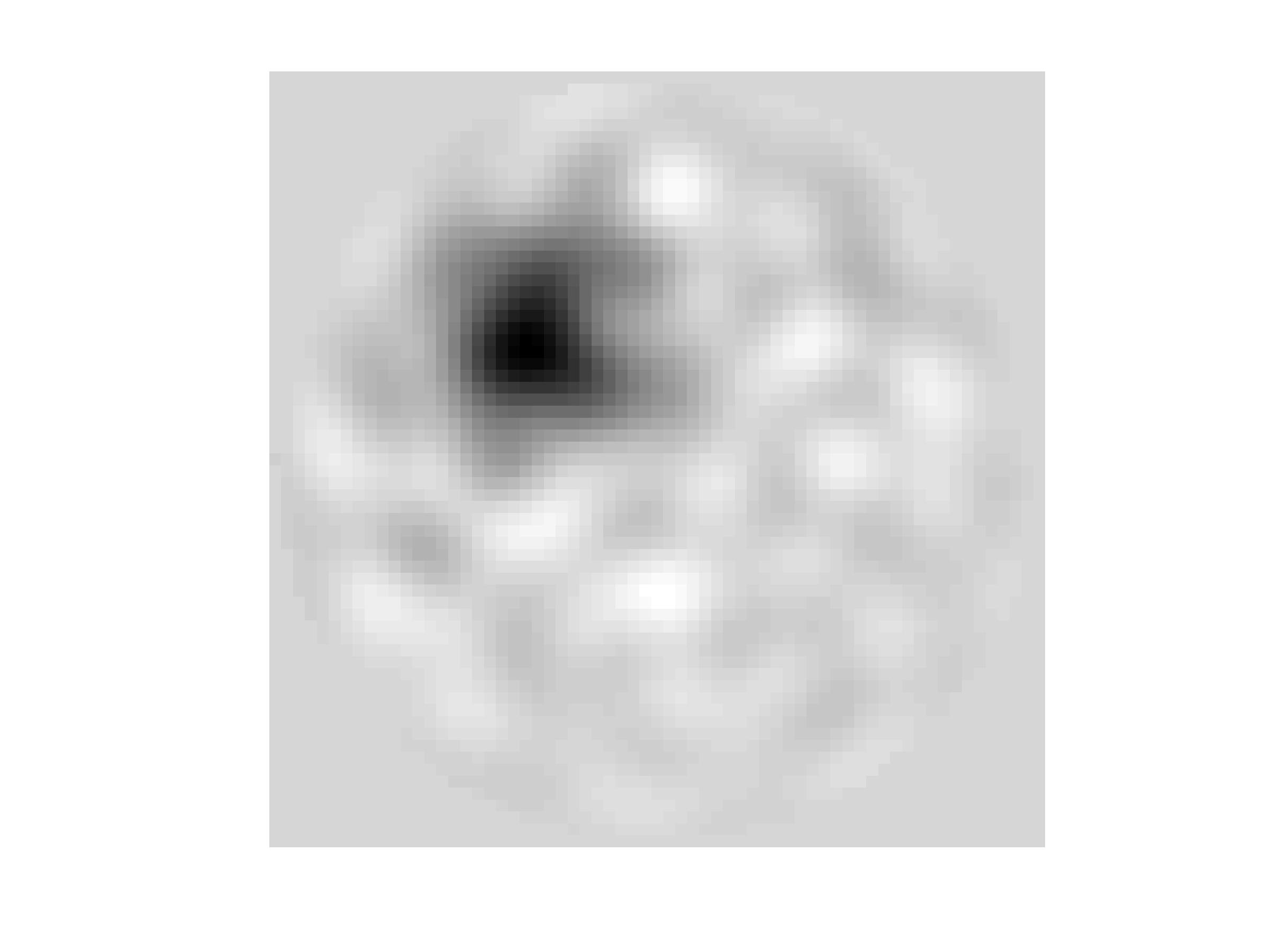}
        \end{subfigure}
	\begin{subfigure}[b]{0.24\textwidth}
                \centering
                \includegraphics[scale = 0.04]{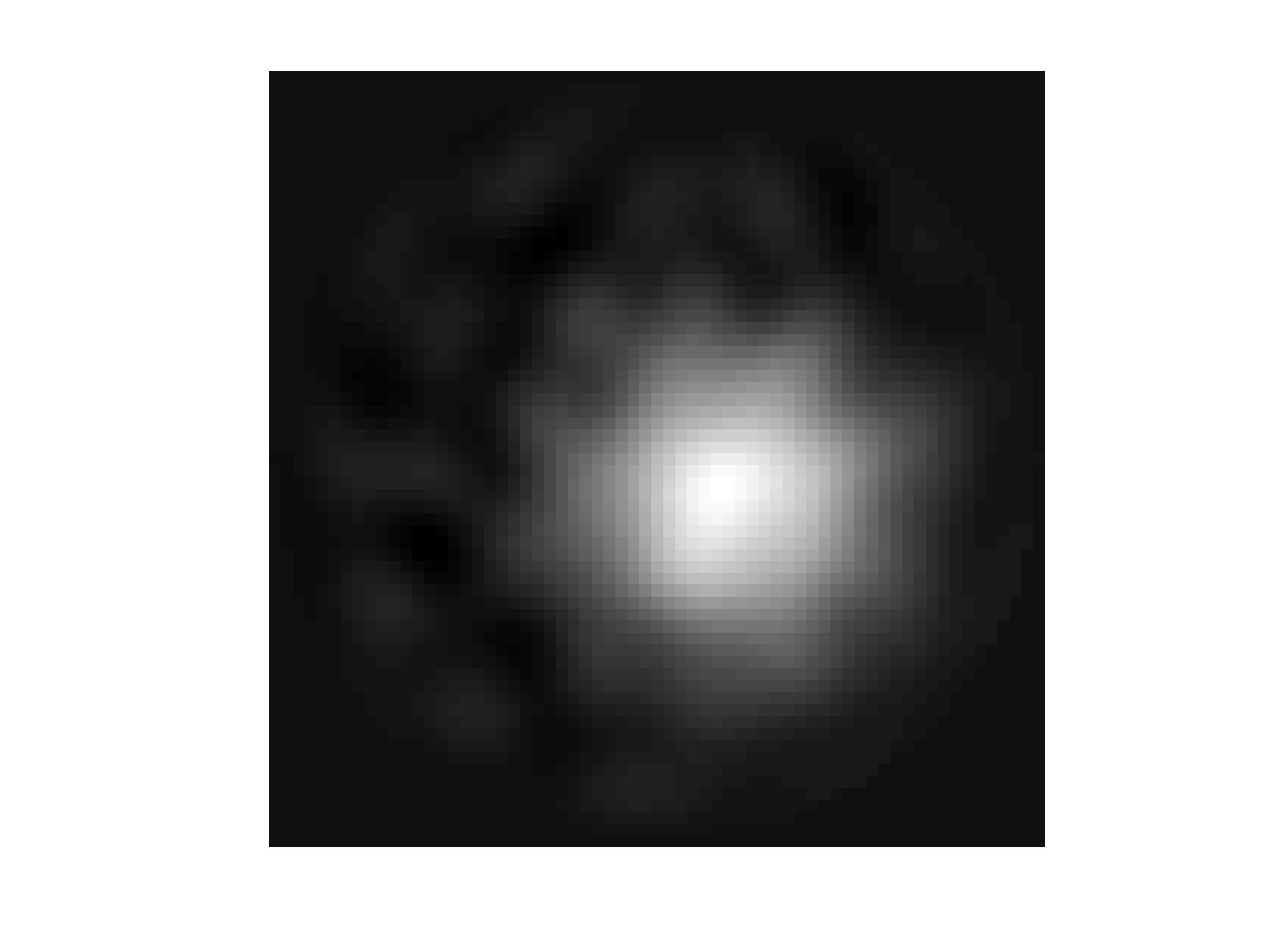}
        \end{subfigure}
	\begin{subfigure}[b]{0.24\textwidth}
                \centering
                \includegraphics[scale = 0.04]{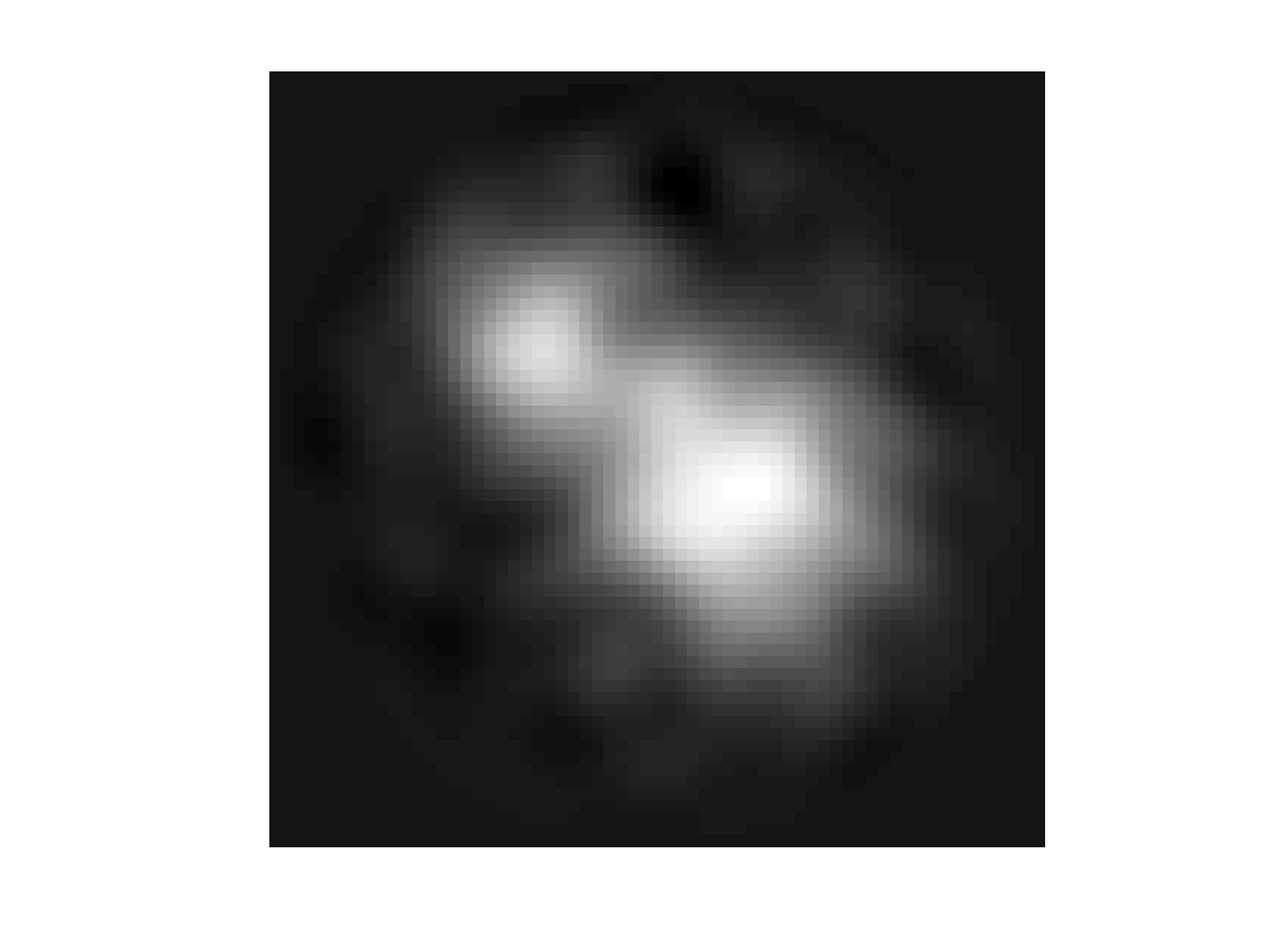}
        \end{subfigure}

\begin{subfigure}[b]{0.24\textwidth}
                \centering
                \includegraphics[scale = 0.04]{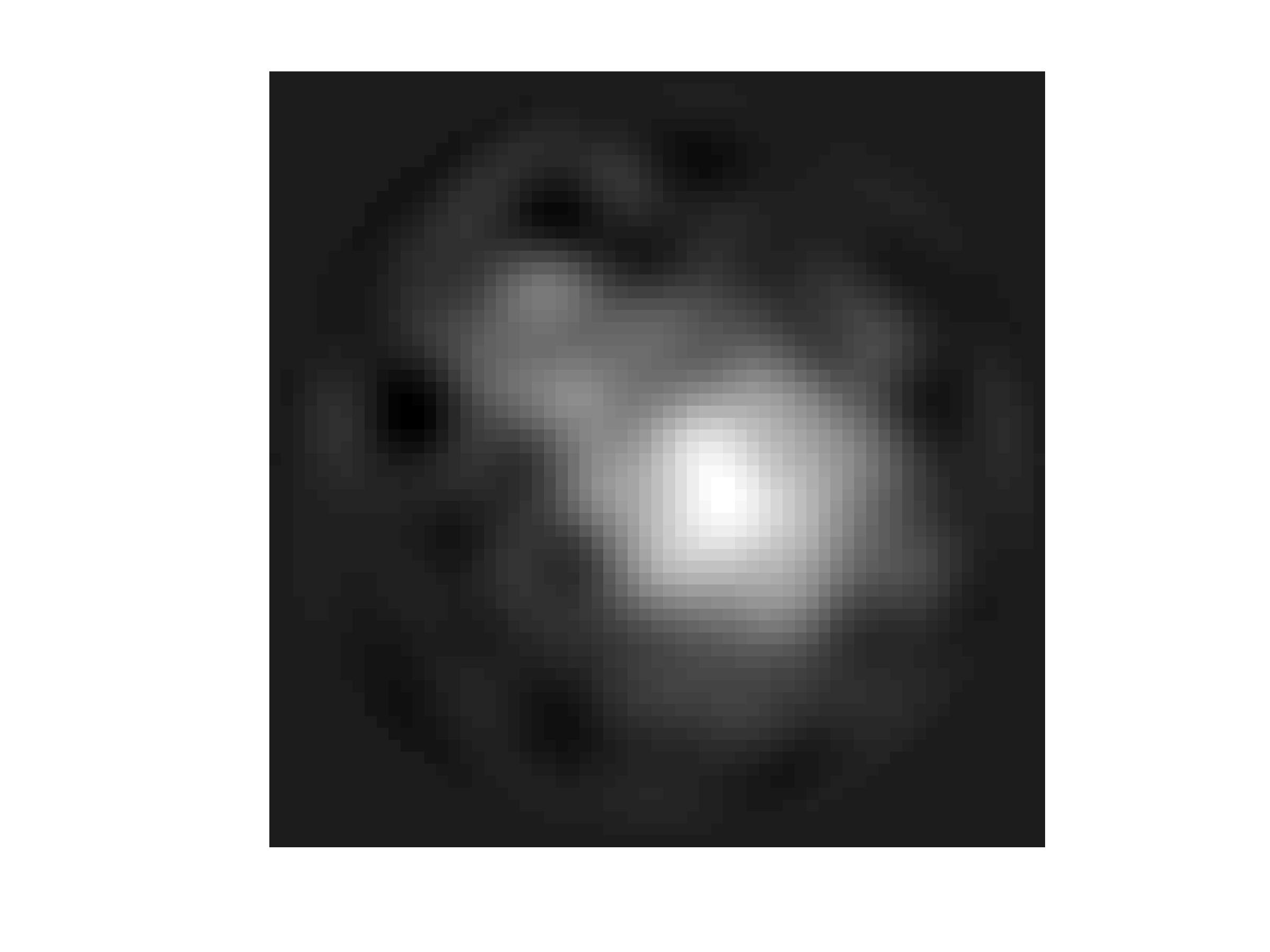}
        \end{subfigure}
\begin{subfigure}[b]{0.24\textwidth}
                \centering
                \includegraphics[scale = 0.04]{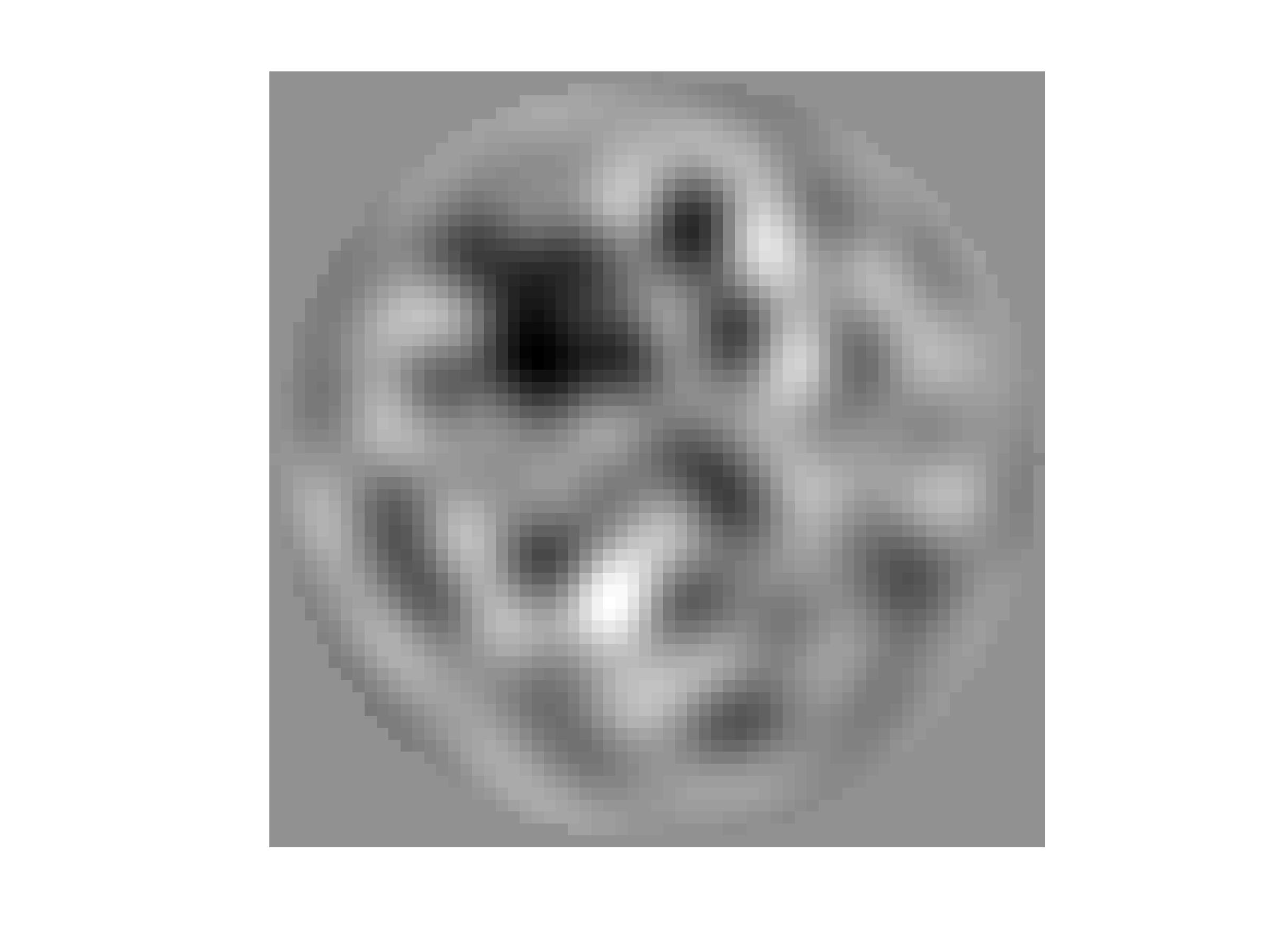}
        \end{subfigure}
\begin{subfigure}[b]{0.24\textwidth}
                \centering
                \includegraphics[scale = 0.04]{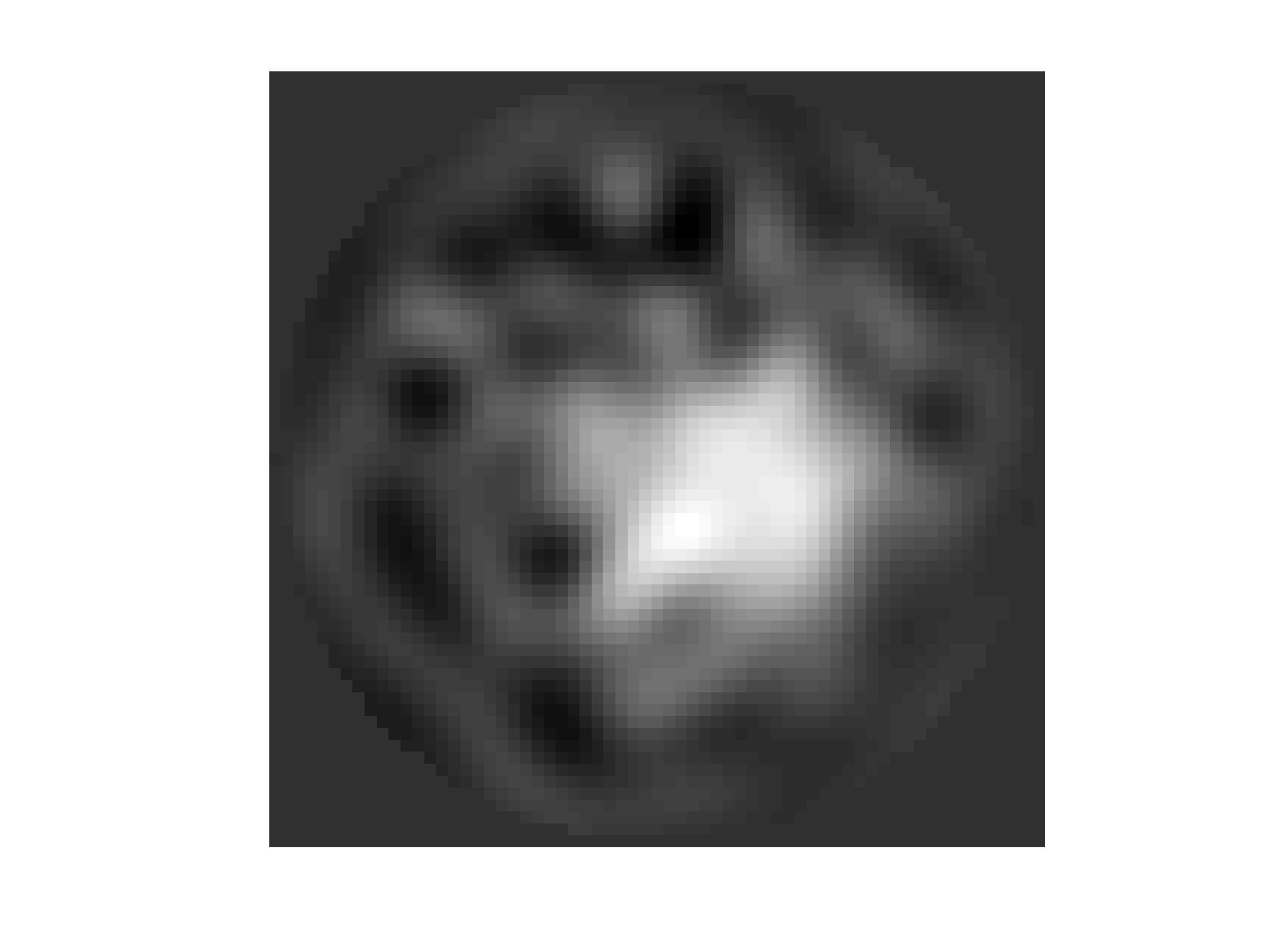}
        \end{subfigure}
\begin{subfigure}[b]{0.24\textwidth}
                \centering
                \includegraphics[scale = 0.04]{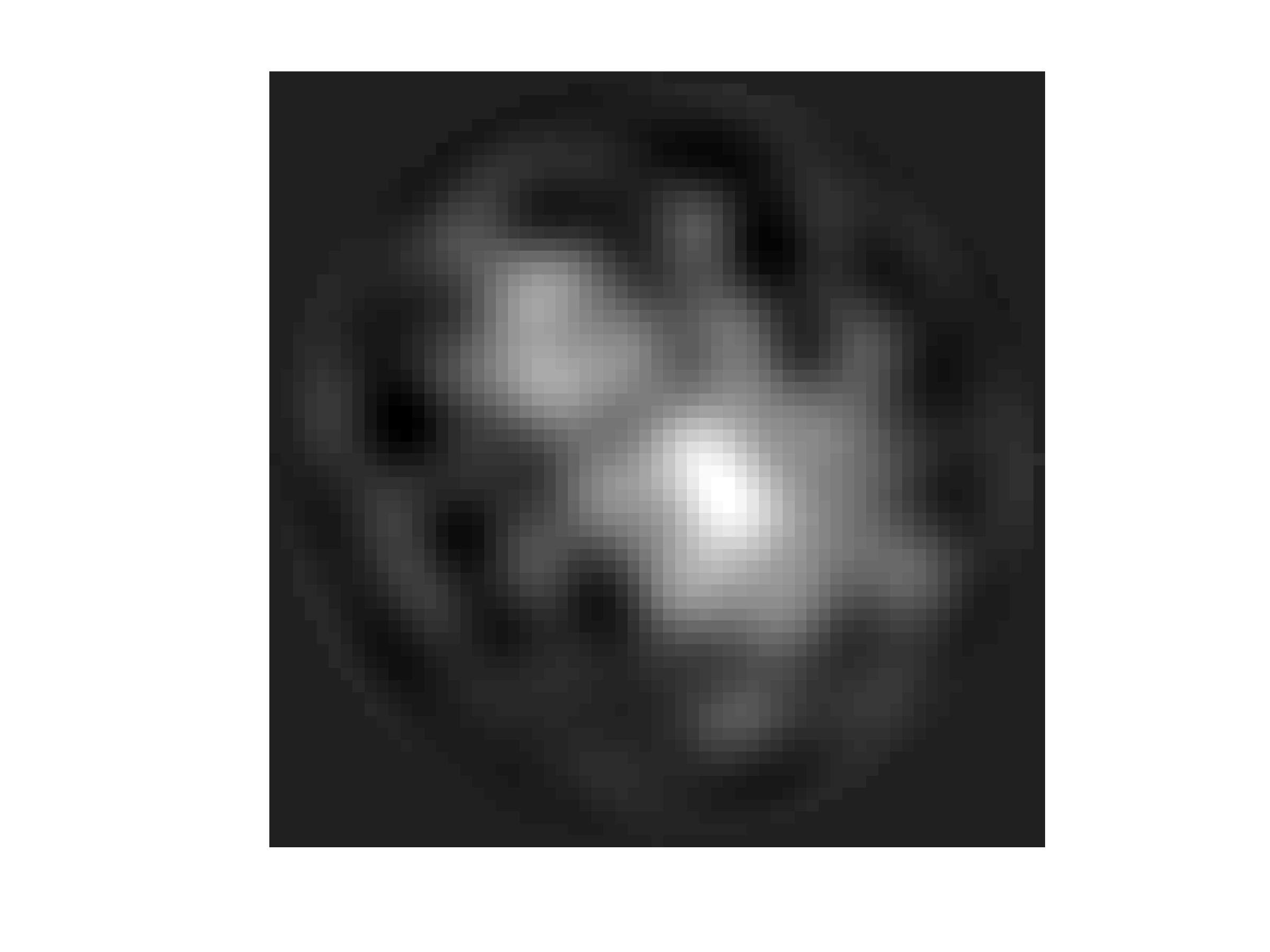}
        \end{subfigure} \\

\begin{subfigure}[b]{0.24\textwidth}
                \centering
                \includegraphics[scale = 0.04]{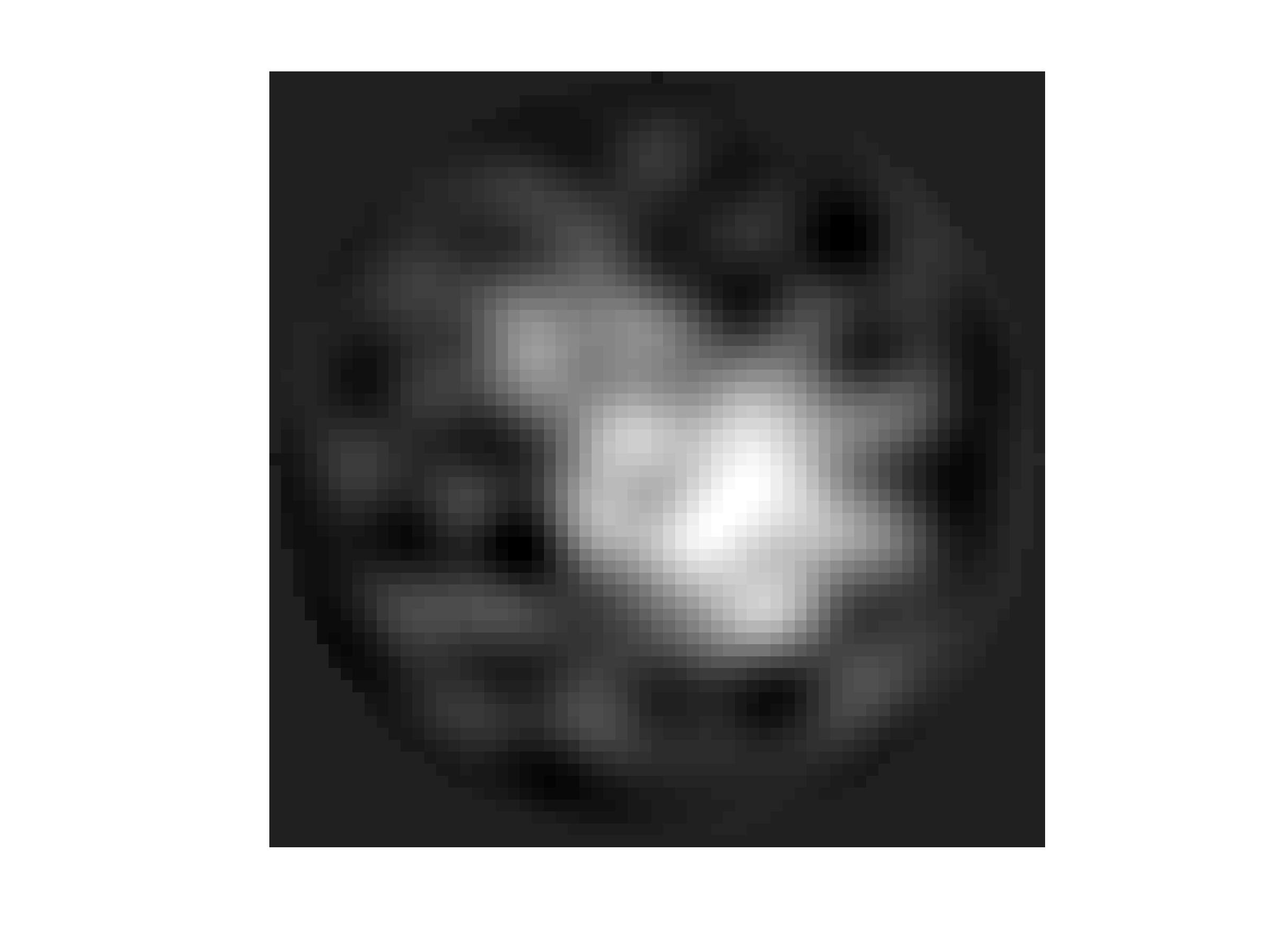}
	\caption{Mean}
        \end{subfigure} 
\begin{subfigure}[b]{0.24\textwidth}
                \centering
                \includegraphics[scale = 0.04]{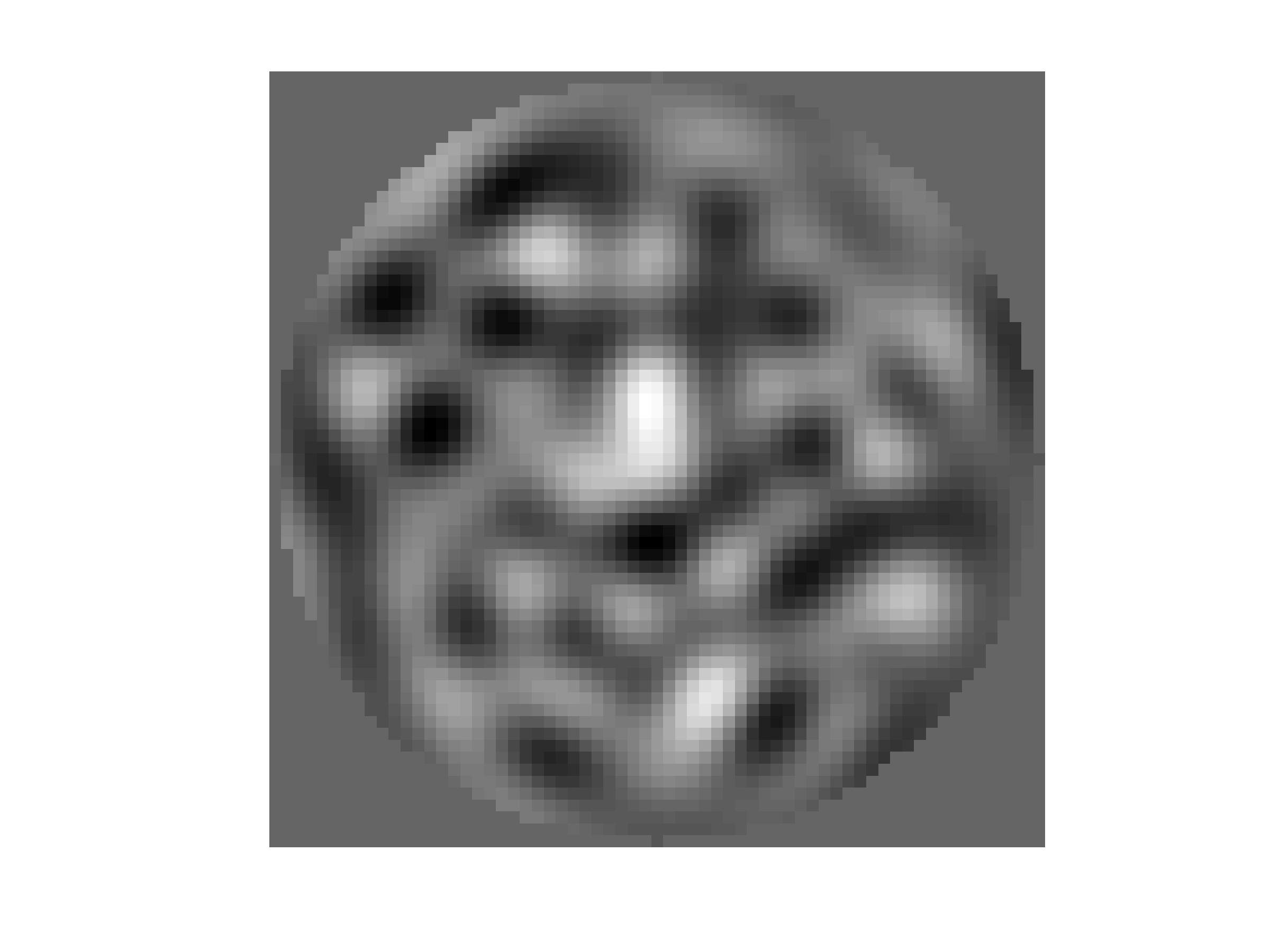}
	\caption{Eigenvector}
        \end{subfigure} 
\begin{subfigure}[b]{0.24\textwidth}
                \centering
                \includegraphics[scale = 0.04]{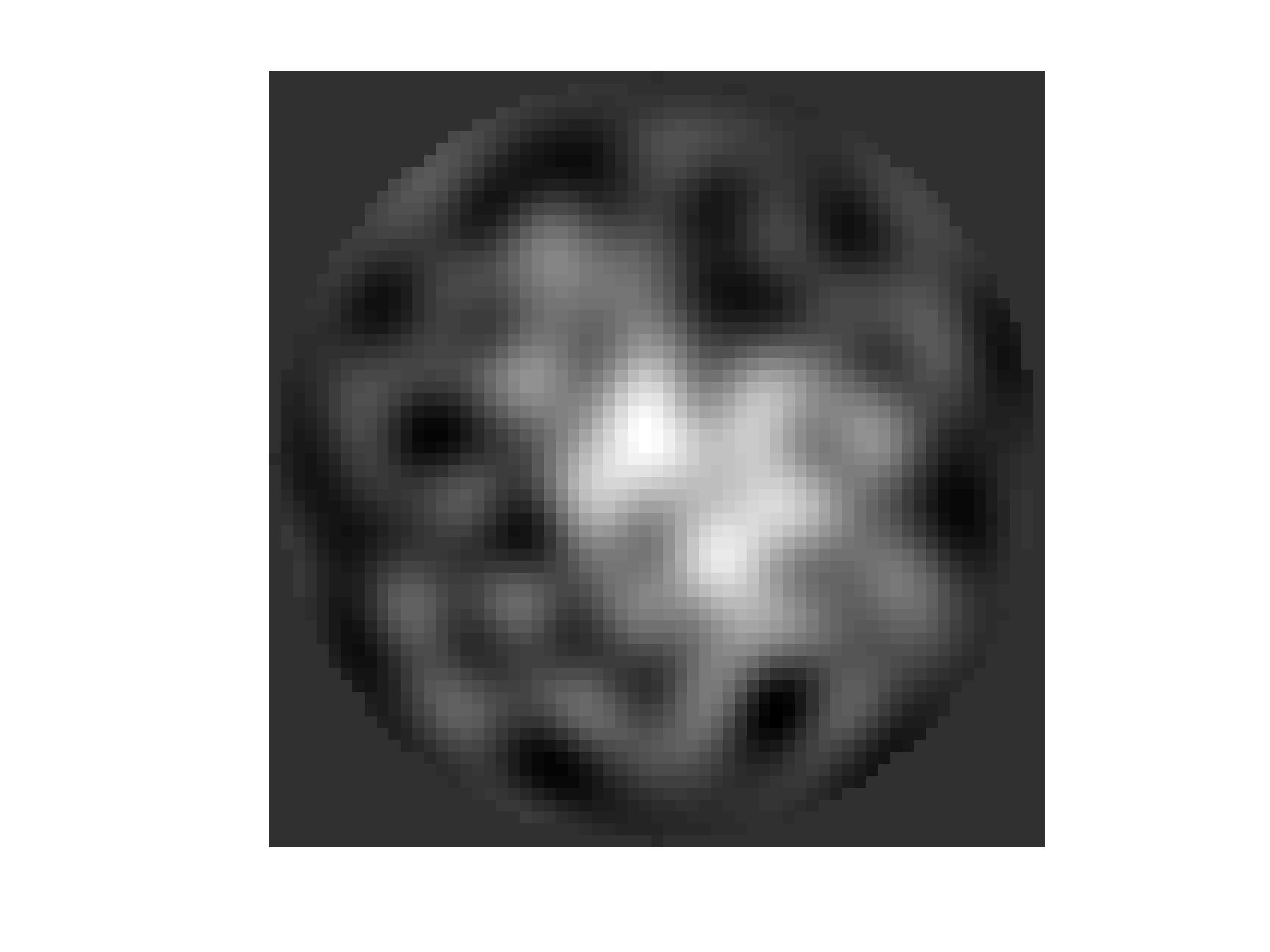}
	\caption{Volume 1}
        \end{subfigure} 
\begin{subfigure}[b]{0.24\textwidth}
                \centering
                \includegraphics[scale = 0.04]{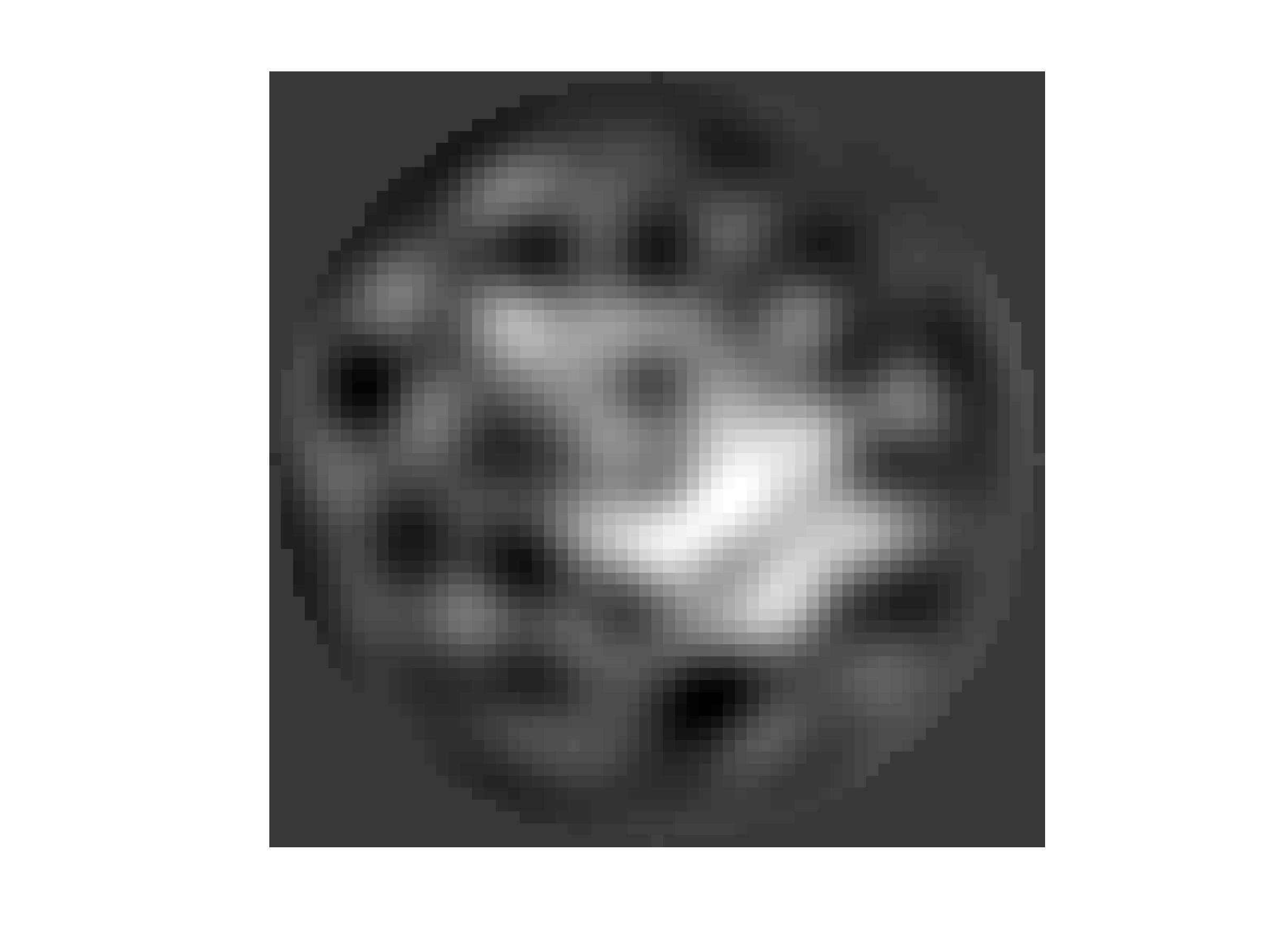}
	\caption{Volume 2}
        \end{subfigure} 
\caption{Cross-sections of reconstructions of the mean, top eigenvector, and two volumes for three different SNR values. The top row is clean, the second row corresponds to SNR$_\text{het}$ = 0.013 (0.25), the third row to SNR$_\text{het}$ = 0.003 (0.056), and the last row to SNR$_\text{het}$ = 0.0013 (0.025).}
\label{reconstructions}
\end{figure}

Our algorithm was able to meaningfully reconstruct the two volumes for SNR$_{\text{het}}$ as low as about 0.003 (0.06). Note that the means were always reconstructed with at least a 94\% correlation to their true values. On the other hand, the eigenvector reconstruction shows a phase-transition behavior, with the transition occurring between SNR$_{\text{het}}$ values of 0.001 (0.002) and 0.003 (0.006). Note that this behavior is tied to the spectral gap (separation of top eigenvalues from the bulk) of $\hat \Sigma_n$. Indeed, the disappearance of the spectral gap going from panel (b) to panel (c) of Figure \ref{eig_hists} coincides with the estimated top eigenvector becoming uncorrelated with the truth, as reflected in Figures \ref{fig:fsc}(b) and \ref{correlations}(a). This phase transition behavior is very similar to that observed in the usual high-dimensional PCA setup, described in Section \ref{rmt}.

\begin{figure}[H]
	\begin{subfigure}[b]{0.32\textwidth}
                \centering
                \includegraphics[scale = 0.132]{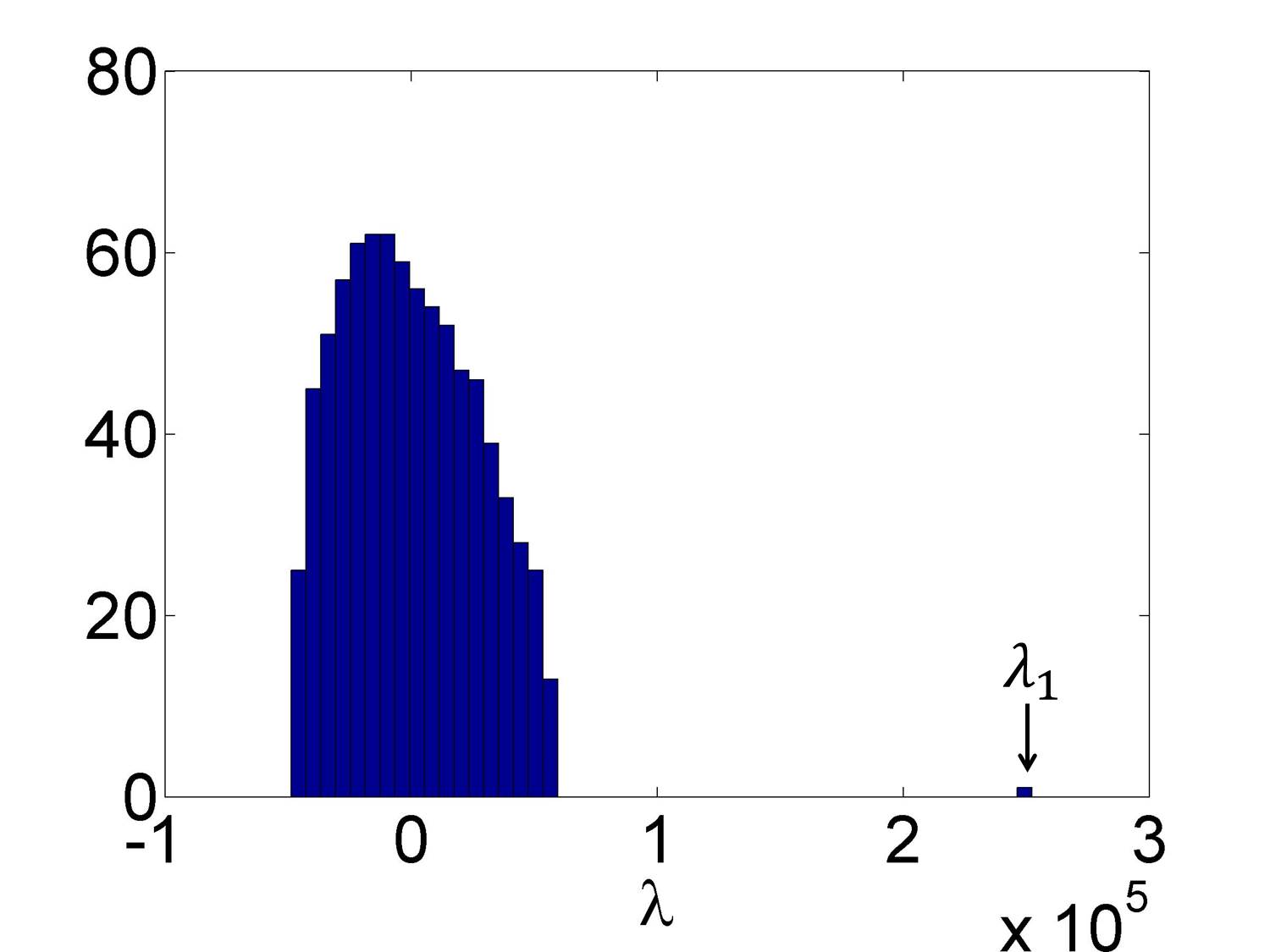}
	\caption{SNR$_{\text{het}}$ = 0.013 (0.25)}
        \end{subfigure}
\begin{subfigure}[b]{0.32\textwidth}
                \centering
                \includegraphics[scale = 0.132]{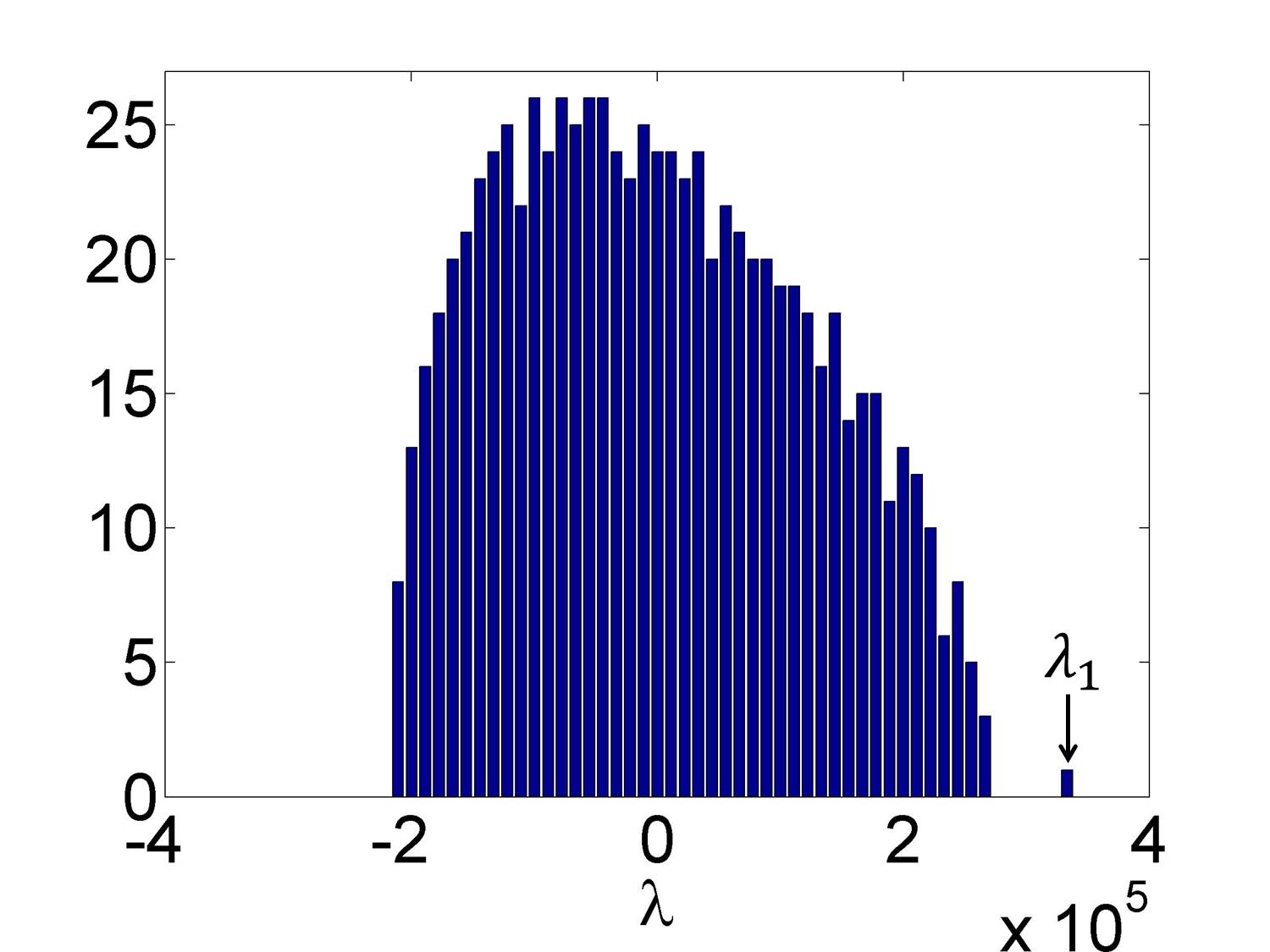}
	\caption{SNR$_{\text{het}}$ = 0.003 (0.056)}
        \end{subfigure}
\begin{subfigure}[b]{0.32\textwidth}
                \centering
                \includegraphics[scale = 0.0525]{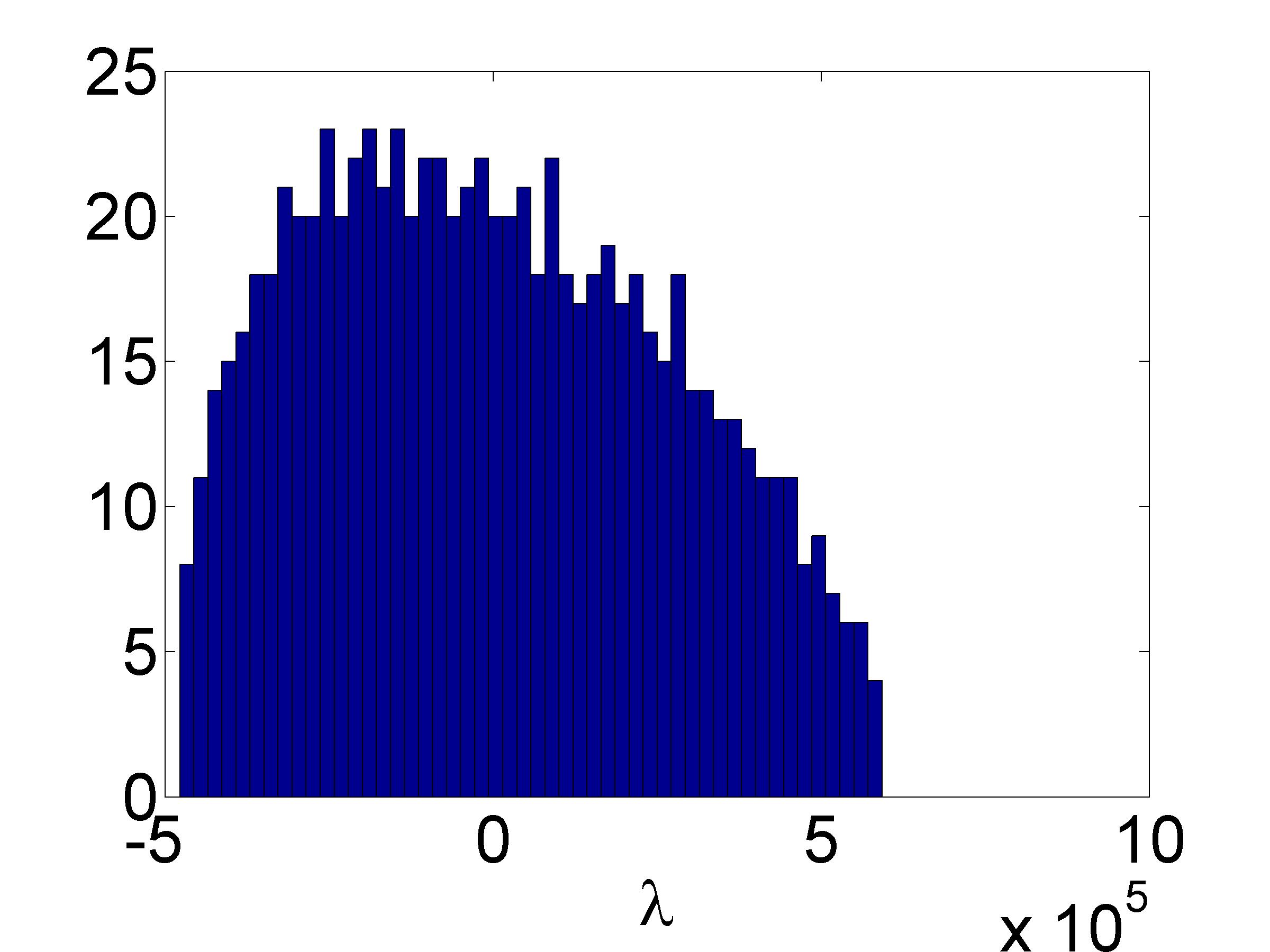}
	\caption{SNR$_{\text{het}}$ = 0.0013 (0.025)}
        \end{subfigure}
\caption{Eigenvalue histograms of $\hat \Sigma_n$ in two volume case for three SNR values. Note that as the SNR decreases, the distribution of eigenvalues associated with noise comes increasingly closer to the top eigenvalue that corresponds to the structural variability, and eventually the latter is no longer distinguishable.}
\label{eig_hists}
\end{figure}

\begin{figure}[H]
 	\centering
\begin{subfigure}[b]{0.3\textwidth}
                \centering
	\includegraphics[scale = 0.05]{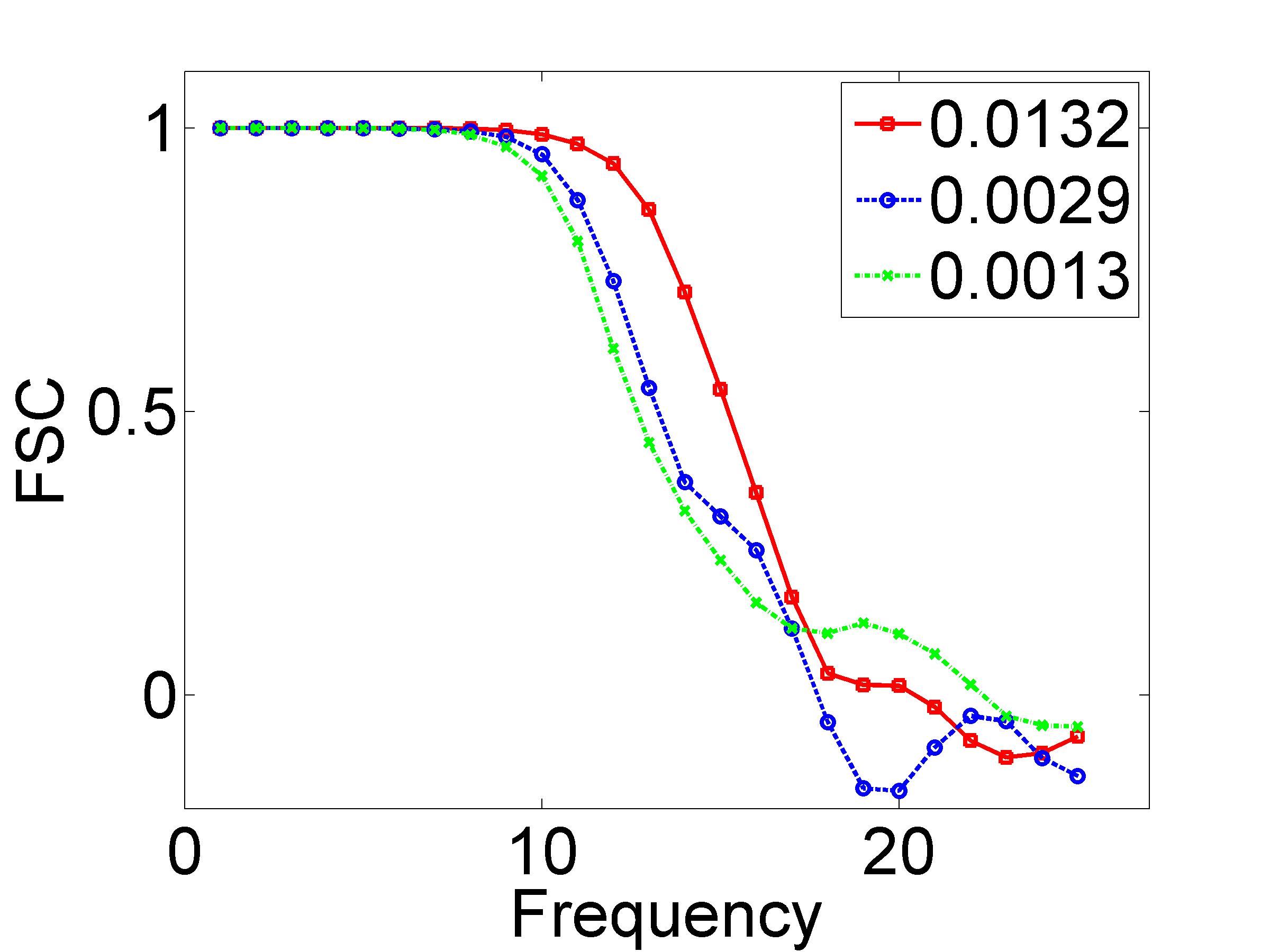}
\caption{Mean}
        \end{subfigure}	
\begin{subfigure}[b]{0.3\textwidth}
	\centering
	\includegraphics[scale = 0.05]{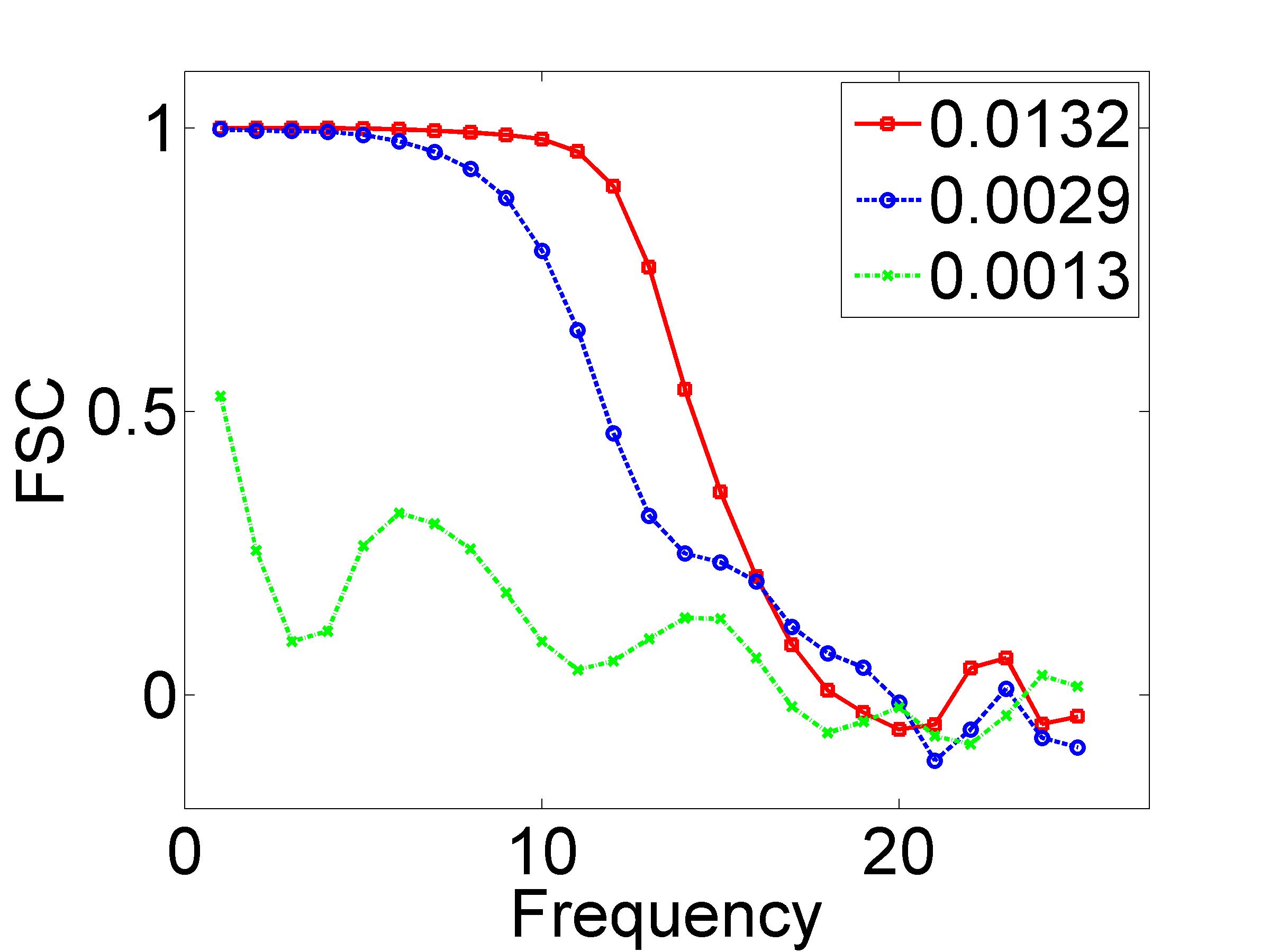}
\caption{Top eigenvector}
\end{subfigure}
\begin{subfigure}[b]{0.3\textwidth}
	\centering
	\includegraphics[scale = 0.05]{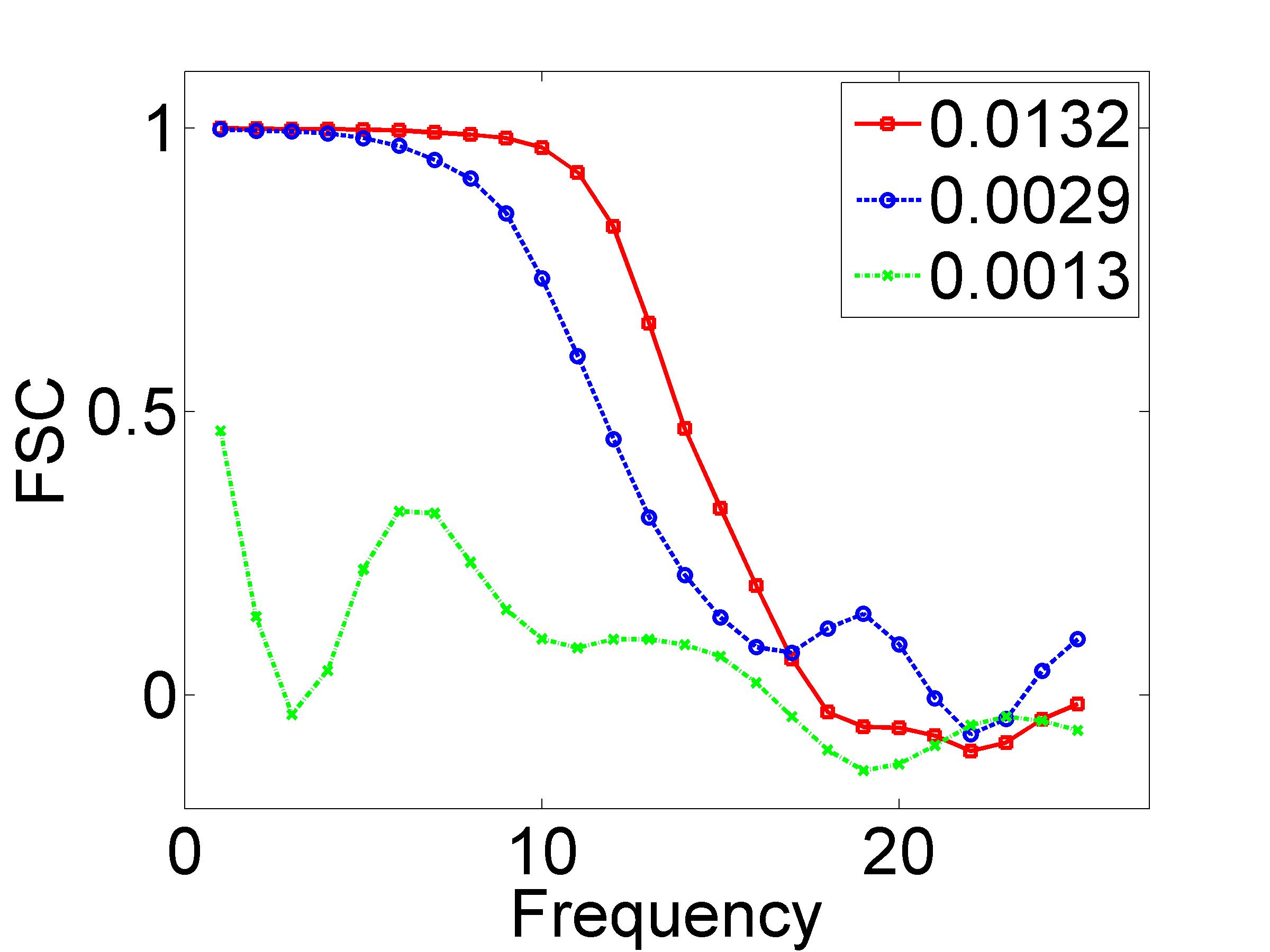}
\caption{Volume 1}
\end{subfigure}
\caption{FSC curves for the mean volume, top eigenvector, and one mean-subtracted volume at the same three SNRs as in Figure \ref{reconstructions}. Note that the mean volume is reconstructed successfully for all three SNR levels. On the other hand, the top eigenvector and volume are recovered at the highest two SNR levels but not at the lowest SNR.}
\label{fig:fsc}
\end{figure}

\begin{figure}[H]
 \begin{subfigure}[b]{0.49\textwidth}
                \centering
                \includegraphics[scale = 0.05]{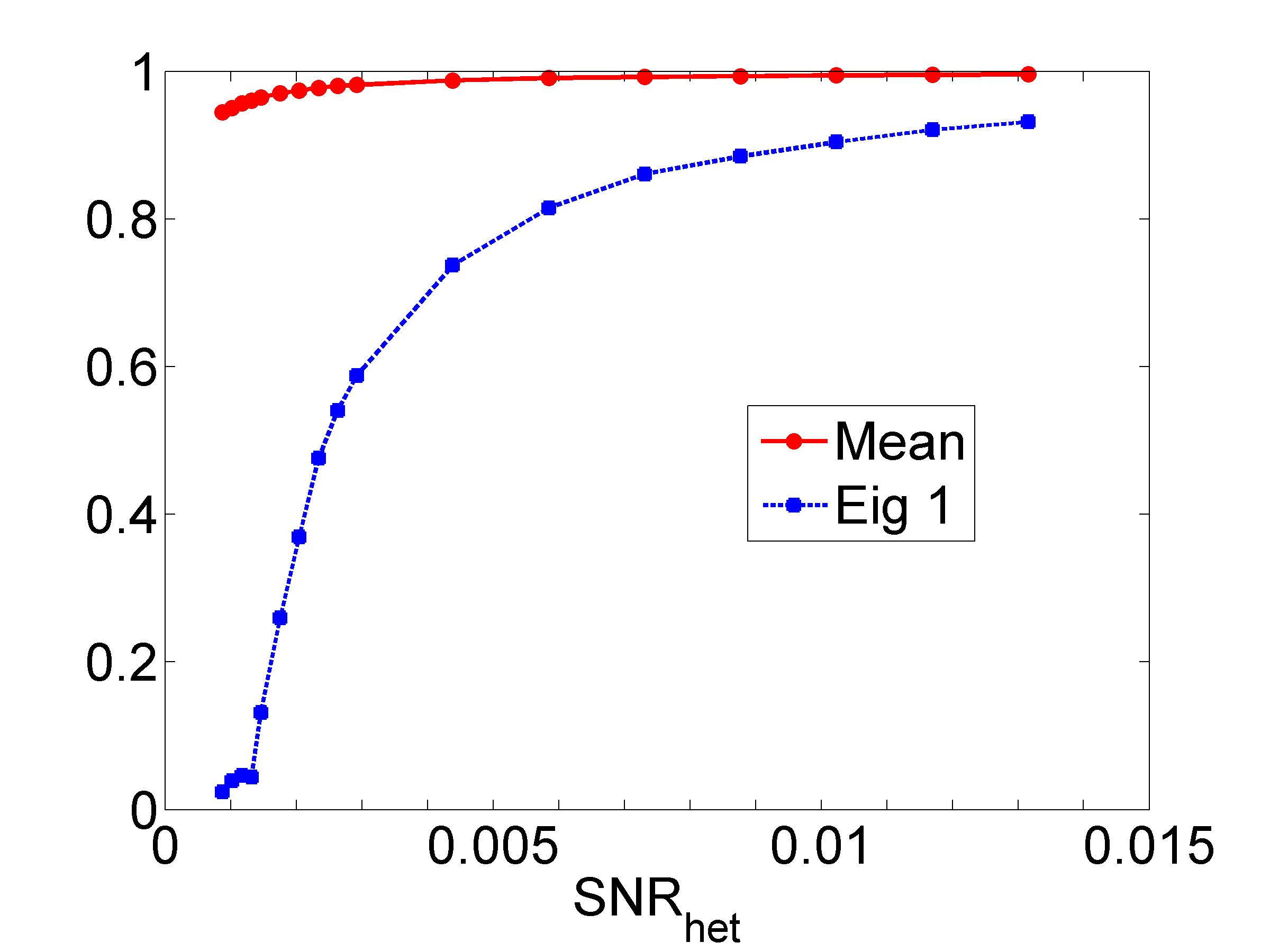}
	\caption{Mean and eigenvector correlations}
        \end{subfigure}
\begin{subfigure}[b]{0.49\textwidth}
	\centering
	\includegraphics[scale = 0.05]{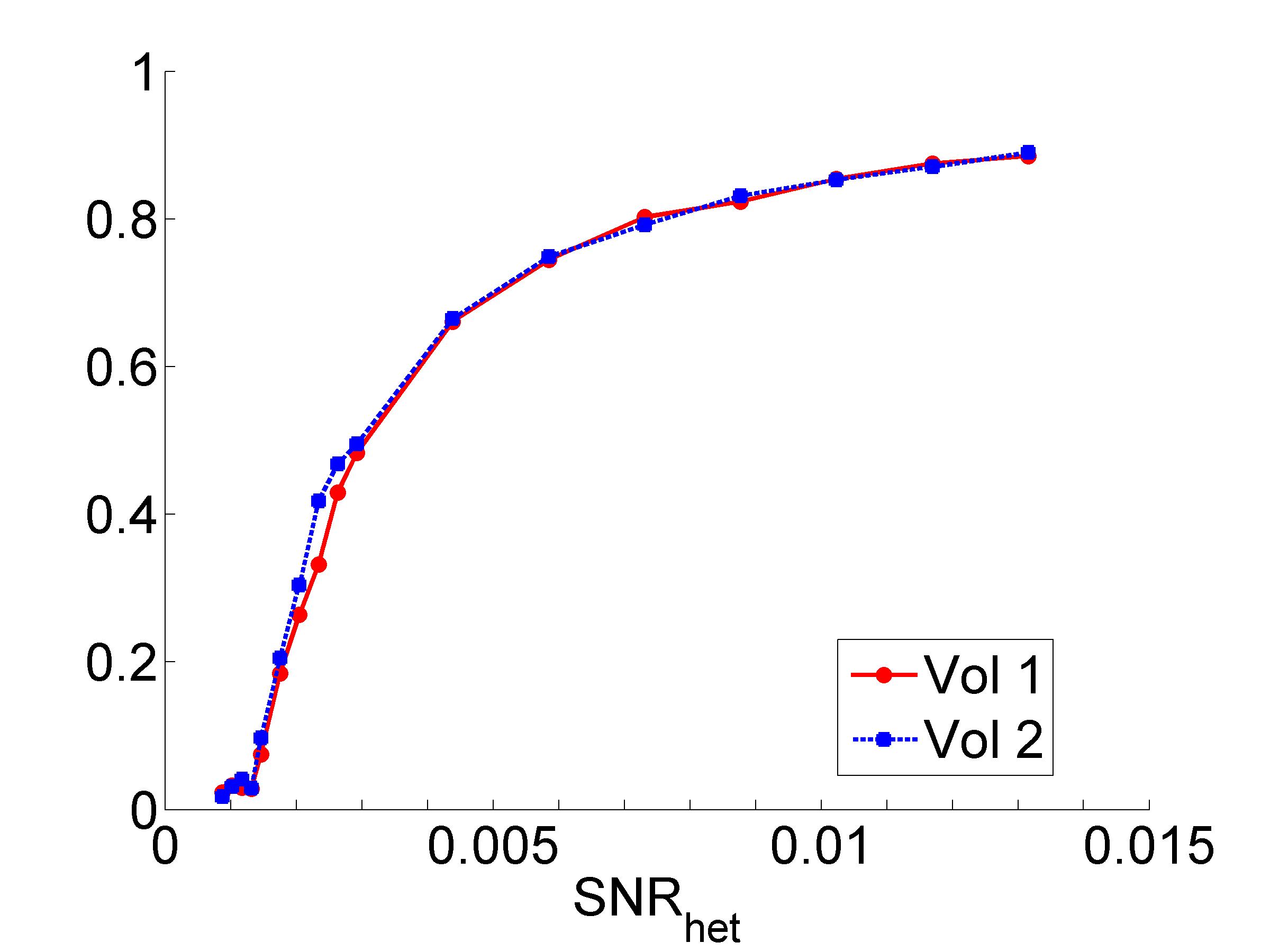}
	\caption{Volume correlations} \label{correlations_c}
\end{subfigure}
\caption{Correlations of computed quantities with their true values for different SNRs (averaged over 10 experiments) for the two volume case. Note that in the two volumes case, the mean-subtracted volume correlations are essentially the same as the eigenvector correlation (the only small discrepancy is that we subtract the true mean rather than the computed mean to obtain the former).}
\label{correlations}
\end{figure}


Regarding the coefficients $\alpha_{s}$ depicted in Figure \ref{coords_2}, note that in the noiseless case, there should be a distribution composed of two spikes. By adding noise to the images, the two spikes start blurring together. For SNR values up to a certain point, the distribution is still visibly bimodal. However, after a threshold the two spikes coalesce into one. The proportions $p_c$ are reliably estimated until this threshold.

\begin{figure}[H]
        \centering
        \begin{subfigure}[b]{0.3\textwidth}
                \centering
                \includegraphics[scale = 0.05]{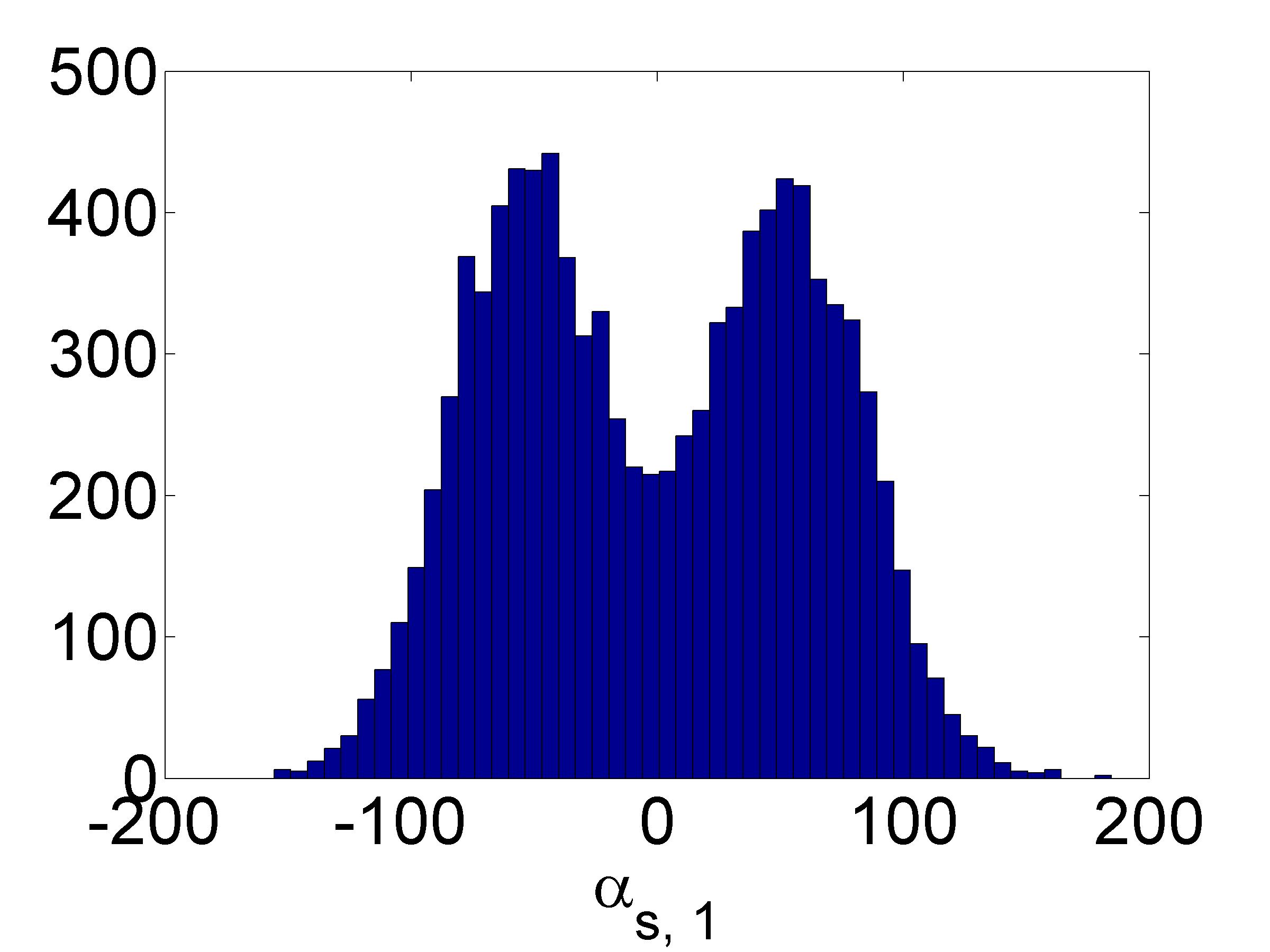}
	\caption{SNR$_{\text{het}}$ = 0.013 (0.25)}
        \end{subfigure}
	\begin{subfigure}[b]{0.3\textwidth}
                \centering
                \includegraphics[scale = 0.05]{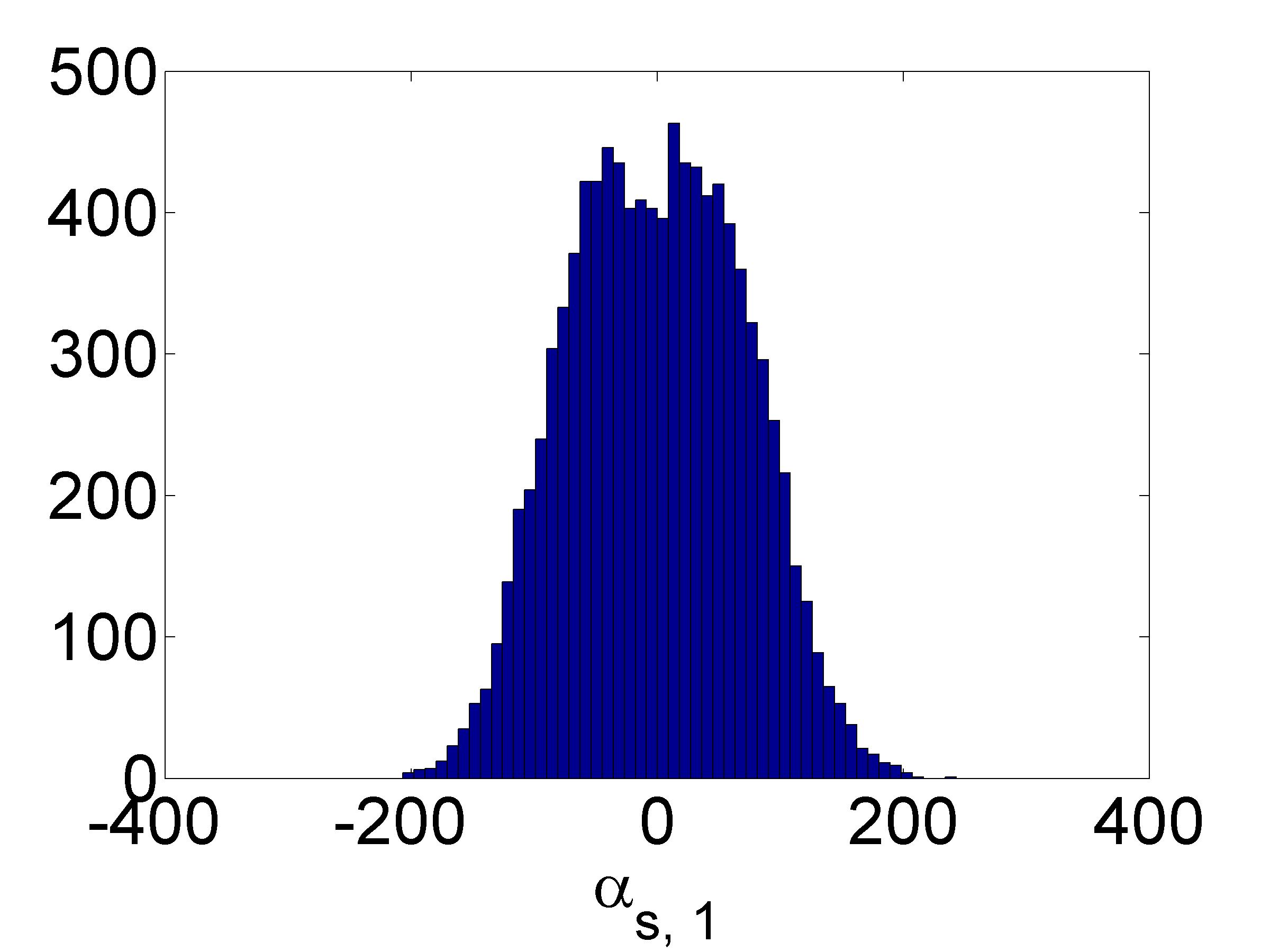}
	\caption{SNR$_{\text{het}}$ = 0.0058 (0.11)}
        \end{subfigure}
	\begin{subfigure}[b]{0.3\textwidth}
                \centering
                \includegraphics[scale = 0.05]{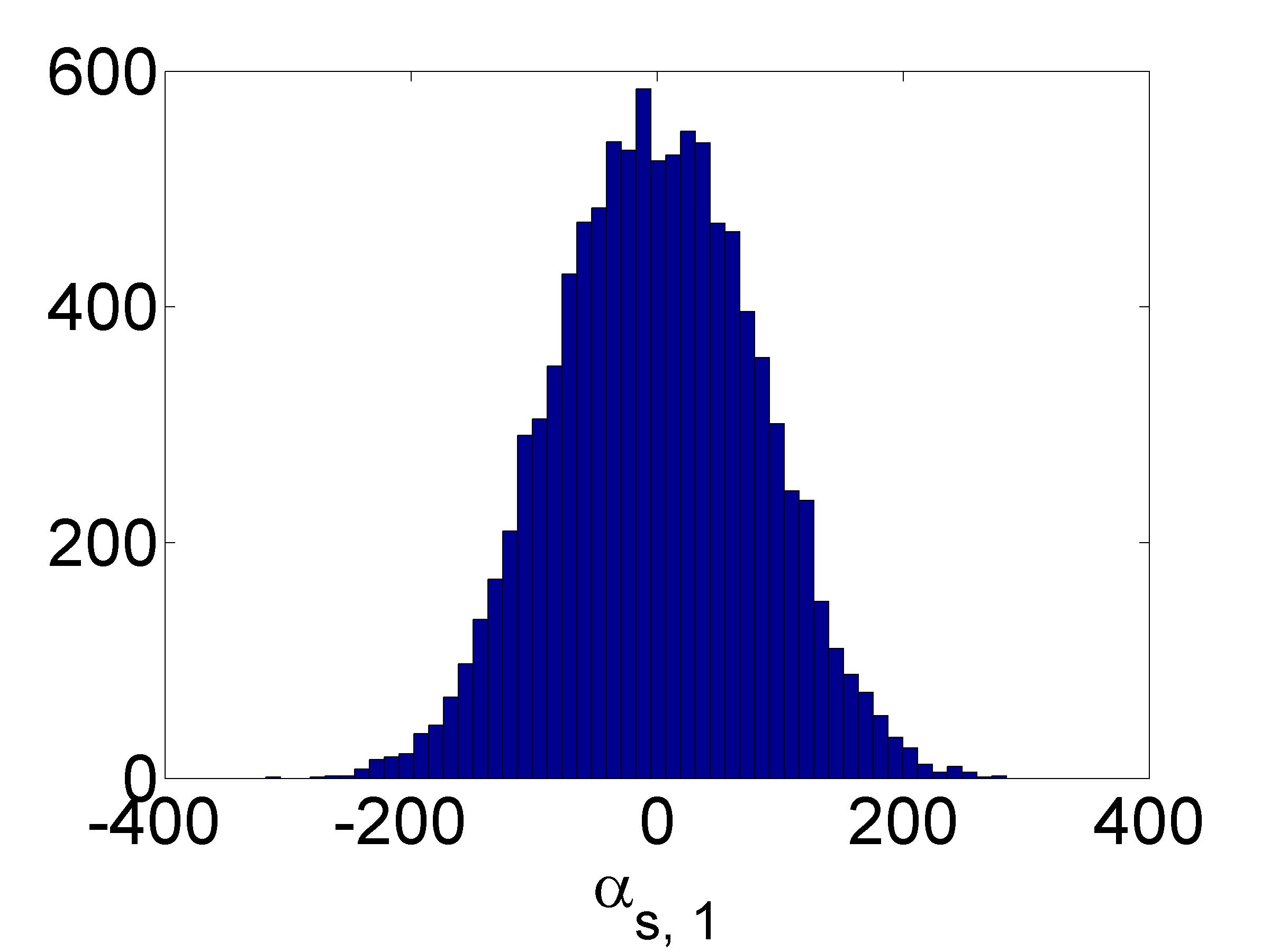}
	\caption{SNR$_{\text{het}}$ = 0.003 (0.056)}
        \end{subfigure}
\caption{Histograms of $\alpha_{s}$ for two class case. Note that (a) has a bimodal distribution corresponding to two heterogeneity classes, but these two distributions merge as SNR decreases.}
\label{coords_2}
\end{figure}


\subsection{Experiment: three classes}

In this experiment, we constructed three phantoms $\scr X^1, \scr X^2, \scr X^3$ of the form (\ref{phantom_eq}), with $M_1 = 2, M_2 = 2, M_3 = 1$. 
The cross-sections of $\scr X^1, \scr X^2, \scr X^3$ are depicted in Figure \ref{reconstructions_3} (top row, panels (d)-(f)). We chose the three classes to be equiprobable: $p_1 = p_2 = p_3 = 1/3$. Note that the theoretical covariance matrix in the three-class heterogeneity problem has rank 2.



Figures \ref{reconstructions_3},\ref{eig_hists_3}, \ref{fig:fsc_3}, \ref{correlations_3},\ref{coords_3}are the three-class analogues of Figures \ref{reconstructions}, \ref{eig_hists}, \ref{fig:fsc}, \ref{correlations}, \ref{coords_2} in the two-class case.

Qualitatively, we observe behavior similar to that in the two class case. The mean is reconstructed with at least 90\% accuracy for all SNR values considered, while both top eigenvectors experience a phase-transition phenomenon (Figure \ref{correlations_3}(a)). As with the two class case, we see that the disappearance of the eigengap coincides with the phase-transition behavior in the reconstruction of the top eigenvectors. However, in the three class case we have two eigenvectors, and we see that the accuracy of the second eigenvector decays more quickly than that of the first eigenvector. This reflects the fact that the top eigenvalue of the true covariance $\hat \Sigma_0$ is $2.1\times 10^5$, while the second eigenvalue is $1.5 \times 10^5$. These two eigenvalues differ because $\scr X^1 - \scr X^3$ has greater norm than $\scr X^2 - \scr X^3$, which means that the two directions of variation have different associated variances. Hence, recovering the second eigenvector is less robust to noise. In particular, there are SNR values for which the top eigenvector can be recovered, but the second eigenvector cannot. SNR$_{\text{het}}$ = 0.0044 (0.03) is such an example. We see in Figure \ref{eig_hists_3} that for this SNR value, only the top eigenvector pops out of the bulk distribution. In this case, we would incorrectly estimate the rank of the true covariance as 1, and conclude that $C = 2$.

The coefficients $\alpha_{s}$ follow a similar trend to those in the two class case. For high SNRs, there is a clearly defined clustering of the coordinates around three points, as in Figure \ref{coords_3}(a). As the noise is increased, the three clusters become increasingly less defined. In Figure \ref{coords_3}(b), we see that in this threshold case, the three clusters begin merging into one. As in the two class case, this is the same threshold up to which $p_c$ are accurately estimated. By the time SNR = 0.0044 (0.03), there is no visible cluster separation, just as we observed in the two class case. Although the SNR threshold for finding $p_c$ from the $\alpha_{s}$ coefficients comes earlier than the one for the eigengap, the quality of volume reconstruction roughly tracks the quality of the eigenvector reconstruction. This suggests that the estimation of cluster means is more robust than that of the probabilities $p_c$. 


\begin{figure}[H]
        \centering
        \begin{subfigure}[b]{0.15\textwidth}
                \centering
                \includegraphics[scale = 0.04]{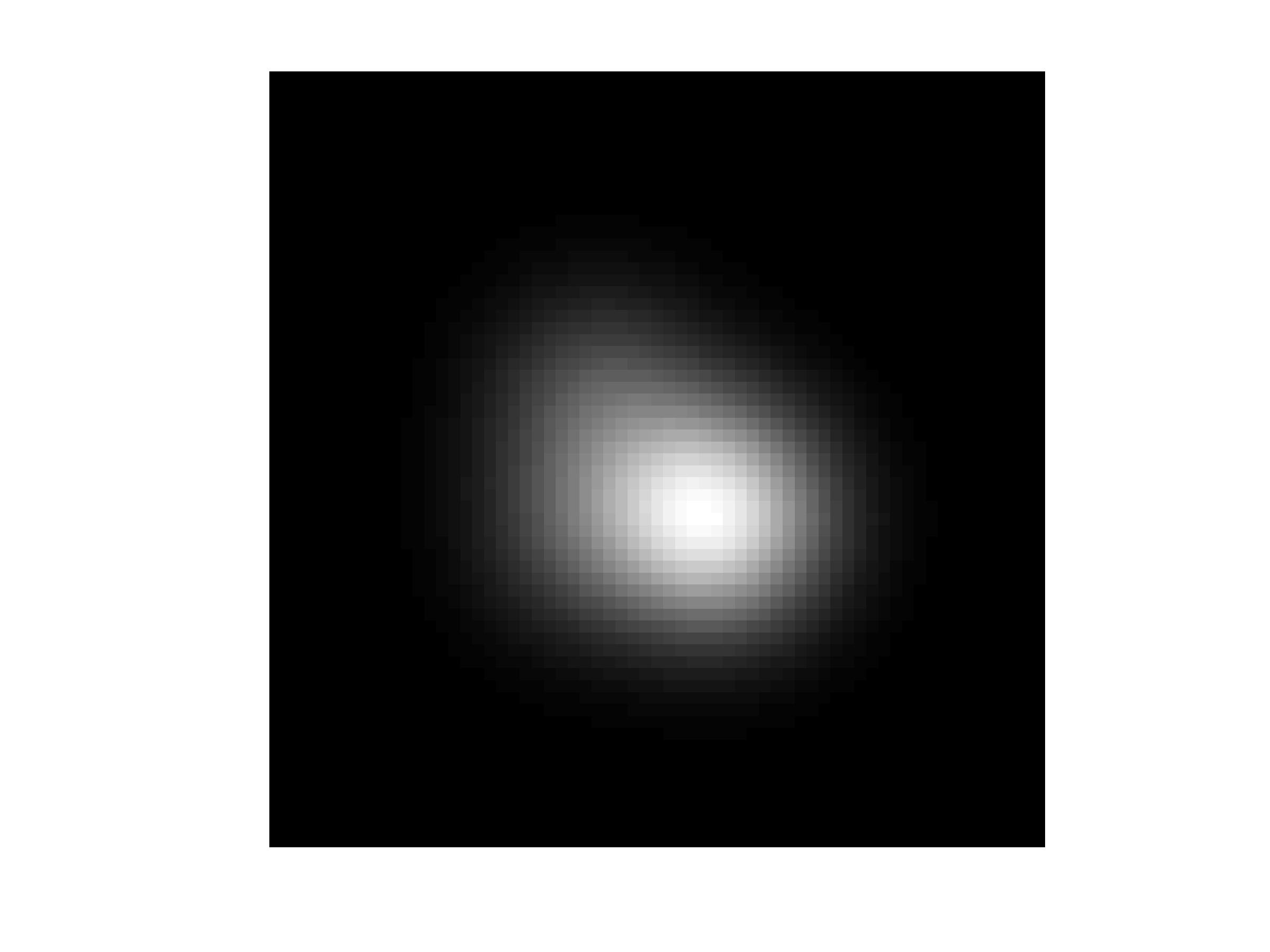}
        \end{subfigure}
\begin{subfigure}[b]{0.15\textwidth}
                \centering
                \includegraphics[scale = 0.04]{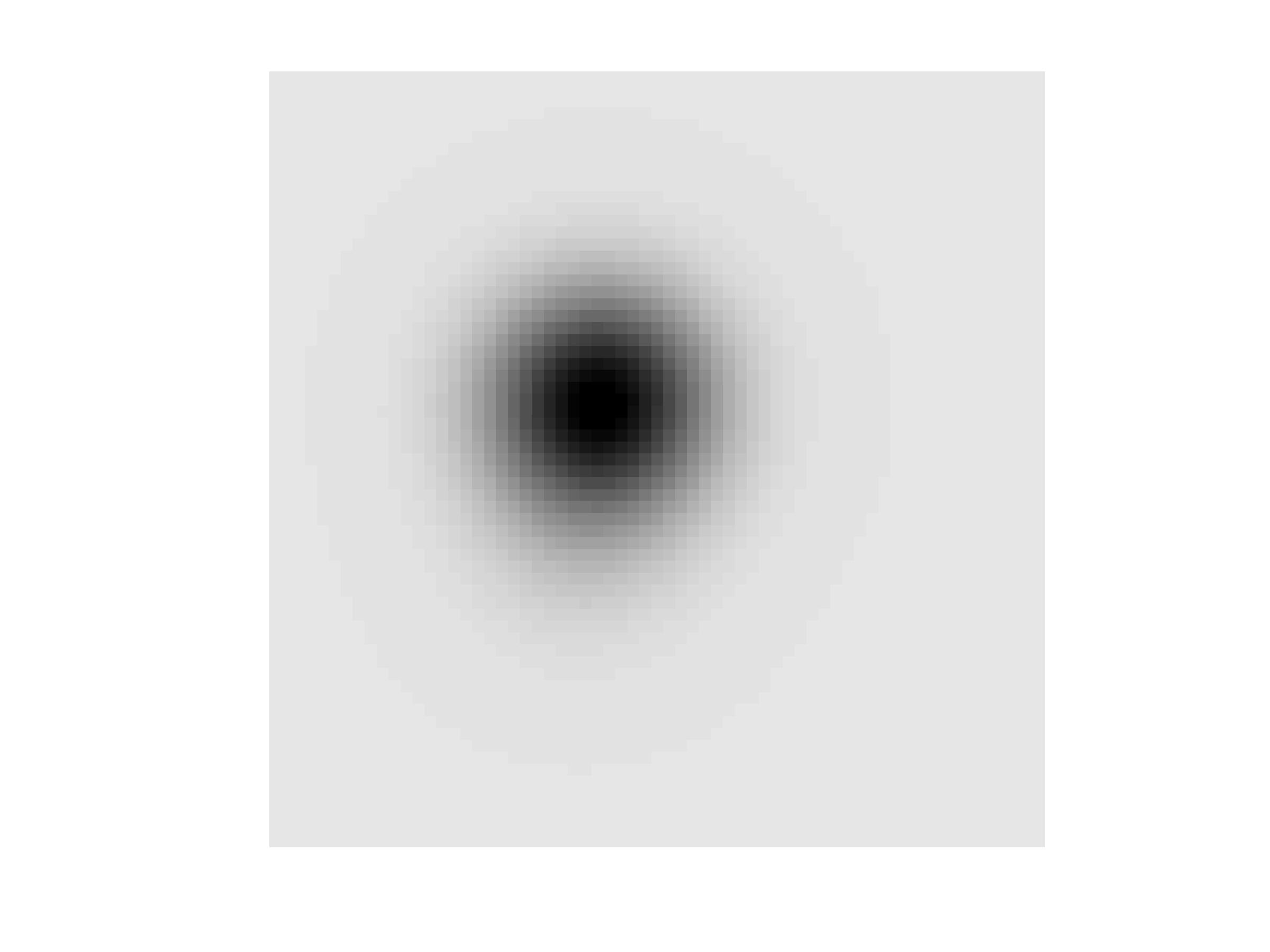}
        \end{subfigure}
\begin{subfigure}[b]{0.15\textwidth}
                \centering
                \includegraphics[scale = 0.04]{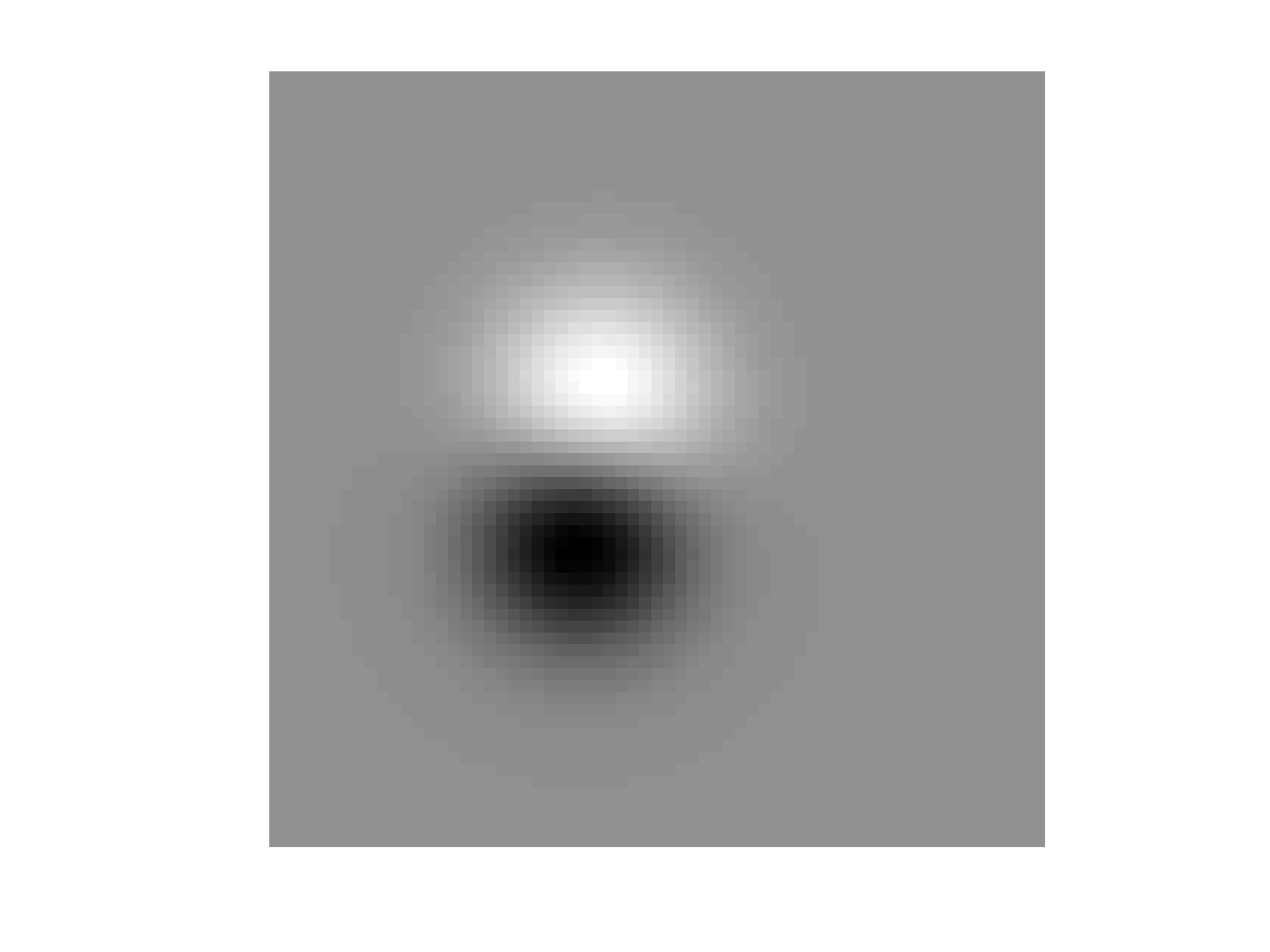}
        \end{subfigure}
\begin{subfigure}[b]{0.15\textwidth}
                \centering
                \includegraphics[scale = 0.04]{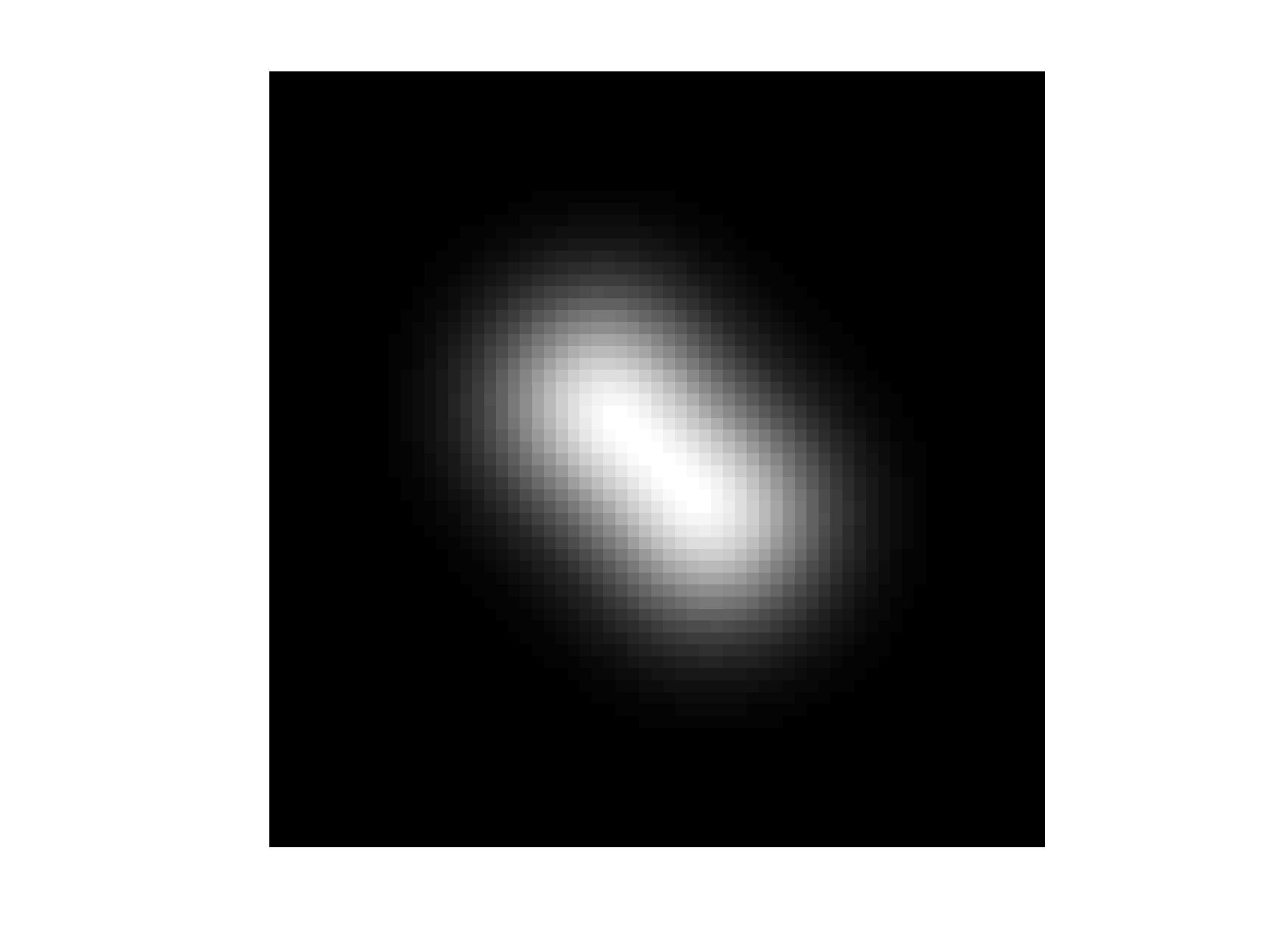}
        \end{subfigure}
      \begin{subfigure}[b]{0.15\textwidth}
                \centering
                \includegraphics[scale = 0.04]{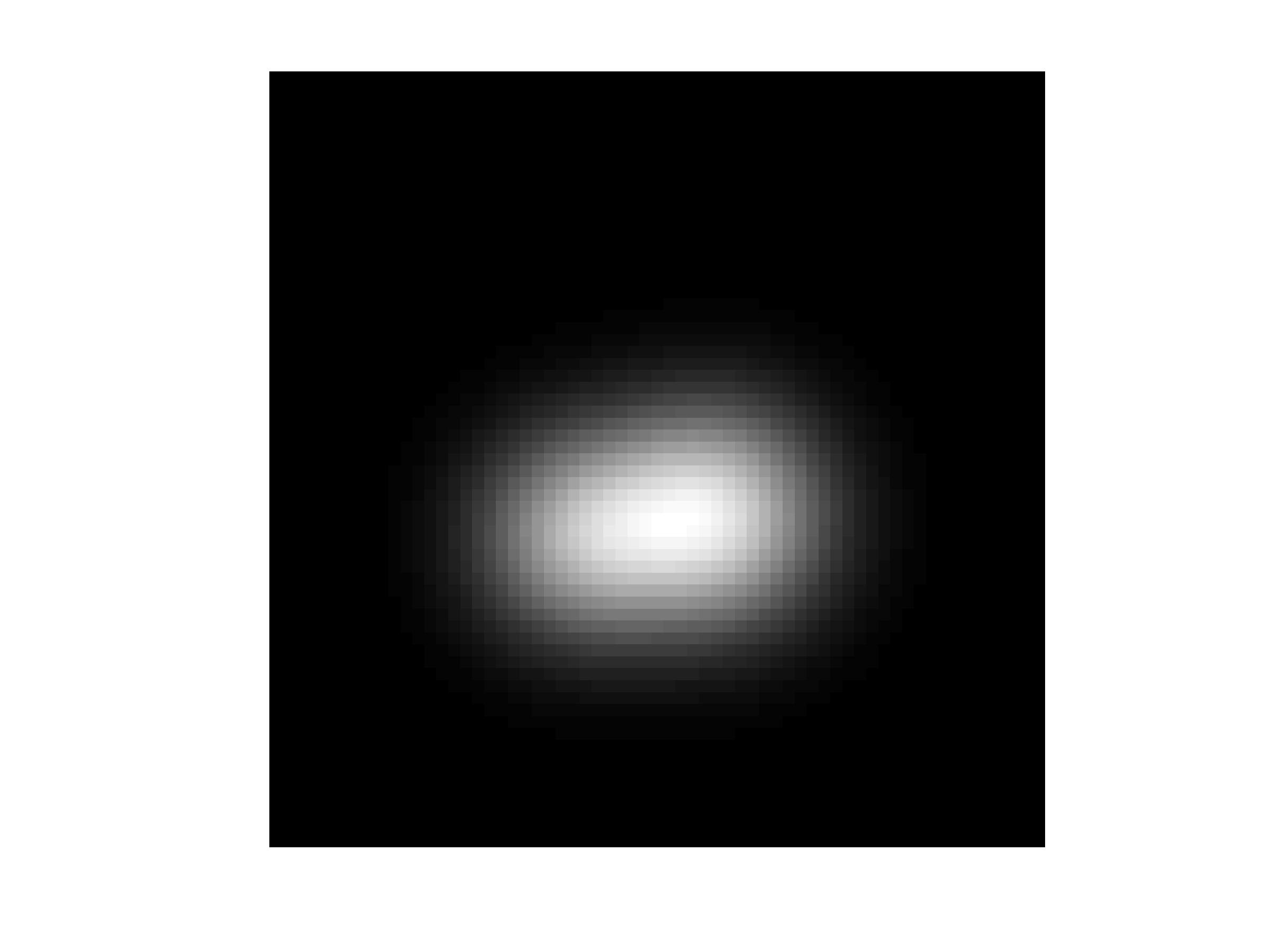}
        \end{subfigure}
        \begin{subfigure}[b]{0.15\textwidth}
                \centering
                \includegraphics[scale = 0.04]{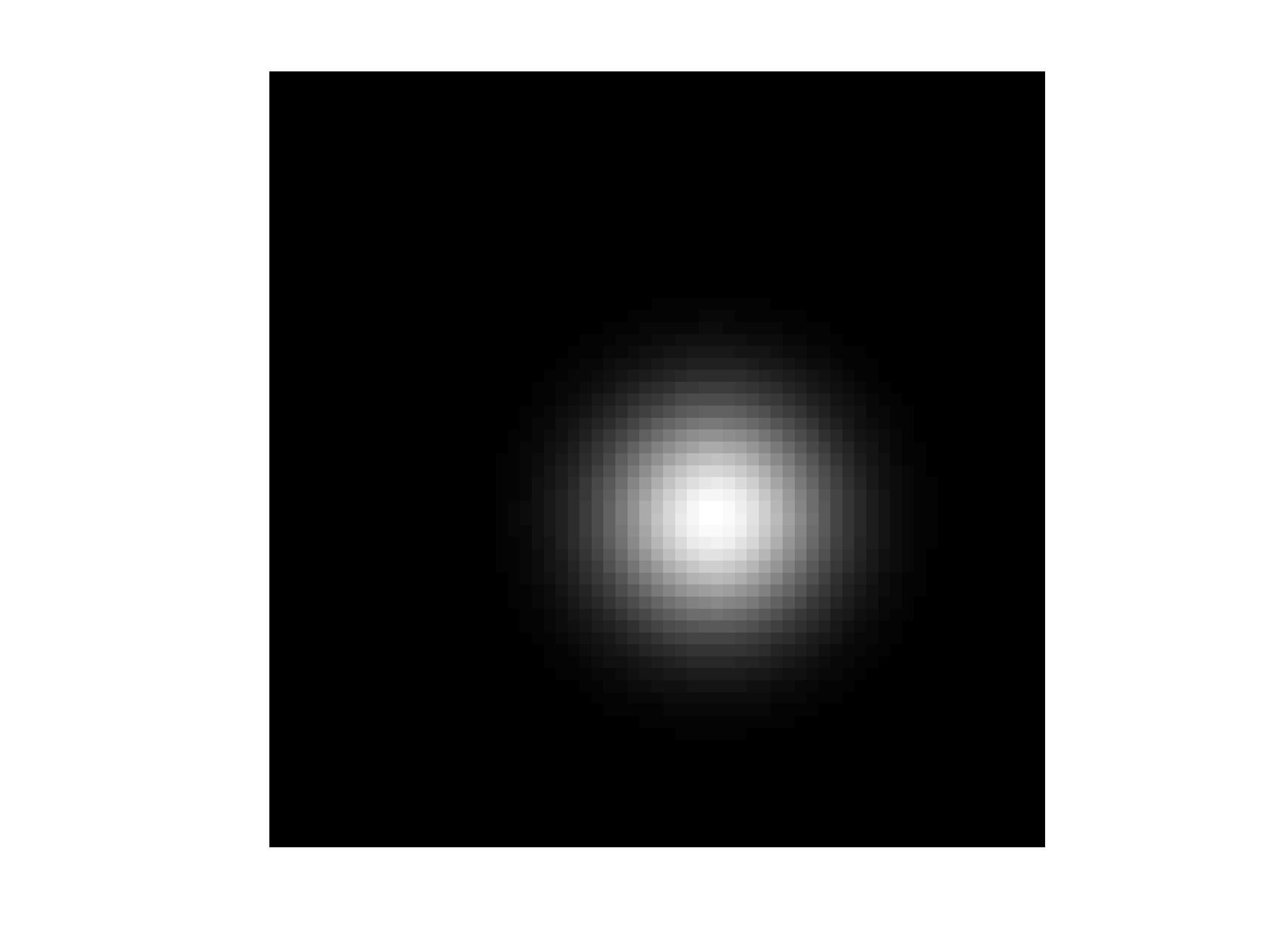}
        \end{subfigure} \\

	\begin{subfigure}[b]{0.15\textwidth}
                \centering
                \includegraphics[scale = 0.04]{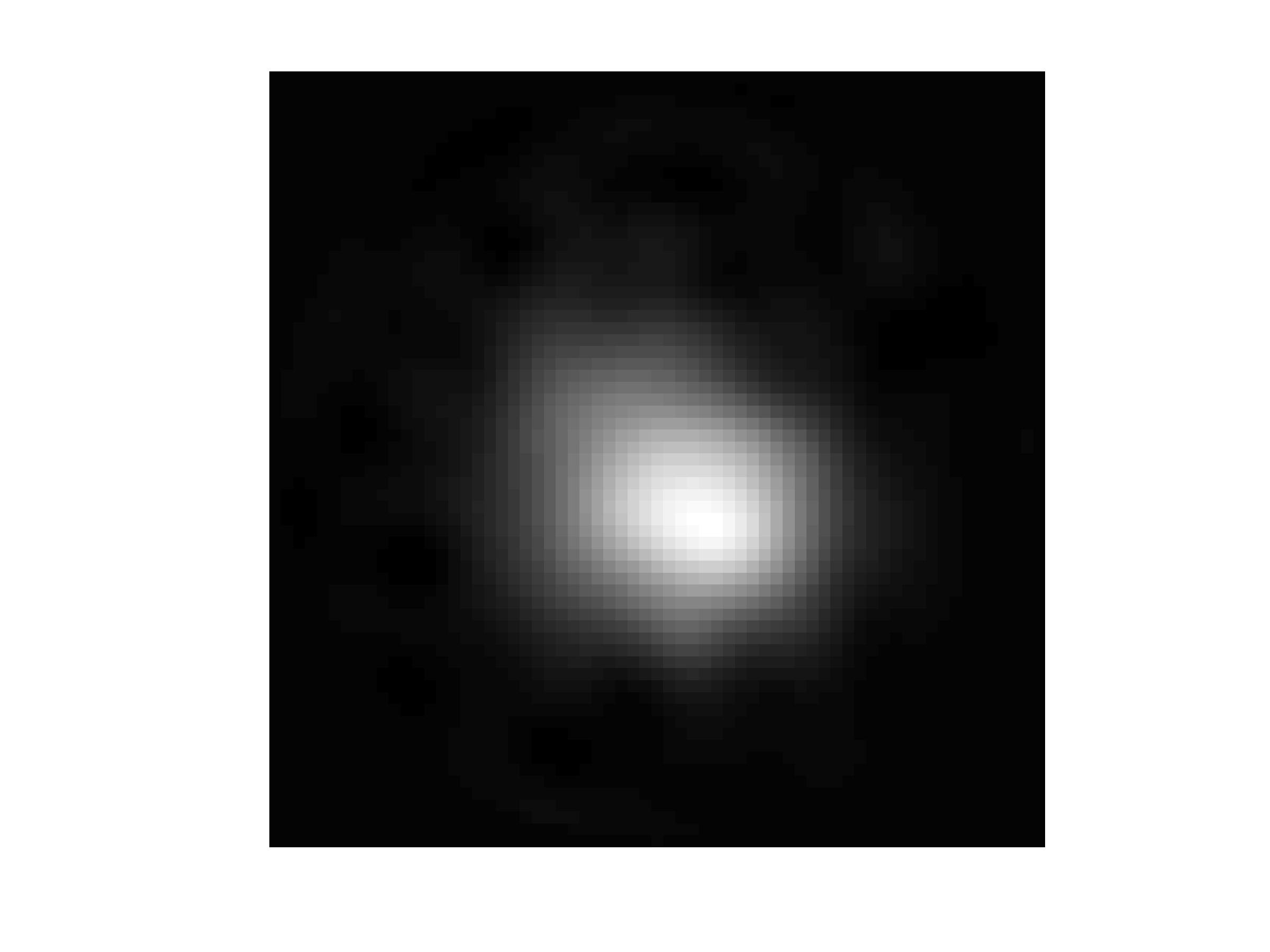}
        \end{subfigure}
	\begin{subfigure}[b]{0.15\textwidth}
                \centering
                \includegraphics[scale = 0.04]{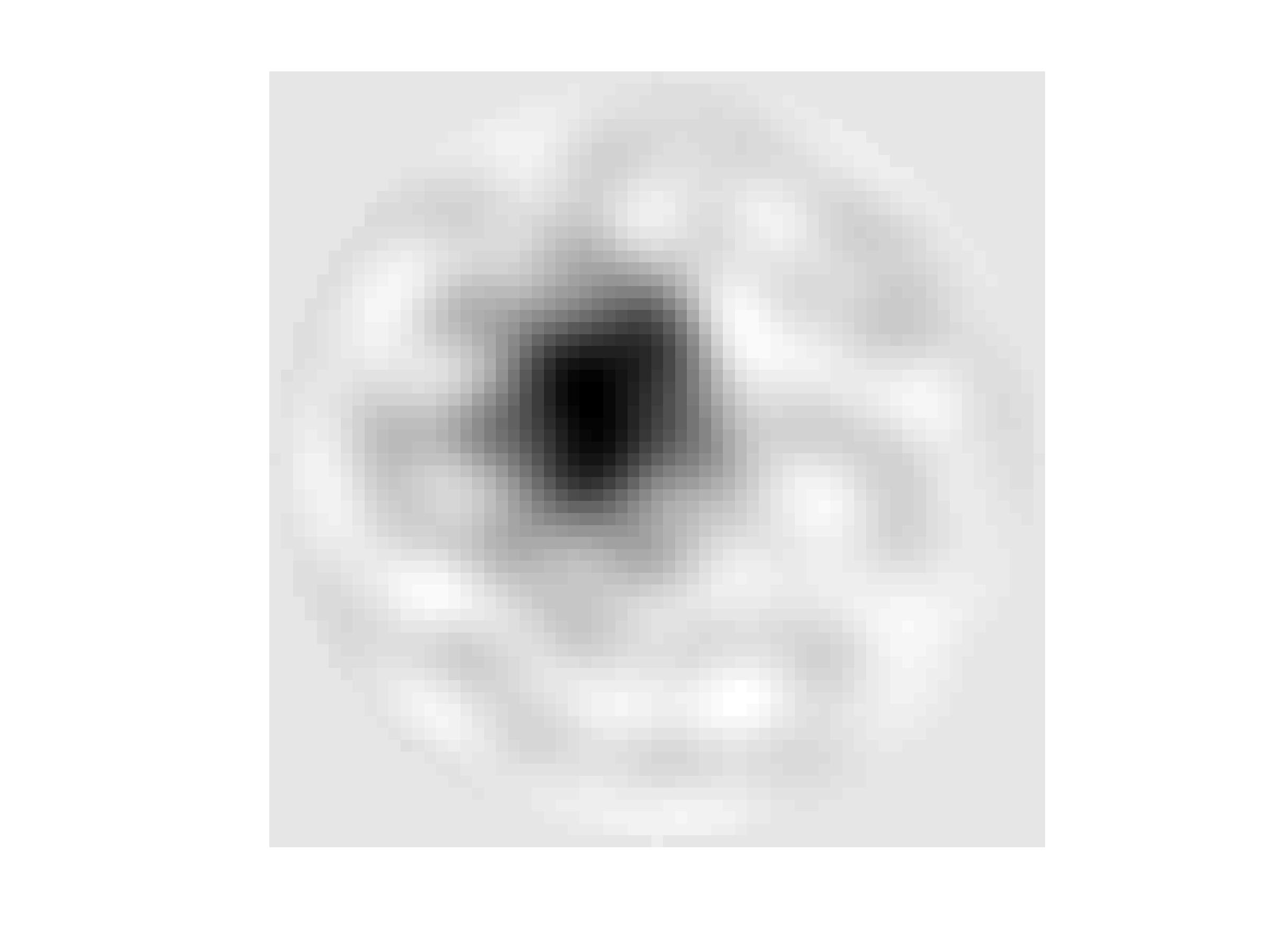}
        \end{subfigure}
	\begin{subfigure}[b]{0.15\textwidth}
                \centering
                \includegraphics[scale = 0.04]{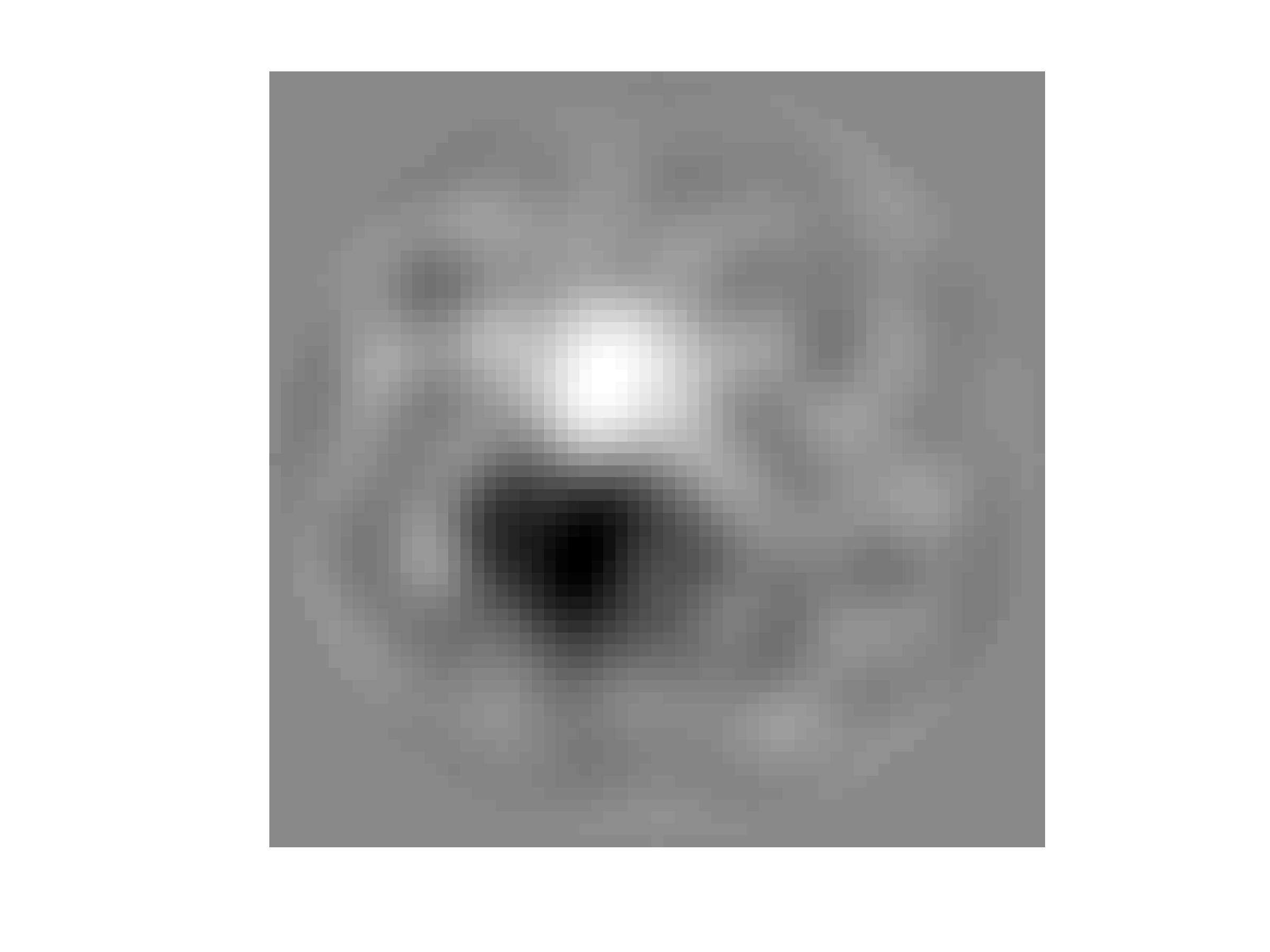}
        \end{subfigure}
	\begin{subfigure}[b]{0.15\textwidth}
                \centering
                \includegraphics[scale = 0.04]{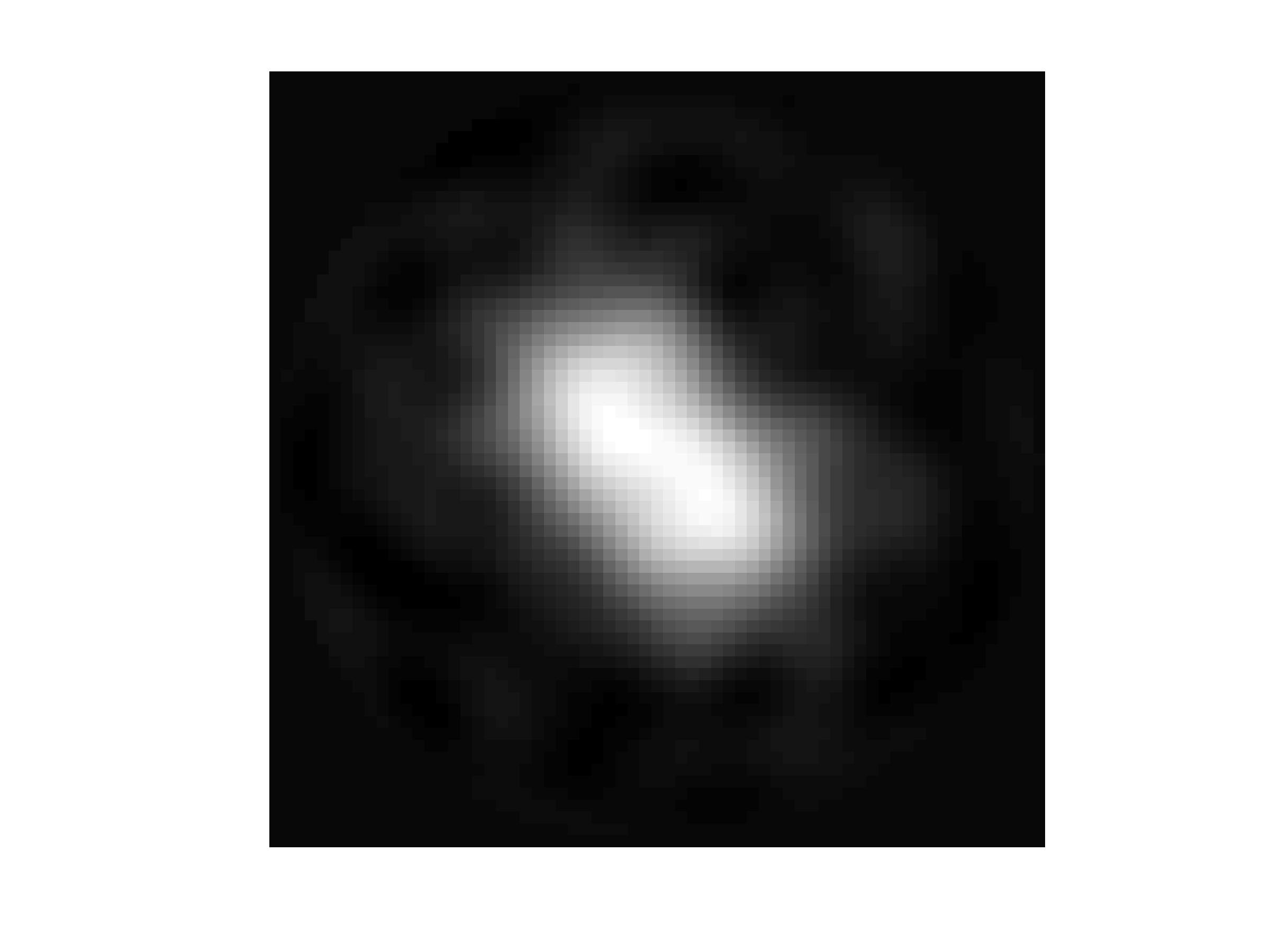}
        \end{subfigure}
	\begin{subfigure}[b]{0.15\textwidth}
                \centering
                \includegraphics[scale = 0.04]{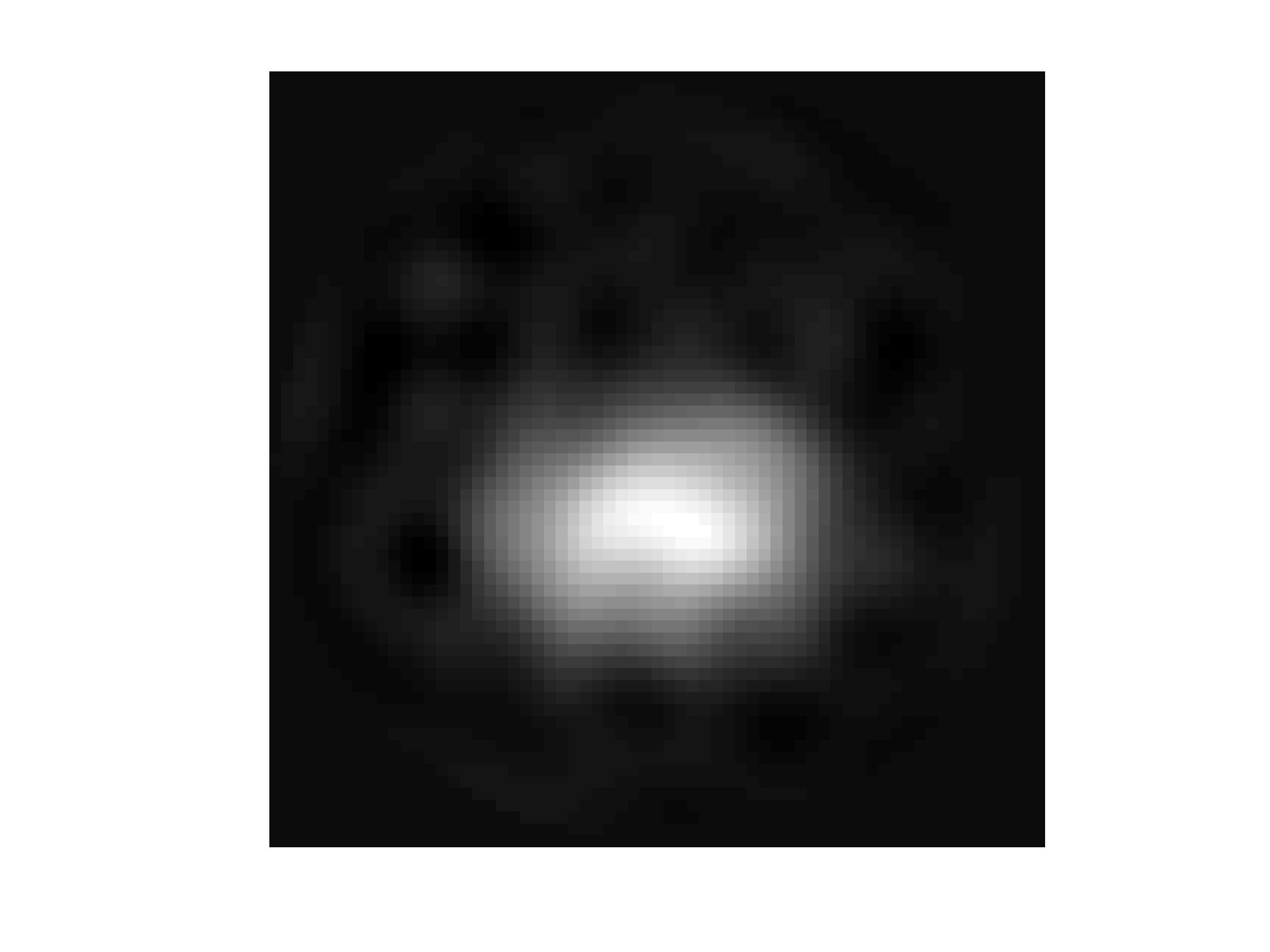}
        \end{subfigure}
	\begin{subfigure}[b]{0.15\textwidth}
                \centering
                \includegraphics[scale = 0.04]{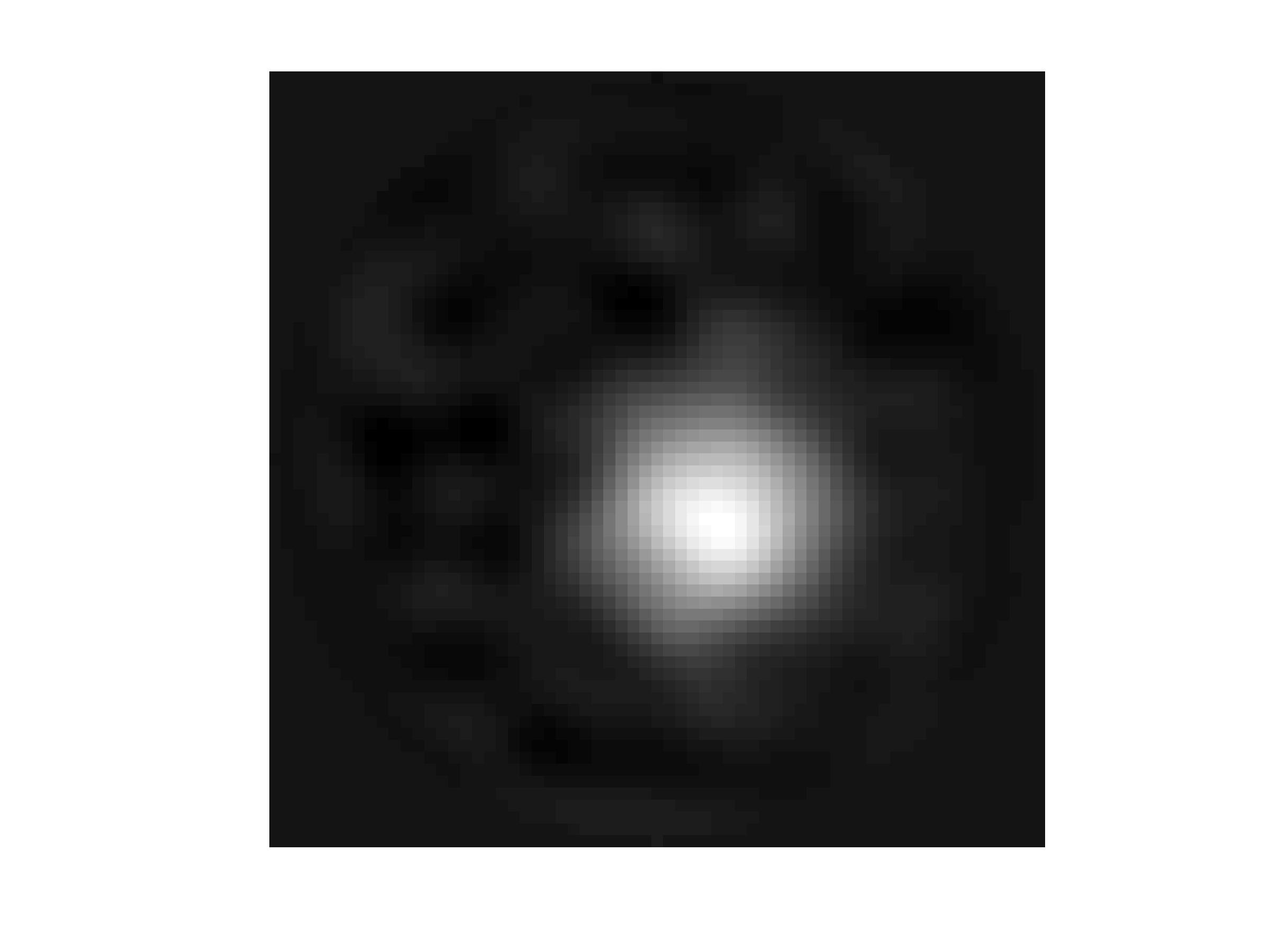}
        \end{subfigure} \\

\begin{subfigure}[b]{0.15\textwidth}
                \centering
                \includegraphics[scale = 0.04]{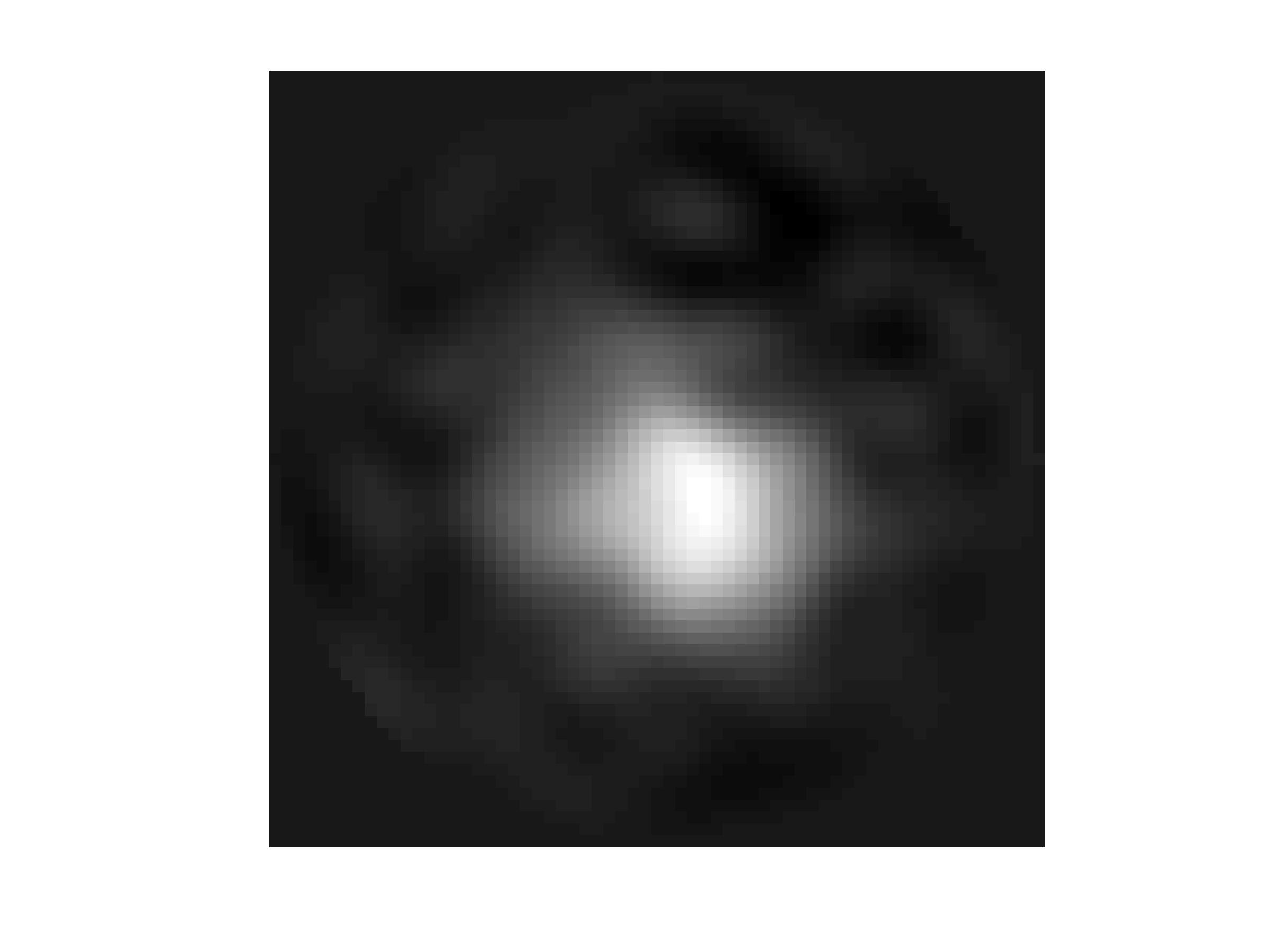}
        \end{subfigure}
\begin{subfigure}[b]{0.15\textwidth}
                \centering
                \includegraphics[scale = 0.04]{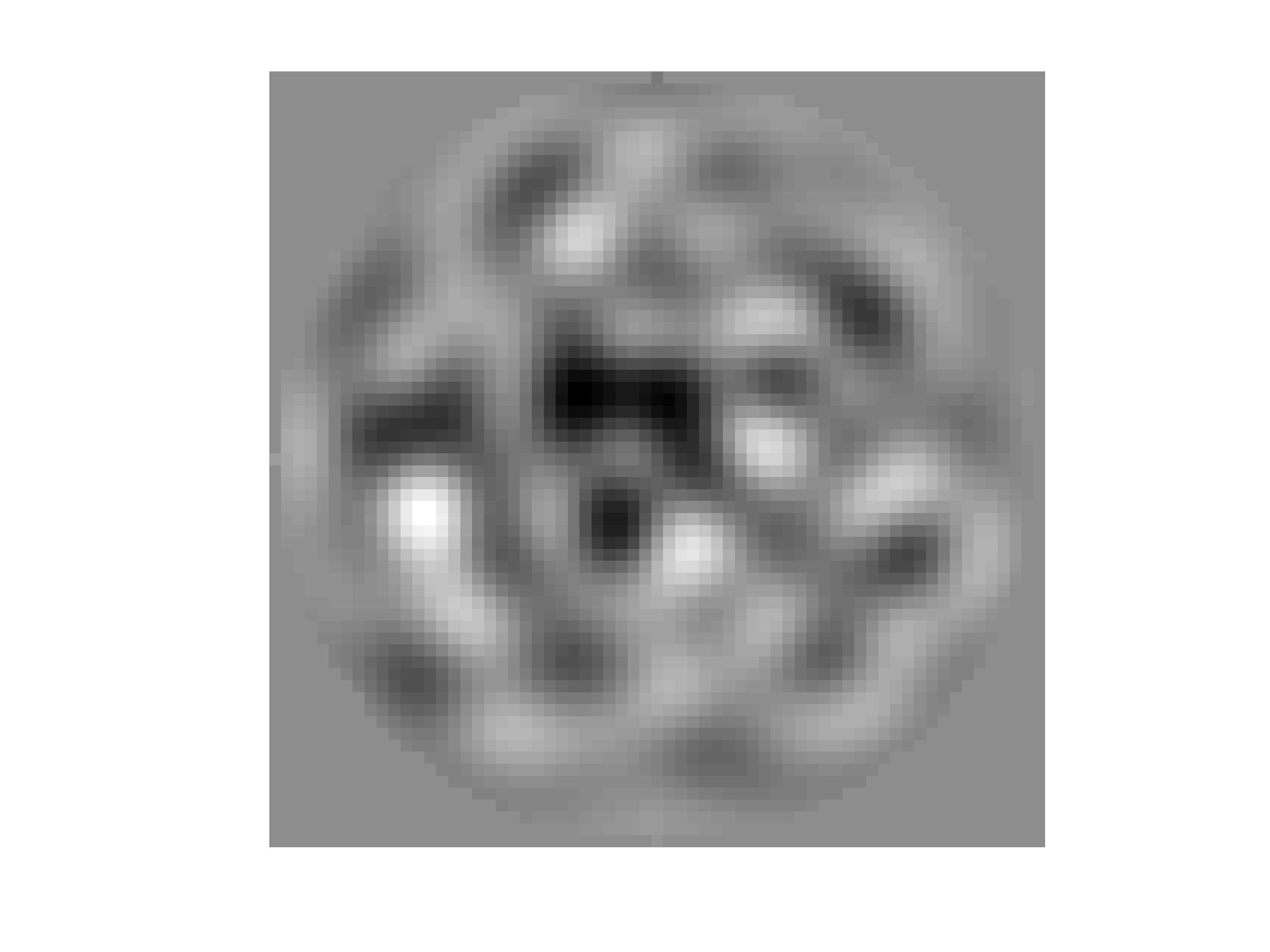}
        \end{subfigure}
\begin{subfigure}[b]{0.15\textwidth}
                \centering
                \includegraphics[scale = 0.04]{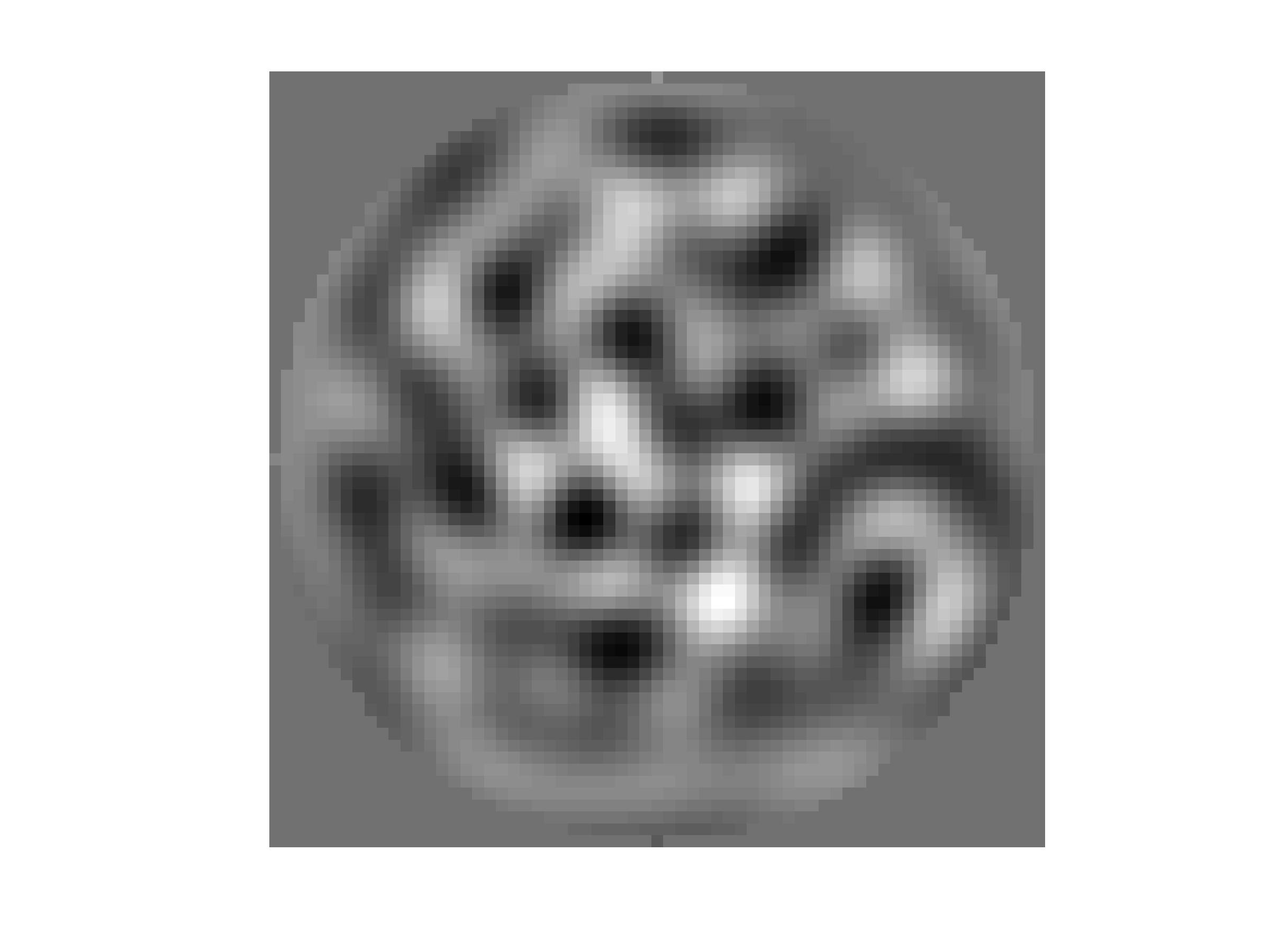}
        \end{subfigure}
\begin{subfigure}[b]{0.15\textwidth}
                \centering
                \includegraphics[scale = 0.04]{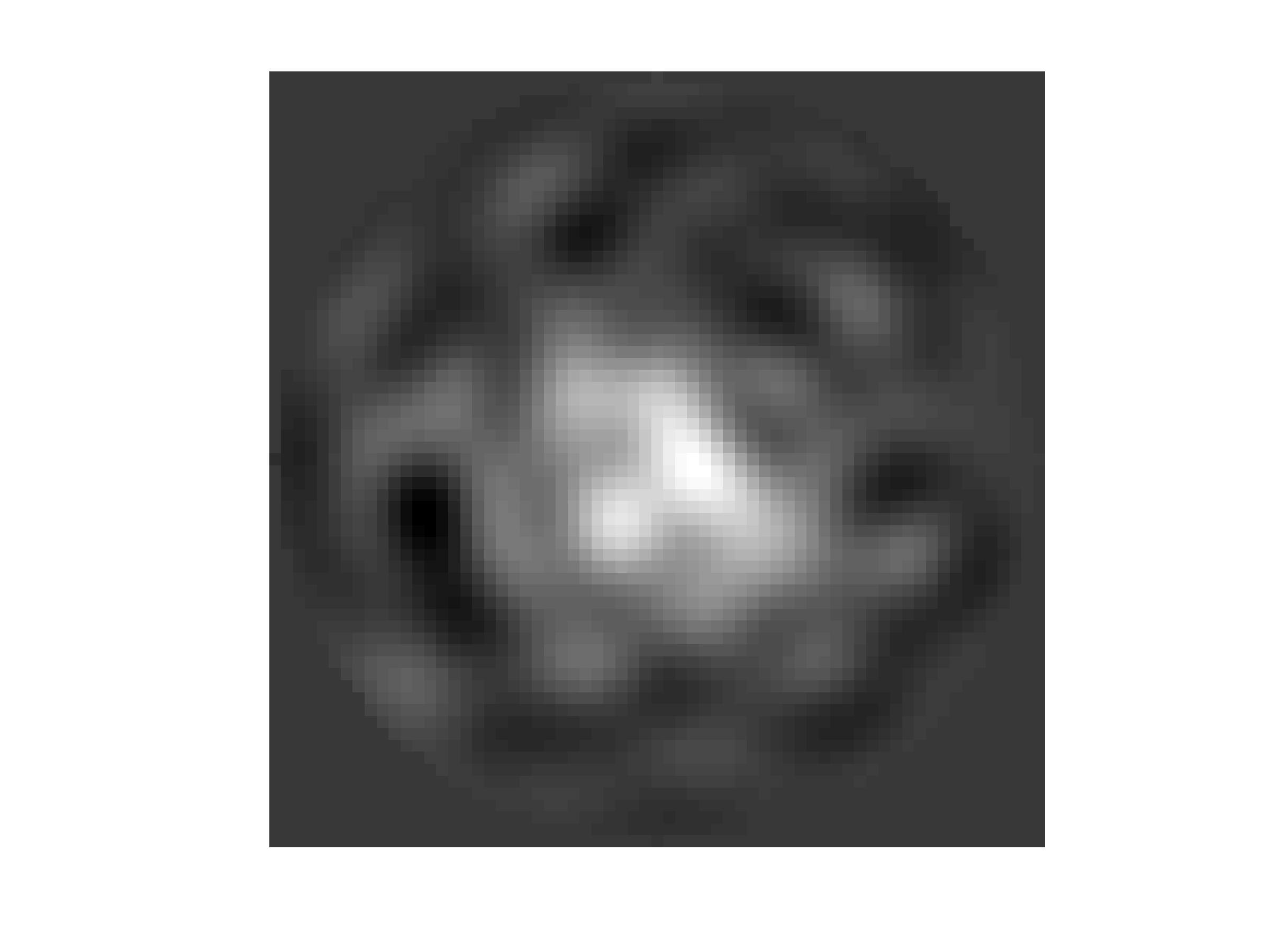}
        \end{subfigure}
\begin{subfigure}[b]{0.15\textwidth}
                \centering
                \includegraphics[scale = 0.04]{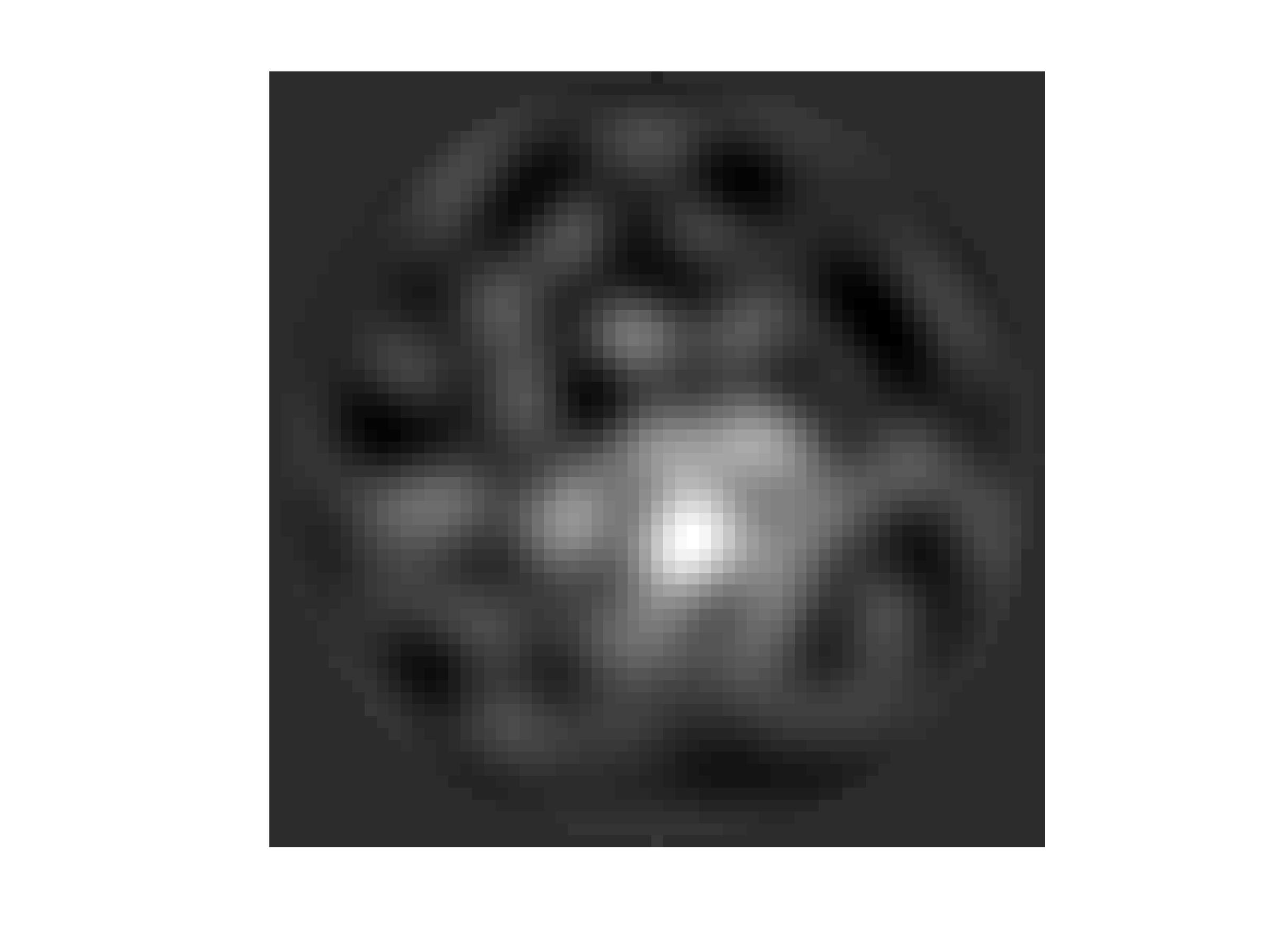}
        \end{subfigure}
\begin{subfigure}[b]{0.15\textwidth}
                \centering
                \includegraphics[scale = 0.04]{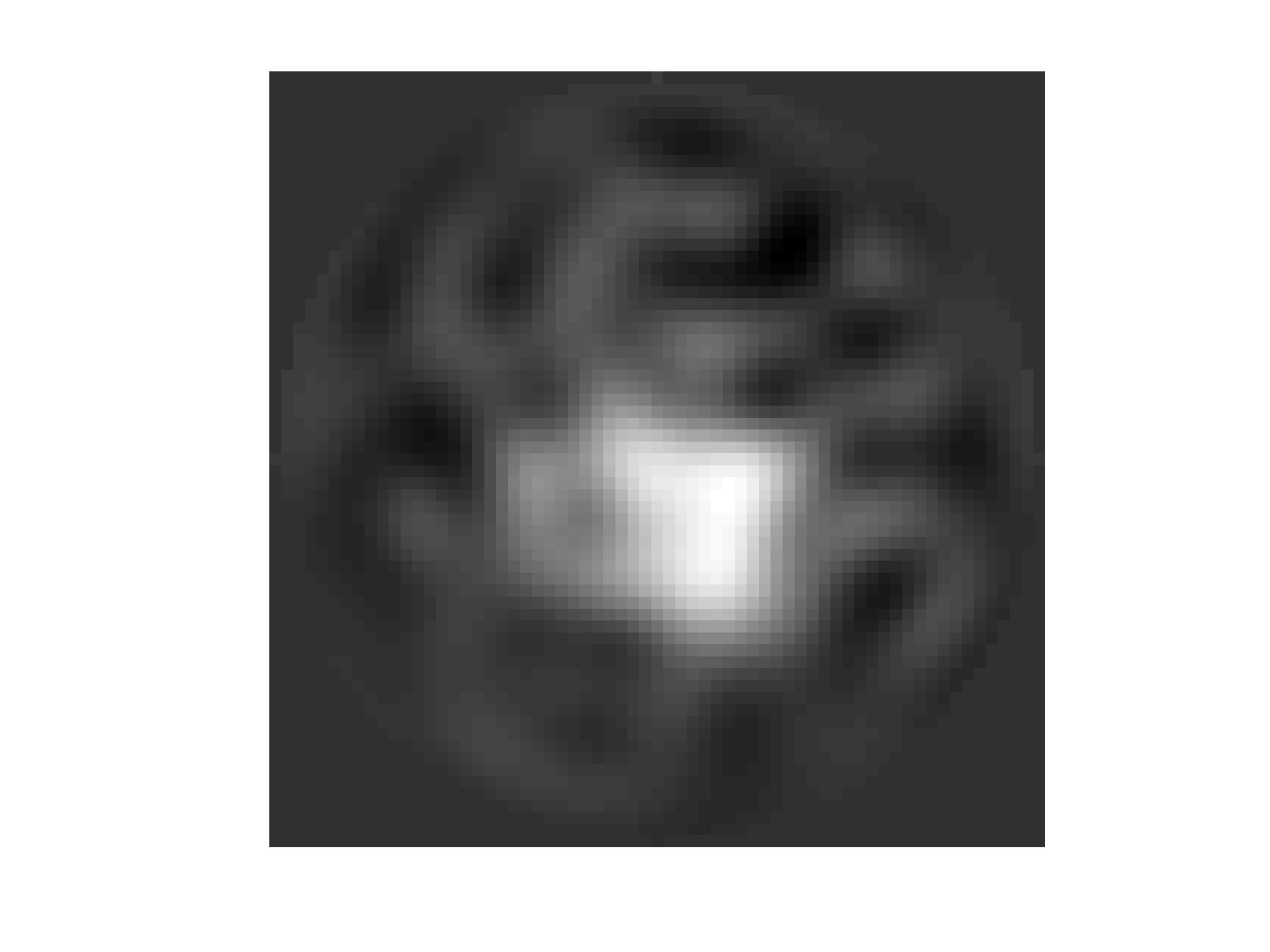}
        \end{subfigure} \\

\begin{subfigure}[b]{0.15\textwidth}
                \centering
                \includegraphics[scale = 0.04]{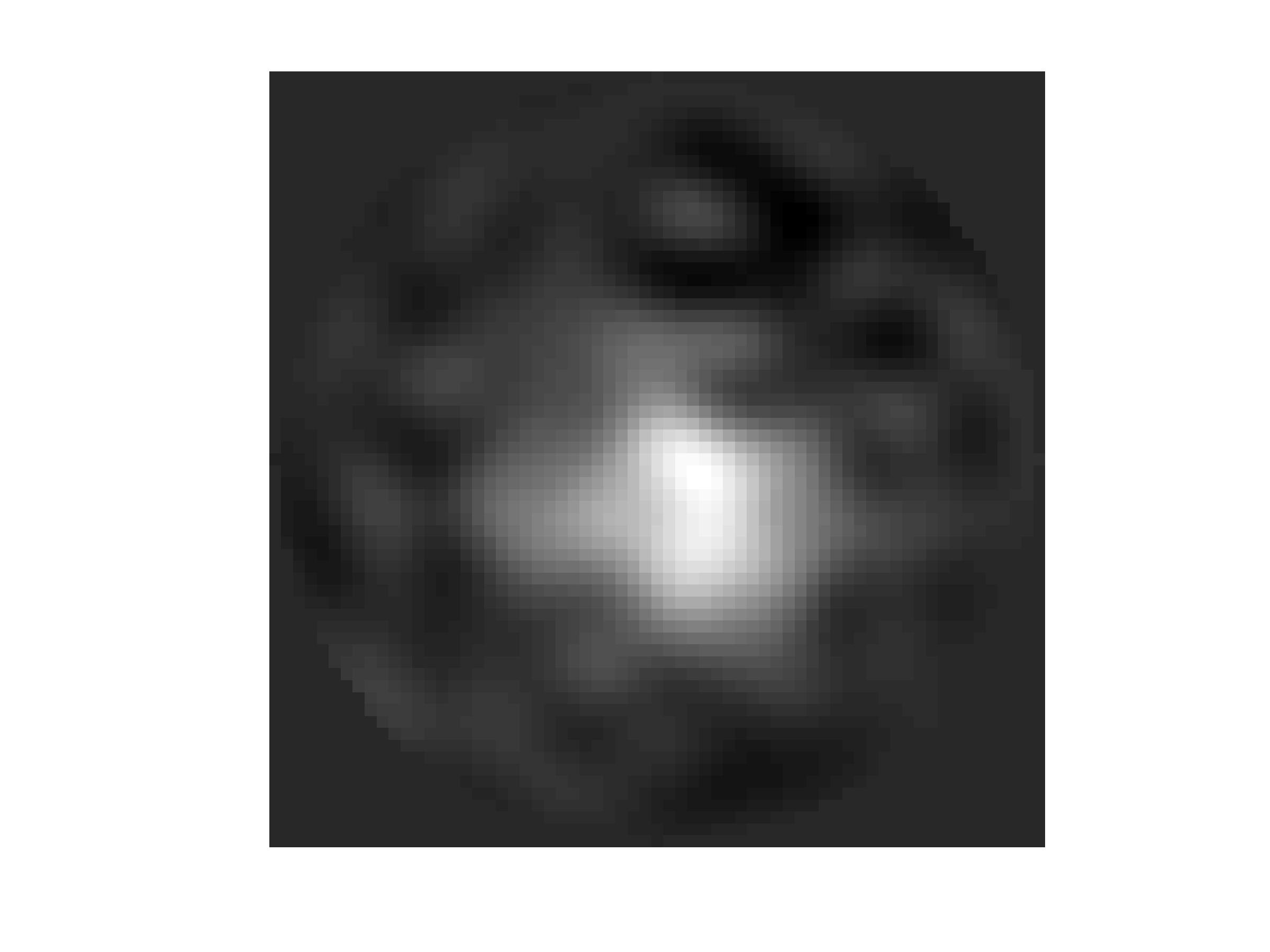}
\caption{Mean}
        \end{subfigure} 
\begin{subfigure}[b]{0.15\textwidth}
                \centering
                \includegraphics[scale = 0.04]{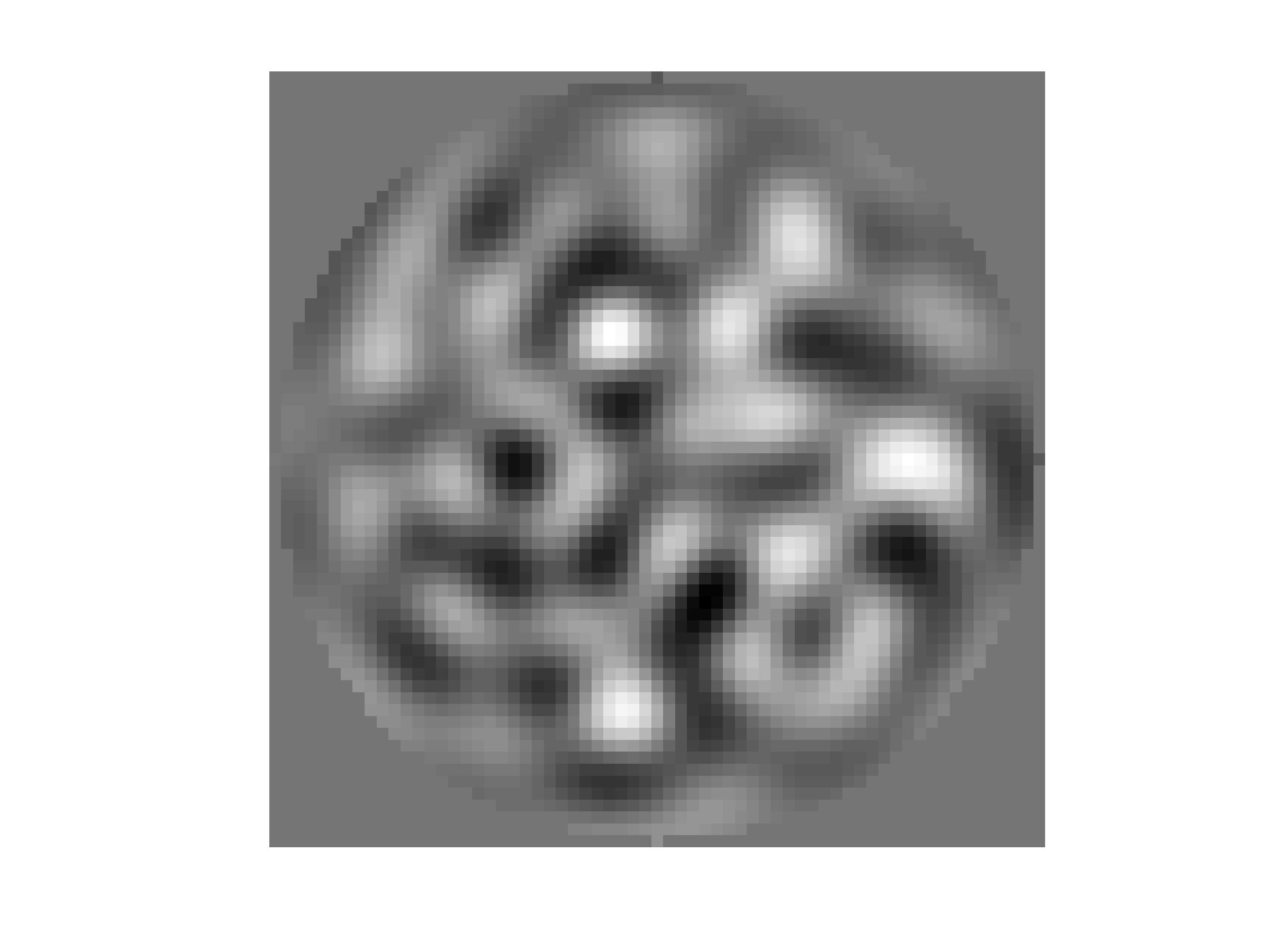}
\caption{Eigenv. 1}
        \end{subfigure}
\begin{subfigure}[b]{0.15\textwidth}
                \centering
                \includegraphics[scale = 0.04]{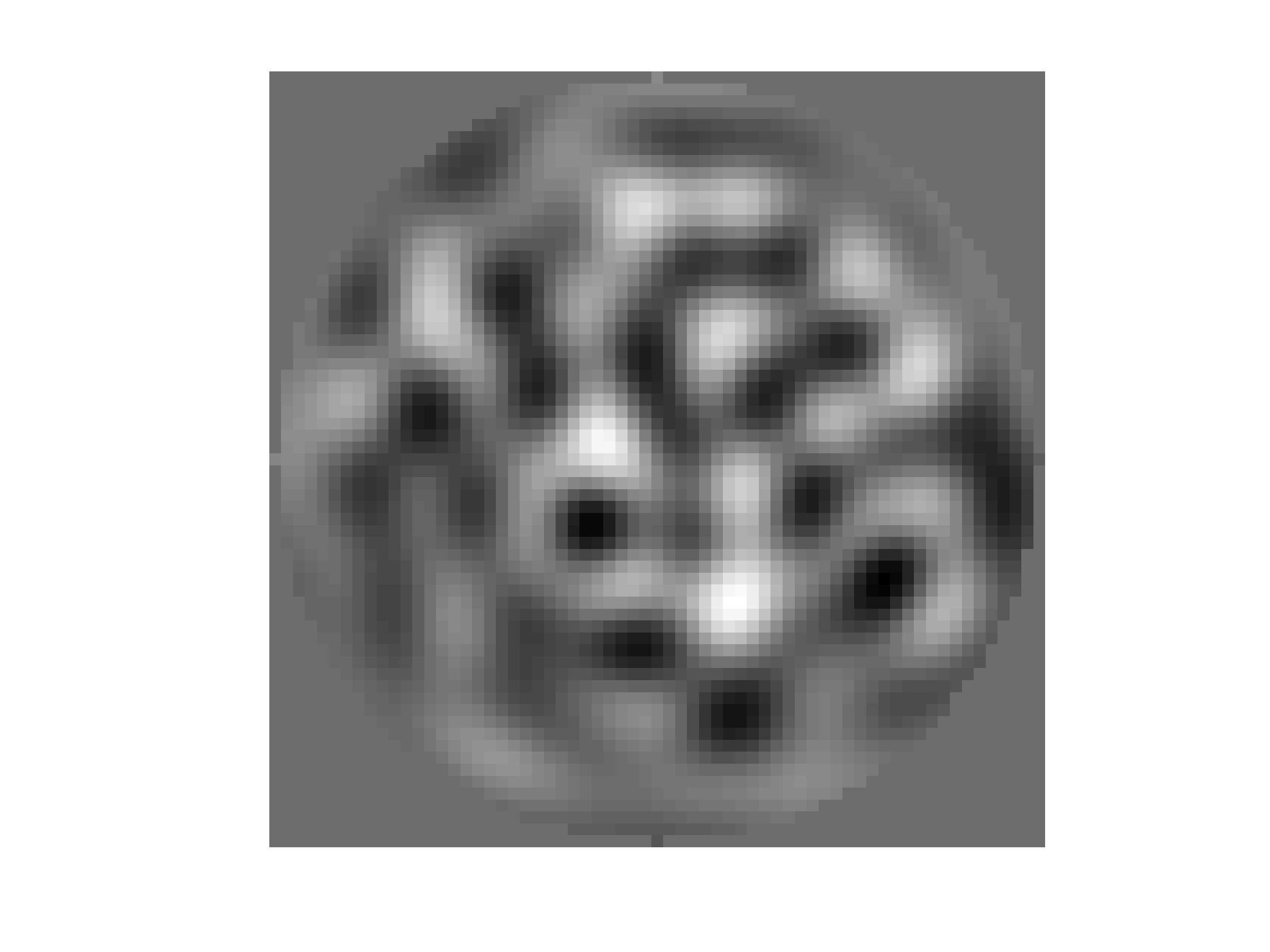}
\caption{Eigenv. 2}
        \end{subfigure}
\begin{subfigure}[b]{0.15\textwidth}
                \centering
                \includegraphics[scale = 0.04]{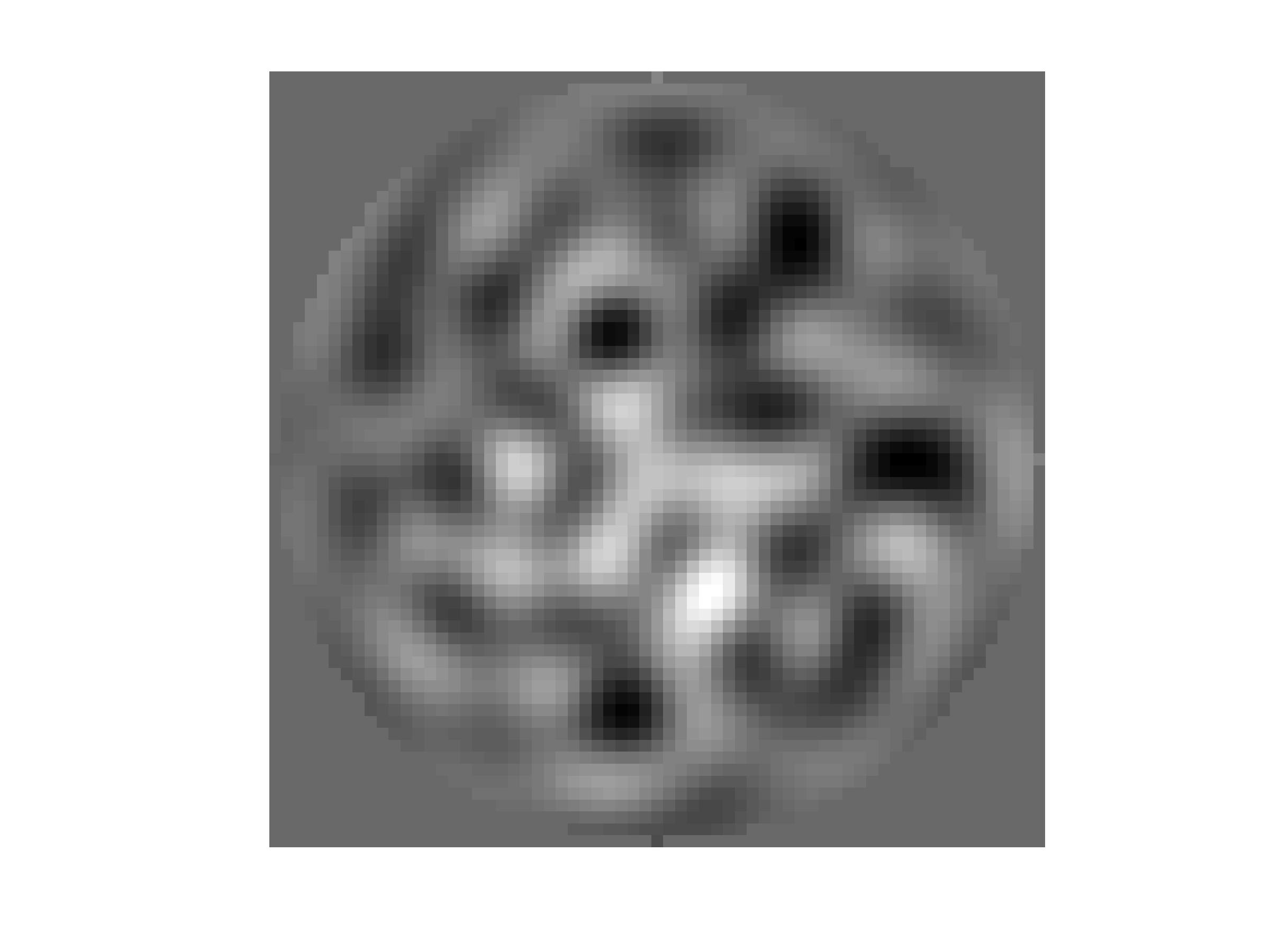}
\caption{Volume 1}
        \end{subfigure}
\begin{subfigure}[b]{0.15\textwidth}
                \centering
                \includegraphics[scale = 0.04]{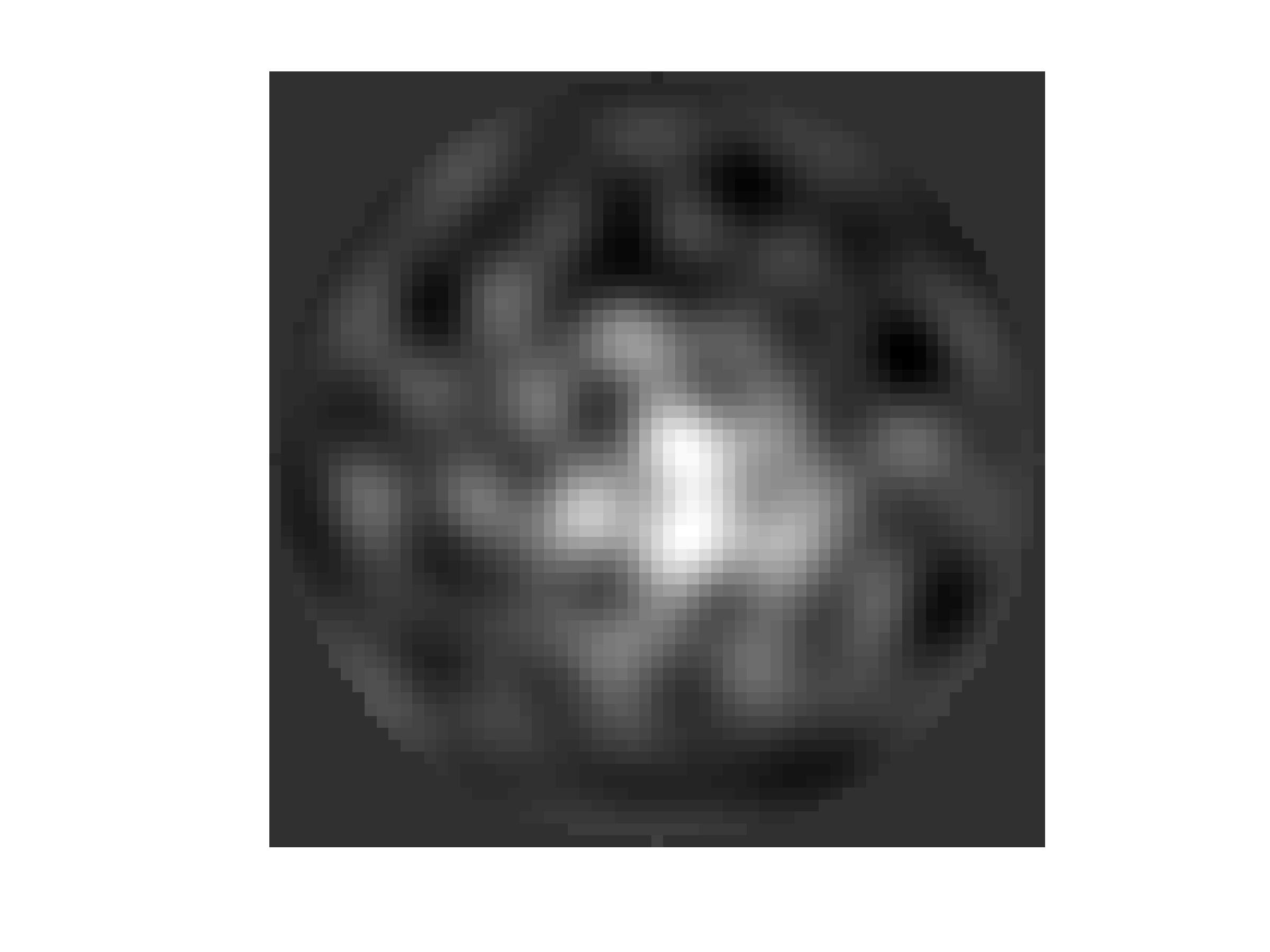}
\caption{Volume 2}
        \end{subfigure}
\begin{subfigure}[b]{0.15\textwidth}
                \centering
                \includegraphics[scale = 0.04]{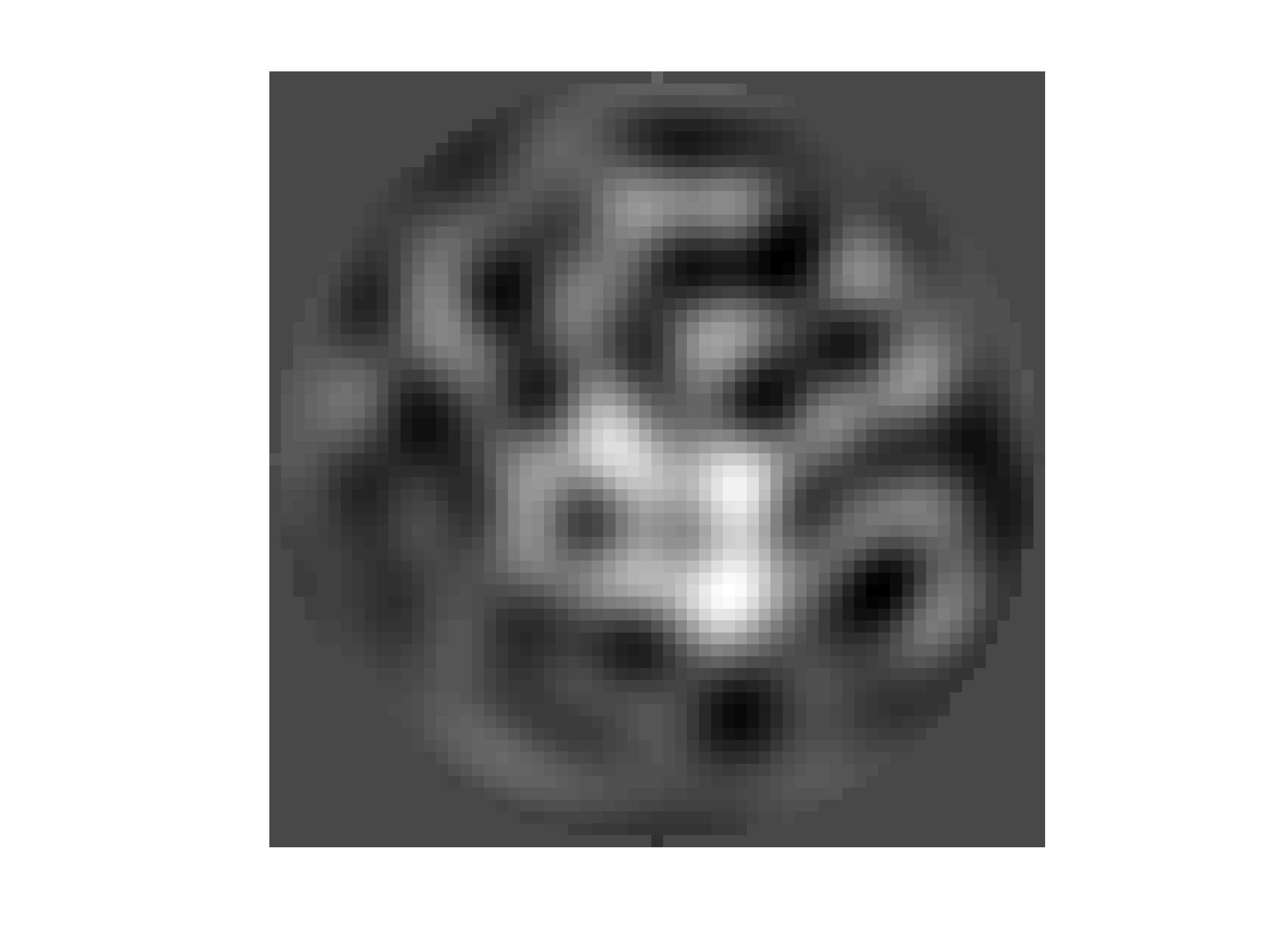}
\caption{Volume 3}
        \end{subfigure}
\caption{Cross-sections of clean and reconstructed objects for the three class experiment. The top row is clean, the second row corresponds to SNR$_\text{het}$ = 0.044 (0.3), the third row to SNR$_\text{het}$ =0.0044 (0.03), and the last row to SNR$_\text{het}$ = 0.0015 (0.01).}
\label{reconstructions_3}
\end{figure}


\begin{figure}[H]
	\begin{subfigure}[b]{0.3\textwidth}
                \centering
                \includegraphics[scale = 0.165]{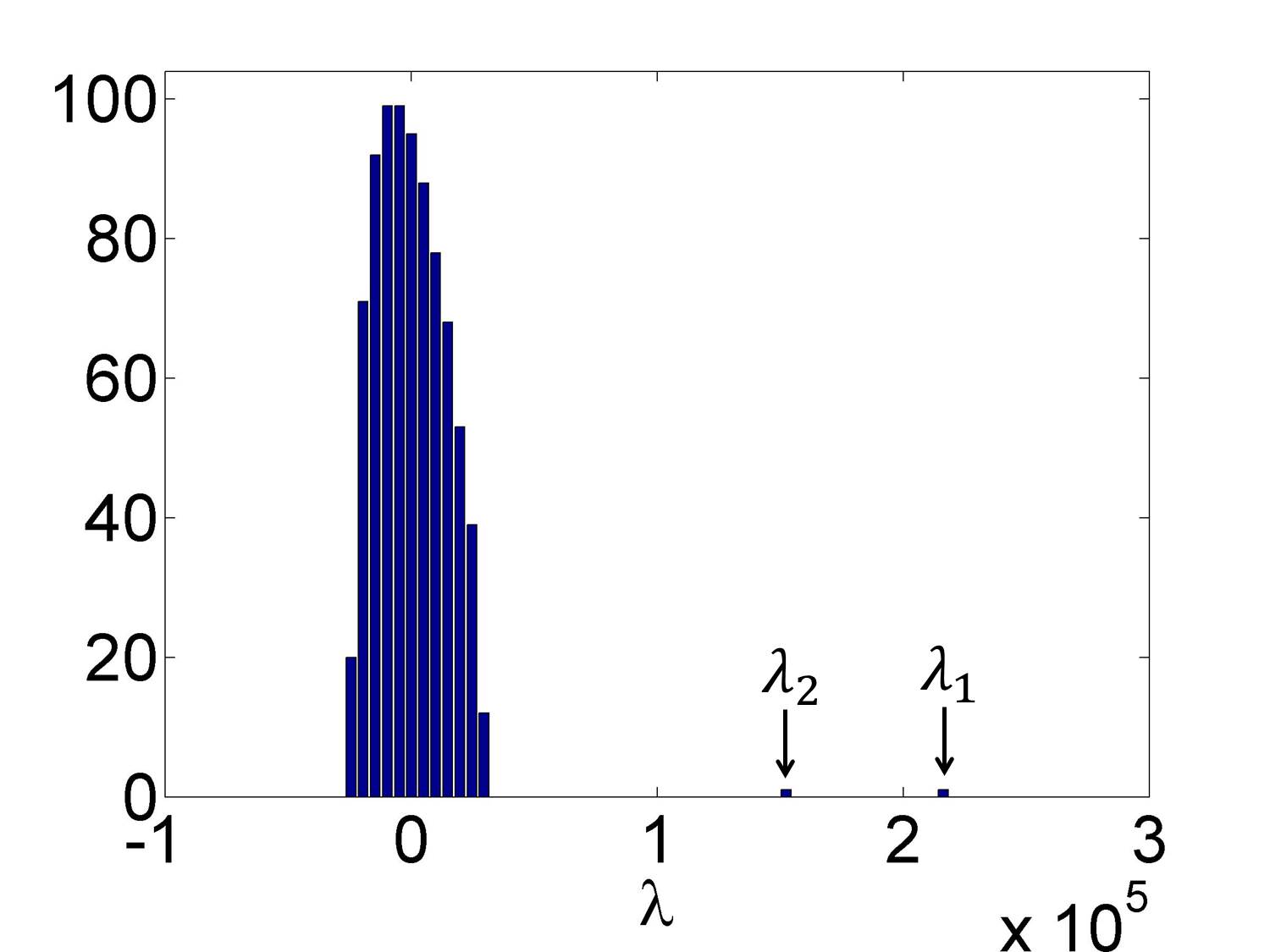}
	\caption{SNR$_{\text{het}}$ = 0.044 (0.3)}
        \end{subfigure}
\begin{subfigure}[b]{0.3\textwidth}
                \centering
                \includegraphics[scale = 0.165]{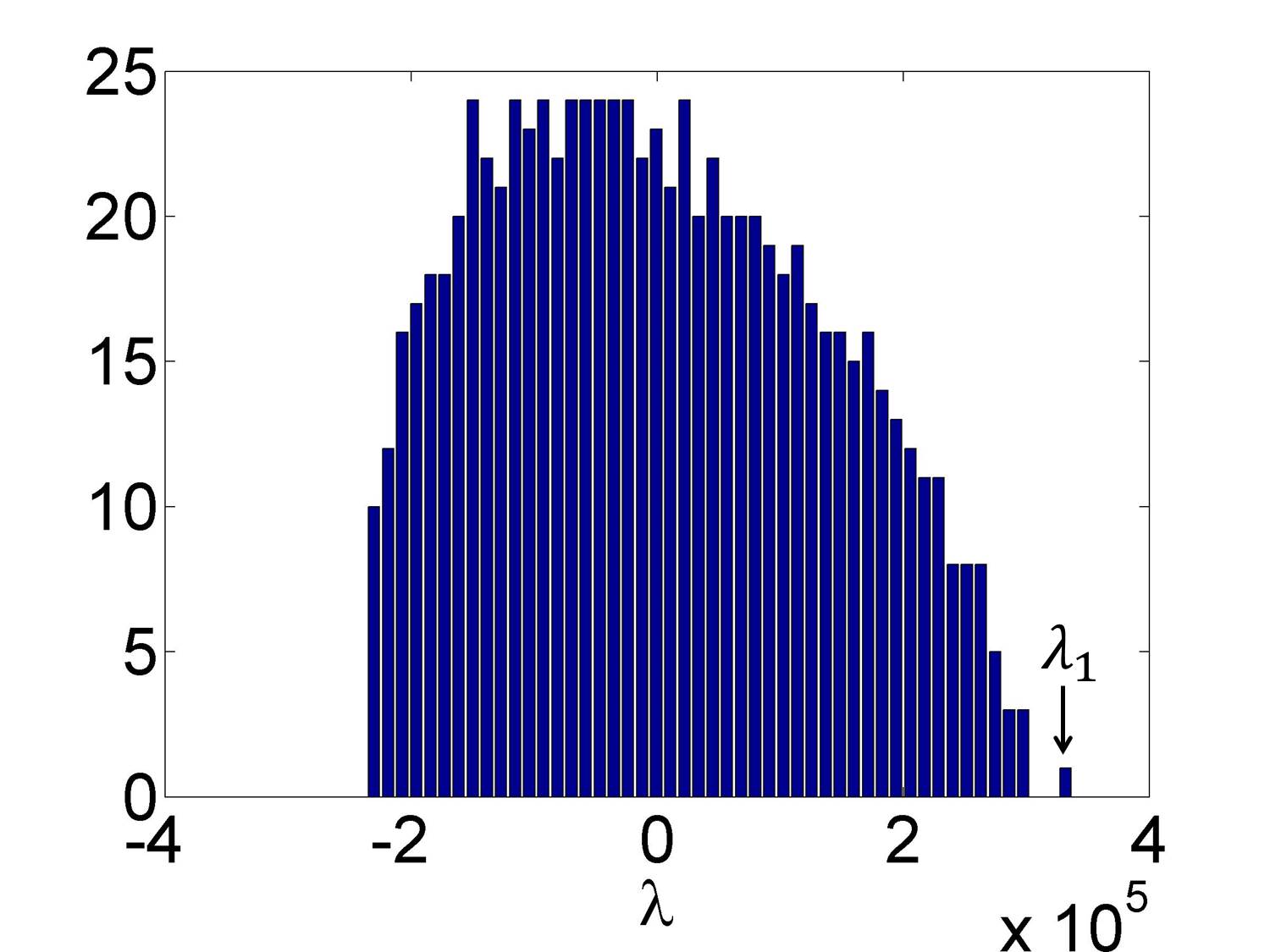}
	\caption{SNR$_{\text{het}}$ = 0.0044 (0.03)}
        \end{subfigure}
\begin{subfigure}[b]{0.3\textwidth}
                \centering
                \includegraphics[scale = 0.067]{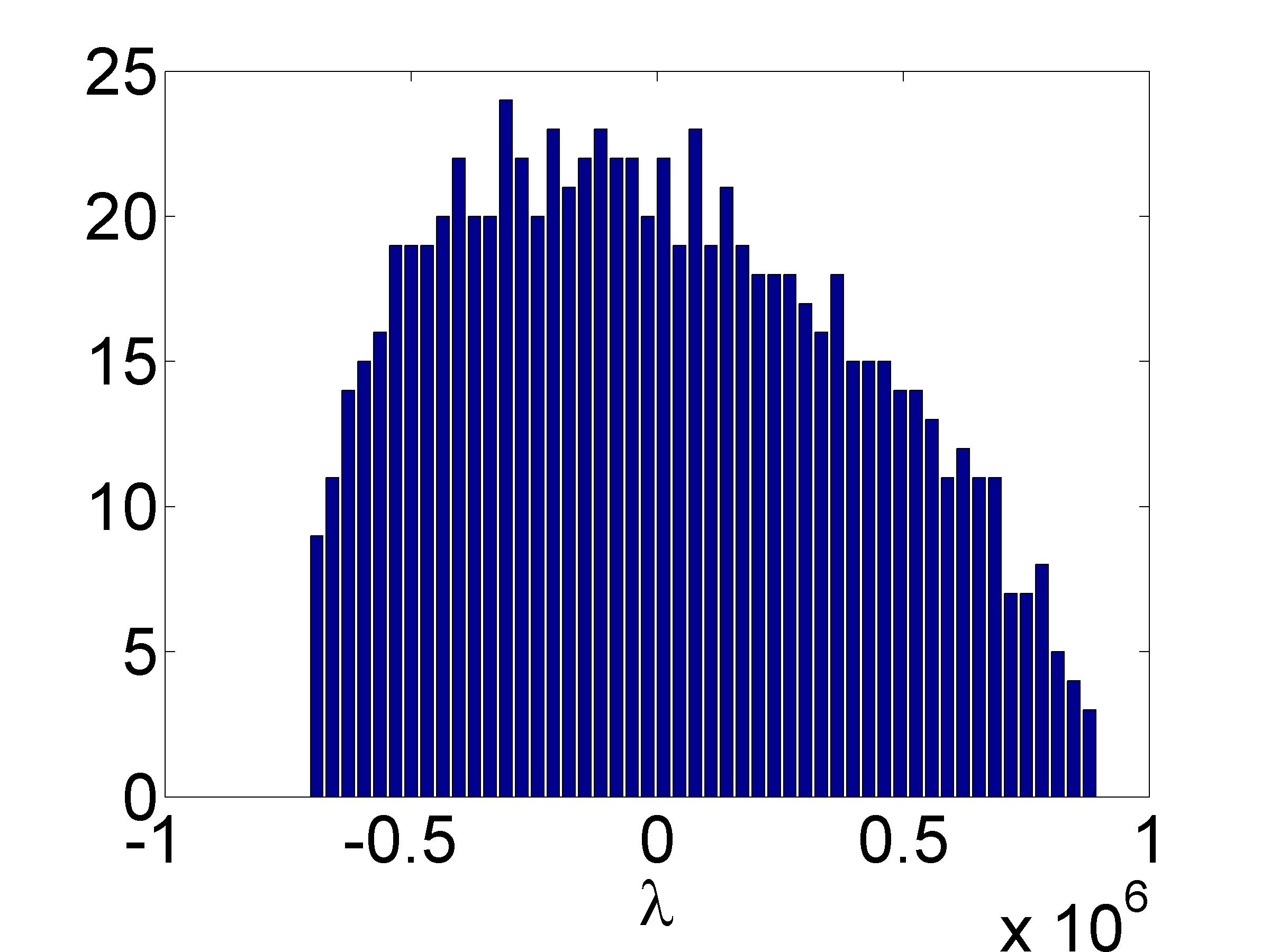}
	\caption{SNR$_{\text{het}}$ =  0.0015 (0.01)}
        \end{subfigure}
\caption{Eigenvalue histograms of reconstructed covariance matrix in the three class case for three SNR values. Note that the noise distribution initially engulfs the second eigenvalue, and eventually the top eigenvalue as well.}
\label{eig_hists_3}
\end{figure}

\begin{figure}[H]
 	\centering
\begin{subfigure}[b]{0.24\textwidth}
                \centering
	\includegraphics[scale = 0.04]{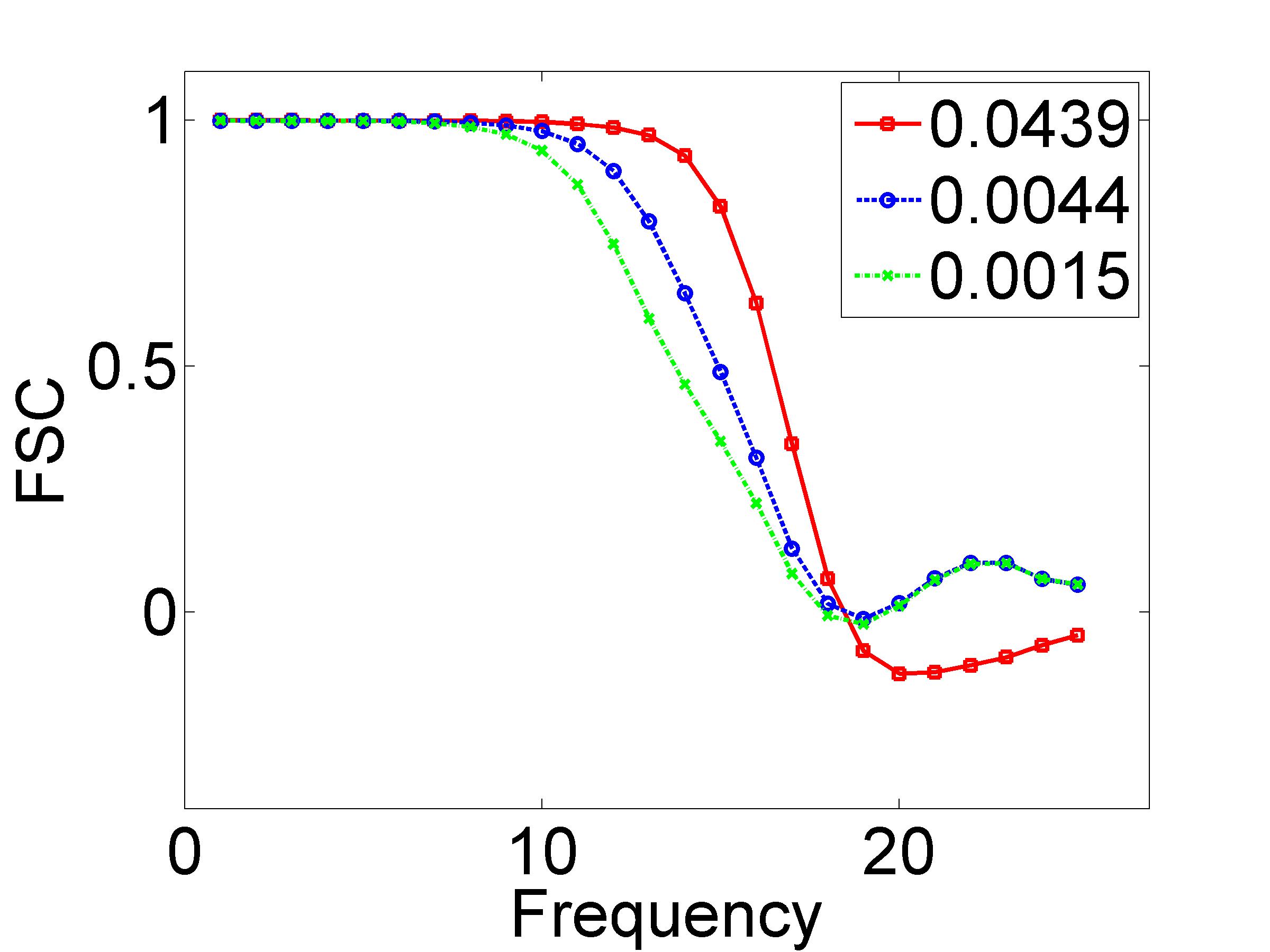}
\caption{Mean}
        \end{subfigure}	
\begin{subfigure}[b]{0.24\textwidth}
	\centering
	\includegraphics[scale = 0.04]{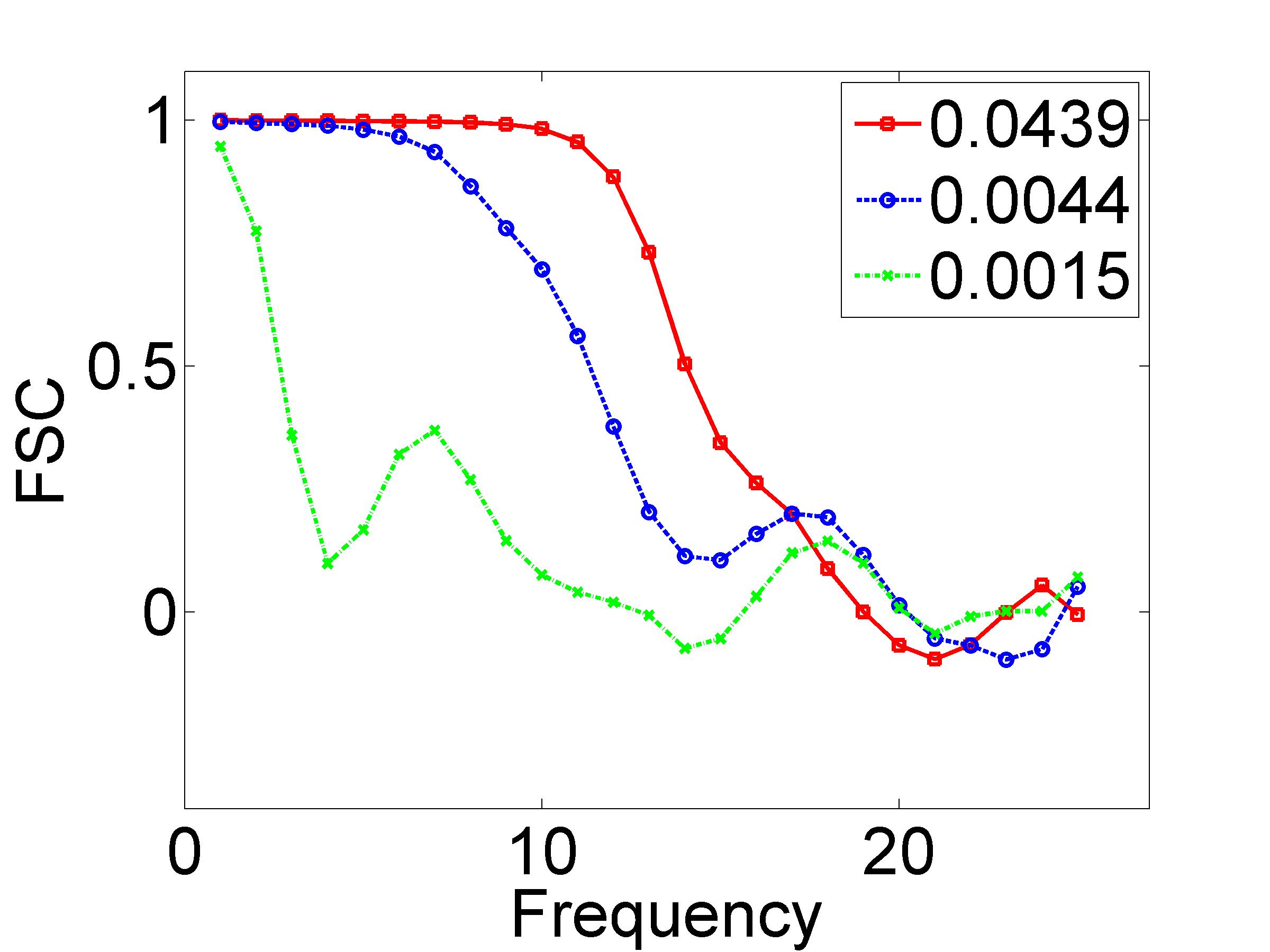}
\caption{Eigenvector 1}
\end{subfigure}
\begin{subfigure}[b]{0.24\textwidth}
	\centering
	\includegraphics[scale = 0.04]{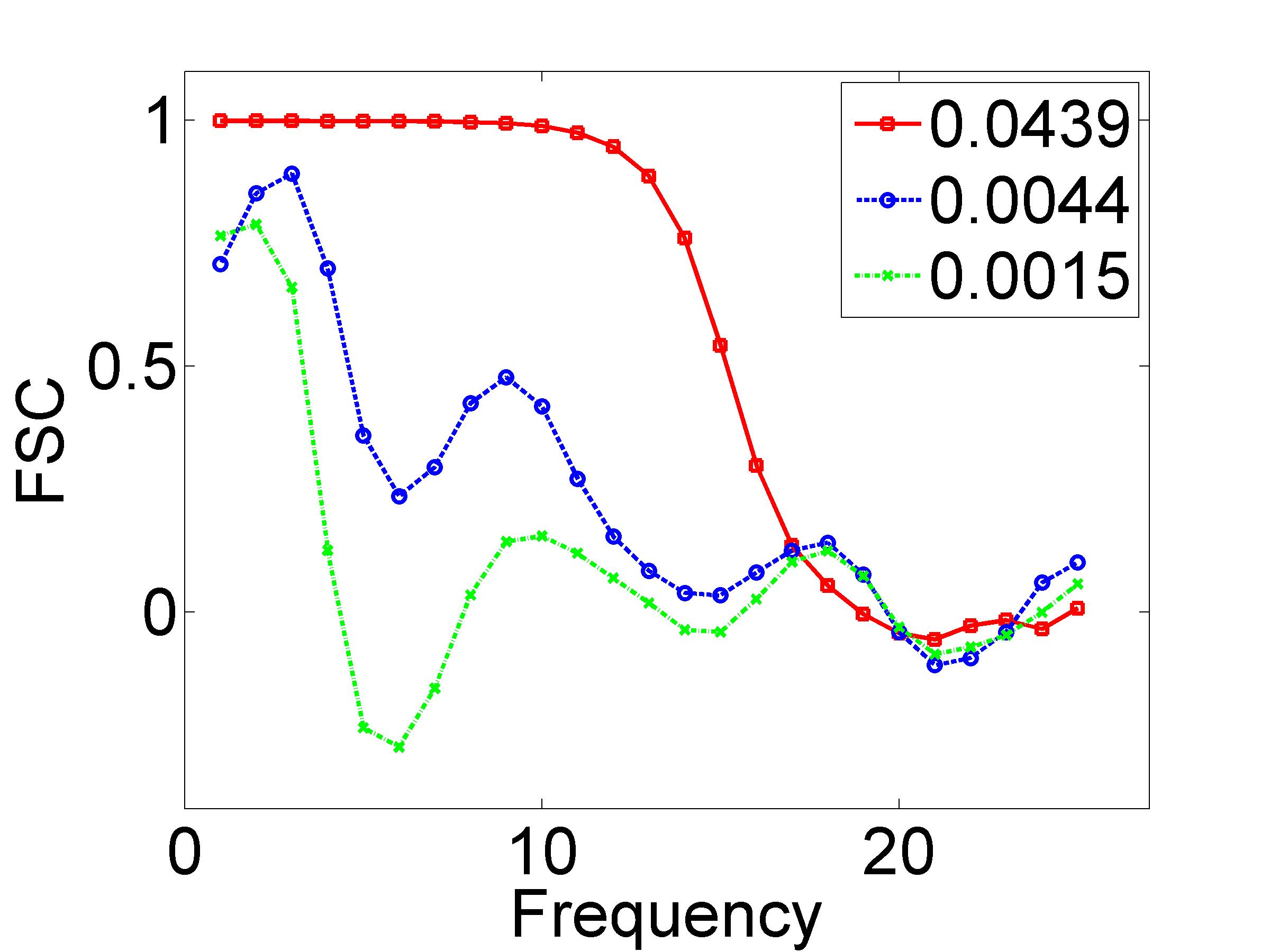}
\caption{Eigenvector 2}
\end{subfigure}
\begin{subfigure}[b]{0.24\textwidth}
	\centering
	\includegraphics[scale = 0.04]{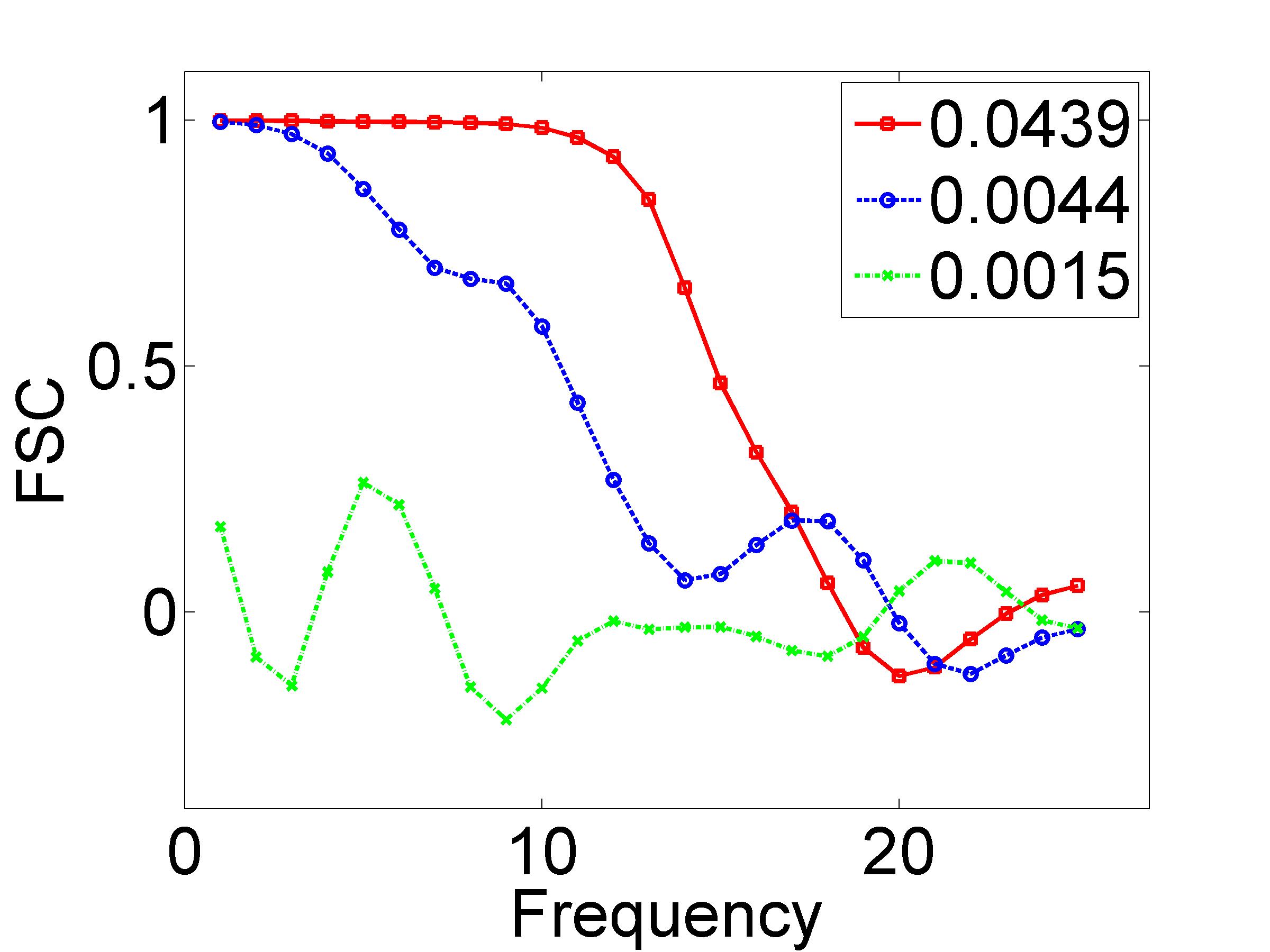}
\caption{Volume 1}
\end{subfigure}
\caption{FSC curves for the mean volume, top eigenvector, and one mean-subtracted volume at the same three SNRs as in Figure \ref{reconstructions_3}. Note that the mean volume is reconstructed successfully for all three SNR levels, and that the second eigenvector is recovered less accurately than the first. }
\label{fig:fsc_3}
\end{figure}

\begin{figure}[H]
 \begin{subfigure}[b]{0.49\textwidth}
                \centering
                \includegraphics[scale = 0.06]{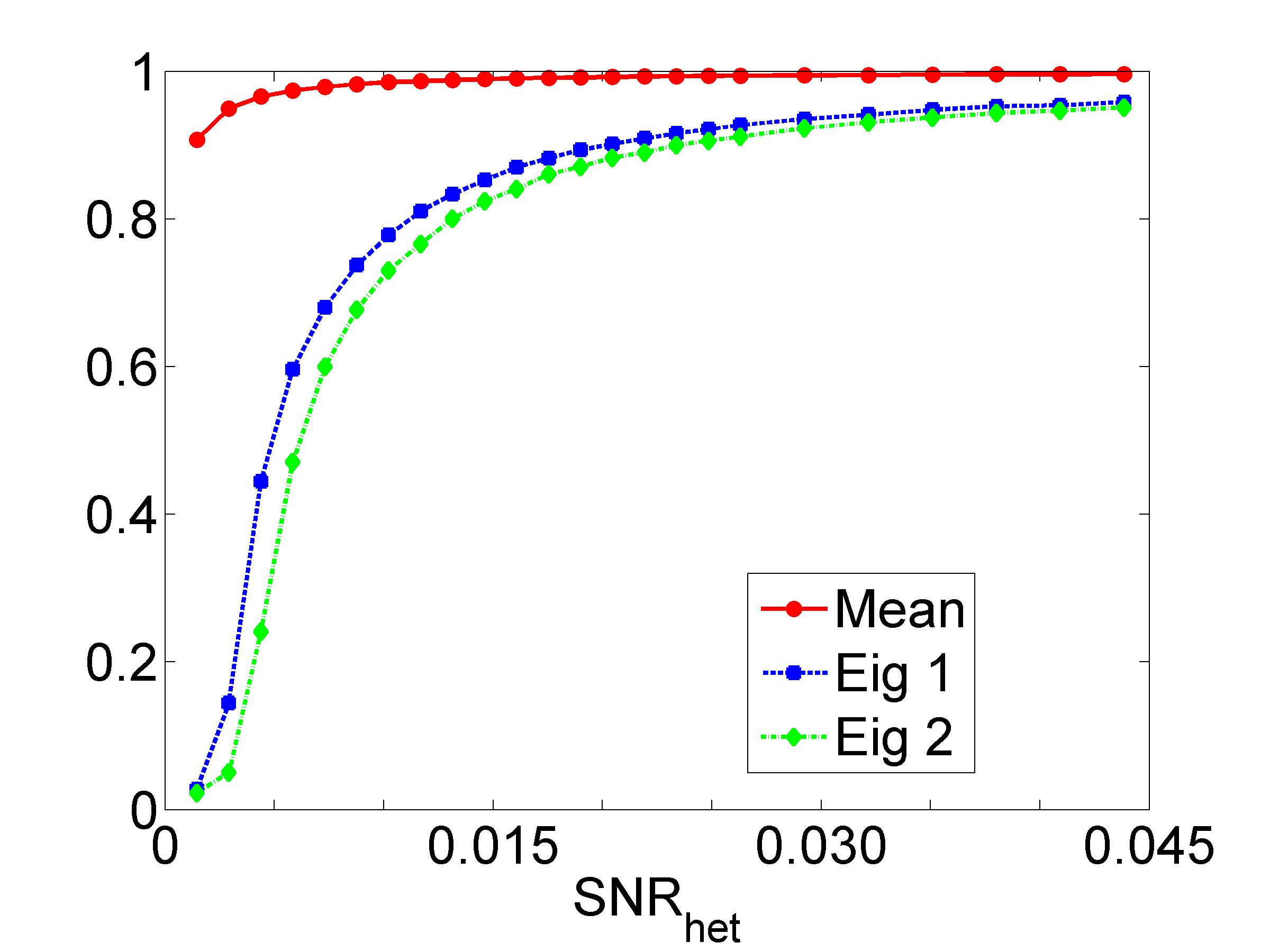}
	\caption{Mean and eigenvector correlations}
        \end{subfigure}
\begin{subfigure}[b]{0.49\textwidth}
	\centering
	\includegraphics[scale = 0.06]{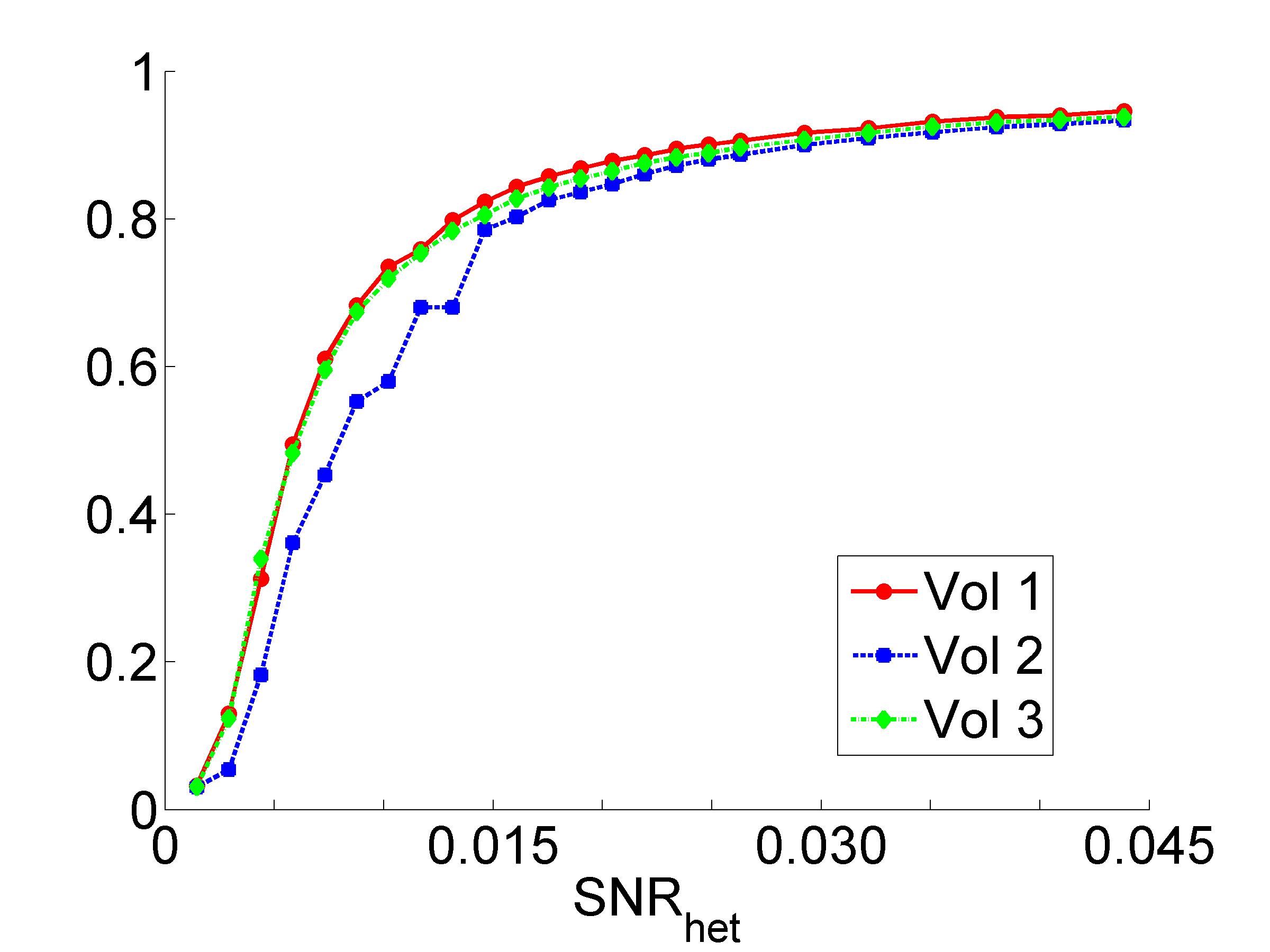}
	\caption{Volume correlations} \label{correlations_c_3}
\end{subfigure}
\caption{Correlations of computed means, eigenvectors, and mean-subtracted volumes with their true values for different SNRs (averaged over 30 experiments). Note that the mean volume is consistently recovered well, whereas recovery of the eigenvectors and volumes exhibits a phase-transition behavior.}
\label{correlations_3}
\end{figure}


\begin{figure}[H]
        \centering
        \begin{subfigure}[b]{0.3\textwidth}
                \centering
                \includegraphics[scale = 0.05]{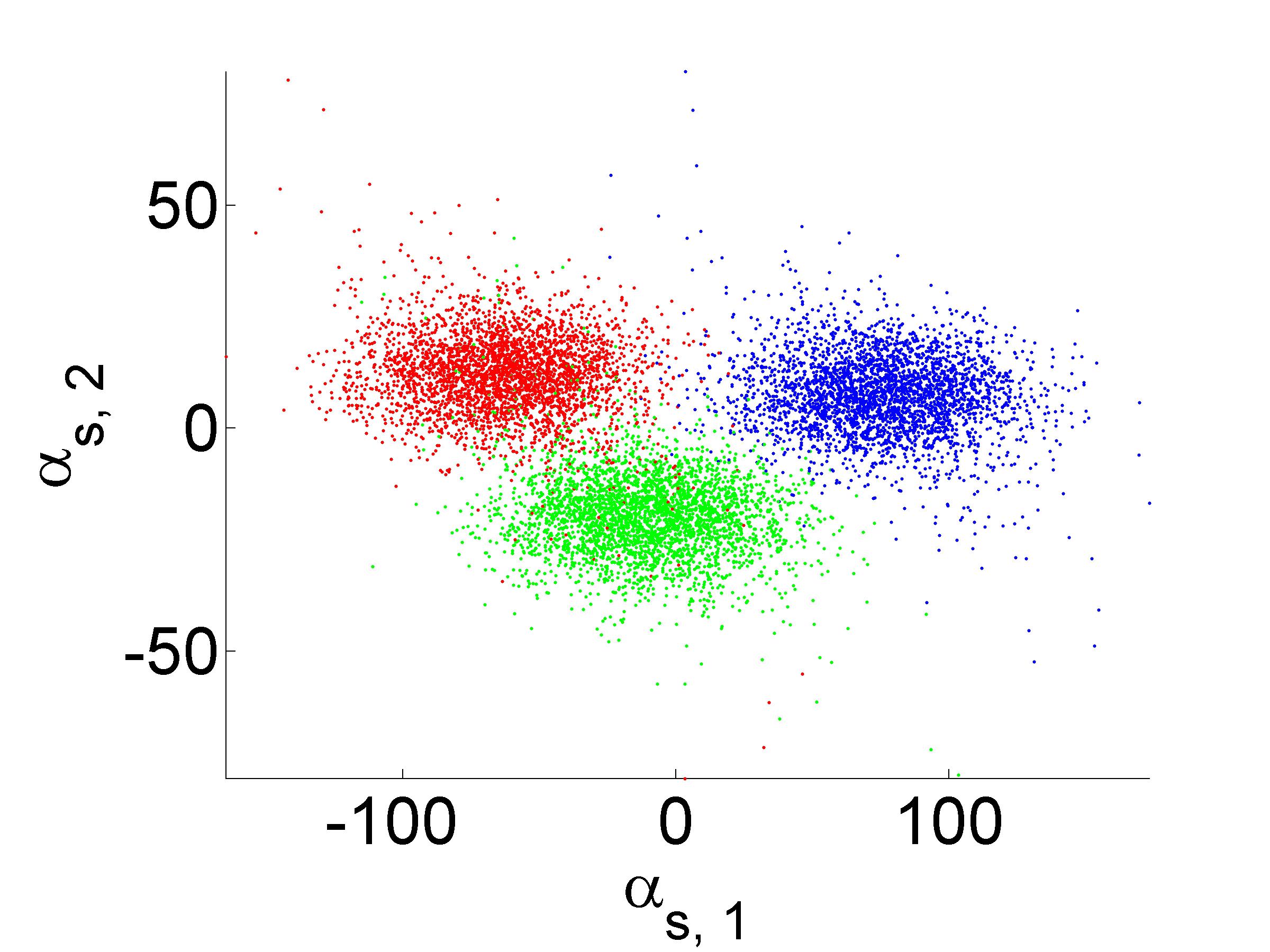}
	\caption{SNR$_{\text{het}}$ = 0.044 (0.3)}
        \end{subfigure}
	\begin{subfigure}[b]{0.3\textwidth}
                \centering
                \includegraphics[scale = 0.05]{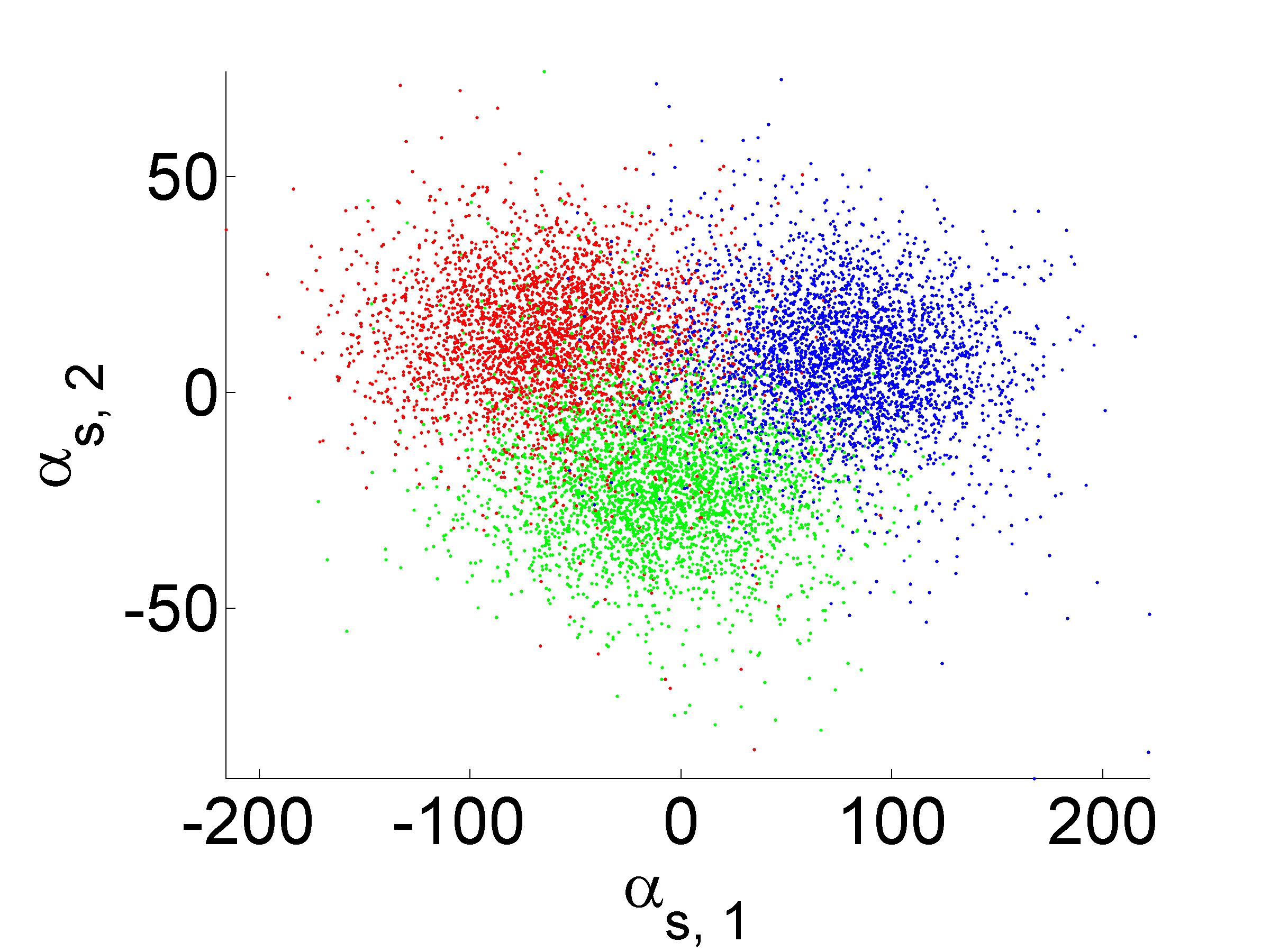}
	\caption{SNR$_{\text{het}}$ = 0.018 (0.12)}
        \end{subfigure}
	\begin{subfigure}[b]{0.3\textwidth}
                \centering
                \includegraphics[scale = 0.05]{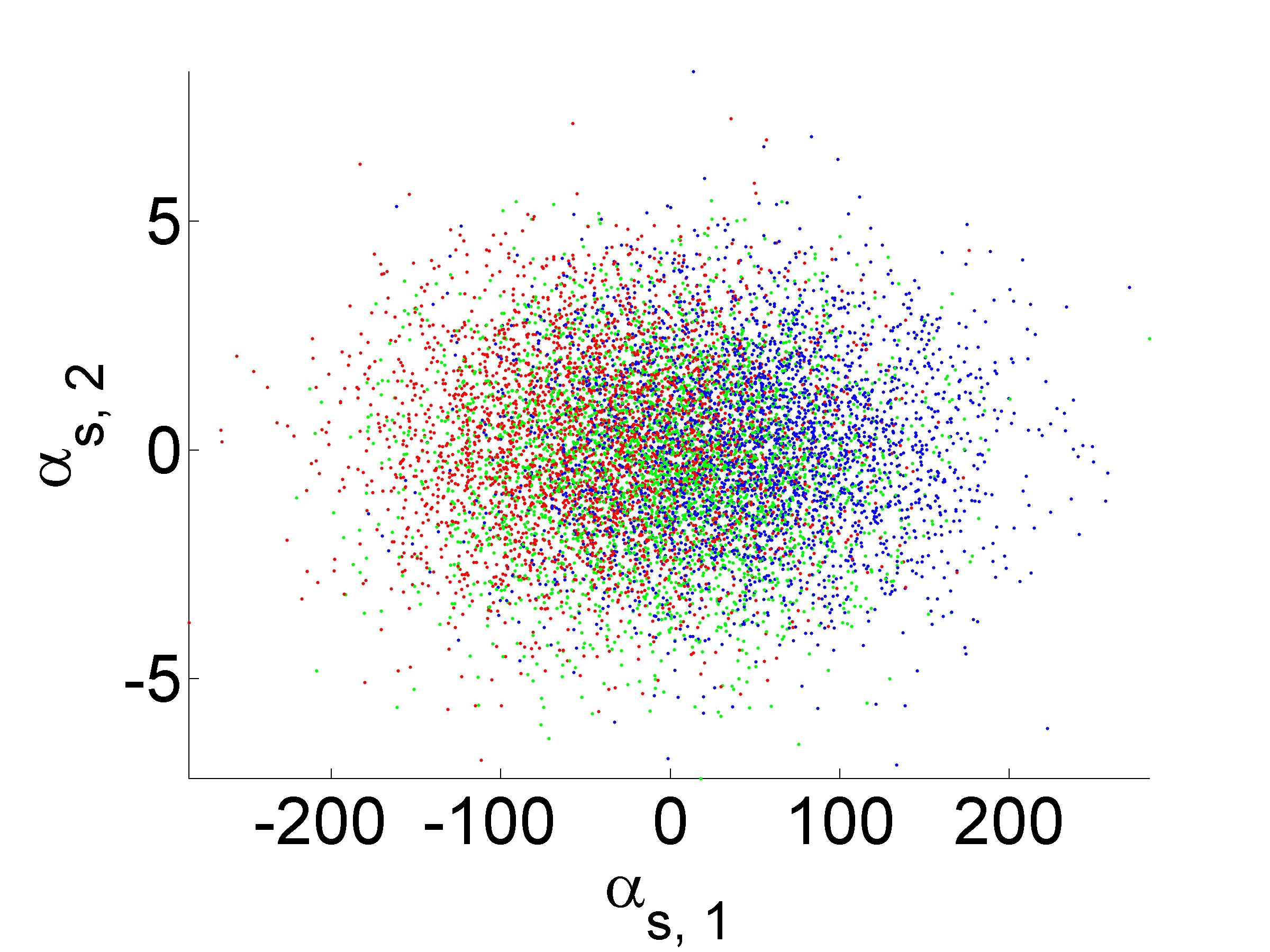}
	\caption{SNR$_{\text{het}}$ = 0.0044 (0.03)}
        \end{subfigure}
\caption{The coordinates $\alpha_{s}$ for the three class case, colored according to true class. The middle scatter plot is near the transition at which the three clusters coalesce.}
\label{coords_3}
\end{figure}

\subsection{Experiment: continuous variation}

In this experiment, we sampled $\scr X_s$ uniformly from the perimeter of the triangle determined by volumes $\scr X^1, \scr X^2, \scr X^3$ (from the three class discrete heterogeneity experiment). This setup is more suitable to model the case when the molecule can vary continuously between each pair $\scr X^i$ and $\scr X^j$. Despite the fact this experiment does not fall under Problem \ref{het_problem}, Figure \ref{eig_hists_cont} shows that we still recover the rank two structure. Indeed, it is clear that all the clean volumes still belong to a subspace of dimension 2. Moreover, we can see the triangular pattern of heterogeneity in the scatter plots of $\alpha_{s}$ (Figure \ref{fig:triangles}). However, note that once the images get moderately noisy, the triangular structure starts getting drowned out. Thus in practice, without any prior assumptions, just looking at the scatter plots of $\alpha_{s}$ will not necessarily reveal the heterogeneity structure in the dataset. To detect continuous variation, a new algorithmic step must be designed to follow covariance matrix estimation. Nevertheless, this experiment shows that by solving the general Problem \ref{main_problem}, we can estimate covariance matrices beyond those considered in the discrete case of the heterogeneity problem.


\begin{figure}[H]
\begin{subfigure}[b]{0.49\textwidth}
                \centering
                \includegraphics[scale = 0.165]{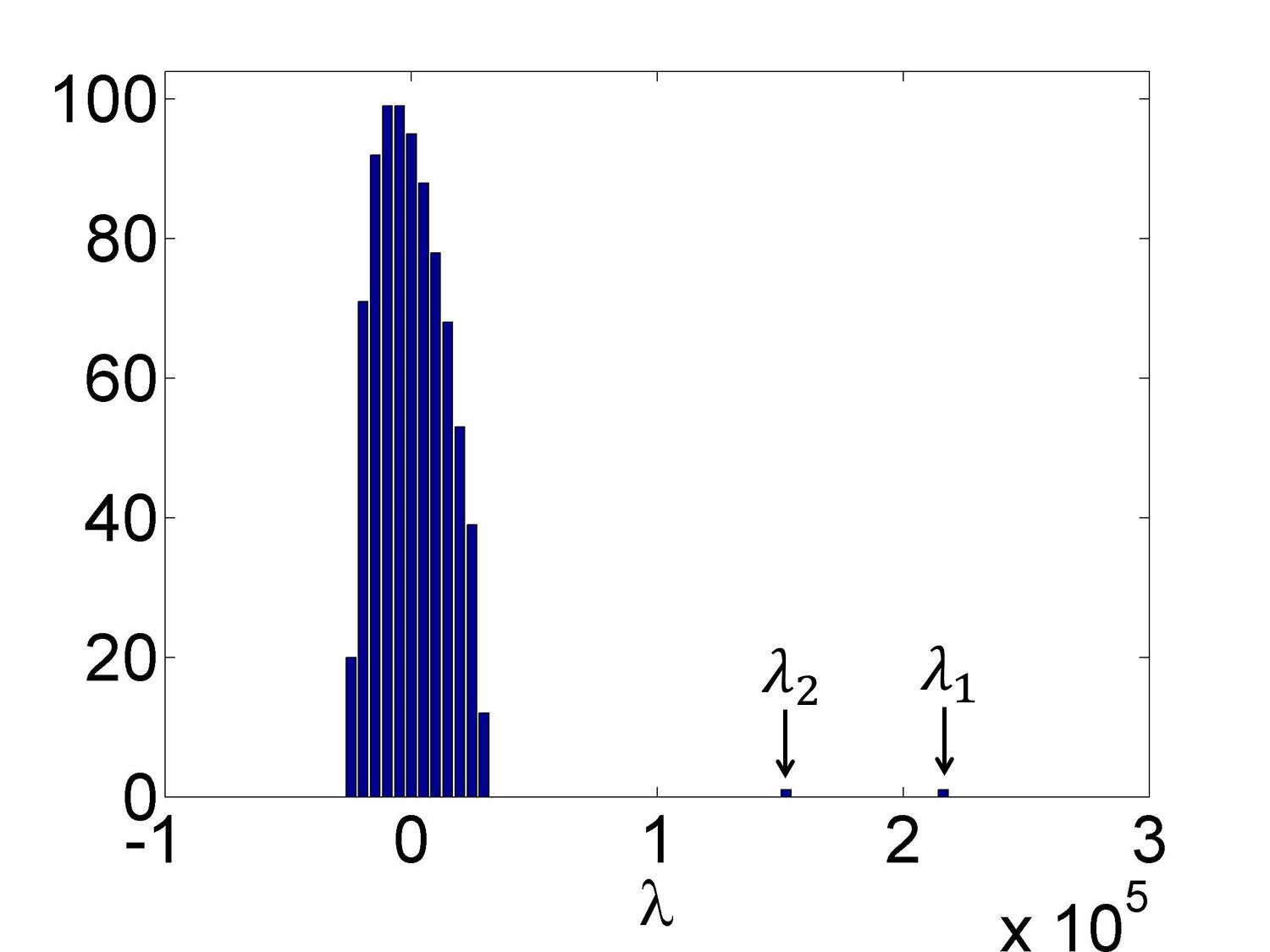}
	\caption{SNR$_{\text{het}}$ = 0.14 (0.97)}
        \end{subfigure}
\begin{subfigure}[b]{0.49\textwidth}
                \centering
                \includegraphics[scale = 0.165]{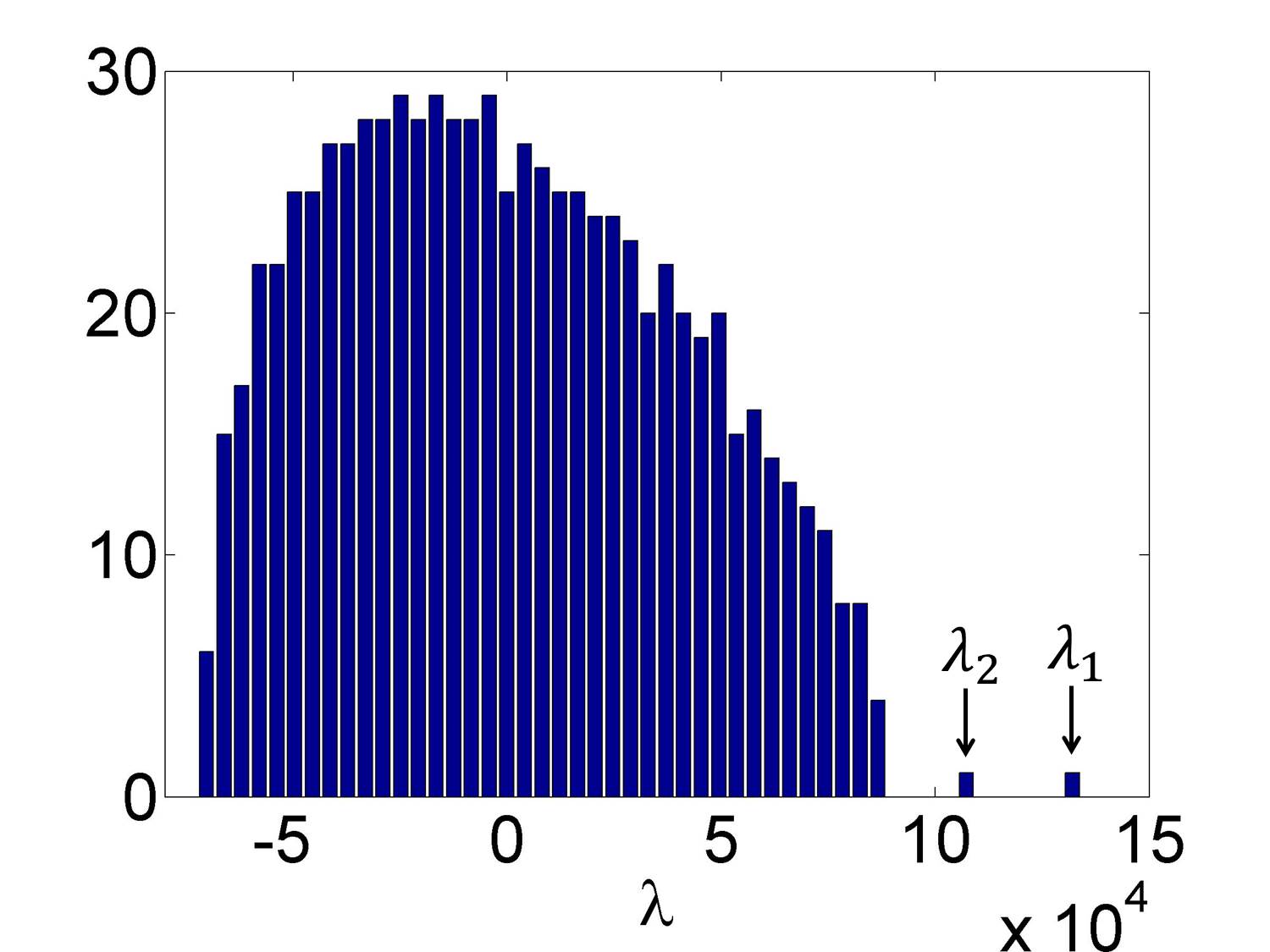}
	\caption{SNR$_{\text{het}}$ = 0.014 (0.1)}
        \end{subfigure}
\caption{Eigenvalue histograms of covariance matrix reconstructed in continuous variation case.}
\label{eig_hists_cont}
\end{figure}

\begin{figure}[h]
 	\centering
\begin{subfigure}[b]{0.3\textwidth}
                \centering
	\includegraphics[scale = 0.2]{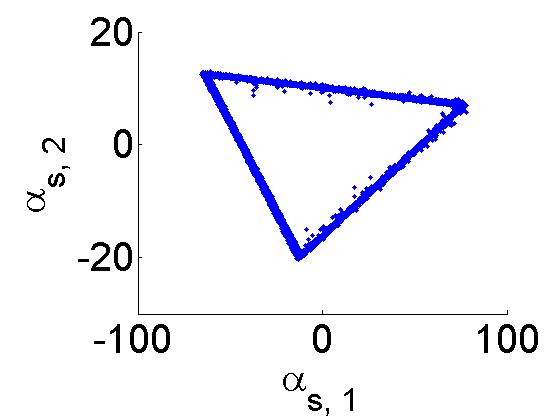}
\caption{Clean images}
        \end{subfigure}	
\begin{subfigure}[b]{0.3\textwidth}
	\centering
	\includegraphics[scale = 0.2]{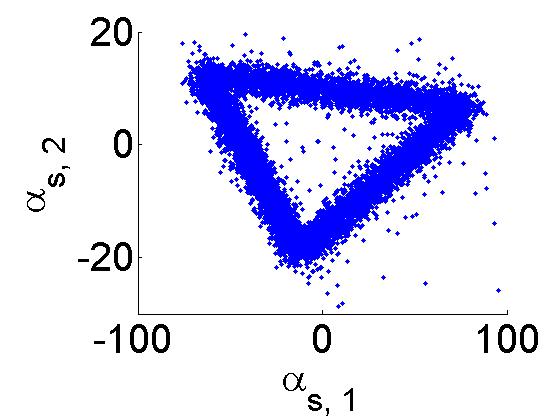}
\caption{SNR$_{\text{het}}$ = 1.4 (9.7)}
\end{subfigure}
\begin{subfigure}[b]{0.3\textwidth}
	\centering
	\includegraphics[scale = 0.2]{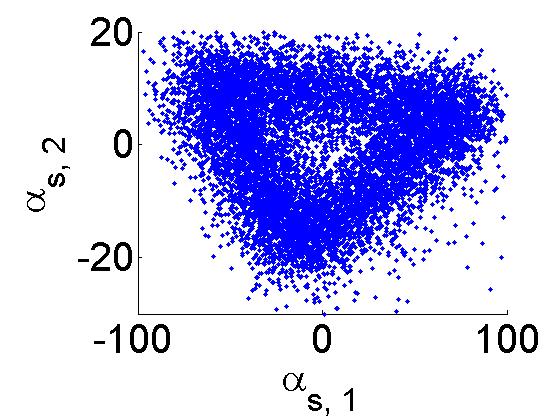}
\caption{SNR$_{\text{het}}$ = 0.14 (0.97)}
\end{subfigure}
\caption{Scatter plots (with some outliers removed) of $\alpha_{s}$ for high SNR values.}
\label{fig:triangles}
\end{figure}

\section{Discussion} \label{future}

In this paper, we proposed a covariance matrix estimator from noisy linearly projected data and proved its consistency. The covariance matrix approach to the cryo-EM heterogeneity problem is essentially a special case of the general statistical problem under consideration, but has its own practical challenges. We overcame these challenges and proposed a methodology to tractably estimate the covariance matrix and reconstruct the molecular volumes. We proved the consistency of our estimator in the cryo-EM case and also began the mathematical investigation of the projection covariance transform. We discovered that inverting the projection covariance transform involves applying the triangular area filter, a generalization of the ramp filter arising in tomography. Finally, we validated our methodology on simulated data, producing accurate reconstructions at low SNR levels. Our implementation of this algorithm is now part of the ASPIRE package at \url{spr.math.princeton.edu}. In what follows, we discuss several directions for future research.

As discussed in Section \ref{rmt}, our statistical framework and estimators have opened many new questions in high-dimensional statistics. While a suite of results are already available for the traditional high-dimensional PCA problem, generalizing these results to the projected data case would require new random matrix analysis. Our numerical experiments in the cryo-EM case have shown many qualitative similarities between the estimated covariance matrix in the cryo-EM case and the sample covariance matrix in the spiked model. There is again a bulk distribution with eigenvalues separated from it. Moreover, there is a phase-transition phenomenon in the cryo-EM case, in which the top eigenvectors of the estimated covariance lose correlation with those of the population covariance once the corresponding eigenvalues are absorbed by the bulk distribution. Answering the questions posed in Section \ref{rmt} would be very useful in quantifying the theoretical limitations of our approach.

As an additional line of further inquiry, note that the optimization problem (\ref{sigopt}) for the covariance matrix is amenable to regularization. If $n \asymp f(p, q)$ is the high-dimensional statistical regime in which the unregularized estimator still carries signal, then of course we need regularization when $n \ll f(p, q)$. Here, $f$ is a function depending on the distribution of the operators $P_s$. Moreover, regularization increases robustness to noise, so in applications like cryo-EM, this could prove useful. Tikhonov regularization does not increase the complexity of our algorithm, but has the potential to make $\hat L_n$ invertible. Under what conditions can we still achieve accurate recovery in a regularized setting? Other regularization schemes can take advantage of a-priori knowledge of $\Sigma_0$, such as using nuclear norm regularization in the case when $\Sigma_0$ is known to be low rank. See \cite{Kuybeda2013116} for an application of nuclear norm minimization in the context of dealing with heterogeneity in cryo-electron tomography. Another special structure $\Sigma_0$ might have is that it is sparse in a certain basis. For example, the localized variability assumption in the case of the heterogeneity problem is such an example; in this case, the covariance matrix is sparse in the real Cartesian basis or a wavelet basis. This sparsity can be encouraged using a matrix 1-norm regularization term. Other methods, such as sparse PCA \cite{sparse} or covariance thresholding \cite{cov_thresh} might be applicable in certain cases when we have sparsity in a given basis.

We developed our algorithm in an idealized environment, assuming that the rotations $\sdr R_s$ (and in-plane translations) are known exactly and correspond to approximately uniformly distributed viewing directions, and that the molecules belong to $\mathscr B$. Moreover, we did not account for the CTF effect of the electron microscope. In practice, of course rotations and translations are estimated with some error. Also, certain molecules might exhibit a preference for a certain orientation, invalidating the uniform rotations assumption. Note that as long as $\hat L_n$ is invertible, our framework produces a valid estimator, but without the uniform rotations assumption, the computationally tractable approach to inverting this matrix proposed in Section \ref{practical} no longer holds. Moreover, molecules might have higher frequencies that those we reconstruct, which could potentially lead to artifacts. Thus, an important direction of future research is to investigate the stability of our algorithm to perturbations from the idealized assumptions we have made. An alternative research direction is to devise numerical schemes to invert $\hat L_n$ without replacing it by $\hat L$, which could allow incorporation of CTF and obviate the need to assume uniform rotations. We proposed one such scheme in Section \ref{comp_challenges_approaches}.

As we discussed in the introduction, our statistical problem (\ref{main_problem}) is actually a special case of the matrix sensing problem. In future work, it would be interesting to test matrix sensing algorithms on our problem. In the cryo-EM case, it would be useful to compare our approach with matrix sensing algorithms. It would also be interesting to explore the applications of our methodology to other tomographic problems involving variability. For example, the field of 4D electron tomography focuses on reconstructing a 3D structure that is a function of time \cite{Kwon25062010}. This 4D reconstruction is essentially a movie of the molecule in action. The methods developed in this paper can in principle be used to estimate the covariance matrix of a molecule varying with time. This is another kind of ``heterogeneity"  that is amenable to the same analysis we used to investigate structural variability in cryo-EM.

\section{Acknowledgements}
E. Katsevich thanks Jane Zhao, Lanhui Wang, and Xiuyuan Cheng (PACM, Princeton University) for their valuable advice on several theoretical and practical issues. Parts of this work have appeared in E. Katsevich's undergraduate Independent Work at Princeton University.

A. Katsevich was partially supported by Award Number DMS-1115615 from NSF.

A. Singer was partially supported by Award Number R01GM090200 from the NIGMS, by Award Number FA9550-12-1-0317 and FA9550-13-1-0076 from AFOSR, and by Award Number LTR DTD 06-05-2012 from the Simons Foundation.

The authors are also indebted to Philippe Rigollet (ORFE, Princeton), as this work benefited from discussions with him regarding the statistical framework. Also, the authors thank Joachim Frank (Columbia University) and Joakim Anden (PACM, Princeton University) for providing helpful comments about their manuscript. They also thank Dr. Frank and Hstau Liao (Columbia University) for allowing them to reproduce Figure 2 from \cite{local} as our Figure \ref{het}. Finally, they thank the editor and the referees for their many helpful comments.

\bibliographystyle{plain}
\bibliography{my_bib}

\appendix
\section{Matrix derivative calculations} \label{appendix_mat_der}
The goal of this appendix is to differentiate the objective functions of (\ref{muopt}) and (\ref{sigopt}) to verify formulas (\ref{mueq}) and (\ref{sigeq}). In order to differentiate with respect to vectors and matrices, we appeal to a few results from \cite{matder}. The results are as follows:
\begin{equation}
\begin{split}
&D_{z^*} (z^H a) = a \\
&D_{z^*} (z^H A z) = Az \\
&D_{Z}(tr(AZ)) = A \\
&D_{Z}(tr(ZAZ^HA)) = AZ^HA.
\end{split}
\label{matders}
\end{equation}
Here, the lowercase letters represent vectors and the uppercase letters represent matrices. Also note that $z^*$ denotes the complex conjugate of $z$. The general term of (\ref{muopt}) is
\begin{equation}
\norm{I_s - P_s\mu}^2 = (I_s^H - \mu^H P_s^H)(I_s - P_s\mu) = \mu^H P_s^H P_s \mu - \mu^H P_s^H I_s - I_s^H P_s \mu + \mathrm{const}.
\end{equation}
We can differentiate this with respect to $\mu^*$ by using the first two formulas of (\ref{matders}). We get
\begin{equation}
D_{\mu^*}\norm{I_s - P_s\mu}^2 = P_s^H P_s \mu - P_s^H I_s.
\end{equation}
Summing in $s$ gives us (\ref{mueq}).

If we let $A_s = (I_s - P_s \mu_n)(I_s - P_s\mu_n)^H - \sigma^2 I$, then the general term of (\ref{sigopt}) is
\begin{equation*}
\begin{split}
&\norm{(I_s - P_s \mu_n)(I_s - P_s\mu_n)^H - (P_s \Sigma P_s^H + \sigma^2 I)}_F^2 \\
&\quad= \norm{A_s - P_s \Sigma P_s^H}_F^2 \\
&\quad= \text{tr}(A_s^H - P_s \Sigma^H P_s^H)(A_s - P_s \Sigma P_s^H) \\
&\quad = \text{tr}(P_s \Sigma^H P_s^H P_s \Sigma P_s^H) - \text{tr}(P_s \Sigma^H P_s^HA_s) - \text{tr}(A_s^H P_s \Sigma P_s^H) + \text{const}, \\
&\quad = \text{tr}(\Sigma P_s^H P_s \Sigma^H P_s^H P_s ) - \text{tr}(P_s^HA_sP_s \Sigma^H ) - \text{tr}(P_s^HA_s^H P_s \Sigma ) + \text{const}.
\end{split}
\end{equation*}
Using the last two formulas of (\ref{matders}), we find that the derivative of this expression with respect to $\Sigma$ is
\begin{equation*}
P_s^H P_s \Sigma^H P_s^H P_s - P_s^H A_s^H P_s.
\end{equation*}
Taking a Hermitian and summing in $s$ gives us (\ref{sigeq}).

\section{Consistency of $\mu_n$ and $\Sigma_n$} \label{analysis_proofs}

In this appendix, we will prove the consistency results about $\mu_n$ and $\Sigma_n$ stated in Section \ref{theoretical}. Recall $\mu_n$ and $\Sigma_n$ are defined nontrivially if $\norm{A_n^{-1}} \leq 2 \norm{A^{-1}}$ and $\norm{L_n^{-1}} \leq 2 \norm{L^{-1}}$. As a necessary step towards our consistency results, we must first prove that the probability of these events tends to 1 as $n \rightarrow \infty$. Such statement follow from a matrix concentration argument based on Bernstein's inequality \cite[Theorem 1.4]{user_friendly}, which we reproduce here for the reader's convenience as a lemma. 

\begin{lemma}{(Matrix Bernstein's Inequality).} \label{matrix_bernstein}
Consider a finite sequence $\rdr Y_s$ of independent, random, self-adjoint matrices with dimension $p$. Assume that each random matrix satisfies
\begin{equation}
\mathbb E[\rdr Y_s] = 0 \quad \text{and} \quad \norm{\rdr Y_s}\leq R \quad a.s. 
\end{equation}
Then, for all $t \geq 0$, 
\begin{equation}
\mathbb P\left\{\norm{\sum_s \rdr Y_s} \geq t\right\} \leq p\cdot \exp\left(\frac{-t^2/2}{\sigma^2 + Rt/3}\right), \quad \text{where } \sigma^2: = \norm{\sum_s \mathbb E(\rdr Y_k^2)}.
\label{tail_bound}
\end{equation}
\end{lemma}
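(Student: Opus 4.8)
Since Lemma~\ref{matrix_bernstein} is quoted verbatim from \cite[Theorem~1.4]{user_friendly}, the proof I would give is essentially a pointer to that reference; for completeness I sketch the standard argument behind it, namely the matrix Laplace transform method of Ahlswede--Winter and Tropp.

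The plan is, first, to reduce a tail bound on $\lambda_{\max}$ to a bound on a matrix moment generating function. For any self-adjoint random matrix $\rdr Z$ of dimension $p$ and any $\theta>0$, Markov's inequality applied to $\operatorname{tr}\exp(\theta\rdr Z)\geq e^{\theta\lambda_{\max}(\rdr Z)}$ yields
\begin{equation}
\mathbb P\{\lambda_{\max}(\rdr Z)\geq t\}\;\leq\; e^{-\theta t}\,\mathbb E\big[\operatorname{tr}\exp(\theta\rdr Z)\big].
\end{equation}
Taking $\rdr Z=\sum_s\rdr Y_s$, it then remains to control $\mathbb E\big[\operatorname{tr}\exp(\theta\sum_s\rdr Y_s)\big]$ and to optimize over $\theta$.

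The second step --- and, I expect, the only genuinely delicate one --- is to decouple the sum inside the exponential in spite of non-commutativity. Here I would invoke Lieb's concavity theorem, which gives the subadditivity of matrix cumulant generating functions,
\begin{equation}
\mathbb E\Big[\operatorname{tr}\exp\Big(\textstyle\sum_s\theta\rdr Y_s\Big)\Big]\;\leq\;\operatorname{tr}\exp\Big(\textstyle\sum_s\log\mathbb E\big[e^{\theta\rdr Y_s}\big]\Big),
\end{equation}
replacing the joint mgf by a sum of individual log-mgfs. Everything after this is bookkeeping: using $\mathbb E[\rdr Y_s]=0$, the almost-sure bound $\|\rdr Y_s\|\leq R$, and the scalar inequality $e^x-1-x\leq\tfrac{x^2/2}{1-|x|/3}$ on $(-3,3)$ applied to the eigenvalues of $\theta\rdr Y_s$, one gets for $0<\theta<3/R$ the semidefinite bound $\mathbb E[e^{\theta\rdr Y_s}]\preceq\exp\big(g(\theta)\,\mathbb E[\rdr Y_s^2]\big)$ with $g(\theta)=\tfrac{\theta^2/2}{1-R\theta/3}$. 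Operator monotonicity of $\log$ together with monotonicity of $\operatorname{tr}\exp$ in the semidefinite order then give
\begin{equation}
\mathbb P\Big\{\lambda_{\max}\Big(\textstyle\sum_s\rdr Y_s\Big)\geq t\Big\}\;\leq\; p\,\exp\!\big(-\theta t+g(\theta)\sigma^2\big),\qquad \sigma^2=\Big\|\textstyle\sum_s\mathbb E[\rdr Y_s^2]\Big\|.
\end{equation}
Choosing $\theta=t/(\sigma^2+Rt/3)$ makes the exponent exactly $-\tfrac{t^2/2}{\sigma^2+Rt/3}$. Finally I would apply the same chain of inequalities to $-\sum_s\rdr Y_s$ to control $\lambda_{\min}$, and, since $\|\sum_s\rdr Y_s\|=\max\{\lambda_{\max},-\lambda_{\min}\}$, conclude by a union bound, recovering the dimensional prefactor $p$ as recorded in \cite[Theorem~1.4]{user_friendly}.
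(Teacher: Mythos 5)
Your proposal is correct and matches the paper exactly: Lemma \ref{matrix_bernstein} is stated here only as a quotation of \cite[Theorem 1.4]{user_friendly}, with no proof given beyond the citation, which is precisely what you do. Your supplementary sketch of the Laplace-transform argument via Lieb's concavity theorem is an accurate summary of the proof in that reference, but it is not something the paper itself supplies.
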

Next, we prove another lemma, which is essentially the Bernstein inequality in a more convenient form.
\begin{lemma} \label{bernstein_corollary}
Let $\rdr Z$ be a symmetric $d \times d$ random matrix, with $\norm{\rdr Z} \leq B$ almost surely. If $\rdr Z_1, \dots, \rdr Z_n$ are i.i.d. samples from $\rdr Z$, then
\begin{equation}
\mathbb P \left \{\norm{\frac{1}{n}\sum_{s =1}^n \rdr Z_s - \mathbb E[\rdr Z]} \geq t\right \} \leq d \exp\left(\frac{-3nt^2}{6B^2 + 4Bt}\right).
\label{general_tail_bound}
\end{equation}
Moreover, 
\begin{equation}
\mathbb E \norm{\frac{1}{n}\sum_{s =1}^n \rdr Z_s - \mathbb E[\rdr Z]} \leq C B \max\left(\sqrt{\frac{\log d}{n}}, \frac{2 \log d}{n}\right),
\label{general_expectation_bound}
\end{equation}
where $C$ is an absolute constant.
\end{lemma}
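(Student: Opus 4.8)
The plan is to apply the matrix Bernstein inequality (Lemma~\ref{matrix_bernstein}) to the rescaled summands $\rdr Y_s := \tfrac1n(\rdr Z_s - \mathbb E[\rdr Z])$; this yields the tail bound (\ref{general_tail_bound}) after a simplification of the exponent, and then integrating that tail bound over $t\in(0,\infty)$ produces the expectation bound (\ref{general_expectation_bound}).

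First I would check the hypotheses of Lemma~\ref{matrix_bernstein}. Each $\rdr Y_s$ is self-adjoint with $\mathbb E[\rdr Y_s]=0$, and since $\norm{\mathbb E[\rdr Z]}\le \mathbb E\norm{\rdr Z}\le B$ we have $\norm{\rdr Y_s}\le \tfrac1n(\norm{\rdr Z_s}+\norm{\mathbb E[\rdr Z]})\le \tfrac{2B}{n}=:R$ almost surely. For the variance parameter, $\sum_{s=1}^n \mathbb E[\rdr Y_s^2]=\tfrac1n\,\mathbb E[(\rdr Z-\mathbb E[\rdr Z])^2]$, and the operator inequalities
\[
0\preceq \mathbb E[(\rdr Z-\mathbb E[\rdr Z])^2]=\mathbb E[\rdr Z^2]-(\mathbb E[\rdr Z])^2\preceq \mathbb E[\rdr Z^2]\preceq B^2\,\mathrm{I}
\]
(the last step because $\norm{\rdr Z}\le B$ forces $\rdr Z^2\preceq B^2\mathrm{I}$ almost surely) give $\sigma^2:=\norm{\sum_s \mathbb E[\rdr Y_s^2]}\le B^2/n$. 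Using $\sum_s \rdr Y_s=\tfrac1n\sum_s \rdr Z_s-\mathbb E[\rdr Z]$ and substituting $R=2B/n$, $\sigma^2\le B^2/n$ into (\ref{tail_bound}),
\[
\mathbb P\!\left\{\norm{\tfrac1n\textstyle\sum_s \rdr Z_s-\mathbb E[\rdr Z]}\ge t\right\}\le d\exp\!\left(\frac{-t^2/2}{B^2/n+(2B/n)t/3}\right)=d\exp\!\left(\frac{-3nt^2}{6B^2+4Bt}\right),
\]
which is (\ref{general_tail_bound}).

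For (\ref{general_expectation_bound}) I would set $\rdr W:=\norm{\tfrac1n\sum_s \rdr Z_s-\mathbb E[\rdr Z]}$ and use $\mathbb E\rdr W=\int_0^\infty \mathbb P\{\rdr W\ge t\}\,dt$. Since $6B^2+4Bt\le \max(12B^2,8Bt)$, the exponent in (\ref{general_tail_bound}) is at least $\min(\tfrac{nt^2}{4B^2},\tfrac{3nt}{8B})$, so $\mathbb P\{\rdr W\ge t\}\le d\,e^{-nt^2/(4B^2)}+d\,e^{-3nt/(8B)}$. For any threshold $t_\ast>0$,
\[
\mathbb E\rdr W\le t_\ast+d\!\int_{t_\ast}^\infty e^{-nt^2/(4B^2)}\,dt+d\!\int_{t_\ast}^\infty e^{-3nt/(8B)}\,dt\le t_\ast+\frac{2B^2 d}{n\,t_\ast}\,e^{-nt_\ast^2/(4B^2)}+\frac{8Bd}{3n}\,e^{-3nt_\ast/(8B)},
\]
where for the Gaussian integral I used $e^{-nt^2/(4B^2)}\le \tfrac{t}{t_\ast}e^{-nt^2/(4B^2)}$ on $[t_\ast,\infty)$. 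Choosing $t_\ast=2B\sqrt{\log d/n}+\tfrac{8B\log d}{3n}$ makes both $d\,e^{-nt_\ast^2/(4B^2)}$ and $d\,e^{-3nt_\ast/(8B)}$ at most $1$, after which the two remaining terms are $O(B\sqrt{\log d/n}+B\log d/n)$; collecting constants gives $\mathbb E\rdr W\le CB\max(\sqrt{\log d/n},\,2\log d/n)$ for an absolute constant $C$, which is (\ref{general_expectation_bound}). (This presumes $d\ge 2$; for $d=1$ the statement reduces to scalar Bernstein.)

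The verification of the Bernstein hypotheses and the simplification of the exponent are mechanical. The only points that require care are the operator-inequality chain bounding $\sigma^2$ by $B^2/n$ — this is what makes the constants come out to exactly $6B^2$ and $4Bt$ — together with the bookkeeping of absolute constants in the tail integration, where one must retain both the sub-Gaussian regime ($t\lesssim B$) and the sub-exponential regime ($t\gtrsim B$) in order to recover the $\max$ of the two terms in (\ref{general_expectation_bound}).
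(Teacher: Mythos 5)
Your proof of the tail bound (\ref{general_tail_bound}) is correct and essentially identical to the paper's: same centering and rescaling $\rdr Y_s = \tfrac1n(\rdr Z_s - \mathbb E[\rdr Z])$, same verification that $R = 2B/n$ and $\sigma^2 \le B^2/n$ (you bound the variance via $\rdr Z^2 \preccurlyeq B^2\mathrm{I}$ a.s., the paper via $\norm{\mathbb E[\rdr Z_s^2]} \le \mathbb E[\norm{\rdr Z_s}^2]$; both are fine and give the same constant), and the same algebraic simplification of the exponent. Where you genuinely diverge is the expectation bound (\ref{general_expectation_bound}): the paper simply cites Remark 6.5 of the Tropp reference \cite{user_friendly}, which packages exactly this consequence of matrix Bernstein, whereas you rederive it by splitting the tail into its sub-Gaussian regime ($t \lesssim B$) and sub-exponential regime ($t \gtrsim B$) and integrating each piece past a threshold $t_\ast$ chosen to kill the dimensional factor $d$. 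Your route is self-contained and makes the origin of the two terms in the $\max$ transparent; the paper's is shorter but opaque without consulting the reference. Your integration bookkeeping checks out (the choice $t_\ast = 2B\sqrt{\log d/n} + \tfrac{8B\log d}{3n}$ does make both exponential factors at most $1/d$, and the leftover integral terms are dominated by $t_\ast$ itself up to an absolute constant), and your caveat about $d=1$ is a defect of the statement as written, shared by the paper, not a gap in your argument.
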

\begin{proof}
The proof is an application of the matrix Bernstein inequality. Let $\rdr Y_s = \frac1n(\rdr Z_s - \mathbb E \rdr Z)$. Then, note that $\mathbb E[\rdr Y_s] = 0$ and 
\begin{equation}
\norm{\rdr Y_s} \leq \frac1n(\norm{\rdr Z_s} + \mathbb E[\norm{\rdr Z}]) \leq \frac{2B}{n} =: R \quad \text{a.s.}
\end{equation}
Next, we have
\begin{equation}
\mathbb E[\rdr Y_s^2] = \frac{1}{n^2}\mathbb E[\rdr Z_s^2 - \rdr Z_s \mathbb E[\rdr Z] - \mathbb E[\rdr Z] \rdr Z_s + E[\rdr Z]^2] = \frac{1}{n^2}(\mathbb E[\rdr Z_s^2] - \mathbb E[\rdr Z]^2) \preccurlyeq \frac{1}{n^2}\mathbb E[\rdr Z_s^2].
\end{equation}
It follows that
\begin{equation}
\sigma^2 := \norm{\sum_{s = 1}^n \mathbb E[\rdr Y_s^2]} \leq \sum_{s = 1}^n \norm{\mathbb E[\rdr Y_s^2]} \leq \sum_{s = 1}^n \frac{1}{n^2}\norm{\mathbb E[\rdr Z_s^2]} \leq \sum_{s = 1}^n \frac{1}{n^2}\mathbb E[\norm{\rdr Z_s}^2] \leq \frac{B^2}{n}.
\end{equation}
Now, by the matrix Bernstein inequality, we find that
\begin{equation}
\begin{split}
\mathbb P \left \{\norm{\frac{1}{n}\sum_{s =1}^n \rdr Z_s - \mathbb E[\rdr Z]} \geq t\right \} &= \mathbb P \left \{\norm{\sum_{s =1}^n Y_s} \geq t\right\} \\
&\leq d \exp\left(\frac{-t^2/2}{\sigma^2 + Rt/3}\right) \leq d \exp\left(\frac{-3nt^2}{6B^2 + 4Bt}\right).
\end{split}
\end{equation}
This proves (\ref{general_tail_bound}). The bound (\ref{general_expectation_bound}) follows from \cite[Remark 6.5]{user_friendly}.
\end{proof}

\begin{corollary} \label{A_corollary}
Let $\rdr P$ be a random $q \times p$ matrix such that $\norm{\rdr P} \leq B_{P}$ almost surely. Let $\sdr A = \mathbb E[\rdr P^H \rdr P]$ and let $\rdr A_n = \frac{1}{n}\sum_{s = 1}^n \rdr P_s^H \rdr P_s$, where $\rdr P_1, \dots, \rdr P_n$ are i.i.d. samples from $\rdr P$. Then, 
\begin{equation}
\mathbb P \left \{\norm{\rdr A_n - \sdr A} \geq t\right \} \leq p \exp\left(\frac{-3nt^2}{6B_{P}^4 + 4B_{P}^2t}\right).
\label{A_tail_bound}
\end{equation}
Moreover, 
\begin{equation}
\mathbb E\norm{\rdr A_n - A} \leq C B_{P}^2 \max\left(\sqrt{\frac{\log p}{n}}, \frac{2 \log p}{n}\right) = C B_{P}^2 \sqrt{\frac{\log p}{n}},
\label{A_expectation_bound}
\end{equation}
where the last equality holds if $n \geq 4 \log p$.
\end{corollary}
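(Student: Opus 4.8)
The plan is to obtain Corollary~\ref{A_corollary} as an immediate specialization of Lemma~\ref{bernstein_corollary}, applied to the random matrix $\rdr Z := \rdr P^H \rdr P$. First I would check the two hypotheses of that lemma. Since $(\rdr P^H \rdr P)^H = \rdr P^H \rdr P$, the matrix $\rdr Z$ is self-adjoint of dimension $d = p$. By submultiplicativity of the operator norm, $\norm{\rdr Z} = \norm{\rdr P^H \rdr P} \le \norm{\rdr P^H}\,\norm{\rdr P} = \norm{\rdr P}^2 \le B_P^2$ almost surely, so we may take $B = B_P^2$ in Lemma~\ref{bernstein_corollary}. Note that we do not need to invoke the matrix Bernstein inequality (Lemma~\ref{matrix_bernstein}) directly, since Lemma~\ref{bernstein_corollary} already packages it in the required $\tfrac1n$-averaged form.

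Next, observe that $\rdr A_n = \tfrac1n\sum_{s=1}^n \rdr Z_s$ with $\rdr Z_s = \rdr P_s^H \rdr P_s$ i.i.d.\ copies of $\rdr Z$, and $\mathbb E[\rdr Z] = \sdr A$ by definition. Substituting $d = p$ and $B = B_P^2$ into (\ref{general_tail_bound}) gives exactly
\begin{equation*}
\mathbb P\left\{\norm{\rdr A_n - \sdr A} \ge t\right\} \le p\exp\left(\frac{-3nt^2}{6B_P^4 + 4B_P^2 t}\right),
\end{equation*}
which is (\ref{A_tail_bound}); likewise (\ref{general_expectation_bound}) yields $\mathbb E\norm{\rdr A_n - \sdr A} \le C B_P^2 \max\!\big(\sqrt{\log p / n},\, 2\log p / n\big)$.

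Finally, for the closed form on the right of (\ref{A_expectation_bound}), I would note that the maximum equals $\sqrt{\log p / n}$ precisely when $\sqrt{\log p / n} \ge 2\log p / n$, i.e.\ when $n \ge 4\log p$, so under this mild sample-size assumption the $\max$ collapses and the claimed equality follows. There is essentially no real obstacle here: the corollary is a routine instantiation of Lemma~\ref{bernstein_corollary}, and the only points requiring care are tracking the constant when replacing the generic bound $B$ by $B_P^2$ and explicitly recording the condition $n \ge 4\log p$ used to simplify the expectation bound.
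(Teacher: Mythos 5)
Your proposal is correct and matches the paper's own proof, which likewise obtains the corollary by setting $\rdr Z = \rdr P^H \rdr P$ in Lemma \ref{bernstein_corollary} with $B = B_P^2$ and $d = p$. The additional remark about when the $\max$ collapses to $\sqrt{\log p / n}$ is exactly the condition $n \geq 4\log p$ recorded in the statement.
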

\begin{proof}
These bounds follow by letting $\rdr Z = \rdr P^H \rdr P$ in Lemma \ref{bernstein_corollary} and noting that $\norm{\rdr Z} \leq B_{P}^2$ almost surely. 
\end{proof}

\begin{corollary} \label{L_corollary}
Let $\rdr P$ be a random $q \times p$ matrix such that $\norm{\rdr P} \leq B_{P}$ almost surely. Let $\sdr L \Sigma = \mathbb E[\rdr P^H \rdr P \Sigma \rdr P^H \rdr P]$ and let $\rdr L_n \Sigma = \frac{1}{n}\sum_{s = 1}^n \rdr P_s^H \rdr P_s \Sigma \rdr P_s^H \rdr P_s$, where $\rdr P_1, \dots, \rdr P_n$ are i.i.d. samples from $\rdr P$. Then, 
\begin{equation}
\mathbb P \left \{\norm{\rdr L_n - \sdr L} \geq t\right \} \leq p^2 \exp\left(\frac{-3nt^2}{6 q^4 B_{P}^8 + 4q^2 B_{P}^4t}\right).
\label{L_tail_bound}
\end{equation}
Moreover,
\begin{equation}
\mathbb E \norm{\rdr L_n - \sdr L} \leq C q^2 B_{P}^4 \max\left(\sqrt{\frac{2\log p}{n}}, \frac{4 \log p}{n}\right) = C q^2 B_{P}^4 \sqrt{\frac{2\log p}{n}},
\label{L_expectation_bound}
\end{equation}
where the last equality holds if $n \geq 8 \log p$.

\end{corollary}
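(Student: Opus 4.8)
The plan is to realize the random linear operator $\rdr L_n$ as an i.i.d.\ average of self-adjoint ``matrices'' on a finite-dimensional Hilbert space, and then invoke Lemma \ref{bernstein_corollary} exactly as Corollary \ref{A_corollary} did for $\rdr A_n$. Regard $\bc^{p\times p}$ as the $p^2$-dimensional Hilbert space with the Frobenius inner product $\ip{A}{B} = \text{tr}(B^H A)$, and let $\rdr Z$ be the random linear operator $\Sigma \mapsto \rdr P^H\rdr P\,\Sigma\,\rdr P^H\rdr P$ on this space. Then $\rdr L_n = \frac1n\sum_{s=1}^n \rdr Z_s$ with $\rdr Z_1,\dots,\rdr Z_n$ i.i.d.\ copies of $\rdr Z$ and $\mathbb E[\rdr Z] = \sdr L$. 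The computation in Section \ref{sample_cov} already shows that this operator is self-adjoint with respect to $\ip{\cdot}{\cdot}$, so all the hypotheses of Lemma \ref{bernstein_corollary} hold with $d = p^2$.

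Next I would supply an almost-sure bound on $\norm{\rdr Z}$, the operator norm of $\rdr Z$ as a map on $(\bc^{p\times p},\norm{\cdot}_F)$. Since $\norm{\rdr P}\le B_{P}$ a.s.\ implies $\norm{\rdr P^H\rdr P}\le B_{P}^2$, submultiplicativity gives $\norm{\rdr P^H\rdr P\,\Sigma\,\rdr P^H\rdr P}_F \le \norm{\rdr P^H\rdr P}^2\,\norm{\Sigma}_F$ for every $\Sigma$, so $\norm{\rdr Z}\le B_{P}^4 \le q^2 B_{P}^4$ almost surely. Substituting $d = p^2$ and $B = q^2 B_{P}^4$ into (\ref{general_tail_bound}) yields (\ref{L_tail_bound}) verbatim, since $6B^2 + 4Bt = 6 q^4 B_{P}^8 + 4 q^2 B_{P}^4 t$.

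For the moment estimate I would feed the same $d$ and $B$ into (\ref{general_expectation_bound}). Using $\log d = \log p^2 = 2\log p$ gives $\mathbb E\norm{\rdr L_n - \sdr L} \le C q^2 B_{P}^4 \max\!\big(\sqrt{2\log p/n},\ 4\log p/n\big)$, which is (\ref{L_expectation_bound}); the maximum is achieved by its first argument exactly when $\sqrt{2\log p/n}\ge 4\log p/n$, i.e.\ when $n \ge 8\log p$, which is the stated simplification.

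I do not expect a genuine obstacle: the argument is line-for-line the $p^2$-dimensional analogue of Corollary \ref{A_corollary}. The only place that calls for care is the setup in the first paragraph --- insisting on the Frobenius inner product so that $\rdr L_n$ really is an average of self-adjoint operators (thus letting matrix Bernstein apply with ambient dimension $p^2$ rather than $p$) --- together with the crude a.s.\ operator-norm bound on the summands. Tighter constants in $B$ are available but make no difference to the asymptotic rate.
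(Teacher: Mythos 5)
Your proposal is correct and follows essentially the same route as the paper: identify $\rdr L_n$ as an i.i.d.\ average of self-adjoint operators on $(\bc^{p\times p}, \norm{\cdot}_F)$ and apply Lemma \ref{bernstein_corollary} with $d = p^2$ and $B = q^2 B_P^4$. The only (harmless) difference is in bounding $\norm{\rdr Z}$: you use the mixed submultiplicativity $\norm{M\Sigma M}_F \le \norm{M}^2\norm{\Sigma}_F$ to get the tighter a.s.\ bound $B_P^4$ and then weaken it to $q^2 B_P^4$, whereas the paper bounds via $\norm{\rdr P}_F^4 \le q^2\norm{\rdr P}^4$; both land on the same constants in (\ref{L_tail_bound}) and (\ref{L_expectation_bound}).
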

\begin{proof}
We wish to apply Lemma \ref{bernstein_corollary} again, this time for $\rdr Z \Sigma = \rdr P^H \rdr P \Sigma \rdr P^H \rdr P$. In this case we must be careful because $\rdr Z$ is an operator on the space of $p \times p$ matrices. We can view it as a $p^2 \times p^2$ matrix if we represent its argument (a $p \times p$ matrix $\Sigma$) as a vector of length $p^2$ (denoted by  $\text{vec}(\Sigma)$). Then, almost surely,
\begin{equation}
\begin{split}
\norm{\rdr Z} = \max_{\norm{\text{vec}(\Sigma)} = 1}\norm{\rdr Z \text{vec}(\Sigma)} &= \max_{\norm{\Sigma}_F = 1}\norm{\rdr Z \Sigma}_F \\
&=  \max_{\norm{\Sigma}_F = 1}\norm{\rdr P^H \rdr P \Sigma \rdr P^H \rdr P}_F \leq \norm{\rdr P}_F^4 \leq q^2 \norm{\rdr P}^4 \leq q^2 B_{P}^4.
\label{L_norm_calculation}
\end{split}
\end{equation}
In the penultimate inequality above we used the fact that $\norm{A}_F \leq \sqrt{\text{rank}(A)}\norm{A}$ for an arbitrary matrix $A$. Now, (\ref{L_tail_bound}) follows from (\ref{general_tail_bound}) by setting $B = q^2 B_{P}^4$ and $d = p^2$.
\end{proof}


%

\begin{proposition} \label{prop_E_n}
Let $\mathcal E_n^A$ be the event that $\norm{A_n^{-1}} \leq 2\norm{A^{-1}}$, and let $\mathcal E_n^L$ be the event that $\norm{L_n^{-1}} \leq 2\norm{L^{-1}}$. Then, 
\begin{equation}
\mathbb P[\mathcal E_n^A] \geq 1 - \alpha_n^A ; \quad  \mathbb P[\mathcal E_n^L] \geq 1 - \alpha_n^L,
\end{equation}
where
\begin{equation}
\alpha_n^A = p \exp\left(\frac{-3n\lambda_{\min}(A)^2/4}{6B_{P}^4 + 2B_{P}^2\lambda_{\min}(A)}\right) \quad \text{and} \quad \alpha_n^L = p^2 \exp\left(\frac{-3n\lambda_{\min}(L)^2/4}{6 q^4 B_{P}^8 + 2q^2 B_{P}^4\lambda_{\min}(L)}\right).
\end{equation}
\end{proposition}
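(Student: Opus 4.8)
The plan is to reduce the statement to the concentration bounds of Corollaries~\ref{A_corollary} and~\ref{L_corollary} by an elementary eigenvalue perturbation argument. First I would record that, since $A$ is self-adjoint and positive semidefinite by inspection and invertible by assumption, it is in fact positive definite, so that $\norm{A^{-1}} = 1/\lambda_{\min}(A)$ with $\lambda_{\min}(A) > 0$. Likewise, using that $L$ is self-adjoint and positive semidefinite on $\bc^{p\times p}$ equipped with the trace inner product (as established in Section~\ref{sample_cov}) and invertible by assumption, we have $\norm{L^{-1}} = 1/\lambda_{\min}(L)$ with $\lambda_{\min}(L) > 0$.

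Next I would show that the deterministic event $\{\norm{A_n - A} \leq \frac{1}{2}\lambda_{\min}(A)\}$ is contained in $\mathcal E_n^A$. Indeed, $A_n$ is self-adjoint, so Weyl's inequality gives
\begin{equation}
\lambda_{\min}(A_n) \geq \lambda_{\min}(A) - \norm{A_n - A} \geq \frac{1}{2}\lambda_{\min}(A) > 0;
\end{equation}
hence $A_n$ is invertible and $\norm{A_n^{-1}} = 1/\lambda_{\min}(A_n) \leq 2/\lambda_{\min}(A) = 2\norm{A^{-1}}$, i.e.\ $\mathcal E_n^A$ holds. The identical argument with the self-adjoint operator $L_n$ in place of $A_n$ shows $\{\norm{L_n - L} \leq \frac{1}{2}\lambda_{\min}(L)\} \subset \mathcal E_n^L$.

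Finally I would invoke the concentration estimates. Applying the tail bound~(\ref{A_tail_bound}) of Corollary~\ref{A_corollary} with $t = \frac{1}{2}\lambda_{\min}(A)$ bounds $\mathbb P[\norm{A_n - A} \geq \frac{1}{2}\lambda_{\min}(A)]$ by $p\exp\bigl(-3nt^2/(6B_P^4 + 4B_P^2 t)\bigr)$; substituting $t$ and using $4B_P^2 \cdot \frac{1}{2}\lambda_{\min}(A) = 2B_P^2\lambda_{\min}(A)$ yields exactly $\alpha_n^A$, and combining with the inclusion above gives $\mathbb P[\mathcal E_n^A] \geq 1 - \alpha_n^A$. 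Symmetrically, applying~(\ref{L_tail_bound}) of Corollary~\ref{L_corollary} with $t = \frac{1}{2}\lambda_{\min}(L)$ — where $L_n$ and $L$ are viewed as $p^2 \times p^2$ self-adjoint matrices, as in the proof of that corollary — gives $\mathbb P[\mathcal E_n^L] \geq 1 - \alpha_n^L$. There is essentially no hard part here: the only point requiring a little care is that the perturbation estimate $\norm{B^{-1}} \leq 2\norm{A^{-1}}$ when $\norm{B-A}$ is small relies on self-adjointness (so that $\norm{A^{-1}} = 1/\lambda_{\min}(A)$ and Weyl's inequality apply), a property that holds for both the pair $A_n, A$ and the pair $L_n, L$, together with checking that the arithmetic simplification of the denominators matches the stated forms of $\alpha_n^A$ and $\alpha_n^L$.
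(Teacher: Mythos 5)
Your proposal is correct and follows essentially the same route as the paper: the inequality $\lambda_{\min}(A_n) \geq \lambda_{\min}(A) - \norm{A_n - A}$ (Weyl), the inclusion of $\{\norm{A_n - A} \leq \tfrac12 \lambda_{\min}(A)\}$ in $\mathcal E_n^A$, and an application of Corollaries \ref{A_corollary} and \ref{L_corollary} with $t = \tfrac12\lambda_{\min}(A)$ and $t = \tfrac12\lambda_{\min}(L)$. Your write-up is if anything slightly more explicit than the paper's (spelling out self-adjointness, positive definiteness, and the identity $\norm{A_n^{-1}} = 1/\lambda_{\min}(A_n)$), but there is no substantive difference.
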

\begin{proof}
Note that $\lambda_{\min}(A_n) \geq \lambda_{\min}(A) - \norm{A_n - A}$. It follows that 
\begin{equation}
\begin{split}
\mathbb P\left[\norm{A_n^{-1}} > 2\norm{A^{-1}}\right] = \mathbb P\left[\lambda_{\min}(A_n) < \frac12 \lambda_{\min}(A)\right] \leq \mathbb P\left[\norm{A_n - A} > \frac12 \lambda_{\min}(A)\right].
\end{split}
\end{equation}
By Corollary \ref{A_corollary}, it follows that
\begin{equation}
\begin{split}
P\left[\norm{A_n - A} > \frac12 \lambda_{\min}(A)\right] \leq p \exp\left(\frac{-3n\lambda_{\min}(A)^2/4}{6B_{P}^4 + 2B_{P}^2\lambda_{\min}(A)}\right) = \alpha_n^A.
\end{split}
\end{equation}
Analogously, Corollary \ref{L_corollary} implies that
\begin{equation}
\begin{split}
P\left[\norm{L_n - L} > \frac12 \lambda_{\min}(L)\right] \leq p^2 \exp\left(\frac{-3n\lambda_{\min}(L)^2/4}{6 q^4 B_{P}^8 + 2q^2 B_{P}^4\lambda_{\min}(L)}\right) = \alpha_n^L.
\end{split}
\end{equation}
\end{proof}

Now, we prove the consistency results, which we restate for convenience. In the following propositions, define 
\begin{equation}
B^2_{I} := \mathbb E[\norm{\rdr I - \rdr P \mu_0}^2].
\label{B_I_def}
\end{equation}
Note that
\begin{equation}
B^2_{I} \leq B_{P}^2 \mathbb E[\norm{\rdr X - \mu_0}^2] + \mathbb E[\norm{\rdr E}]^2.
\label{B_I_ineq}
\end{equation}
Also, recall the following notation introduced in Section \ref{theoretical}:
\begin{equation}
\moment{\rdr V}_m = \mathbb E[\norm{\rdr V - \mathbb E[\rdr V]}^m]^{\frac1m},
\end{equation}
where $\rdr V$ is a random vector. For example, (\ref{B_I_ineq}) can be written as $B^2_{I} \leq B_{P}^2\moment{\rdr X}_2^2 + \moment{\rdr E}_2^2$.

\begin{proposition}\label{mu_appendix}
Suppose $A$ (defined in (\ref{as_convergence2})) is invertible, that $\norm{\rdr P} \leq B_P$ almost surely, and that $\moment{\rdr X}_2, \moment{\rdr E}_2 < \infty$. Then, for fixed $p, q$ we have 
\begin{equation}
\mathbb E\norm{\rdr \mu_n - \mu_0}= O\left(\frac 1 {\sqrt n}\right).
\end{equation}
Hence, under these assumptions, $\mu_n$ is consistent.
\end{proposition}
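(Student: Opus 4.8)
The plan is to decompose the error $\rdr\mu_n - \mu_0$ on the event $\mathcal E_n^A$ (where $\rdr\mu_n = \rdr A_n^{-1}\rdr b_n$) and to control the complementary event $(\mathcal E_n^A)^c$ separately using the exponential bound $\alpha_n^A$ from Proposition~\ref{prop_E_n}, which decays faster than any power of $1/\sqrt n$. On $\mathcal E_n^A$, the natural starting point is the identity
\begin{equation}
\rdr A_n(\rdr\mu_n - \mu_0) = \rdr b_n - \rdr A_n\mu_0 = \frac1n\sum_{s=1}^n \rdr P_s^H(\rdr I_s - \rdr P_s\mu_0),
\end{equation}
so that $\rdr\mu_n - \mu_0 = \rdr A_n^{-1}\left(\frac1n\sum_s \rdr P_s^H(\rdr I_s - \rdr P_s\mu_0)\right)$. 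Each summand $\rdr P_s^H(\rdr I_s - \rdr P_s\mu_0) = \rdr P_s^H(\rdr P_s(\rdr X_s - \mu_0) + \rdr E_s)$ has mean zero by the independence assumptions of Problem~\ref{main_problem} and by $\mathbb E[\rdr X] = \mu_0$, $\mathbb E[\rdr E] = 0$. Hence this is a normalized sum of i.i.d.\ mean-zero vectors, and its second moment is $O(1/n)$: indeed $\mathbb E\norm{\frac1n\sum_s \rdr P_s^H(\rdr I_s - \rdr P_s\mu_0)}^2 = \frac1n\mathbb E\norm{\rdr P^H(\rdr I - \rdr P\mu_0)}^2 \le \frac{B_P^2 B_I^2}{n}$, using $\norm{\rdr P}\le B_P$ a.s.\ and the definition (\ref{B_I_def}) of $B_I^2$ (finite by (\ref{B_I_ineq}) and the assumption $\moment{\rdr X}_2,\moment{\rdr E}_2 <\infty$). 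On $\mathcal E_n^A$ we have $\norm{\rdr A_n^{-1}}\le 2\norm{A^{-1}}$, so by Cauchy--Schwarz
\begin{equation}
\mathbb E\left[\norm{\rdr\mu_n - \mu_0}\,\mathbf 1_{\mathcal E_n^A}\right] \le 2\norm{A^{-1}}\left(\mathbb E\norm{\tfrac1n\sum_s \rdr P_s^H(\rdr I_s - \rdr P_s\mu_0)}^2\right)^{1/2} \le \frac{2\norm{A^{-1}}B_P B_I}{\sqrt n}.
\end{equation}

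For the complementary event, on $(\mathcal E_n^A)^c$ the estimator is set to $0$, so $\norm{\rdr\mu_n - \mu_0} = \norm{\mu_0}$ is a deterministic constant, and $\mathbb E\left[\norm{\rdr\mu_n-\mu_0}\mathbf 1_{(\mathcal E_n^A)^c}\right] = \norm{\mu_0}\,\mathbb P[(\mathcal E_n^A)^c] \le \norm{\mu_0}\,\alpha_n^A$. Since $\alpha_n^A$ decays exponentially in $n$ (Proposition~\ref{prop_E_n}), this term is $o(1/\sqrt n)$, in fact negligible compared to any polynomial rate. Adding the two contributions gives $\mathbb E\norm{\rdr\mu_n - \mu_0} = O(1/\sqrt n)$, and consistency follows.

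I do not expect a serious obstacle here: the argument is a routine ``split on a high-probability event'' combined with a second-moment estimate for an i.i.d.\ average, with all the ingredients (the uniform bound on $\norm{\rdr P}$, the finiteness of the second moments of $\rdr X$ and $\rdr E$, the invertibility of $A$, and the exponential tail bound $\alpha_n^A$) already in hand. The one place requiring a little care is making sure the conditioning and independence are handled cleanly when bounding $\mathbb E\norm{\rdr P^H(\rdr I - \rdr P\mu_0)}^2$ — one conditions on $\rdr P$, uses $\Var[\rdr I\mid\rdr P] = \rdr P\Sigma_0\rdr P^H + \sigma^2 I_q$ together with $\mathbb E[\rdr I\mid\rdr P] = \rdr P\mu_0$ from (\ref{mean})--(\ref{cov}), and then takes the outer expectation, bounding operator norms by $B_P$. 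The analogous (but messier) bookkeeping for $\Sigma_n$ in Proposition~\ref{Sigma_main} is where the real work lies, but for $\mu_n$ the above suffices.
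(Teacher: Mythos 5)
Your proposal is correct and follows essentially the same route as the paper's proof: split on the event $\mathcal E_n^A$, bound the complement via $\alpha_n^A\norm{\mu_0}$, and on $\mathcal E_n^A$ use $\norm{\rdr A_n^{-1}}\le 2\norm{A^{-1}}$ together with the second-moment bound $\frac1n\mathbb E\norm{\rdr P^H(\rdr I-\rdr P\mu_0)}^2\le B_P^2B_I^2/n$ for the i.i.d.\ mean-zero average $\rdr b_n-\rdr A_n\mu_0$. The only cosmetic difference is that you phrase the event-splitting with indicators while the paper uses conditional expectations multiplied by probabilities; these are identical.
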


\begin{proof}
Since $\mathbb P[\norm{\rdr \mu_n - \mu_0} \geq t] \leq t^{-1}\mathbb E[\norm{\rdr \mu_n - \mu_0}]$ by Markov's inequality, it is sufficient to prove that $\mathbb E[\norm{\rdr \mu_n - \mu_0}] \rightarrow 0$ as $n \rightarrow \infty$. Note that by the definition of $\rdr \mu_n$ and Proposition \ref{prop_E_n},
\begin{equation}
\begin{split}
\mathbb E[\norm{\rdr \mu_n - \mu_0}] &= \mathbb P[\mathcal E_n^A] \mathbb E\left[\norm{\rdr \mu_n - \mu_0} | \ \mathcal E_n^A\right] + (1 - \mathbb P[\mathcal E_n^A]) \mathbb E\left[\norm{\rdr \mu_n - \mu_0} | \ \overline{\mathcal E_n^A}\right] \\
&\leq \mathbb P[\mathcal E_n^A]\mathbb E\left[\norm{\rdr A_n^{-1}\rdr b_n - \mu_0} | \ \mathcal E_n^A\right] + \alpha_n^A \norm{\mu_0} \\
&\leq \mathbb P[\mathcal E_n^A]\mathbb E\left[\norm{\rdr A_n^{-1}(\rdr b_n - \rdr A_n\mu)} | \ \mathcal E_n^A\right] + \alpha_n^A \norm{\mu_0} \\
&\leq \mathbb P[\mathcal E_n^A]2 \norm{A^{-1}}\mathbb E\left[\norm{\rdr b_n - \rdr A_n\mu_0} | \ \mathcal E_n^A\right] + \alpha_n^A \norm{\mu_0} \\
&\leq 2 \norm{A^{-1}}\mathbb E\left[\norm{\rdr b_n - \rdr A_n\mu_0}\right] + \alpha_n^A \norm{\mu_0}.
\end{split}
\end{equation}
Since $\rdr b_n - \rdr A_n\mu_0 = \frac{1}{n}\sum_{s = 1}^n \rdr P_s^H (\rdr I_s - \rdr P_s \mu_0)$, where these summands are i.i.d., we find
\begin{equation}
\begin{split}
\mathbb E\left[\norm{\rdr b_n - \rdr A_n\mu_0}\right]^2 \leq \mathbb E\left[\norm{\rdr b_n - \rdr A_n\mu_0}^2\right] &= \frac{1}{n}\mathbb E\left[\norm{\rdr P^H(\rdr I - \rdr P \mu_0)}^2\right] \leq \frac{1}{n}B_{P}^2 B_{I}^2.
\end{split}
\end{equation}
Putting together what we have, we arrive at
\begin{equation}
\mathbb E[\norm{\rdr \mu_n - \mu_0}] \leq \frac{2 \norm{A^{-1}}B_{P}B_{I}}{\sqrt n} + \alpha_n^A \norm{\mu_0}.
\end{equation}
Inspecting this bound reveals that $\mathbb E[\norm{\rdr \mu_n - \mu_0}] \rightarrow 0$ as $n \rightarrow \infty$, as needed. 
\end{proof}

\begin{remark}
Note that with a simple modification to the above argument, we obtain
\begin{equation}
\mathbb P[\mathcal E_n^A] \mathbb E[\norm{\mu_n - \mu_0}^2 | \ \mathcal E_n^A] \leq \frac{4 \norm{A^{-1}}^2}{n}B_{P}^2 B_{I}^2.
\label{useful_later}
\end{equation}
This bound will be useful later.
\end{remark}

Before proving the consistency of $\Sigma_n$, we state a lemma. 

\begin{lemma} \label{rudelson}
Let $\rdr V$ be a random vector on $\bc^p$ with $\mathbb E[\rdr V \rdr V^H] = \Sigma_{\rdr V}$, and let $\rdr V_1, \dots, \rdr V_n$ be i.i.d. samples from $\rdr V$. Then, for some absolute constant $C$,
\begin{equation}
\mathbb E \norm{\frac{1}{n}\sum_{s = 1}^n \rdr V_s \rdr V_s^H - \Sigma_{\rdr V}} \leq C \norm{\Sigma_{\rdr V}}\norm{\Sigma_{\rdr V}^{-1/2}}\frac{\sqrt{\log p}}{\sqrt{n}} \left(\mathbb E\norm{\rdr V}^{\log n}\right)^{1/\log n},
\end{equation}
provided the RHS does not exceed $\norm{\Sigma_{\rdr V}}$. 
\end{lemma}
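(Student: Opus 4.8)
The plan is to \emph{whiten}, \emph{truncate}, and apply the matrix Bernstein inequality (Lemma \ref{matrix_bernstein}) to the truncated, bounded part, absorbing the truncation error into a lower-order term. First I would reduce to the isotropic case: set $\rdr W = \Sigma_{\rdr V}^{-1/2}\rdr V$, so that $\mathbb E[\rdr W\rdr W^H] = \mathrm{I}_p$, and observe
\begin{equation}
\frac1n\sum_{s=1}^n \rdr V_s\rdr V_s^H - \Sigma_{\rdr V} = \Sigma_{\rdr V}^{1/2}\left(\frac1n\sum_{s=1}^n \rdr W_s\rdr W_s^H - \mathrm{I}_p\right)\Sigma_{\rdr V}^{1/2},
\end{equation}
so that the left-hand side of the lemma is at most $\norm{\Sigma_{\rdr V}}\,\mathbb E\norm{\frac1n\sum_s \rdr W_s\rdr W_s^H - \mathrm{I}_p}$. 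Moreover $\big(\mathbb E\norm{\rdr W}^{\log n}\big)^{1/\log n}\le \norm{\Sigma_{\rdr V}^{-1/2}}\big(\mathbb E\norm{\rdr V}^{\log n}\big)^{1/\log n} =: \beta$, so it suffices to show $\mathbb E\norm{\frac1n\sum_s \rdr W_s\rdr W_s^H - \mathrm{I}_p} \lesssim \beta\sqrt{\log p/n}$, where we may use the proviso in the form $\beta\sqrt{\log p/n}\lesssim 1$.

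The obstacle to applying Lemma \ref{matrix_bernstein} directly is that $\norm{\rdr W}$ need not be bounded. I would fix a truncation radius $R>0$, set $\tilde{\rdr W}_s := \rdr W_s\,\mathbf{1}\{\norm{\rdr W_s}\le R\}$ and $\tilde{\rdr Z}_s := \tilde{\rdr W}_s\tilde{\rdr W}_s^H$, and split
\begin{equation}
\frac1n\sum_s \rdr W_s\rdr W_s^H - \mathrm{I}_p = \frac1n\sum_s(\rdr W_s\rdr W_s^H - \tilde{\rdr Z}_s) + \frac1n\sum_s(\tilde{\rdr Z}_s - \mathbb E\tilde{\rdr Z}) + (\mathbb E\tilde{\rdr Z} - \mathrm{I}_p).
\end{equation}
The first and third terms are each controlled in expectation by $\mathbb E\big[\norm{\rdr W}^2\mathbf{1}\{\norm{\rdr W}>R\}\big] \le R^{2-m}\,\mathbb E\norm{\rdr W}^m$ via Markov's inequality, for any $m>2$. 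For the middle term, $\norm{\tilde{\rdr Z}_s}\le R^2$ almost surely, and since $\tilde{\rdr W}_s\tilde{\rdr W}_s^H \preccurlyeq \rdr W_s\rdr W_s^H$ one gets $\mathbb E[\tilde{\rdr Z}_s^2] = \mathbb E[\norm{\tilde{\rdr W}_s}^2\,\tilde{\rdr W}_s\tilde{\rdr W}_s^H] \preccurlyeq R^2\,\mathbb E[\tilde{\rdr W}_s\tilde{\rdr W}_s^H] \preccurlyeq R^2\,\mathrm{I}_p$, so the matrix variance parameter of $\frac1n\sum_s(\tilde{\rdr Z}_s-\mathbb E\tilde{\rdr Z})$ is at most $R^2/n$. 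Then Lemma \ref{matrix_bernstein}, together with the standard conversion of a Bernstein tail bound into an expectation bound (exactly as carried out in the proof of Lemma \ref{bernstein_corollary}), gives $\mathbb E\norm{\frac1n\sum_s(\tilde{\rdr Z}_s-\mathbb E\tilde{\rdr Z})} \lesssim R\sqrt{\log p/n} + R^2\log p/n$.

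It remains to optimize $R$ and recombine. Taking $m = \log n$ and $R \asymp \big(\mathbb E\norm{\rdr W}^{\log n}\big)^{1/\log n}\le\beta$ makes $R^{2-m}\mathbb E\norm{\rdr W}^m \asymp R^2 e^{-(m-2)} = O(\beta^2/n)$, so the first and third terms are $O(\beta^2/n)$, while the middle term is $\lesssim \beta\sqrt{\log p/n} + \beta^2\log p/n$. Under the proviso $\beta\sqrt{\log p/n}\lesssim 1$ both $\beta^2/n$ and $\beta^2\log p/n$ are $\lesssim \beta\sqrt{\log p/n}$, so $\mathbb E\norm{\frac1n\sum_s \rdr W_s\rdr W_s^H - \mathrm{I}_p}\lesssim \beta\sqrt{\log p/n}$; multiplying by $\norm{\Sigma_{\rdr V}}$ and unwinding the bound $\beta\le\norm{\Sigma_{\rdr V}^{-1/2}}\big(\mathbb E\norm{\rdr V}^{\log n}\big)^{1/\log n}$ yields the claim. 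The main obstacle — really the only nontrivial point — is the truncation bookkeeping: one must pick the truncation \emph{moment} to be $m=\log n$ precisely so that the discarded tail contributes $O(\beta^2/n)$ rather than $O(\beta^2/\sqrt n)$, which is why the $\log n$-th moment (rather than a fixed moment) appears in the statement; tracking how the whitening produces the $\norm{\Sigma_{\rdr V}}$ and $\norm{\Sigma_{\rdr V}^{-1/2}}$ prefactors is then routine.
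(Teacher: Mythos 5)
Your proof is correct, but it takes a genuinely different route from the paper's. The paper does not actually prove this lemma: its ``proof'' is a one-line deferral to Rudelson's theorem \cite{rudelson}, whose original argument goes through the noncommutative Khintchine inequality and, as stated there, carries a $\sqrt{\log n}$ (sample-size) factor rather than the $\sqrt{\log p}$ (dimension) factor appearing in the lemma --- the passage between the two being the unstated ``simple modification.'' You instead derive the bound self-containedly from the matrix Bernstein inequality already recorded as Lemma \ref{matrix_bernstein}: whitening reduces to the isotropic case and accounts for the $\norm{\Sigma_{\rdr V}}\norm{\Sigma_{\rdr V}^{-1/2}}$ prefactors; truncating at radius $R\asymp\bigl(\mathbb E\norm{\rdr W}^{\log n}\bigr)^{1/\log n}$ and controlling the tail with the $m=\log n$ moment makes the discarded part $O(\beta^2/n)$, which is exactly why the $\log n$-th moment appears in the statement; and the sharp variance bound $\mathbb E[\tilde{\rdr Z}^2]\preccurlyeq R^2\,\mathrm{I}_p$ (rather than the crude squared-norm bound used in the proof of Lemma \ref{bernstein_corollary}) is what delivers the linear-in-$R$ leading term $R\sqrt{\log p/n}$. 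The proviso is invoked exactly where it must be, to absorb the $R^2\log p/n$ term into $R\sqrt{\log p/n}$. What your approach buys is a proof that matches the statement as written (dimension-dependent logarithm) using only machinery already present in the appendix; what the paper's citation buys is brevity. The only caveats are ones shared with the statement itself rather than introduced by you: $\Sigma_{\rdr V}$ must be invertible for $\Sigma_{\rdr V}^{-1/2}$ (and your whitening) to make sense, and $p\ge 2$ is needed so that $\log p$ is not degenerate.
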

\begin{proof}
This result is a simple modification of \cite[Theorem 1]{rudelson}.
\end{proof}

\begin{proposition} \label{Sigma_appendix}
Suppose $A$ and $L$ (defined in \ref{as_convergence2}) are invertible, that $\norm{\rdr P} \leq B_P$ almost surely, and that there is a polynomial $Q$ for which
\begin{equation}
\moment{\rdr X}_j, \moment{\rdr E}_j \leq Q(j), \quad j \in \mathbb N.
\label{moment_condition_appendix}
\end{equation}
Then, for fixed $p, q$, we have
\begin{equation}
\mathbb E\norm{\rdr \Sigma_n  - \Sigma_0} = O\left(\frac{Q(\log n)}{\sqrt n}\right).
\end{equation}
Hence, under these assumptions, $\Sigma_n$ is consistent.
\end{proposition}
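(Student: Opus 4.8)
The plan is to exploit the defining relation $L_n(\Sigma_n) = B_n$. On the event $\mathcal E_n^L$ (notation of Proposition \ref{prop_E_n}) one may write $\Sigma_n - \Sigma_0 = L_n^{-1}\big(B_n - L_n(\Sigma_0)\big)$, so that
\begin{equation}
\norm{\Sigma_n - \Sigma_0}_F \le \norm{L_n^{-1}}\,\norm{B_n - L_n(\Sigma_0)}_F \le 2\norm{L^{-1}}\,\norm{B_n - L_n(\Sigma_0)}_F,
\end{equation}
and the problem reduces to bounding $\mathbb E\norm{B_n - L_n(\Sigma_0)}_F$; since $p$ is fixed, the Frobenius and operator norms on $\bc^{p\times p}$ are equivalent, so an $O(Q(\log n)/\sqrt n)$ bound on the former suffices. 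First I would dispose of the low-probability events. Writing
\begin{equation}
\mathbb E\norm{\Sigma_n - \Sigma_0} \le \mathbb E\big[\norm{\Sigma_n - \Sigma_0}\,1_{\mathcal E_n^L\cap\mathcal E_n^A}\big] + \norm{\Sigma_0}\,\mathbb P[\overline{\mathcal E_n^L}] + \mathbb E\big[\norm{\Sigma_n - \Sigma_0}\,1_{\mathcal E_n^L\cap\overline{\mathcal E_n^A}}\big],
\end{equation}
the middle term is $\le \norm{\Sigma_0}\,\alpha_n^L$, exponentially small by Proposition \ref{prop_E_n}; on $\overline{\mathcal E_n^A}$ one has $\mu_n = 0$ and $\norm{\Sigma_n}_F \le 2\norm{L^{-1}}\norm{B_n}_F$, so a Cauchy--Schwarz split using $\mathbb P[\overline{\mathcal E_n^A}] \le \alpha_n^A$ and $\mathbb E\norm{B_n}_F^2 < \infty$ (a consequence of (\ref{moment_condition_appendix})) makes the last term exponentially small too. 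It then remains to estimate the contribution on $\mathcal E_n^L\cap\mathcal E_n^A$.

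Next I would decompose $B_n - L_n(\Sigma_0)$. Put $W_s = I_s - P_s\mu_0$, $V_s = P_s^H W_s$, and $\delta = \mu_n - \mu_0$. Expanding $(I_s - P_s\mu_n)(I_s - P_s\mu_n)^H$ around $\mu_0$ and using $\mathbb E\big[(\rdr I - \rdr P\mu_0)(\rdr I - \rdr P\mu_0)^H \mid \rdr P\big] = \rdr P\Sigma_0\rdr P^H + \sigma^2\text{I}_q$ (that is, (\ref{cov})), one finds
\begin{equation}
B_n - L_n(\Sigma_0) = \frac1n\sum_{s=1}^n T_s + R_n, \qquad T_s := V_s V_s^H - \sigma^2 P_s^H P_s - P_s^H P_s\Sigma_0 P_s^H P_s,
\end{equation}
where the $T_s$ are i.i.d.\ with $\mathbb E[T_s] = 0$ and $R_n$ is a sum of three cross terms, each carrying a factor $\delta$ (two linear in $\delta$, one quadratic). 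For the main sum I would split $\frac1n\sum_s T_s$ into $\big(\frac1n\sum_s V_sV_s^H - \mathbb E[\rdr V\rdr V^H]\big)$, $-\sigma^2(A_n - A)$, and $-(L_n - L)(\Sigma_0)$; the last two are $O(1/\sqrt n)$ in expectation by Corollaries \ref{A_corollary} and \ref{L_corollary}. For the first I would apply Lemma \ref{rudelson} to $\rdr V = \rdr P^H(\rdr I - \rdr P\mu_0)$, whose covariance is $L(\Sigma_0) + \sigma^2 A$ (with norm controlled by $\norm{L^{-1}}$ and $\norm A$): Minkowski's inequality and (\ref{moment_condition_appendix}) give
\begin{equation}
\big(\mathbb E\norm{\rdr V}^{\log n}\big)^{1/\log n} \le B_P\big(B_P\,\moment{\rdr X}_{\log n} + \moment{\rdr E}_{\log n}\big) = O\big(Q(\log n)\big),
\end{equation}
whence $\mathbb E\norm{\frac1n\sum_s V_sV_s^H - \mathbb E[\rdr V\rdr V^H]} = O\big(Q(\log n)/\sqrt n\big)$ for $n$ large (the hypothesis of Lemma \ref{rudelson} then holds; $\log p$ is a constant). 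For $R_n$, each cross term is an outer product of Frobenius norm at most $B_P^3\norm{W_s}\norm\delta$ (respectively $B_P^4\norm\delta^2$ for the quadratic one), so a Cauchy--Schwarz split using $\mathbb E\big[\frac1n\sum_s\norm{W_s}^2\big] = B_I^2$ and the estimate $\mathbb P[\mathcal E_n^A]\,\mathbb E\big[\norm{\mu_n - \mu_0}^2 \mid \mathcal E_n^A\big] \le 4\norm{A^{-1}}^2 B_P^2 B_I^2/n$ from (\ref{useful_later}) yields $\mathbb E\big[\norm{R_n}_F\,1_{\mathcal E_n^A}\big] = O(1/\sqrt n)$. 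Assembling the pieces gives $\mathbb E\norm{\Sigma_n - \Sigma_0} = O\big(Q(\log n)/\sqrt n\big)$.

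The main obstacle is the term $\frac1n\sum_s V_sV_s^H$ with $V_s = P_s^H(I_s - P_s\mu_0)$: unlike $A_n$ and $L_n$, this is an average of outer products of vectors that are only assumed to have polynomially growing moments (not to be bounded almost surely), so the matrix Bernstein inequality (Lemma \ref{matrix_bernstein}) no longer applies and one is forced to use the Rudelson-type bound of Lemma \ref{rudelson}; evaluating the moment at the order $\log n$ dictated by that lemma is exactly what introduces the $Q(\log n)$ factor into the rate. The remaining work --- handling the truncation events $\mathcal E_n^A,\mathcal E_n^L$ and controlling the $\mu_n - \mu_0$ cross terms --- is routine, but must be carried out carefully so that no term of order worse than $Q(\log n)/\sqrt n$ slips in.
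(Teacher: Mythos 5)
Your proposal is correct and follows essentially the same route as the paper's proof: reduce to bounding $\mathbb E\norm{B_n - L_n\Sigma_0}$ on the good events, split off the $\mu_n-\mu_0$ cross terms via Cauchy--Schwarz and (\ref{useful_later}), handle $\sigma^2(A_n-A)$ and $(L_n-L)\Sigma_0$ by the Bernstein corollaries, and apply Lemma \ref{rudelson} to $\rdr V = \rdr P^H(\rdr I - \rdr P\mu_0)$, which is exactly where the $Q(\log n)$ factor enters. Your treatment of the event $\mathcal E_n^L\cap\overline{\mathcal E_n^A}$ (where $\mu_n=0$ but $\Sigma_n$ is still defined nontrivially) is in fact slightly more careful than the paper's, which folds the whole complement into a single $(\alpha_n^A+\alpha_n^L)\norm{\Sigma_0}$ term.
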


\begin{proof}
In parallel to the proof of Proposition \ref{mu_appendix}, we will prove that $\mathbb E[\norm{\rdr \Sigma_n - \Sigma_0}] \rightarrow 0$ as $n \rightarrow \infty$. We compute
\begin{equation}
\begin{split}
\mathbb E[\norm{\rdr \Sigma_n - \Sigma_0}] &= \mathbb P[\mathcal E_n^A \cap \mathcal E_n^L] \mathbb E\left[\norm{\rdr \Sigma_n - \Sigma_0} | \ \mathcal E_n^A \cap \mathcal E_n^L\right] + (1 - \mathbb P[\mathcal E_n^A \cap \mathcal E_n^L]) \mathbb E\left[\norm{\rdr \Sigma_n - \Sigma_0} | \ \overline{\mathcal E_n^A \cap \mathcal E_n^L}\right] \\
&\leq \mathbb P[\mathcal E_n^A \cap \mathcal E_n^L]\mathbb E\left[\norm{\rdr L_n^{-1}\rdr B_n - \Sigma_0} | \ \mathcal E_n^A \cap \mathcal E_n^L\right] + (\alpha_n^A + \alpha_n^L) \norm{\Sigma_0} \\
&\leq \mathbb P[\mathcal E_n^A \cap \mathcal E_n^L]\mathbb E\left[\norm{\rdr L_n^{-1}(\rdr B_n - \rdr L_n\Sigma_0)} | \ \mathcal E_n^A \cap \mathcal E_n^L\right] + (\alpha_n^A + \alpha_n^L) \norm{\Sigma_0} \\
&\leq 2 \norm{L^{-1}}\mathbb P[\mathcal E_n^A \cap \mathcal E_n^L]\mathbb E\left[\norm{\rdr B_n - \rdr L_n\Sigma_0} | \ \mathcal E_n^A \cap \mathcal E_n^L\right] + (\alpha_n^A + \alpha_n^L) \norm{\Sigma_0} \\
&\leq 2 \norm{L^{-1}}P[\mathcal E_n^A]\mathbb E\left[\norm{\rdr B_n - \rdr L_n\Sigma_0} | \ \mathcal E_n^A\right] + (\alpha_n^A + \alpha_n^L) \norm{\Sigma_0}.
\label{first_expression}
\end{split}
\end{equation}
Now, we will bound $\mathbb E\left[\norm{\rdr B_n - \rdr L_n\Sigma_0} | \ \mathcal E_n^A\right]$. To do this, we write 
\begin{equation}
\begin{split}
\rdr B_n - \rdr L_n \Sigma_0 &=  \left(\frac{1}{n}\sum_{s = 1}^n \rdr P_s^H (\rdr I_s - \rdr P_s \rdr \mu_n)(\rdr I_s - \rdr P_s \rdr \mu_n)^H \rdr P_s - \frac{1}{n}\sum_{s = 1}^n \rdr P_s^H (\rdr I_s - \rdr P_s \mu_0)(\rdr I_s - \rdr P_s \mu_0)^H \rdr P_s\right) \\
&\quad + \left(\frac{1}{n}\sum_{s = 1}^n \rdr P_s^H (\rdr I_s - \rdr P_s \mu_0)(\rdr I_s - \rdr P_s \mu_0)^H \rdr P_s - (\sigma^2 A + L\Sigma_0)\right) + \sigma^2(A - \rdr A_n)  + (L - \rdr L_n) \Sigma_0 \\
&=: \rdr D_1 + \rdr D_2 + \rdr D_3 + \rdr D_4.
\label{second_expression}
\end{split}
\end{equation}
Let us consider each of these four difference terms in order.
Note that
\begin{equation}
\begin{split}
\mathbb E[\norm{\rdr D_1} | \ \mathcal E_n^A] &\leq B_{P}^2 \frac{1}{n}\sum_{s = 1}^n \mathbb E\left[\norm{(\rdr I_s - \rdr P_s \rdr \mu_n)(\rdr I_s - \rdr P_s \rdr \mu_n)^H - (\rdr I_s - \rdr P_s \mu_0)(\rdr I_s - \rdr P_s \mu_0)^H} | \ \mathcal E_n^A\right].
\end{split}
\end{equation}
Moreover,
\begin{equation}
\begin{split}
&(\rdr I_s - \rdr P_s \rdr \mu_n)(\rdr I_s - \rdr P_s \rdr \mu_n)^H - (\rdr I_s - \rdr P_s \rdr \mu_0)(\rdr I_s - \rdr P_s \rdr \mu_0)^H \\
&\quad = \left\{(\rdr I_s - \rdr P_s \mu_0) + \rdr P_s (\mu_0 - \rdr \mu_n)\right\}\left\{(\rdr I_s - \rdr P_s \mu_0) + \rdr P_s (\mu_0 - \rdr \mu_n)\right\}^H - (\rdr I_s - \rdr P_s \mu_0)(\rdr I_s - \rdr P_s \mu_0)^H \\
&\quad = (\rdr I_s - \rdr P_s \mu_0)(\mu_0 - \rdr \mu_n)^H \rdr P_s^H + \rdr P_s (\mu_0 - \rdr \mu_n)(\rdr I_s - \rdr P_s \mu_0)^H  + \rdr P_s (\mu_0 - \rdr \mu_n)(\mu_0 - \rdr \mu_n)^H \rdr P_s^H
\label{three_terms}
\end{split}
\end{equation}
Using the Cauchy-Schwarz inequality and (\ref{useful_later}), we find
\begin{equation}
\begin{split}
&\mathbb E\left[\norm{(\rdr I_s - \rdr P_s \mu_0) (\mu_0 - \rdr \mu_n)^H \rdr P_s^H} | \ \mathcal E_n^A\right]^2 \\
&\quad\leq B_{P}^2\mathbb E[\norm{\rdr I_s - \rdr P_s \mu_0}\norm{\mu_0 - \rdr \mu_n}| \ \mathcal E_n^A]^2 \\
&\quad\leq B_{P}^2\mathbb E[\norm{\rdr I_s - \rdr P_s \mu_0}^2 | \ \mathcal E_n^A]\mathbb E[\norm{\mu_0 - \rdr \mu_n}^2| \ \mathcal E_n^A] \\
&\quad \leq \frac{4 \norm{A^{-1}}^2}{n\mathbb P[\mathcal E_n^A]^2}B_{P}^4B_{I}^4 \\
\end{split}
\end{equation}
Here, we used (\ref{useful_later}). This bound also holds for the second term in the last line of (\ref{three_terms}). As for the third term,
\begin{equation}
\begin{split}
\mathbb E[\norm{\rdr P_s (\mu_0 - \rdr \mu_n)(\mu_0 - \rdr \mu_n)^H \rdr P_s^H} | \ \mathcal E_n^A] \leq B_{P}^2 \mathbb E[\norm{\mu_0 - \rdr \mu_n}^2 | \ \mathcal E_n^A] \leq \frac{4 \norm{A^{-1}}^2}{n\mathbb P[\mathcal E_n^A]}B_{P}^4 B_{I}^2.
\end{split}
\end{equation}
Putting these bounds together, we arrive at
\begin{equation}
\begin{split}
\mathbb P[\mathcal E_n^A]\mathbb E[\norm{\rdr D_1} | \ \mathcal E_n^A] &\leq \mathbb P[\mathcal E_n^A]B_{P}^2 \left(2\frac{2 \norm{A^{-1}}}{\sqrt{n}\mathbb P[\mathcal E_n^A]}B_{P}^2B_{I}^2 + \frac{4 \norm{A^{-1}}^2}{n\mathbb P[\mathcal E_n^A]}B_{P}^4 B_{I}^2\right) \\
&= \frac{4B_{P}^4 B_{I}^2 \norm{A^{-1}}}{n} \left(\sqrt{n} + \norm{A^{-1}} B_{P}^2\right).
\label{D_1}
\end{split}
\end{equation}

Next, we move on to analyzing $\rdr D_2$. If $\rdr V = \rdr P^H(\rdr I - \rdr P \mu_0)$, note that 
\begin{equation}
\Sigma_{\rdr V} = \mathbb E[\rdr V \rdr V^H] = \mathbb E[\rdr P^H\rdr P(\rdr X - \mu_0)(\rdr X - \mu_0)^H \rdr P^H \rdr P] + \mathbb E[\rdr P^H \rdr E \rdr E^H \rdr P] = L\Sigma_0 + \sigma^2 A. 
\end{equation}
By Lemma (\ref{rudelson}), we find
\begin{equation}
\mathbb P[\mathcal E_n^A]\mathbb E[\norm{\rdr D_2} | \ \mathcal E_n^A] \leq \mathbb E\norm{\frac{1}{n}\sum_{s = 1}^n \rdr V_s \rdr V_s^H - \Sigma_{\rdr V}} \leq C \norm{\Sigma_{\rdr V}}\norm{\Sigma_{\rdr V}^{-1/2}}\frac{\sqrt{\log p}}{\sqrt{n}} \left(\mathbb E\norm{\rdr V}^{\log n}\right)^{\frac{1}{\log n}}
\label{D_2_precursor}
\end{equation}
Since $\Sigma_0 = \mathbb E[(\rdr X - \mu_0)(\rdr X - \mu_0)^H]$, it follows that $\norm{\Sigma_0} \leq \mathbb E[\norm{\rdr X - \mu_0}^2] = \moment{\rdr X}_2^2$. Further, the calculation (\ref{L_norm_calculation}) implies that 
\begin{equation}
\norm{L\Sigma_0} \leq \norm{L\Sigma_0}_F \leq q^4 B_{P}^4 \norm{\Sigma_0}_F \leq q^4 B_{P}^4 \sqrt{\text{rank}(\Sigma_0)} \norm{\Sigma_0} \leq q^4 B_{P}^4 \sqrt{\text{rank}(\Sigma_0)} \moment{\rdr X}_2^2.
\label{D_2}
\end{equation}
Also, it is clear that $\norm{A} \leq B_{P}^2$. Furthermore, Minkowski inequality implies that
\begin{equation}
\begin{split}
\left(\mathbb E\norm{\rdr V}^{\log n}\right)^{\frac{1}{\log n}} &\leq B_{P}\left(B_{P}(\mathbb E[\norm{\rdr X - \mu_0}^{\log n}]^{\frac{1}{\log n}} + \mathbb E[\norm{\rdr E}^{\log n}]^{\frac{1}{\log n}})\right) \\
&= B_{P}\left(B_{P}(\moment{\rdr X}_{\log n} + \moment{\rdr E}_{\log n}\right).
\end{split}
\end{equation}
Hence, (\ref{D_2_precursor}) becomes
\begin{equation}
\begin{split}
&\mathbb P[\mathcal E_n^A]\mathbb E[\norm{\rdr D_2} | \ \mathcal E_n^A] \\
&\quad \leq C B_{P}^3(q^4 B_{P}^2 \sqrt{\text{rank}(\Sigma_0)} \moment{\rdr X}_2^2 + \sigma^2) \norm{(L\Sigma_0 + \sigma^2 A)^{-1/2}}\frac{\sqrt{\log p}}{\sqrt{n}} \left(B_{P}(\moment{\rdr X}_{\log n} + \moment{\rdr E}_{\log n}\right).
\end{split}
\end{equation}

Next, a bound for $\rdr D_3$ follows immediately from (\ref{A_expectation_bound}):
\begin{equation}
\mathbb P[\mathcal E_n^A] \mathbb E[\norm{\rdr D_3} | \ \mathcal E_n^A] \leq \mathbb E[\norm{\rdr D_3}] = \sigma^2\mathbb E[\norm{A - \rdr A_n}]  \leq \sigma^2 C' B_{P}^2 \sqrt{\frac{\log p}{n}}.
\label{D_3}
\end{equation}

Similarly, (\ref{L_expectation_bound}) gives
\begin{equation}
\begin{split}
\mathbb P[\mathcal E_n^A] \mathbb E[\norm{\rdr D_4} | \ \mathcal E_n^A] \leq \mathbb E[\norm{\rdr D_4}] 
&\leq \mathbb E[\norm{L - \rdr L_n}] \norm{\Sigma_0}_F \\
&\leq \sigma^2 C' q^2B_{P}^4 \sqrt{\frac{2\log p}{n}}\sqrt{\text{rank}(\Sigma_0)}\moment{\rdr X}_2^2.
\label{D_4}
\end{split}
\end{equation}

Combining the four bounds (\ref{D_1}), (\ref{D_2}), (\ref{D_3}), (\ref{D_4}) with (\ref{first_expression}) and (\ref{second_expression}), we arrive at
\begin{equation}
\begin{split}
\mathbb E[\norm{\rdr \Sigma_n - \Sigma_0}] &\leq 2\norm{L^{-1}}\left\{\frac{4B_{P}^4 (B_{P}^2\moment{\rdr X}_2^2 + \moment{\rdr E}_2^2) \norm{A^{-1}}}{n} \left(\sqrt{n} + \norm{A^{-1}} B_{P}^2\right) \right. \\
&\quad \quad + C B_{P}^3(q^4 B_{P}^2 \sqrt{\text{rank}(\Sigma_0)} \moment{\rdr X}_2^2 + \sigma^2) \norm{(L\Sigma_0 + \sigma^2 A)^{-1/2}}\frac{\sqrt{\log p}}{\sqrt{n}} \left(B_{P}(\moment{\rdr X}_{\log n} + \moment{\rdr E}_{\log n}\right) \\
&\quad \quad + \sigma^2 C' B_{P}^2 \sqrt{\frac{\log p}{n}} \left. + \sigma^2 C' q^2B_{P}^4 \sqrt{\frac{2\log p}{n}} \sqrt{\text{rank}(\Sigma_0)} \moment{\rdr X}_2^2 \right\} + (\alpha_n^A + \alpha_n^L)\moment{\rdr X}_2^2.
\end{split}
\end{equation}
Fixing all the variables except $n$, we see that the largest term is the one in the second line, and it decays as $Q(\log n)/\sqrt n$ due to the moment growth condition (\ref{moment_condition_appendix}).
\end{proof}

\section{Simplifying (\ref{L_block})} \label{simplifying_integral}
Here, we simplify the expression for an element of $\hat L^{k_1, k_2}$:
\begin{equation}
\begin{split}
\hat L_{i_1i_2, j_1j_2}^{k_1, k_2}  &= \int_{S^2\times S^2}(a_{j_1}^{k_1} \otimes a_{j_2}^{k_2})(\alpha, \beta)\overline{(a_{i_1}^{k_1} \otimes a_{i_2}^{k_2})(\alpha, \beta)} \mathcal K(\alpha, \beta) d\alpha d\beta.
\label{L_block_recall}
\end{split}
\end{equation}
Let $A_{i, j}^k = \overline{a_{i}^{k}} a_{j}^{k}$. Then, (\ref{L_block_recall}) becomes
\begin{equation}
\begin{split}
\hat L^{k_1, k_2}_{i_1i_2, j_1j_2} &= \int_{S^2\times S^2}A_{i_1j_1}^{k_1}(\alpha)\overline{A_{i_2j_2}^{k_2}(\beta)}\mathcal K(\alpha, \beta) d\alpha d\beta.
\end{split}
\end{equation}
Recall from Section \ref{properties} that $a_i^k$ is a spherical harmonic of order up to $k$. It follows that $A_{i_1j_1}^{k_1}$ has a spherical harmonic expansion up to order $2k_1$ (using the formula for the product of two spherical harmonics, which involves the Clebsch-Gordan coefficients). The same holds for $A_{i_2 j_2}^{k_2}$, where the order goes up to $2k_2$.  Let us write $C_{\ell}^m(A_{ij}^{k})$ for the $\ell, m$ coefficient of the spherical harmonic expansion of $A_{ij}^{k}$. Thus, we have
\begin{equation}
A_{i_1j_1}^{k_1}(\alpha) = \sum_{\ell = 0}^{2k_1}\sum_{|m| \leq \ell} C_{\ell, m}(A_{i_1j_1}^{k_1}) Y_{\ell}^m(\alpha), \quad A_{i_2j_2}^{k_2}(\beta) = \sum_{\ell' = 0}^{2k_2}\sum_{|m'| \leq \ell'} C_{\ell', m'}(A_{i_2j_2}^{k_2}) Y_{\ell'}^{m'}(\beta)
\end{equation}
It follows that
\begin{equation}
\begin{split}
\hat L_{i_1i_2, j_1j_2}^{k_1, k_2}  &= \sum_{\ell, m}\sum_{\ell', m'} C_{\ell, m}(A_{i_1j_1}^{k_1}) \overline{ C_{\ell', m'}(A_{i_2j_2}^{k_2}) }\int_{S^2}\int_{S^2} Y_\ell^m(\alpha)\mathcal K(\alpha,\beta) \overline{Y_{\ell'}^{m'}(\beta)}d\alpha d\beta.
\label{soclose}
\end{split}
\end{equation}
Since $\mathcal K(\alpha, \beta)$ depends only on $\alpha \cdot \beta$, by an abuse of notation we can write $\mathcal K(\alpha, \beta) = \mathcal K(\alpha \cdot \beta)$. Thus, the Funk-Hecke theorem applies \cite{natterer}, so we may write 
\begin{equation}
\int_{S^2} Y_\ell^m(\alpha)\mathcal K(\alpha, \beta) d\alpha = c(\ell)Y_\ell^m(\beta),
\end{equation}
where
\begin{equation}
c(\ell) = \frac{2\pi}{P_\ell(1)}\int_{-1}^1 \mathcal K(t)P_{\ell}(t)dt.
\end{equation}
Note that $P_{\ell}$ are the Legendre polynomials. Since $\mathcal K$ is an even function of $t$ and $P_\ell$ has the same parity as $\ell$, it follows that $c(\ell) = 0$ for odd $\ell$. For even $\ell$, we have
\begin{equation}
c(\ell) = 2\int_0^1 \frac{1}{\sqrt{1 - t^2}}P_\ell(t)dt.
\end{equation}
It follows from formula 3 on p. 423 of \cite{prudnikov} that
\begin{equation}
\begin{split}
c(\ell) = 2\int_0^1 \frac{1}{\sqrt{1 - t^2}}P_\ell(t)dt = \pi \left(\frac{\ell!}{2^\ell(\frac \ell 2 !)^2}\right)^2.	
\label{c_ell}
\end{split}
\end{equation}
Using Stirling's formula, we can find that $c(\ell) \sim \ell^{-1}$ for large $\ell$.

Finally, plugging the result of Funk-Hecke into (\ref{soclose}), we obtain
\begin{equation}
\begin{split}
\hat L_{i_1i_2, j_1j_2}^{k_1, k_2}  &= \sum_{\ell, m}\sum_{\ell', m'}c(\ell)C_{\ell, m}(A_{i_1j_1}^{k_1}) \overline{ C_{\ell', m'}(A_{i_2j_2}^{k_2}) }\int_{S^2}Y_\ell^m(\beta)\overline{Y_{\ell'}^{m'}(\beta)}d\beta \\
&= \sum_{\ell,m} c(\ell)C_{\ell, m}(A_{i_1j_1}^{k_1}) \overline{ C_{\ell, m}(A_{i_2j_2}^{k_2})}.
\label{formula2}
\end{split}
\end{equation}
Thus, we have verified (\ref{L_hat_indices}).

\end{document}